\def\SL@eqntext#1{\rlap{\quad\SL@margintext{#1}}}
\renewcommand{\bibnamedash}{\leavevmode\raise3pt\hbox to3em{\hrulefill}\space}
\date{Novembre 2025}
\title{Model theory, differential algebra and functional transcendence}
\author{Amador Martin-Pizarro}
\address{Dept. of Mathematical Logic\\ Mathematical Institute\\ University of Freiburg\\ Ernst-Zermelo-Str. 1\\ D-79104 Freiburg \\ GERMANY}
\email{pizarro@math.uni-freiburg.de}
\theoremstyle{definition}
\newtheorem*{notation}{Notation}
\theoremstyle{plain}
\newtheorem*{claimstar}{Claim}
\newenvironment{claimstarproof}{\noindent\textit{Proof of
		Claim.}}{\hfill\qedsymbol \tiny{Claim}
	\medskip}
\newcounter{claimCount}
\newtheorem*{TheoIntro}{Theorem}
\newtheorem*{Notation}{Notation}
\def\Ind#1#2{#1\setbox0=\hbox{$#1x$}\kern\wd0\hbox to
	0pt{\hss$#1\mid$\hss} \lower.9\ht0\hbox to
	0pt{\hss$#1\smile$\hss}\kern\wd0}
\def\Notind#1#2{#1\setbox0=\hbox{$#1x$}\kern\wd0\hbox to
	0pt{\mathchardef\nn="0236\hss$#1\nn$\kern1.4\wd0\hss}\hbox to
	0pt{\hss$#1\mid$\hss}\lower.9\ht0 \hbox to
	0pt{\hss$#1\smile$\hss}\kern\wd0}
\def\ind{\mathop{\mathpalette\Ind{}}}
\def\nind{\mathop{\mathpalette\Notind{}}}
\newcommand{\nc}{\newcommand}
\nc{\Z}{\mathbb{Z}}
\nc{\Q}{\mathbb{Q}}
\nc{\N}{\mathbb{N}}
\nc{\F}{\mathbb{F}}
\nc{\UU}{\mathbb{U}}
\nc{\C}{\mathbb{C}}
\nc{\CC}{\mathcal{C}}
\nc{\M}{\mathcal{M}}
\nc\LL{\mathcal L}
\newlength{\wpwidth}
\nc{\primep}{\raisebox{\depth}{\makebox[\wpwidth]{\rotatebox{10}{$\wp$}}}}
\nc\ord{\operatorname{ord}}
\nc{\dcl}{\operatorname{dcl}}
\nc{\acl}{\operatorname{acl}}
\nc{\tr}{\operatorname{trdeg}}
\nc\U{\operatorname{U}}
\nc\RD{\operatorname{RD}}
\nc\Bind{\operatorname{Bind}}
\nc\Aut{\operatorname{Aut}}
\nc{\Stab}{\operatorname{Stab}}
\nc{\ZZ}{\operatorname{Z}}
\nc{\nf}[1]{_{\mid {#1}}}
\nc{\restr}[1]{\xspace_{\upharpoonright {#1}}}
\nc{\sbgp}[1]{\langle\xspace {#1}\xspace\rangle}
\nc{\str}[2]{{#1}\xspace\langle\xspace {#2}\xspace\rangle}
\nc\inv{ ^{-1}}
\nc{\tp}{\operatorname{tp}}
\nc\cb{\operatorname{Cb}}
\nc{\cf}{\text{cf.\,}}
\nc{\eg}{\text{e.g. }}
\begin{document}

\maketitle
\tableofcontents

\section*{Introduction}

The moduli space of elliptic curves over the complex numbers can be identified with the affine line $\mathbb A^1$ using the \emph{j-invariant} function, an analytic map defined on the complex upper half plane $\mathbb H$. Indeed, an elliptic curve over the complex numbers is isomorphic to a complex torus $\C/\Lambda$, and the lattice $\Lambda\subset \C$ can be transformed to $\Lambda_\tau=\Z+\tau \Z$ for some  $\tau $ in $\mathbb H$. Using this transformation, the Weierstrass form of the elliptic curve is  \[y^2= 4x^3 + g_2(\Lambda) x+ g_3(\Lambda), \text{ with }g_2(\Lambda)=60\sum_{0\ne z\in \Lambda} z^{-4} \text{ and } g_3(\Lambda)=140\sum_{0\ne z\in \Lambda} z^{-6},\] so the $j$-invariant of $\C/\Lambda_\tau$, which we identify with the element $\tau$ of $\mathbb H$, is 
\[ j(\tau)=1728 \frac{g_2(\Lambda_\tau)^3}{g_2(\Lambda_\tau)^3-27g_3(\Lambda_\tau)^2}.\]
Using the \emph{counting theorem} of  \textcite[Theorem 1.10]{PilaWilkie}, a result in \emph{o-minimality}, an area which can be seen as a model theoretic generalisation of semi-algebraic geometry, \textcite[Theorem 1.1]{Pila} showed a generalization of the Ax--Lindemann--Weierstrass theorem: Given an irreducible variety $W$ defined over $\C$ and rational functions $a_1,\ldots, a_n$ on $W$ which locally at the point $P$ take values in $\mathbb H$, whenever  no $j(a_i)$ is a constant function and there is no \emph{modular relation}  for $i\ne k$ of the form  \[ a_k= \frac{a a_i + b}{c a_i +d} \text{ for some }\begin{pmatrix}a & b\\ c& d\end{pmatrix} \in \mathrm{GL}_2(\Q) \text{ with } \mathrm{det}\begin{pmatrix}a & b\\ c& d\end{pmatrix}>0,\]
 then  \[j(a_1), \ldots, j(a_n), j'(a_1), \ldots, j'(a_n), j''(a_1), \ldots, j''(a_n)\] are algebraically independent over the function field $\C(W)$, where each $j(a_i)$ is seen as a function on $W$ locally near P, and the derivative is taken with respect to the element $a_i(P)$ of $\mathbb H$.  The matrix of a modular relation as above can be rescaled  to an element of the double $\mathrm{SL}_2(\Z)$-coset of the matrix $(\begin{smallmatrix}1 & 0\\ 0& N\end{smallmatrix})$ for some $N\ge 1$ in $\N$. In doing so, the modular relation 
 translates into a \emph{Hecke correspondence} between the values $j(a_i)$ and $j(a_k)$ given by a \emph{modular polynomial} $F_N(X, Y)$ which has coefficients in $\Z$ and is monic both in $X$ and $Y$. 

Pila's modular Ax--Lindemann--Weierstrass theorem is optimal in the sense that no further derivative can be algebraically independent. Indeed, the $j$-invariant function satisfies the following order $3$ irreducible differential algebraic (rational) equation $\chi(T)=0$ with \[ \chi(T)=S(T)+\frac{T^2-1968T+2 654 208}{2T^2(T-1728)^2},\] where  $S(T)$ is the \emph{Schwarzian derivative} \[ S(T)=\bigl(\frac{T''}{T'}\bigr)'-\frac{1}{2} \bigl(\frac{T''}{T'}\bigr)^2.\] 

Hence, it makes  sense to study the solution set of the above differential algebraic equation within any differential field of characteristic $0$, and particularly within a differentially closed field. Differentially closed fields are the differential analogue of algebraically closed fields for systems of finitely many differential algebraic equations in finitely many differential variables. The solution set of such a system is called a Kolchin closed set, and Boolean combinations thereof are called Kolchin constructible. 

Model-theoretically,  differentially closed fields of characteristic $0$ are  existentially closed differential fields (see Corollary~\ref{C:DCF_existclosed}) and their common theory, denoted by DCF$_0$, is \emph{the least misleading example of a totally transcendental theory},  according to \textcite[pp. 4--5]{Sacks}. It is a complete theory and has quantifier elimination (see Theorem~\ref{T:DCF_QE}), which is equivalent to saying that the projection of a Kolchin constructible set is again Kolchin constructible. From the point of view of the \emph{classification program} introduced by \textcite{Shelah}, the theory DCF$_0$ is very tame: it is \emph{$\omega$-stable}. Thus DCF$_0$ is equipped with a canonical notion of independence, which will be introduced in Section~\ref{S:Stab}, arising from stability. The general notion of independence in arbitrary stable theories is of combinatorial nature and yet captures many of the well-known properties of algebraic independence for algebraically closed fields. 

Using Pila's Ax--Lindemann--Weierstrass as well as the \emph{Embedding Theorem} of \textcite[Theorem, p. 160]{Seidenberg_Embedding}, \textcite[Theorem 3.10]{FreitagScanlon} showed that the Kolchin closed set of solutions of the differential algebraic equation $\chi(T)=0$ satisfied by the $j$-invariant function is \emph{strongly minimal}, working inside a sufficiently large (or \emph{universal} as in Section~\ref{S:Univ}) differentially closed field $(\UU, \delta)$: Given any Kolchin constructible subset $X$ of $\UU$, either the set $X\cap (\chi(T)=0)$ is finite or its relative complement $(\chi(T)=0)\setminus X$ is. Moreover, its \emph{geometry is trivial}: Given a countable differential subfield $K$ of $\UU$ and solutions $a_1,\ldots, a_n$ of $\chi(T)=0$, none of which is algebraic over $K$, the family \[ a_1,\ldots, a_n, \delta(a_1),\ldots, \delta(a_n), \delta^2(a_1),\ldots, \delta^2(a_n)\] is algebraically independent over $K$, as long as no two solutions $a_i$ and $a_j$ with $i\ne j$ satisfy a modular polynomial relation $F_N(X, Y)$ as previously introduced. More generally (see Definition~\ref{D:trivial}),  given a strongly minimal differential algebraic equation defined over the differential subfield $k$ of $\UU$ and 
$n$ solutions $a_1,\ldots, a_n$, we say that the solutions $a_1,\ldots, a_n$ are \emph{independent} over $K$ if for every $1\le i\le n$, the differential field $\str{K}{a_i}$ generated by $K(a_i)$ is algebraically independent from $\str{K}{a_1,\ldots, a_{i-1}}$ over $K$. Thus, the strongly minimal differential algebraic equation has \emph{trivial geometry} if for every collection of $n$  solutions  $a_1,\ldots, a_n$ and every differential field extension $K$ of $k$, the solutions are independent over $K$,  as long as no solution $a_i$ is algebraic over $K$ and any two distinct solutions $a_i$ and $a_j$ with $i\ne j$ are pairwise independent over $K$.

\textcite[Lemme 9]{Poizat_Rang} considered  a similar  differential algebraic (rational) equation, namely  \[ \frac{T''}{T'}  - \frac{1}{T}=0,\] and showed that it is also strongly minimal with trivial geometry.  Poizat's result was later generalized by \textcite[Theorems 3.1 \& 6.1]{FJMN23} to a wider class of differential algebraic equations of Li\'enard type, that is, of the form \[ T'' + f(T) T' + g(T)=0, \] for   rational functions $f$ and $g$ defined over the field of constants $\CC_\UU=\{x\in \UU \ | \ \delta(x)=0\}$. A differential algebraic equation defined over the field of constants is called \emph{autonomous}. \textcite[Theorem A]{Jaoui} obtained a similar result for any autonomous differential algebraic equation of order two and (differential) degree at least $3$ as long as the defining coefficients build an algebraically independent tuple over the prime field $\Q$.  

Whilst the above transcendence results use different methods, particular to
the equations they considered, some of the tools are common. Indeed, Zilber's
\emph{trichotomy principle} (Fact~\ref{F:dichotomy}) and the binding group
(Fact~\ref{F:binding}) are always present in some way or other. In recent work, \textcite[Theorem 3.9]{FJM22} have extracted (many of) the essential steps of previous proofs in order to show a stronger version of triviality, often called \emph{total disintegration}, for  autonomous differential algebraic equations. In an extremely elegant yet concise proof, they show the following result (see Theorem~\ref{T:Main} and Corollaries~\ref{C:sm} \& \ref{C:D2_otherparam}):
\begin{TheoIntro}
	Consider an irreducible  differential algebraic (rational) equation $P(T)=0$ of order $n\ge 1$ defined over a countable algebraically closed subfield $K$ of the constant field $\CC_\UU$ of an ambient differentially closed field $(\UU, \delta)$.  Assume that the algebraic differential equation has Property D$_2$, that is, for any two distinct solutions $a_1\ne a_2$, none of them algebraic over $K$ (or equivalently, neither $a_1$ nor $a_2$ belongs to $K$), the elements \[ a_1, \delta(a_1), \ldots, \delta^{n-1}(a_1), a_2, \delta(a_2),\ldots, \delta^{n-1}(a_2)\] from an algebraically independent family over $K$. 
	
	We have that the Kolchin constructible set given by $P(T)=0$ is strongly minimal and totally disintegrated: for every integer $m\ge 2$,  every differential field extension~$L$ of~$K$ as well as $m$~pairwise distinct solutions $a_1,\ldots, a_m$ in $\UU$,  none of which is algebraic over~$L$, the elements \[ a_1,\delta(a_1), \ldots, \delta^{n-1}(a_1), \ldots, a_m,\delta(a_m), \ldots, \delta^{n-1}(a_m)\] forms an algebraically independent family over~$L$.
\end{TheoIntro}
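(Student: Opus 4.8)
The plan is to establish, in this order: a reformulation of Property~D$_2$ as the absence of algebraic self-correspondences over $K$; strong minimality; disintegration of the generic type via Zilber's trichotomy; and finally the passage from ``over $K$'' to ``over every $L$''. Write $X=(P(T)=0)\subseteq\UU$ and let $p$ be the generic type over $K$ of the general component of $X$, i.e.\ the type of a solution $a$ with $\tr(\str{K}{a}/K)=n$; since $P$ is irreducible this type is unique, so $X$ has Morley degree one over $K$. First I would note that D$_2$ leaves no solution of intermediate order over $K$: a solution $b$ with $0<\tr(\str{K}{b}/K)<n$ would, paired via D$_2$ with any other solution $c\ne b$ outside $K$, force $b,\delta b,\dots,\delta^{n-1}b$ to be algebraically independent over $K$ — absurd; so every solution outside $K$ realises $p$. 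And if $a\models p$ while $c\in\acl(Ka)\cap X$ is not algebraic over $K$, then $c$ realises $p$, so $\str{K}{c}$ and $\str{K}{a}$ (both of transcendence degree $n$ over $K$, one inside the algebraic closure of the other) share an algebraic closure; thus $c$ and $a$ are interalgebraic over $K$, and D$_2$ on $(a,c)$ gives $c=a$. Hence $\acl(Ka)\cap X=\{a\}\cup(\acl(K)\cap X)$ for every $a\models p$: in other words $X$ admits no non-diagonal finite-to-finite algebraic self-correspondence defined over $K$ $(\star)$.

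For strong minimality it remains to see $\U(p)=1$. If $\U(p)\ge2$, then — $p$ being the generic type of an irreducible finite-dimensional $\delta$-variety over $K$ — I would invoke the differential-algebraic form of the semiminimal analysis (going back, for autonomous equations, to Rosenlicht): there is a $\delta$-rational map $f$ over $K$ with $f(a)$ of order $<n$ and with the fibres of $f$ of positive order. Taking $a_1\models p$ and $a_2\ne a_1$ generic in $f^{-1}(f(a_1))$ over $\acl(\str{K}{a_1})$, one gets two solutions outside $K$ with $f(a_1)=f(a_2)$, hence a non-trivial polynomial relation over $K$ between $(a_1,\dots,\delta^{n-1}a_1)$ and $(a_2,\dots,\delta^{n-1}a_2)$, contradicting D$_2$. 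So $\U(p)=1$ and, with Morley degree one, $X$ is strongly minimal.

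Next, apply Zilber's trichotomy (Fact~\ref{F:dichotomy}) to the minimal type $p$. If $p$ were not disintegrated, it would be non-orthogonal either to $\CC_\UU$ or to the Manin kernel of a simple abelian variety over $\CC_\UU$; in both cases the binding-group machinery (Fact~\ref{F:binding}) — together with the hypothesis that $K$ is algebraically closed \emph{inside} $\CC_\UU$, which is exactly what forces the relevant objects to descend to $K$ — yields a non-diagonal finite-to-finite self-correspondence of $X$ defined over $K$: in the first case via a $K$-rational ``first integral'' of the (autonomous) equation and the argument of the previous paragraph, in the second by transporting translation-by-a-nonzero-$\overline{\Q}$-rational-torsion-point of the Manin kernel along the correspondence relating it to $X$. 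Either way $(\star)$ is violated, so $p$ is disintegrated.

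Finally, total disintegration. Fix a differential field $L\supseteq K$; $X$ keeps Morley degree one over $L$, its generic type $p_L$ is the stationary non-forking extension of $p$, and a solution lies outside $\acl(L)$ exactly when it realises $p_L$. Since triviality of $p$ is inherited by $p_L$, the computation of the first paragraph, rerun over $L$, reduces the claim to the analogue of $(\star)$ over $L$: for $a\models p_L$, no solution $c\ne a$ outside $\acl(L)$ lies in $\acl(La)$. Suppose such a $c$ existed. From $a,c\models p_L$ we get $a,c\ind_K L$, hence $a,c\models p$; by $(\star)$ and $c\ne a$, $c\notin\acl(Ka)$, so $a\ind_K c$ and $\U(ac/K)=2$. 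Put $e:=\cb(ac/L)$; one-basedness of the disintegrated type $p$ gives $e\in\acl(Kac)$. Then $e\notin\acl(K)$ — otherwise $ac\ind_K L$, whence $c\in\acl(Ka)$, against $(\star)$ — and $ac\notin\acl(Ke)$ — since $e\in\acl(L)$ and $a,c\notin\acl(L)$ — so the Lascar inequalities force $\U(e/K)=1$. Thus $\tp(e/K)$ is minimal; the defining property $ac\ind_{Ke}L$ of the canonical base yields $c\in\acl(Kea)$ and $a\in\acl(Kec)$, so $e\notin\acl(Ka)\cup\acl(Kc)$. But $\tp(e/K)$ is non-orthogonal to $\tp(ac/K)$, hence to $p$, hence disintegrated; as $a\ind_K c$, this would force $e\in\acl(Ka)\cup\acl(Kc)$ — contradiction. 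So the analogue of $(\star)$, and with it total disintegration, holds over every $L$, and the theorem follows. The two places where I expect the real work to be are the descent of the third paragraph (a self-correspondence over $K$ \emph{itself}, for which ``$K$ algebraically closed in $\CC_\UU$'' and autonomy are indispensable) and this canonical-base argument, which converts the single-base statement $(\star)$ into its uniform version over all $L$.
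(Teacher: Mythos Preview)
Your argument for strong minimality in the second paragraph has a genuine gap. The claim that $\U(p)\ge 2$ forces a $K$-rational map $f$ with $0<\ord f(a)<n$ is not a consequence of any ``semiminimal analysis'' or of Rosenlicht's work on first integrals: what the semiminimal analysis (Proposition~\ref{P:nonorth_internal}) produces is a tuple $\bar d\in\acl(Ka)\setminus\acl(K)$ whose type over $K$ is \emph{internal} to some family of minimal types, not a tuple whose type is itself minimal, and certainly not a $K$-definable fibration. Types of finite rank $\ge 2$ with no proper fibrations over their base exist; your condition $(\star)$ --- which is exactly Lemma~\ref{L:D2_fibration} --- says precisely that $p$ has no proper fibrations over $K$, and this alone does not yield $\U(p)=1$. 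The paper's route is genuinely different and the order matters: orthogonality to the constants is established \emph{first} (Proposition~\ref{P:D2_orthogonal_constants}), and only then does the semiminimal analysis, in its sharpened form Proposition~\ref{P:nonorth_internal}(c), produce a $\bar d\in\acl(Ka)$ whose type over $K$ is both minimal and $1$-based; combined with Lemma~\ref{L:D2_fibration} this gives $a\in\str{K}{\bar d}$ and hence $\U(a/K)=1$.

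Your third paragraph is also off. First, since $K\subseteq\CC_\UU$ the Manin-kernel branch of the trichotomy is vacuous (Corollary~\ref{C:descent}): every abelian variety over $K$ descends to the constants, so there is nothing to rule out there. Second, non-orthogonality to the constants does not hand you a $K$-rational first integral, so your proposed contradiction does not go through. The paper's mechanism is quite different and is the heart of the autonomous case: if $p$ were non-orthogonal to the constants, then by Corollaries~\ref{C:D2_nonorth} and~\ref{C:D2_weakorth} it would be internal \emph{and} weakly orthogonal to the constants, so the binding group $\Bind(a/K)$ exists; Property~D$_2$ makes the action on the isolating set $2$-transitive, hence the group is centerless (Remark~\ref{R:2trans}); but the associated strongly normal extension is Picard--Vessiot with base in the constants, forcing the binding group to be commutative (Remark~\ref{R:PV_commutative}) --- a contradiction. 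Your first paragraph (the reformulation $(\star)$) and your final canonical-base argument are essentially correct and close in spirit to Lemma~\ref{L:D2_fibration} and Corollary~\ref{C:D2_otherparam}, but the middle --- minimality and orthogonality to the constants --- needs to be rebuilt along these lines.
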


Whilst Poizat's equation has Property D$_2$  \parencite[Example 6.3]{FJMN23}, the autonomous equation $\chi(T)=0$ of the $j$-invariant function does not. Indeed, two solutions $a_1$ and $a_2$ may be distinct and yet satisfy a \emph{modular relation} $F_N(a_1, a_2)=0$ given by a Hecke correspondence with $N\ge 2$. In this case, the solution $a_2$ is algebraic over $\Q(a_1)\subset \str{\Q}{a_1}$, witnessing the failure of Property D$_2$.

\subsection*{On the goal of this (long) text: To whom is it addressed?}

\textcite[Theorem 3.6 \& Section \S 3.2]{FJM22} go beyond   the autonomous case mentioned beforehand. They have a similarly impressive result for irreducible differential algebraic equations within a differentially closed field, even if the equation is not defined over the field of constants, as long as the order of the differential algebraic equation is at least $2$. Their result resonates with work of \textcite{NaglooPillay} on Painlev\'e's equation. Nevertheless, we have decided to restrict the exposition to the case of an irreducible differential algebraic equation defined over the field of constants. We believe that the autonomous case already contains relevant model-theoretic tools and shows their interactions with differential algebra and transcendence theory. However, the fact that the equation is defined over the field of constants renders the proof somewhat simpler. Therefore, the goal of this text is to exhibit some of the ideas and methods from model theory, translated to the particular context of differentially closed fields. Through the text, we often provide definitions in algebraic terms, identifying what the classical notions in model theory correspond to in the particular tame theory  DCF$_0$ of differentially closed fields of characteristic $0$. The reader should be aware that the definitions and notions in question come from model theory and can be stated in a more general context (without even assuming that the ambient universe is a field!). Similarly, most of the proofs  are model-theoretic, though some arguments  become simpler in the particular case of differential fields.

 The core of the notions of \emph{geometric model theory} needed for the proof are contained mostly in Sections~\ref{S:Univ},  \ref{S:Stab} and~\ref{S:Intern}. Whilst these notions are well-known among model theorists, we have decided to present them for a general audience in mathematics as self-contained as possible, for we could not find many suitable references.  The first two sections are by now more mainstream, so we have mostly used the existing literature without any substantial input from our side. Section~\ref{S:DiffAlg}
 presents a basic introduction to  differential algebra and Section~\ref{S:DCF} introduces  the basic model-theoretic properties of the theory DCF$_0$.  Regarding the last sections, Section~\ref{S:binding} presents the model theoretic avatar of the differential Galois group and Section~\ref{S:D2} contains the full proof of the result of  \textcite{FJM22} for autonomous irreducible differential algebraic equations.
 
\subsection*{Acknowledgements}
 
I am mostly indebted to my dear colleagues Piotr Kowalski, Martin Hils, Rémi Jaoui, Rahim Moosa and Martin Ziegler for their scientific input in order to render some of the proofs and notions more accessible to a general audience.  I would like to specially thank Charlotte Bartnick, Thomas Blossier, Charlotte Hardouin, Rahim Moosa and Daniel Palacin for the careful reading of this long text and the feedback they have provided. Whilst there is still plenty of improvement for this text, this is only due to my lack of communication skills, and not due to the immense help all of them have provided!

I would also like to thank the members of the \textit{association des collaborateurs de Nicolas Bourbaki} for suggesting this amazing opportunity, which has made me discover (and reflect on) some beautiful mathematics along the way, as well as for their patience and willingness to help during the course of this long year of typing and preparation. Last, but not least, I would like to thank the anonymous readers of the \textit{association}, whose comments and editorial suggestions have been extremely helpful. 

\section{Some (very) basics of differential algebra}\label{S:DiffAlg}

In this section, we will present in a concise way the basic notions of differential algebra which will be needed in the next sections. We mostly follow the presentations of \textcite{Marker, Tressl} without any significant input from our side. 

\begin{notation}
For the sake of the presentation, all rings and fields in this section are commutative with identity and contain the field $\Q$ of rational numbers, so they all have have characteristic $0$. 
\end{notation}

\begin{defi}\label{D:diff_field}
A \emph{differential ring} is a ring $R$ equipped with a \emph{derivation}, that is, an additive homomorphism  $\delta\colon R\to R$ satisfying the Leibniz rule \[ \delta(a\cdot b)=\delta(a)\cdot b+ a\cdot \delta(b) \ \text { for all $a$ and $b$ in $R$}.\]

Given a differential ring $R$, the subring 
$\CC_R=\{ a\in R \ | \ \delta(a)=0\}$ is  the ring of \emph{constants} of $R$.   A \emph{differential field} is a differential ring whose underlying set is a field. 
\end{defi}

Many of the basic manipulations we know from an undergraduate course in analysis also hold for a derivation. In particular, the additive map $\delta$ is linear with respect to the constant elements. If the ring is an integral domain, then the derivation $\delta$  extends uniquely to a derivation on the field of fractions 

\begin{rema}\label{R:extension_diff}
Working inside an ambient field $M$, consider a subfield $K$ of $M$ and a derivation $\delta$ on $K$. 
\begin{enumerate}[(a)]
\item If an element $a$ of $M$ is algebraic over $K$, then there is a unique extension $\widetilde{\delta}$ of $\delta$ to the subfield $K(a)\subset M$: Indeed, if $m_a(T)$ denotes the minimal polynomial of~$a$ over~$K$, then \[ 0= \widetilde{\delta}(m_a(a))=m_a^\delta(a)+\frac{\partial m_a}{\partial T}(a) \widetilde{\delta}(a),\] where $m_a^\delta(T)$ is the polynomial over $K$ obtained  by differentiating each coefficient from $m_a(T)$ (so $\deg(m_a^\delta(T))<\deg(m_a(T))$ as $m_a(T)$ is monic).  Note that the field extension $K\subset K(a)$ is separable (since the characteristic is $0$), so $\frac{\partial m_a}{\partial T}(a)\ne 0$ and thus the value $\widetilde{\delta}(a)$ is uniquely determined. 

In particular, the algebraic closure of a differential field is again a differential field. 
\item\label{I:ext_lin_disj} Linear disjointness provides a general method to extend derivations. Recall that $K$ is \emph{linearly disjoint} from a subfield $L$ of $M$ over a common subfield $k$ if any elements $a_1,\ldots, a_n$ of $K$ are linearly independent over $L$ (seen as elements of $M$), whenever they are linearly independent over $k$.  Linear disjointness is a symmetric notion, despite the asymmetric definition. 

Given a  derivation $\delta_L$ on $L$ whose restriction to $k$ coincides with $\delta$ (so $(k, \delta\restr k)$ is a common differential subfield of $(K, \delta)$ and of $(L, \delta_L)$), if $K$ and $L$ are linearly disjoint over $k$, then there is a unique derivation $\widetilde{\delta}$ on the compositum field $K\cdot L$ extending both $\delta$ and $\delta_L$ with \[ \widetilde{\delta}(a\cdot b)= \delta(a)\cdot b + a\cdot \delta_L(b)  \text{ for all $a$ in $K$ and $b$ in $L$}.\]
\end{enumerate}
\end{rema}

\begin{defi}\label{D:diffpolys}
Given a differential ring $(R, \delta)$, the \emph{ring $R\{T\}$ of differential polynomials with coefficients in $R$} has as underlying set the ring $R[(T_n)_{n\in \N}]$ in infinitely many variables, where we identify the variable $T_0$ with $T$. It admits a natural derivation~$D$ extending $\delta$ imposing that $D(T_n)=T_{n+1}$, so we will often write~$T^{(n)}$ for~$T_n$ with $T^{(0)}=T$. Iterating this process, we obtain \emph{the ring of differential polynomials $R\{U_1,\ldots, U_m\}$ in the differential variables $U_1,\ldots, U_m$ with coefficients in~$R$}. 

The \emph{order} $\ord(P)$ of a differential polynomial $P(T)$ is the largest $n$ such that $T^{(n)}$ non-trivially occurs  in $P$ (if $P$ is a constant polynomial, then its  order is $-\infty$). If the order of $P(T)$ is a natural number $n$ in $\N$, then write \[ P(T)= \sum_{i=0}^{d} Q_i(T^{(0)}, \ldots, T^{(n-1)}) (T^{(n)})^i,\]
 for some $d\ge 1$ in $\N$ (called the \emph{degree} of $P$) and differential polynomials $Q_i$ of order strictly less than $n$ with $Q_d$ non-trivial. The \emph{separant} $s_P(T)$ is the differential polynomial $s_P=\frac{\partial P}{\partial T_n}$ obtained by formally differentiating the multivariate polynomial $P$ with respect to the variable $T^{(n)}=T_n$. Note that the separant $s_P(T)$ of $P(T)$ has either smaller order or smaller degree than $P$.

A \emph{differential ideal} is an ideal $I\subset R\{T\}$ closed under the action of the derivation~$D$, that is, an ideal such that  $D(P(T))$ belongs to~$I$ for every $P(T)$ in~$I$. A \emph{radical differential ideal}, resp.\ a \emph{prime differential ideal}, is a differential ideal which is  radical, resp.\ prime, as an ideal.

Given a subset $A$ of $R$, we denote by $(A)_\delta$ the \emph{differential ideal generated by} $A$. if  $A=\{a\}$ is a singleton,  write $(a)_\delta$  for $(A)_\delta$. 
\end{defi}

\begin{rema}\textup{(}\textcite[Lemma 1.3, Corollary 1.7  \& Lemma 1.8]{Marker}\textup{)}~\label{R:prime_minpol}
Consider a differential field $(K, \delta)$ and the induced derivation $D$ on the ring $K\{T\}$ of differential polynomials. 
\begin{enumerate}[(a)]
\item For every non-zero prime differential ideal $\primep$ of $K\{T\}$ there is some differential polynomial $P(T)$, called a \emph{minimal polynomial} of $\primep$, such that $P(T)$ is irreducible, as a multivariate polynomial, with $\primep=I(P)$,   where \[ I(P)=\{ Q \in K\{T\} \ | \ s_P^k\cdot Q \text{ belongs to $(P)_\delta$  for some $k$ in $\N$}\}.\] 
\item For every irreducible differential polynomial $P$, the above set $I(P)$ is a prime differential ideal with minimal polynomial $P(T)$. Moreover  \parencite[Corollary 1.2.10]{Tressl}, \[ I(P)\cap K[T^{(0)}, \ldots,T^{(\ord(P))}]= P\cdot K[T^{(0)}, \ldots,T^{(\ord(P))}].\] 
\end{enumerate} 
\end{rema}
The above remark yields that the minimal polynomial of a prime differential ideal is essentially unique, up to rescaling by an element of $K^*$. 
\begin{coro}\label{C:unique_minpol}
Given a non-zero prime differential ideal $\primep$, any two minimal polynomials of $\primep$ differ by a non-zero scalar, so the order $n$ of a minimal polynomial only depends on $\primep$. There is no non-trivial differential polynomial in $\primep$ of order strictly less than $n$. 
\end{coro}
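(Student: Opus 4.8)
The plan is to deduce everything from Remark~\ref{R:prime_minpol}, specifically from the two characterisations of a prime differential ideal via a minimal polynomial and from the intersection formula $I(P)\cap K[T^{(0)},\ldots,T^{(\ord P)}] = P\cdot K[T^{(0)},\ldots,T^{(\ord P)}]$.

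First I would fix a non-zero prime differential ideal $\primep$ and suppose $P$ and $\tilde P$ are both minimal polynomials of $\primep$, so that $\primep = I(P) = I(\tilde P)$ and both $P,\tilde P$ are irreducible as multivariate polynomials. Let $n=\ord(P)$ and $\tilde n = \ord(\tilde P)$; without loss of generality $n \le \tilde n$. Since $P \in I(P) = I(\tilde P)$ and $P$ involves only the variables $T^{(0)},\ldots,T^{(n)}$, hence lies in $K[T^{(0)},\ldots,T^{(\tilde n)}]$, the intersection formula applied to $\tilde P$ gives $P \in \tilde P\cdot K[T^{(0)},\ldots,T^{(\tilde n)}]$; that is, $\tilde P$ divides $P$ in the polynomial ring. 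Comparing orders forces $\tilde n \le n$, so $n = \tilde n$, and then $P = c\,\tilde P$ for some $c \in K[T^{(0)},\ldots,T^{(n)}]$. Since $\tilde P$ is irreducible and $P$ is irreducible (in particular not a product of two non-units), $c$ must be a unit, i.e.\ $c \in K^*$. This proves the first assertion, and in particular $n$ is an invariant of $\primep$.

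For the last sentence, suppose toward a contradiction that $Q \in \primep$ is a non-zero differential polynomial with $\ord(Q) = m < n$. Then $Q \in K[T^{(0)},\ldots,T^{(m)}] \subseteq K[T^{(0)},\ldots,T^{(n)}]$, so by the intersection formula $Q \in P\cdot K[T^{(0)},\ldots,T^{(n)}]$, i.e.\ $P \mid Q$ in the polynomial ring. But $P$ genuinely involves $T^{(n)}$ while $Q$ does not, so this divisibility is impossible unless $Q = 0$ — a contradiction. Hence no non-zero element of $\primep$ has order strictly less than $n$.

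The only mildly delicate point is making sure the intersection formula is invoked in the right polynomial ring: one must embed $P$ (resp.\ $Q$) into $K[T^{(0)},\ldots,T^{(\tilde n)}]$ (resp.\ $K[T^{(0)},\ldots,T^{(n)}]$) before applying it, which is harmless since a polynomial of lower order lives inside the larger polynomial ring. Beyond that bookkeeping, the argument is a direct unwinding of Remark~\ref{R:prime_minpol}, with the irreducibility of the minimal polynomial doing the work of pinning the scalar down to $K^*$.
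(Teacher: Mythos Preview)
Your proof is correct and follows essentially the same route as the paper's: both arguments use the intersection formula from Remark~\ref{R:prime_minpol}(b) to obtain a divisibility $\tilde P\mid P$ (resp.\ $P\mid Q$) in the appropriate polynomial ring and then conclude via irreducibility. The paper is marginally more direct in the first part (it deduces that $H$ is a unit straight from the irreducibility of $P$ without the intermediate order comparison), but this is a cosmetic difference, not a substantive one.
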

\begin{proof}

Assume the prime ideal $\primep$ has two irreducible minimal polynomials  $P$ and $P_1$. We may assume that $\ord(P)\le \ord(P_1)$, so \begin{multline*}  P\in \primep\cap  K[T^{(0)}, \ldots,T^{(\ord(P_1))}]= \\ = I(P_1)\cap  K[T^{(0)}, \ldots,T^{(\ord(P_1))}] \stackrel{\ref*{R:prime_minpol} \, (b)}{=} P_1 \cdot K[T^{(0)}, \ldots,T^{(\ord(P_1))}].\end{multline*} Thus, there is some  $H$ in $K[T^{(0)}, \ldots,T^{(\ord(P_1))}]$ with $P=P_1\cdot H$. Irreducibility of $P$ (since $P_1$ is not a unit) yields that $H$ is a unit in $K[T^{(0)}, \ldots,T^{(\ord(P_1))}]$ and thus a non-zero scalar of $K$, as desired.  In particular, we have that $\ord(P)=\ord(P_1)$. 

If a differential polynomial $Q$  in $\primep$ has order $\ord(Q)<\ord(P)$, then we deduce from the above discussion that $Q=P(T)\cdot H$ for some $H$ in $K[T^{(0)}, \ldots,T^{(\ord(P))}]$.  Since $T^{(\ord(P))}$ occurs non-trivially in $P$, we conclude that $H$, and thus $Q$, is the zero polynomial, as desired. 
\end{proof}
Given a differential field extension $K \subset L$ and an element $a$ in $L$, denote by $\str{K}{a}_\text{Ring}$ the differential ring generated by $K\cup\{a\}$ (so the differential subfield $\str{K}{a}$ of $L$ generated by $K\cup\{a\}$ is the field of fractions of $\str{K}{a}_\text{Ring}$). There is a  natural differential epimorphism given by \emph{evaluation on the element $a$} \[ \begin{array}{ccc} 
	K\{T\}& \to & \str{K}{a}_\text{Ring}\\[2mm]
	Q(T, T^{(1)},\ldots, T^{(\ord(Q))}) & \mapsto & Q(a, \delta(a), \ldots, \delta^{(\ord(Q))}(a))
\end{array}.\] 
\begin{defi}\label{D:difftr_alg}
Consider a differential field extension $K \subset L$ as well as an element $a$ in $L$. The ideal $I(a/K)$ is the kernel of the evaluation morphism, that is, the prime ideal of all differential polynomials over $K$ which vanish on $a$. 

The element $a$ is \emph{differentially transcendental} over $K$ if $I(a/K)=\{0\}$.   

Otherwise, the element $a$ is \emph{differentially algebraic} over $K$. The \emph{differential rank} of $a$ over $K$ is $\RD(a/K)=\ord(P_a)$, for some  minimal polynomial $P_a$ of the non-zero prime ideal $I(a/K)$ (this is well-defined by Corollary~\ref{C:unique_minpol}). 
\end{defi}
\begin{rema}\label{R:iso_diff}
With the notation of the previous definition, the evaluation morphism induces a differential isomorphism between 
	$\mathrm{Quot}(K\{T\}/I(a/K))$ and $\str{K}{a}$. If $a$ is differentially algebraic, then the ideal $I(a/K)$ is completely determined by a minimal polynomial $P_a(T)$ of $I(a/K)$. In particular, if $\varphi\colon K\to K'$ is a differential isomorphism and $a'$ is an element (in some differential field extension $L'$ of $K'$) whose minimal polynomial over $K'$ is  $\varphi(P_a)(T)$, then there is a differential field isomorphism $\widetilde{\varphi}\colon \str{K}{a}\to \str{K'}{a'}$  extending $\varphi$  and mapping $a$ to $a'$. 
\end{rema}

\begin{coro}\label{C:diffrank_trdeg}
Given a differential field extension $K \subset L$ and an element $a$ in $L$, we have the following:
\begin{itemize}
	\item If $a$ is differentially transcendental over $K$, then $\str{K}{a}$  is isomorphic to the field of fractions of the differential ring $K\{T\}$, and thus has infinite transcendence degree over $K$. 
	\item If $a$ is differentially algebraic, then the differential rank $\RD(a/K)$, that is, the order of a minimal polynomial $P_a(T)$ of $I(a/K)$, equals the transcendence degree of $\str{K}{a}$ over $K$. 
\end{itemize}
\end{coro}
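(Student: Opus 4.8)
The plan is to treat the two bullet points separately; both follow quickly once we combine the description of $I(a/K)$ from Remark~\ref{R:iso_diff} with Remark~\ref{R:prime_minpol} and Corollary~\ref{C:unique_minpol}. If $a$ is differentially transcendental over $K$, then $I(a/K)=\{0\}$ by definition, so the evaluation epimorphism $K\{T\}\to \str{K}{a}_\text{Ring}$ has trivial kernel and is thus an isomorphism of differential rings; passing to fraction fields gives $\str{K}{a}\cong \mathrm{Quot}(K\{T\})=K(T_0,T_1,\dots)$. As the variables $T_n$ are algebraically independent over $K$, this field has infinite transcendence degree over $K$, settling the first case.

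Now assume $a$ is differentially algebraic and set $n=\RD(a/K)=\ord(P_a)$, with $P_a$ a minimal polynomial of $\primep=I(a/K)$. For the upper bound $\tr(\str{K}{a}/K)\le n$, I would first show that $F:=K(a,\delta(a),\dots,\delta^{n}(a))$ is closed under $\delta$, hence a differential subfield of $L$ containing $K\cup\{a\}$, and therefore equal to $\str{K}{a}$. To see the closure it is enough to check that $\delta^{n+1}(a)\in F$: differentiating the identity $P_a(a,\delta(a),\dots,\delta^{n}(a))=0$ and using the Leibniz rule expresses $s_{P_a}(a,\dots,\delta^{n}(a))\cdot\delta^{n+1}(a)$ as a polynomial over $K$ in $a,\dots,\delta^{n}(a)$, where $s_{P_a}$ is the separant of $P_a$. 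Since $s_{P_a}$ has smaller order than $P_a$, or the same order but smaller degree in $T^{(n)}$, it is not a $K[T^{(0)},\dots,T^{(n)}]$-multiple of $P_a$, so by Remark~\ref{R:prime_minpol}\,(b) it does not lie in $I(P_a)=I(a/K)$; hence $s_{P_a}(a,\dots,\delta^{n}(a))\ne 0$ and $\delta^{n+1}(a)\in F$. Thus $F=\str{K}{a}$. Moreover $\delta^{n}(a)$ is algebraic over $K(a,\dots,\delta^{n-1}(a))$, since $P_a$, having order exactly $n$, specialises to a nonzero one-variable polynomial over $K(a,\dots,\delta^{n-1}(a))$ vanishing at $\delta^{n}(a)$ (its leading coefficient is a nonzero differential polynomial of order $<n$, hence not in $I(a/K)$ by Corollary~\ref{C:unique_minpol}). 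Therefore $\tr(\str{K}{a}/K)=\tr(F/K)\le n$.

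For the reverse inequality $\tr(\str{K}{a}/K)\ge n$, I would show that $a,\delta(a),\dots,\delta^{n-1}(a)$ are algebraically independent over $K$: otherwise a nonzero $Q\in K[T_0,\dots,T_{n-1}]$ vanishes on this tuple, and then $Q$, read as a differential polynomial of order at most $n-1$, lies in the prime differential ideal $I(a/K)$, contradicting the final assertion of Corollary~\ref{C:unique_minpol}. Combining the two inequalities yields $\tr(\str{K}{a}/K)=n=\RD(a/K)$. The one genuinely substantive point is the non-vanishing of the separant at $a$ — equivalently, that $s_{P_a}\notin I(a/K)$; everything else is routine bookkeeping with the chain rule and the minimality of $\str{K}{a}$ among differential fields containing $K\cup\{a\}$.
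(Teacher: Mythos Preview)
Your proof is correct and follows essentially the same route as the paper: both use Corollary~\ref{C:unique_minpol} for the algebraic independence of $a,\delta(a),\dots,\delta^{n-1}(a)$ and the shape of $P_a$ for the algebraicity of $\delta^n(a)$. The only cosmetic difference is that where the paper invokes Remark~\ref{R:extension_diff} to conclude that all higher derivatives $\delta^{n+k}(a)$ are algebraic over $K(a,\dots,\delta^{n-1}(a))$, you instead make the separant computation explicit to show $\delta^{n+1}(a)\in K(a,\dots,\delta^n(a))$ directly; this is exactly the same mechanism unpacked, and your version has the mild bonus of exhibiting $\str{K}{a}=K(a,\dots,\delta^n(a))$ as an equality of fields rather than merely an algebraic extension.
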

\begin{proof}
If $a$ is differentially transcendental over $K$,  then the ideal $I(a/K)$ is trivial. By Remark~\ref{R:iso_diff}, the field $\str{K}{a}$  is isomorphic to the field of fractions of the differential ring $K\{T\}$.

Assume now that the differential rank $\RD(a/K)=n=\ord(P_a)$ is a  natural number.  By Corollary~\ref{C:unique_minpol}, the elements $a, a'=a^{(1)}=\delta(a),\ldots, a^{(n-1)}=\delta^{(n-1)}(a)$ must be algebraically independent over $K$. Moreover, the element $a^{(n)}$ is algebraic over $K(a,\ldots, a^{(n-1)})$, by Definition~\ref{D:diffpolys}. We deduce from Remark~\ref{R:extension_diff} that the derivatives $a^{(n+k)}$, for $k$ in $\N$, are all  algebraic over $K(a,\ldots, a^{(n-1)})$, so $n$ equals the transcendence degree of $\str{K}{a}$ over $K$, as desired, 
\end{proof}

A commutative ring with identity is \emph{noetherian} if every ideal is finitely generated, or equivalently, if every increasing chain of ideals stabilizes. Clearly, given a differential field $(K, \delta_K)$,  the ring $K\{T\}$ of differential polynomials is not noetherian, as witnessed by the chain \[ (T) \subset (T, T^{(1)}) \subset \cdots \subset (T, T^{(1)}, \ldots, T^{(n)} ) \subset \cdots \]
However, the theorem of Ritt--Raudenbush below, see  \textcite[Theorem 1.16]{Marker}, can be seen as a differential analogue of Hilbert's basis theorem once we  restrict our attention to radical differential ideals.

\begin{fait}\textup{(Theorem of Ritt--Raudenbush}\textup{)}~
Consider a fixed differential field $(K, \delta_K)$.  Every increasing chain of  radical differential ideals in $K\{U_1,\ldots, U_m\}$ stabilizes. In particular, every  prime differential ideal  $\primep$ of $K\{U_1,\ldots, U_m\}$ is finitely generated as a radical ideal, that is, there is  a finite set $A$ of differential polynomials of $K\{U_1,\ldots, U_m\}$  such that \[ \primep= \{ P \in K\{U_1,\ldots, U_m\} \ | \ P^n \in (A)_\delta \text{ for some $n$ in $\N$}\}.\]
 \end{fait}

\begin{defi}\label{D:Kolchintop}
A subset $V$ of $K^m$ is \emph{Kolchin closed} if there are finitely many differential polynomials $P_1,\ldots, P_n$ of $K\{U_1,\ldots, U_m\}$ such that \[V=\{\bar a\in K^m \ | \ P_1(\bar a)=\ldots=P_n(\bar a)=0\}.\]

The Kolchin topology on $K^m$ is the topology whose closed subsets are exactly the Kolchin closed subsets. Note that it is indeed a topology, by the theorem of Ritt--Raudenbush, as the vanishing ideal $I(\bar a/k)$ of a finite tuple $\bar a=(a_1,\ldots, a_m)$ of $K^m$ over a differential subfield $k$ of $K$ is a prime differential ideal. 
\end{defi}

\begin{coro}\label{C:Kolchintop}
The Kolchin topology on $K^m$ is noetherian. In particular, given a tuple $\bar a$ in $K^m$ and a differential subfield $k$ of $K$, there exists a smallest Kolchin closed subset $V$ of $K^m$ containing $\bar a$ given by differential polynomials with coefficients in $k$. We refer to $V$ as the \emph{Kolchin locus} of $\bar a$ over $k$. 
\end{coro}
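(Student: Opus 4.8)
The plan is to deduce noetherianity of the Kolchin topology on $K^m$ directly from the theorem of Ritt--Raudenbush, through the usual order-reversing correspondence between closed sets and radical differential ideals. To a Kolchin closed subset $V$ of $K^m$ I associate $J(V)=\{P\in K\{U_1,\ldots,U_m\}\ |\ P(\bar b)=0\text{ for all }\bar b\in V\}$. Since evaluation at a point $\bar b\in K^m$ is a differential ring homomorphism $K\{U_1,\ldots,U_m\}\to K$, each such kernel is a prime differential ideal, so $J(V)$, being their intersection over $\bar b\in V$, is a radical differential ideal, and $V\mapsto J(V)$ is inclusion-reversing.

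The point is that $V$ is recovered from $J(V)$: if $V$ is defined by a finite set $A$ of differential polynomials, then $A\subseteq J(V)$, so $V\subseteq V(J(V))\subseteq V(A)=V$. Hence a descending chain $V_1\supseteq V_2\supseteq\cdots$ of Kolchin closed sets produces an ascending chain $J(V_1)\subseteq J(V_2)\subseteq\cdots$ of radical differential ideals, which stabilizes by Ritt--Raudenbush; applying $V(-)$ recovers $V_i$ from $J(V_i)$ and shows the original chain stabilizes. Thus the Kolchin topology on $K^m$ is noetherian, and in particular every nonempty family of Kolchin closed sets has a minimal element.

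For the Kolchin locus, given $\bar a\in K^m$ and a differential subfield $k$ of $K$, I work with the prime differential ideal $I(\bar a/k)\subseteq k\{U_1,\ldots,U_m\}$ of differential polynomials over $k$ vanishing at $\bar a$. By Ritt--Raudenbush it is finitely generated as a radical ideal, say by a finite set $A$; put $V=\{\bar b\in K^m\ |\ P(\bar b)=0\text{ for all }P\in A\}$, a Kolchin closed set defined over $k$ and containing $\bar a$. If $W=\{\bar b\ |\ Q_1(\bar b)=\cdots=Q_\ell(\bar b)=0\}$ with $Q_j\in k\{U_1,\ldots,U_m\}$ also contains $\bar a$, then each $Q_j\in I(\bar a/k)$, so $Q_j^{\,n}\in (A)_\delta$ for some $n$; any $\bar b\in V$ is a common zero of $A$, hence of the differential ideal $(A)_\delta$ (the kernel of evaluation at $\bar b$ is a differential ideal containing $A$), hence of $Q_j^{\,n}$, hence of $Q_j$ as $K$ is a domain. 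So $V\subseteq W$, i.e.\ $V$ is the desired smallest set.

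The only step demanding any care --- and the closest thing to an obstacle --- is insisting on \emph{radical} differential ideals throughout: the correspondence $V\mapsto J(V)$ lands among radical differential ideals precisely because point evaluations have prime kernels, and it is for radical differential ideals, not arbitrary ones (which fail the ascending chain condition, as the chain $(T)\subsetneq(T,T^{(1)})\subsetneq\cdots$ shows), that Ritt--Raudenbush applies. Everything else is formal.
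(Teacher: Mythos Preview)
Your proof is correct and is exactly the standard argument the paper has in mind: the corollary is stated without proof, as an immediate consequence of the Ritt--Raudenbush theorem via the order-reversing correspondence $V\mapsto J(V)$, which is precisely what you spell out.
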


%--------------------------------------------------

\section{A crash-course in the basic model theory of differentially closed fields}\label{S:DCF}

The purpose of this section is to provide a concise introduction to the basic model-theoretic properties of  differentially closed fields of characteristic $0$. For the sake of the presentation, we will  reformulate some general notions from mathematical logic in the particular context of differential fields. Whilst we are well aware that the terminology we  provide is not \emph{standard},  we nonetheless hope that it will render the presentation more intuitive. 

As in the previous section, all fields and rings are commutative with identity, of characteristic $0$ and contain $\Q$. 

\begin{defi}\label{D:language}
We denote by $\LL_\delta=\{0,1,+, -, \cdot, \delta\}$  the language of differential rings. Note that every differential ring can be seen as an \emph{$\LL_\delta$-structure} with the obvious \emph{interpretations} of the symbols. 

A \emph{language expansion} $\LL$ of  $\LL_\delta$ consists of the symbols of $\LL_\delta$ plus possibly new constant symbols $(c_i)_{i\in I}$. A choice of an $\LL$\emph{-structure} for a differential field $K$ consists of a sequence  $(d_i)_{i\in I}$ of elements in $K$ for the \emph{interpretations} of the constant symbols $c_i$'s. We often use the notation $c_i^K$ for the interpretation $d_i$. 
\end{defi}
The reader should be aware that a constant symbol need not be interpreted as a constant element of derivative $0$. The double meaning of the word \emph{constant} is indeed unfortunate, but we do not want to introduce unnecessary non-standard terminology. 

\begin{defi}\label{D:formula}
Given a language expansion $\LL=\LL_\delta\cup\{c_i\}_{i \in I}$ of the differential ring language, a   \emph{differential (Kolchin-)constructible} (or \emph{quantifier-free}) $\LL$-formula in the variables $x_1,\ldots, x_n$ is a boolean combination of expressions of the form $P(x_1,\ldots, x_n, c_{i_1}, \ldots, c_{i_m})=0$, with $m$ in $\N$ and $P$ a differential polynomial in $\Z\{U_1,\ldots, U_n, V_1,\ldots, V_m\}$. 

The class of all $\LL$-formulae is the smallest collection of (formal) expressions obtained from the differential constructible formulae which is stable under boolean combinations, \emph{existential quantifications} (or \emph{projections}) and logical implications and equivalences, that is, if $\varphi$ and $\psi$ are $\LL$-formulae and $y$ is some variable, then  \[ \neg \varphi, (\varphi\lor \psi), (\varphi\land \psi), (\varphi\Rightarrow \psi), \  (\varphi\Leftrightarrow \psi), \ \exists y \varphi \] are again $\LL$-formulae. In particular, the abbreviation $\forall y \varphi= \neg\exists y \neg \varphi$ will also be treated as an $\LL$-formula. 

A  \emph{bound variable} of an $\LL$- formula $\varphi$ is a variable  $y$ occurring  within the scope of a quantifer $\exists$ or $\forall$. A variable which is not a bound variable is called \emph{free}. We use the notation $\varphi=\varphi(x_1,\ldots, x_n)$ to denote that the free variables which actually occur in $\varphi$ are among the variables in the list $x_1,\ldots, x_n$. A \emph{sentence} is a formula all whose variables are bound variables. 

Given an $\LL$-formula $\varphi(x_1,\ldots, x_n)$ and a tuple $\bar a=(a_1,\ldots, a_n)$ in a differential field~$K$ (seen as an $\LL$-structure), we say that $\bar a$~\emph{realizes} (or \emph{satisfies}) the formula~$\varphi$ in~$K$, denoted by $K\models \varphi(a_1,\ldots, a_n)$, if the conditions determined by~$\varphi$ hold in the differential field~$K$ (with the natural intepretations), once we replace each free variable~$x_i$ by the element~$a_i$. 

Given a subset $B$ of parameters of $K$ as well as an  $\LL$-formula $\varphi(\bar x, \bar y)$ and a tuple $\bar b$ in $B^{|\bar y|}$,  a \emph{$B$-instance} $\varphi(\bar x, \bar b)$  is the subset  \[ \{\bar a \in K^{|\bar x|} \ | \ K\models\varphi(\bar a, \bar b)  \}. \] 
\end{defi}
\begin{exem}\label{E:DF}
There exists a countable collection of $\LL_\delta$-sentences, denoted DF$_0$, expressing that the underlying $\LL_\delta$-structure is a differential field (of characteristic $0$). Indeed, it suffices to consider the finite set of (commutative) ring axioms together with the following axioms: 
\begin{description}
\item[Inverses exist] $\forall x\exists y (x\ne 0\Rightarrow x\cdot y=1)$. 
\item[Fields are not trivial] $1\ne 0$.
\item[Derivation is additive] $\forall x\forall y \ \delta(x+y)=\delta(x)+\delta(y)$
\item[Leibniz rule] $\forall x\forall y \ \delta(x\cdot y)=x\delta(y)+y\delta(x)$. 
\end{description}
and the following infinite list of sentences expressing that the characteristic is not a prime number.
\begin{description}
\item[Characteristic different from $p>0$] $\underbrace{1+\cdots+1}_p\ne 0$.   
\end{description}
Every differential field $K$, seen as an $\LL_\delta$-structure, is a \emph{model} of DF$_0$, denoted by $K\models \mathrm{DF}_0$. More generally, given an expansion $\LL$ of $\LL_\delta$ and an $\LL$-\emph{theory} $T$, that is, a collection of $\LL$-sentences, we say that an $\LL$-structure $K$ is a \emph{model} of $T$ if $K\models \chi$ for every $\LL$-sentence $\chi$ in $T$. 
\end{exem}

\begin{rema}\label{R:log_aquiv}
	
An $\LL_\delta$-formula $\psi(x_1,\ldots, x_n)$ is in \emph{prenex normal form} if \[ \psi(x_1,\ldots, x_n)=Q_1 y_1\cdots Q_m y_m \chi(x_1,\ldots, x_n, y_1,\ldots, y_m) \] for some differential constructible formula $\chi$ such that each symbol $Q_j$ is either the quantifier $\forall$ or the quantifier $\exists$.    

Modulo the above theory DF$_0$, every $\LL_\delta$-formula $\varphi(x_1,\ldots, x_n)$ is \emph{logically equivalent} to 
a formula $\psi$ in prenex normal form, that is, for every differential field $K$ (seen as an $\LL_\delta$-structure), we have that \[ K\models \forall x_1\cdots\forall x_n \ \big( \varphi(x_1,\ldots, x_n) \Leftrightarrow \psi(x_1,\ldots, x_n) \big).\] 
\end{rema}

\begin{defi}\label{D:substr_iso}
Consider a language expansion $\LL=\LL_\delta\cup\{c_i\}_{i \in I}$ and two differential fields $K$ and $L$, seen both as $\LL$-structures.

An \emph{embedding} of $K$ in $L$ is a differential monomorphism $F\colon K\to L$ which is compatible with the interpretations, so $F(c_i^K)=c_i^L$ for all $i$ in $I$. An \emph{isomorphism} of $\LL$-structures is a surjective embedding. If the embedding $F$ is the set-theoretic inclusion of fields, we say that $K$ is an $\LL$-substructure of $L$. 

Given $\LL$-substructures $k$ of $K$ and $k'$ of $K'$, an isomorphism $F\colon k\to k'$ is a \emph{partial elementary map between $K$ and $K'$}  if $F$ \emph{preserves satisfaction}, that is, for every $\LL$-formula $\varphi(x_1,\ldots, x_n)$ and every tuple $a_1,\ldots, a_n$ in $k$, we have that \[ K \models \varphi(a_1,\dots, a_n) \ \iff \ L \models \varphi(F(a_1),\ldots, F(a_n)).\] In particular, the fields $K$ and $L$ must satisfy the same $\LL$-sentences. 

A substructure $k$ of $K$ is an \emph{elementary substructure} if  the inclusion map $k\to K$ is a partial elementary map. 
\end{defi}
Consider the differential field $\Q(T)$ with $\delta(T)=1$ as an $\LL_\delta$-structure. Whilst  $\Q$ is a differential subfield of $\Q(T)$, it is not an elementary substructure of $\Q(T)$, since the satisfaction of the sentence $\forall y (\delta(y)=0)$ is not 
preserved.
\begin{rema}\label{R:iso_qf}
	Consider a language expansion $\LL=\LL_\delta\cup\{c_i\}_{i \in I}$ and an $\LL$-isomorphism $F\colon k\to k'$ of differential fields, seen as $\LL$-structures. Assume furthermore that there are $\LL$-structures $K$ and $K'$ such that $k$, resp.\ $k'$, is an $\LL$-substructure of $K$, resp.\ $K'$.  The map $F$ induces a map from $k$-instances $\varphi(\bar x, \bar b)$ of differential constructible formulae to $k'$-instances  $\varphi(\bar x, F(\bar b))$, applying $F$ to the parameters defining the instance. 
	
	Given a tuple $\bar a$ in $K$, consider the collection \[ \Sigma(x_1,\ldots, x_n)=\{ \varphi(\bar x, F(\bar b)) \ | \ \varphi \text{	diff. constructible } \& \  K\models \varphi(\bar a, \bar b)\}.\]
	
	A tuple $\bar a'$ in $K'$ realizes $\Sigma(\bar x)$, that is, the tuple $\bar a'$ realizes in $K'$ every instance in $\Sigma$, if and only if $F$ extends to an $\LL$-isomorphism $G\colon  \str{k}{\bar a}\to \str{k'}{\bar a'}$ mapping $\bar a$ to $\bar a'$, by a straightforward adaptation of Remark~\ref{R:iso_diff}. 
	
	In particular, if $K$ is an $\LL$-substructure of $K'$, a tuple $\bar a$ in $K$ realizes a differentially constructible $k$-instance in $K$ if and only if $\bar a$ realizes the $k$-instance in $K'$. 
\end{rema}

\begin{defi}\label{D:sat}
	Given a language expansion $\LL=\LL_\delta\cup\{c_i\}_{i \in I}$, consider a differential field $K$, seen as an $\LL$-structure, as well as a differential subfield $k$ of $K$. A \emph{partial $n$-type} $\Sigma(x_1,\ldots, x_n)$ over $k$ is a (possibly infinite) collection of $k$-instances $\varphi(\bar x, \bar b)$ with $\bar x=(x_1,\ldots, x_n)$ and $\bar b$ in $k$ such that $\Sigma(\bar x)$ is \emph{finitely consistent in $K$}, that is,  for every finite subset $\{\varphi_i(\bar x, \bar b_i)\}_{i=1}^m$ of instances in $\Sigma$ there exists a common realization $\bar a=(a_1,\ldots, a_n)$ in $K^n$: \[ K\models \bigwedge_{i=1}^m \varphi_i(\bar a, \bar b_i).\]
	
	A  \emph{partial $n$-type} $\Sigma(x_1,\ldots, x_n)$ over $k$  is realized by $\bar a=(a_1,\ldots, a_n)$ if $K\models \varphi(\bar a, \bar b)$ for every $k$-instance $\varphi(\bar x, \bar b)$ of $\Sigma$. 
	
	The differential field $K$ is \emph{$\aleph_1$-saturated} if it realizes every partial $n$-type over every countable differential subfield. 
\end{defi}

We now define the class of differential fields which will play a crucial role in the work of \textcite{FJM22}.
\begin{defi}\label{D:DCF}
	A differential field $K$ is \emph{differentially closed} if for every pair of non-trivial differential polynomials $P$ and $Q$ in one variable with $\ord(Q)<\ord(P)$, there exists an element $a$ in $K$ with $P(a)=0\ne Q(a)$. 
\end{defi}
Notice that a differentially closed field~$K$ is in particular algebraically closed, since on one hand, every polynomial can be seen as a differential polynomial (or order at most~$0$) and on the other hand, the order of the constant polynomial~$1$ is $-\infty$. It follows immediately from Remark~\ref{R:extension_diff}.(a) that the constant subfield~$\CC_K$ of~$K$ is also algebraically closed. 
\begin{rema}\label{R:DCF0_Einbettung}
	\begin{enumerate}[(a)]
		\item Every differential field can be embedded into a differentially closed field, by a standard chain argument, using Remark~\ref{R:prime_minpol}.(b) \parencite[Lemma 2.2]{Marker}. 
		\item As in Example~\ref{E:DF}, there exists an infinite collection of $\LL_\delta$-sentences, denoted by DCF$_0$, such that a differential field $K$ is a model of DCF$_0$ if and only if it is a differentially closed field. The theory  DCF$_0$ consists of adding to the theory DF$_0$ the  collection of sentences $(\chi_{n, d, d_1})_{n, d, d_1}$ described as follows: For every triple $(n,  d, d_1)$, choose an enumeration $(M^0_\alpha, \ldots, M^k_\alpha)$ of all monomials in the variables $T, T^{(1)}, \ldots, T^{(n-1)}$ of total degree at most $d$. Similarly, for every $m<n$, let $(M'_{\beta(m)})$ be an enumeration of all monomials in the variables $T, T^{(1)}, \ldots, T^{(m)}$ of total degree at most $d_1$. The 
		sentence $\chi_{n, d, d_1}$ encodes the particular axiom of differentially closed fields for a given (shape of a) differential polynomial $P$ of order $n$ and degree at most $d$, so 
		\begin{multline*} \chi_{n, d, d_1}= \bigwedge\limits_{m<n}  \forall \bar y^0_\alpha \ldots \forall\bar y^k_\alpha \forall \bar z_{\beta(m)} \exists x \Big( \big( \bigvee_{\beta(m)} z_\beta\ne 0 \land \bigvee_{j=1}^k  \bigvee_\alpha y^j_\alpha\ne 0  \land \bigwedge_{r=j+1}^k y^r_\alpha=0 \big) \Rightarrow \\  \big( \sum_{\alpha, j} y^j_\alpha M^j_\alpha(x, \ldots, \delta^{n-1}(x)) \delta^n(x)^j=0 \land \sum_\beta z_{\beta(m)} M'_{\beta(m)}(x, \ldots, \delta^m(x))\ne 0 \big) \Big)  .\end{multline*}
	\end{enumerate}
\end{rema}

\begin{exem}\label{E:types} 
We will exhibit two examples of partial $1$-types in a differentially closed field $K$  over a countable subfield $k$. 
\begin{enumerate}[(a)]
\item Given a differential polynomial $P$ with coefficients in $k$ of order $\ord(P)$, the following collection of $k$-instances 
\[ \Sigma_P(x)=\{P(x)=0\} \cup \{Q(x)\ne 0 \ | \ 0\ne Q \in k\{T\} \text{ with } \ord(Q)<\ord(P)\}\] is a partial $1$-type over $k$. Indeed, we need to show that every collection of finitely many instances in $\Sigma_P(x)$ has a common realization in $K$. We may assume without loss of generality that the instances we are given consist of the equation $P(x)=0$ together with  instances requiring that a finite number of non-trivial differential polynomials $Q_1(x),\ldots, Q_m(x)$ over $k$ do not vanish.  Now, the differential polynomial $Q=\prod_{j=1}^{m} Q_j$ over $k$ is again non-trivial of order strictly less than $\ord(P)$. Since $K$ is differentially closed, there exists an element $b$ in $K$ such that $P(b)=0$ yet $Q(b)\ne 0$, so $Q_j(b)\ne 0$ for $1\le j\le m$. Hence, the collection of instances in $\Sigma$ is finitely consistent, as desired. 

If the differential polynomial $P$ is irreducible, note that $b$ in $K$ realizes  $\Sigma_P$ if and only if $P$ is the minimal differential polynomial of the vanishing differential ideal $I(b/k)$. Such a realization need not in general exist (for example if $K=k$ is countable and $n\ge 1$). 
\item Similarly, the collection of $k$-instances 
\[ \Sigma_\text{diff.tr}(x)= \{Q(x)\ne 0 \ | \ 0\ne Q(T) \in k\{T\} \}\] is a partial $1$-type over $k$. Indeed,  finitely many $k$-instances in $\Sigma_\text{diff.tr}$ correspond to finitely many non-trivial differential polynomials $Q_1(x),\ldots, Q_m(x)$ over $k$. If we consider again the non-trivial differential polynomial $Q(T)=\prod_{i=1}^m Q_j$ of order $\ord(Q)=\ell$, the differentially closed field $K$ has an element $b$ with $\delta^{\ell+1}(b)=0$, yet $Q(b)\ne 0$, so  thus $Q_j(b)\ne 0$ for $1\le j\le m$, as desired. 

An element $b$ in $K$ realizes  $\Sigma_\text{diff.tr}$ if and only if it is differentially transcendental over $k$. 
\end{enumerate}
\end{exem}
\begin{rema}\label{R:sat_1types}
In order to show that sone differential field $K$ is $\aleph_1$-saturated, seen as an $\LL$-structure, it suffices to show that every partial $1$-type over every countable differential subfield $k$ of $K$ has a realization in $K$. Indeed, we argue by induction on the number of variables, so consider a partial $n$-type $\Sigma(x_1,\ldots, x_n)$ over $k$. Now,  the collection of instances \[ \{ \exists x_1 \bigwedge\limits_{j=1}^m \varphi_{i_j}(x_1,\ldots, x_n, \bar b_{i_j}) \ | \ m\in \N \ \& \  \varphi_{i_j}(x_1,\ldots, x_n, \bar b_{i_j}) \in \Sigma(\bar x) \} \] is a partial $(n-1)$-type in the free variables $x_2,\ldots, x_n$, which admits a realization $(a_2,\ldots, a_n)$ in $K$ by induction on the number of variables. Our choice of $a_2,\ldots, a_n$ yields that the set of instances \[ \{ \varphi(x_1, a_2,\ldots, a_n, \bar b) \ | \ \varphi(\bar x, \bar b) \in \Sigma \}  \] is now a partial $1$-type over the countable differential subfield $\str{k}{a_2,\ldots, a_n}$. By assumption, this partial $1$-type is realized by some element $a_1$ in $K$. The tuple $(a_1,\ldots, a_n)$ in~$K$ realizes~$\Sigma$, as desired. 
\end{rema}
Non-principal ultrapowers always yield $\aleph_1$-saturated structures \parencite[Theorem 6.1.1]{ChangKeisler}, whenever the language is countable. 
\begin{rema}\label{R:ultraprod_sat}
Given a countable language expansion $\LL=\LL_\delta\cup\{c_i\}_{i \in I}$, every differential field $K$, seen as an $\LL$-structure, admits an elementary extension which is $\aleph_1$-saturated, namely, the 
 \emph{ultrapower} $\prod_{\mathcal U} K$, where $\mathcal U$ is a non-principal ultrafilter on $\N$, that is, a finitely additive probability measure on $\N$ only  taking the values $0$ and $1$ such that every finite set has measure $0$.  With this identification in mind, we will often use the expression \emph{for $\mathcal{U}$-almost all $n$ in $\N$} if the collection of such $n$'s has measure $1$, that is, it belongs to the ultrafilter $\mathcal U$. 

The ultrapower $\prod_{\mathcal U} K$ consists of the equivalence classes of
sequences $(a_n)_{n\in \N}$ in $\prod_{n\in \N} K$, where we identify two
sequences $(a_n)_{n\in \N}$ and $(a'_n)_{n\in \N}$, if the corresponding
entries $a_n$ and $a'_n$ agree for $\mathcal U$-almost all $n$. The ultrapower
$\prod_{\mathcal U} K$ becomes a differential field, setting for the
equivalence classes $[(a_n)]_\mathcal U$ and $[(b_n)]_\mathcal U$ in
$\prod_{\mathcal U} K$ the following:
\begin{align*}
  [(a_n+b_n)]_{\mathcal U} &= [(a_n)]_{\mathcal U}+[(b_n)]_{\mathcal U};  \quad    [(a_n\cdot b_n)]_{\mathcal U} = [(a_n)]_{\mathcal U}\cdot [(b_n)]_{\mathcal U};  \\
0_{\prod_{\mathcal U}K} &= [(0_K)]_{\mathcal U}; \quad  1_{\prod_{\mathcal U}K}  =[(1_K)]_{\mathcal U};  \quad    c_i^{\prod_{\mathcal U}K} =[(c_i^K)]_{\mathcal U};    \quad
  \delta([(a_n)]_{\mathcal U}) = [(\delta(a_n))]_\mathcal U.
\end{align*}
% \[ \begin{array}{ccccc}
% [(a_n+b_n)]_{\mathcal U} = [(a_n)]_{\mathcal U}+[(b_n)]_{\mathcal U}; & \quad &    [(a_n\cdot b_n)]_{\mathcal U} = [(a_n)]_{\mathcal U}\cdot [(b_n)]_{\mathcal U};  && \\[3mm]
% 0_{\prod_{\mathcal U}K} = [(0_K)]_{\mathcal U}; &  \quad  1_{\prod_{\mathcal U}K}  =[(1_K)]_{\mathcal U}; &  \quad    c_i^{\prod_{\mathcal U}K} =[(c_i^K)]_{\mathcal U};   &&  \\[4mm]
% &  \delta([(a_n)]_{\mathcal U}) = [(\delta(a_n))]_\mathcal U. &&& 
%\end{array}\] 

Given an $\LL_\delta$-formula $\varphi(x_1,\ldots, x_m)$, we deduce \L{}o\'s's theorem in this particular set-up, by an easy 
 induction argument on the number of quantifiers of its prenex normal form (see Remark~\ref{R:log_aquiv}):  for all equivalence classes 
 $[(a^1_n)]_\mathcal U, \ldots,  [(a^m_n)]_\mathcal U$, 
 we have that 
 \[ \prod_{\mathcal U} K\models \varphi\left( [(a^1_n)]_\mathcal U, \ldots,  [(a^m_n)]_\mathcal U  \right)  \  \iff \  K\models \varphi(a_n^1,\ldots, a_n^m) \text{ for $\mathcal U$-almost all $n$}.\]
In particular, the differential field $K$ is an elementary substructure of the ultrapower, via the natural embedding \[ \begin{array}[t]{rcl}
K&\to& \prod_{\mathcal U} K \\[1mm]
a & \mapsto & [(a, \ldots, a, \ldots)]_\mathcal{U}
\end{array}.\]

In order to prove that $\prod_{\mathcal U} K$ is $\aleph_1$-saturated, consider a countable differential subfield~$k$ of $\prod_{\mathcal U} K$ as well as a partial $\ell$-type $\Sigma(x_1,\ldots, x_\ell)$ over~$k$. Since both~$k$ and the language expansion~$\LL$ are countable, there are only countably many $k$-instances. In order to simplify the notation,   list all the instances in $\Sigma(\bar x)$ as $(\varphi_m(\bar x, \bar b_m))_{m\ge 1}$ for some finite tuple~$b_m$ with entries in~$k$ consisting of equivalence classes of sequences. Now, for every $1\le m$ in $\N$, we have that \[\prod_{\mathcal U} K\models \exists \bar x \bigwedge_{k=1}^m \varphi_k(\bar x, \bar b_k),\] so  by \L{}o\'s's Theorem  the set of indexes \[X_m=\{ n \in \N \ | \ m\le n \ \& \  K\models  \exists \bar x \bigwedge_{k=1}^m \varphi_k(\bar x, \bar b_k(n)), \} \] has $\mathcal U$-measure $1$, as  the finite subset $\{0,\ldots, m-1\}$ of $\N$ has $\mathcal U$-measure $0$.  Set $X_0=\N$. Given $n$ in $\N$, choose $m_n$ in $\N$ the largest index with $n$ in $X_{m_n}$ (such an index  $m_n$ always exists, since $\bigcap_m X_m=\emptyset$).  If $m_n=0$, then choose $\bar a(n)$ an arbitrary $\ell$-tuple of $K^\ell$; otherwise let $\bar a(n)$ be a realization in $K^\ell$ of \[ \bigwedge_{k=1}^{m_n} \varphi_k(\bar x, \bar b_k(j)).\] 
The saturation of $\prod_{\mathcal U} K$ will follow once we show that the equivalence class $\bar a=[(\bar a(n))]_\mathcal U$ realizes every formula $\varphi_m(\bar x, \bar b_m)$ in $\Sigma(\bar x)$. By \L{}o\'s's theorem, it suffices to show that \[ X_m \subset \{n\in \N \ | \ K\models \varphi_m(\bar a(n), \bar b_m(n))\},\] for $X_m$ has $\mathcal U$-measure $1$. Now, if $n$ belongs to $X_m$, then $1\le m\le m_n$ (since $m_n$ is largest with $n$ in $X_{m_n}$). Hence, the tuple $\bar a(n)$ in $K^n$ realizes the conjunction $\bigwedge_{k=1}^{m_n} \varphi_k(\bar x, \bar b_k(j))$, and in particular the instance $\varphi_m(\bar x, \bar b_m(n))$, as desired. 
\end{rema}
We will now state a  version of Gödel's compactness theorem \parencite[Corollary 1.2.12]{ChangKeisler} customized for our purposes. The proof of the following Fact is an easy application of the classical compactness theorem and Remark~\ref{R:ultraprod_sat}. 
\begin{fait}\textup{(}The compactness theorem \textup{)}~\label{F:compact} Consider a countable language expansion $\LL=\LL_\delta\cup\{c_i\}_{i \in I}$ and an $\LL$-theory $T$ of differential fields (see Example~\ref{E:DF}) which is finitely consistent, that is, such that for every finite subset $T_0$ of sentences in $T$, there is a differential field (seen as an $\LL$-structure) which satisfies every sentence in $T_0$. Then, there exists an $\aleph_1$-saturated differential field $K$ which satisfies every sentence in $T$. 	
\end{fait}

Our next goal is to introduce the fundamental model-theoretic property of \emph{quantifier elimination} for the particular case of differentially closed fields. Indeed,  Theorem~\ref{T:DCF_QE}  yields that every differential formula is (logically equivalent) to a differentially constructible formula. Quantifier elimination is a syntactic notion which is strongly related to the semantic notion of Ehrenfeucht--Fra\"iss\'e games (which most model-theorists refer to simply as Back-\&-Forth).
\begin{defi}\label{D:BackForth}
Consider a language expansion $\LL=\LL_\delta\cup\{c_i\}_{i \in I}$ and two differential fields $K$ and $K'$ (seen as $\LL$-structures). We say that a collection $\mathcal S$ of partial isomorphisms between countable differential subfields of $K$ and $K'$ is a \emph{Back-\&-Forth system} if it satisfies the following two conditions for every partial isomorphism $F\colon k\to k'$ in $\mathcal S$, where $k$, resp.\ $k'$, is a countable differential subfield of $K$, resp.\ of $K'$. 
\begin{description}
\item[Forth] For every element $a$ in $K$, there exists some extension of $F$ to a partial isomorphism $G\colon  \str{k}{a}\to \str{k'}{a'}$ in $\mathcal S$ for some $a'$ in $K'$. 
\item[Back] For every element $b'$ in $K'$, there exists some extension of $F$ to a partial isomorphism $H\colon  \str{k}{b}\to \str{k'}{b'}$ in $\mathcal S$ for some $b$ in $K$. 
\end{description}
\end{defi}
\begin{rema}\label{R:BackForth_elementary}
\begin{enumerate}[(a)]
	\item It follows immediately from Remark~\ref{R:iso_qf} by induction on the number of quantifiers of the prenex normal form (see Remark~\ref{R:log_aquiv}) that every partial isomorphism $F\colon k\to k'$ of a Back-\&-Forth system $\mathcal S$ between $K$ and $K'$ is a partial elementary map, as in Definition~\ref{D:substr_iso}. 
\item Every global isomorphism $F\colon K\to K'$ induces a Back-\&-Forth system by considering the restrictions of $F$ to countable differential subfields of $k$ of $K$. In particular, isomorphisms are elementary maps and thus preserve satisfaction. 
\end{enumerate}
\end{rema}
\begin{defi}\label{D:QE_complete}
	Assume that a  theory of differential fields $T$ in the language  $\LL_\delta$ of differential rings admits a \emph{model}, that is, a differential field which satisfies every sentence (or \emph{axiom}) of the theory $T$. 
\begin{enumerate}[(a)]
\item The theory $T$ \emph{eliminates quantifiers} if for every $\LL_\delta$-formula $\varphi(x_1,\ldots, x_n)$ there exists a differentially constructible formula $\theta(x_1,\ldots, x_n)$ such that \[ K\models \forall x_1 \ldots \forall x_n \big(  \varphi(x_1,\ldots, x_n) \Leftrightarrow \theta(x_1,\ldots, x_n) \big) \] for every model $K$ of $T$. 
\item We say that $T$ is \emph{complete} if every $\LL_\delta$-sentence which holds in some model of $T$ holds in every model of $T$. 
\end{enumerate}
\end{defi}
We can now give a customized version of the \emph{separation lemma} \parencite[Lemma 3.1.1]{TentZiegler} adapted to our setting. Whilst Back-\&-Forth is relatively easy to show in practice, it does not provide an effective method to describe the resulting quantifier-free formula, due to the use of compactness in the proof of the next result. Effective quantifier elimination is usually achieved via a more syntactic approach. 
\begin{prop}\label{P:QE_sat}
	Consider a theory of differential fields $T$ in the language $\LL_\delta$ such that $T$ admits a \emph{model}. Assume that for every two $\aleph_1$-saturated models $K$ and $K'$ of $T$  the collection of all partial isomorphisms between countable differential subfields $k$ of $K$ and $k'$ of $K'$ is a non-empty Back-\&-Forth system. We have that the theory $T$ is complete and eliminates quantifiers. 
\end{prop}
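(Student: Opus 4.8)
The plan is to route both conclusions through a single \emph{separation statement} $(\star)$: for every $\LL_\delta$-formula $\varphi(x_1,\dots,x_n)$, all models $K$ and $K'$ of $T$, every isomorphism $F\colon k\to k'$ between finitely generated differential subfields $k\subseteq K$ and $k'\subseteq K'$, and every finite tuple $\bar a$ generating $k$, one has $K\models\varphi(\bar a)$ if and only if $K'\models\varphi(F(\bar a))$.

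First I would prove $(\star)$. Given such $K,K',F$, use Fact~\ref{F:compact} (equivalently Remark~\ref{R:ultraprod_sat}) to pass to $\aleph_1$-saturated elementary extensions $\widehat K\succeq K$ and $\widehat K'\succeq K'$. As $\bar a$ is finite, $k$ and $k'$ are countable differential subfields of $\widehat K$, resp.\ $\widehat K'$, so $F$ belongs to the Back-\&-Forth system supplied by the hypothesis; by Remark~\ref{R:BackForth_elementary}.(a) it is therefore a partial elementary map between $\widehat K$ and $\widehat K'$, so $\widehat K\models\varphi(\bar a)\Leftrightarrow\widehat K'\models\varphi(F(\bar a))$. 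Since $K\preceq\widehat K$ and $K'\preceq\widehat K'$, the same equivalence already holds in $K$ and $K'$. Completeness is then immediate: given two models of $T$, pass to $\aleph_1$-saturated elementary extensions, pick any member of the non-empty Back-\&-Forth system between them, and note by Remark~\ref{R:BackForth_elementary}.(a) that it is a partial elementary map, whence the two extensions --- and therefore the two original models --- satisfy exactly the same $\LL_\delta$-sentences.

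For quantifier elimination, fix $\varphi(\bar x)$ and let $\Gamma(\bar x)$ be the set of differentially constructible formulae $\psi(\bar x)$ with $T\models\forall\bar x\,(\varphi(\bar x)\to\psi(\bar x))$. The heart of the matter is the claim that $T\cup\Gamma(\bar c)\models\varphi(\bar c)$ for a tuple $\bar c$ of new constant symbols. Granting it, compactness (Fact~\ref{F:compact}) produces finitely many $\psi_1,\dots,\psi_m\in\Gamma$ with $T\models\forall\bar x\,\bigl(\bigwedge_{i}\psi_i\to\varphi\bigr)$; since each $\psi_i$ furnishes the converse implication, $\varphi$ is equivalent modulo $T$ to the differentially constructible formula $\bigwedge_{i}\psi_i$, as desired. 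To prove the claim, suppose toward a contradiction that some model $K\models T$ contains a tuple $\bar a$ with $K\models\Gamma(\bar a)$ but $K\models\neg\varphi(\bar a)$. Then the differentially constructible formulae satisfied by $\bar a$ in $K$, together with $\varphi(\bar x)$ and the axioms of $T$, form a finitely consistent set: were it not, a single differentially constructible $\chi(\bar x)$ with $K\models\chi(\bar a)$ would satisfy $T\models\forall\bar x\,(\varphi\to\neg\chi)$, hence $\neg\chi\in\Gamma$ and $K\models\neg\chi(\bar a)$, a contradiction. By Fact~\ref{F:compact} we obtain a model $K'\models T$ with a tuple $\bar a'$ satisfying exactly the same differentially constructible formulae as $\bar a$ and with $K'\models\varphi(\bar a')$. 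By Remark~\ref{R:iso_qf} the assignment $\bar a\mapsto\bar a'$ extends to an isomorphism between the differential subfields generated by $\bar a$ in $K$ and by $\bar a'$ in $K'$, so $(\star)$ yields $K\models\varphi(\bar a)\Leftrightarrow K'\models\varphi(\bar a')$, contradicting $K\models\neg\varphi(\bar a)$ and $K'\models\varphi(\bar a')$.

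I expect the one genuinely delicate point to be the step in the proof of $(\star)$ where $F$ is declared a member of the Back-\&-Forth system: this is precisely why the hypothesis speaks of \emph{the collection of all} partial isomorphisms between countable differential subfields being a system, and why one must first move to $\aleph_1$-saturated extensions so that the finitely generated subfields $k$ and $k'$ count as legitimate (countable) domains. The remainder --- the finite-consistency verification and the extraction of $\bigwedge_i\psi_i$ --- is standard compactness bookkeeping converting the semantic amalgamation property $(\star)$ into a syntactic elimination of quantifiers.
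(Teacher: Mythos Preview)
Your proof is correct and rests on the same ingredients as the paper's --- compactness plus the hypothesis that partial isomorphisms between countable subfields of $\aleph_1$-saturated models form a Back-\&-Forth system, hence are partial elementary by Remark~\ref{R:BackForth_elementary}.(a) --- but the organization differs in a way worth recording.

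You front-load the model-theoretic content into the single transfer principle $(\star)$, valid between \emph{arbitrary} models of $T$ (by passing to saturated elementary extensions), and then run the classical one-step $\Gamma$-argument: collect all quantifier-free consequences of $\varphi$, show they entail $\varphi$ back, and extract a finite conjunction by compactness. The paper instead works entirely inside $\aleph_1$-saturated models and performs a \emph{two-pass} separation: for each saturated $K$ realizing $\varphi(\bar c)$ it first extracts a quantifier-free $\theta_K$ with $T\models\theta_K(\bar c)\Rightarrow\varphi(\bar c)$ (this is your inconsistency-of-$\Sigma$ step, done locally), and then a second compactness argument over the family $\{\neg\theta_K\}_K$ yields finitely many such $\theta_{K_i}$ whose disjunction is equivalent to $\varphi$. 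Your route is more modular --- once $(\star)$ is isolated, both completeness and QE fall out of it without further appeal to saturation --- while the paper's avoids the detour through elementary extensions of arbitrary models at the cost of a slightly longer bookkeeping with two inconsistent theories $T_1$ and $T_2$. Neither approach is deeper than the other; they are two standard packagings of the same separation lemma.
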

\begin{proof}
We first show that $T$ eliminates quantifiers, so consider an $\LL_\delta$-formula $\varphi(x_1,\ldots, x_n)$. Choose new constant symbols $c_1,\ldots, c_n$  and consider now $T$ as a theory in the countable language expansion $\LL=\LL_\delta\cup\{c_1,\ldots, c_n\}$. Every model of $T$ can be seen as an $\LL$-structure (by choosing a particular tuple for the interpretations of the $c_i$'s). Given a differential constructible formula $\theta(x_1,\ldots, x_n)$, in order to show that   \[ K\models \forall x_1 \ldots \forall x_n \big(  \varphi(x_1,\ldots, x_n) \Leftrightarrow \theta(x_1,\ldots, x_n) \big) \] for every model $K$ of $T$, it suffices to show that  $\theta(\bar c)$ is \emph{logically equivalent} to the sentence $\varphi(\bar c)$ modulo $T$, that is, 
\[ K\models \left(  \varphi(\bar c)  \Leftrightarrow \theta(\bar c) \right) \] for every model $K$ of the $\LL$-theory $T$, seeing the differential field $K$ as an $\LL$-structure. So we need only show that there is some differential constructible $\LL_\delta$-formula $\theta(\bar x)$ such that the $\LL$-sentence $\theta(\bar c)$ is logically equivalent to the $\LL$-sentence $\varphi(\bar c)$ modulo $T$.

Clearly, if the $\LL$-sentence $\varphi(\bar c)$ does not hold in any model of $T$, then it is equivalent to the differential constructible formula $0\ne 0$. Therefore,  we may assume that $T\cup \{\varphi(\bar c)\}$ admits a model. By compactness (Fact~\ref{F:compact}), fix some $\aleph_1$-saturated differential field $K$, seen as an $\LL$-structure, satisfying every axiom of $T$ as well as $\varphi(\bar c)$.  Let $\bar a$ in $K^{|\bar c|}$ be the interpretation of $\bar c$ in $K$. 

The $\LL$-theory \[T_1= T\cup \{\neg\varphi(\bar c)\}\cup\{\theta(\bar c) \ | \  \theta(\bar x) \text{ is diff. constructible } \& \  K\models \theta(\bar a)\} \] cannot be finitely consistent. Assume otherwise for a contradiction, so the above collection of $\LL$-sentences has an $\aleph_1$-saturated model $K'$ by compactness (Fact~\ref{F:compact}). In particular, the tuple $\bar a'=\bar c^{K'}$ given by the interpretation of $\bar c$ in $K'$ realizes every differential constructible $\LL$-formula which is realized by $\bar a$ in $K$. By Remark~\ref{R:iso_qf} there is an $\LL$-isomorphism $F\colon \str{\Q}{\bar a}\to \str{\Q}{\bar a'}$ mapping $\bar a$ to $\bar a'$. By  assumption together with Remark~\ref{R:BackForth_elementary}.(a), we conclude that $F$ is a partial elementary map between $K$ and $K'$, yet $K\models \varphi(\bar a)$ but $K'\not\models \varphi(F(\bar a))$, which gives the desired contradiction.

Since $T_1$ is not finitely consistent,  there are finitely many differentially constructible formulae $\theta_1(\bar x), \ldots, \theta_m(\bar x)$ with $K\models \bigwedge_{j=1}^m\theta_j(\bar a)$ such that for every differential field $L$ (seen as an $\LL$-structure), if $L$ is a model of $T$, then \[ L \models \big( \bigwedge_{j=1}^m \theta_j(\bar c)\Rightarrow \varphi(\bar c)\big) \tag*{(1)}\] Set now $\theta_K(\bar x)=\bigwedge_{j=1}^m \theta_j(\bar x)$, which is a differentially constructible $\LL_\delta$-formula.  Notice that our $\aleph_1$-saturated model $K$ satisfies the $\LL$-sentence $\theta_K(\bar c)$.

Running now over all possible choices of $\aleph_1$-saturated models $K$ of our theory $T$  and all possible differentially constructible formulae $\theta_K$ as above, consider the $\LL$-theory  \[ T_2=T\cup \{\varphi(\bar c)\}\cup\{\neg\theta_K(\bar c) \ |  \ K  \text{ is $\aleph_1$-saturated  } \& \  K\models \varphi(\bar c)\}.\] 
Notice that $T_2$ is indeed a set of sentences, as our language $\LL$ is fixed, though the collection of all possible models of $T$ is a proper class and not a set!. 

We now show that the theory $T_2$ 
is not be finitely consistent: Assume otherwise for a contradiction. By compactness (Fact~\ref{F:compact}), the theory $T_2$ admits an $\aleph_1$-saturated model $K'$, so $K'$ is a model  of $T$ with $K'\models \varphi(\bar c)$ yet the corresponding formula $\theta_{K'}(\bar c)$ does not hold  by construction, which gives the desired contradiction. As before, we deduce that there are  finitely many differentially constructible formulae $\theta_{K_1}, \ldots, \theta_{K_r}$ such that for every differential field $L$ (seen as an $\LL$-structure), if $L$ is a model of $T$, then \[ L \models \big( \varphi(\bar c) \Rightarrow \bigvee_{j=1}^r \theta_{K_j}(\bar c)\big) \tag*{(2)}.\] 

Now,  the differential constructible formula $\theta(\bar x)=\bigvee_{j=1}^r \theta_{K_j}(\bar x)$ is as desired. Indeed, by~(1) and~(2),   for every differential field~$L$ (seen as an $\LL$-structure), if $L$~is a model of~$T$, then \[ L\models  \left( \varphi(\bar c) \Leftrightarrow \theta(\bar c)\right).\] 

In order to conclude, let us now show that $T$ is complete. Assume otherwise for a contradiction, so there exists an $\LL_\delta$-sentence $\chi$ which holds in some model of $T$ but  not in every model in $T$. In particular, both theories $T\cup\{\chi\}$ and $T\cup\{\neg\chi\}$ are (finitely) consistent, so compactness (Fact~\ref{F:compact}) yields two $\aleph_1$-saturated differential fields $K$ and $K'$, each one a model of $T$, such that $K\models \chi$ yet $K'\models \neg\chi$. By assumption, there are countable isomorphic differential  subfields $k$ of $K$ and $k'$ of $K'$, as the Back-\&-Forth system between $K$ and $K'$ is not empty. Every map in a  Back-\&-Forth system is a partial elementary map, by Remark~\ref{R:BackForth_elementary}.(a), and thus preserves satisfation of sentences,  by  Definition~\ref{D:substr_iso}.  We obtain the desired contradiction, so $T$ is complete. 
\end{proof}

\begin{theo}\label{T:DCF_QE}
Given two $\aleph_1$-saturated differentially closed fields $K$ and $K'$, the collection of partial isomorphisms between countable differential subfields $k$ of $K$ and $k'$ of $K'$ is a non-empty Back-\&-Forth system. In particular, the theory DCF$_0$ of differentially closed fields of characteristic $0$ is complete and eliminates quantifiers. Every differential monomorphism  of differentially closed fields is elementary. 

Therefore, given a differentially closed field $K$, the collection of Kolchin constructible sets (that is, given by instances of Kolchin constructible formulae) is closed under projections. 
\end{theo}
\begin{proof}
	Completeness and quantifier elimination as well as the last assertion in the statement follow by Proposition~\ref{P:QE_sat}, once we show that, given two $\aleph_1$-saturated differentially closed fields $K$ and $K'$, the collection of partial isomorphisms between countable differential subfields $k$ of $K$ and $k'$ of $K'$ is a non-empty Back-\&-Forth system. It is clearly not empty, as both $K$ and $K'$ have characteristic $0$, so $\Q$   is a common differential subfield of both $K$ and $K'$, since $\Q$ only admits the trivial derivation. 
	
	Let us now show that {\bf Forth} holds, since the proof for {\bf Back} is analogous. Consider therefore a partial isomorphism $F\colon k\to k'$ as well as an element $a$ in $K$. If $a$~belongs to~$k$, set $G=F$. Assume thus that $a$ does not lie in $k$. If $a$ is differentially algebraic over $k$ with minimal polynomial $P_a(T)$, then set $F(P_a)(T)$ the polynomial over $k'$ obtained  from $P_a(T)$ by applying $F$ to each coefficient. The polynomial $F(P_a)(T)$ is clearly irreducible over $k'$ of order $\ord(P)$. By Remark~\ref{R:iso_diff}, we need only show that $K'$ contains an element $a'$ whose minimal polynomial over $k'$ equals  $F(P_a)(T)$. As in Example~\ref{E:types}.(a), the collection of $k'$-instances 
		\[ \Sigma_{F(P_a)}(x)=\{F(P_a)(x)=0\} \cup \{Q(x)\ne 0 \ | \ 0\ne Q \in k'\{T\} \text{ with } \ord(Q)<\ord(P)\}\] 
	is a partial $1$-type in the differentially closed field $K'$. Since $K'$ is $\aleph_1$-saturated and $k'$ is countable, we deduce that $K'$ contains a realization $a'$ of $\Sigma_{F(P_a)}$. By Remark~\ref{R:prime_minpol}.(b) and Corollary~\ref{C:unique_minpol}, we deduce that the minimal polynomial of $a'$ over $k'$ is indeed $F(P_a)(T)$, as desired. 
	
	If $a$ is differentially transcendental over $k$, it suffices to show by the same argument as above that $K'$ contains a differentially transcendental element $a'$ over $k'$. As in Example~\ref{E:types}.(b), the collection of $k'$-instances 
		\[ \Sigma_\text{diff.tr}(x)= \{Q(x)\ne 0 \ | \ 0\ne Q(T) \in k'\{T\} \}\]  
	is a partial $1$-type over $k'$ in the differentially closed field
        $K'$. Since $K'$ is $\aleph_1$-saturated, there is a realization $a'$
        in $K'$ of $\Sigma_\text{diff.tr}$. We conclude as before that the
        isomorphism $F\colon k\to k'$ extends to $\str{k}{a}$ and $\str{k'}{a'}$, as desired. 
\end{proof}
Quantifier elimination yields the following characterization of differentially closed fields as existentially closed among differential field extensions. 
\begin{coro}\label{C:DCF_existclosed}
A differential field $K$ is differentially closed if and only if it is  \emph{existentially closed} in the class of differential fields, that is, for every differentially constructible $\LL_\delta$-formula $\varphi(x_1,\ldots, x_n, y_1,\ldots, y_m)$  and every tuple $\bar b$ in $K^{|\bar y|}$, if the instance $\varphi(\bar x, \bar b)$ has a realization in some differential field extension $L$ of $K$, then there is a realization of $\varphi(\bar x, \bar b)$ already in $K$. 
\end{coro}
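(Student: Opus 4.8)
The plan is to deduce both implications from quantifier elimination (Theorem~\ref{T:DCF_QE}) together with the fact that every differential field embeds into a differentially closed one (Remark~\ref{R:DCF0_Einbettung}.(a)). Once these are available the argument is essentially immediate, so I do not expect any genuine obstacle.

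\emph{For the forward direction}, suppose $K$ is differentially closed. Given a differentially constructible $\LL_\delta$-formula $\varphi(\bar x,\bar y)$, a tuple $\bar b$ in $K^{|\bar y|}$, and a realization $\bar a$ of the instance $\varphi(\bar x,\bar b)$ in some differential field extension $L$ of $K$, I would embed $L$ into a differentially closed field $\widehat L$ via Remark~\ref{R:DCF0_Einbettung}.(a); identifying $L$ with its image we obtain differential field extensions $K\subseteq L\subseteq\widehat L$ with $K$ and $\widehat L$ both differentially closed. Since $\varphi$ is quantifier-free, $\bar a$ still realizes $\varphi(\bar x,\bar b)$ in $\widehat L$ (absoluteness of differentially constructible instances, Remark~\ref{R:iso_qf}), so $\widehat L\models\exists\bar x\,\varphi(\bar x,\bar b)$. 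Now the inclusion $K\hookrightarrow\widehat L$ is a differential monomorphism of differentially closed fields, hence elementary by Theorem~\ref{T:DCF_QE}; as $\bar b$ lies in $K$, this yields $K\models\exists\bar x\,\varphi(\bar x,\bar b)$, i.e.\ $\varphi(\bar x,\bar b)$ is already realized in $K$. (Alternatively one could invoke quantifier elimination directly, replacing $\exists\bar x\,\varphi(\bar x,\bar y)$ by an equivalent differentially constructible $\theta(\bar y)$ and again using absoluteness of $\theta$.)

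\emph{For the converse}, suppose $K$ is existentially closed in the class of differential fields, and let $P,Q\in K\{T\}$ be non-trivial with $\ord(Q)<\ord(P)$; I must produce $a\in K$ with $P(a)=0\ne Q(a)$, which is the defining condition of Definition~\ref{D:DCF}. I would again embed $K$ into a differentially closed field $\widehat K$ by Remark~\ref{R:DCF0_Einbettung}.(a); by the very definition of differentially closed, $\widehat K$ contains such an element $a$. Thus the differentially constructible formula $\big(P(x)=0\land Q(x)\ne 0\big)$, whose defining coefficients are read off from $K$ as parameters, is realized in the differential field extension $\widehat K$ of $K$, hence --- by existential closedness --- already in $K$. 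Therefore $K$ is differentially closed.

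The only point that warrants a little care is purely bookkeeping: the notion of existentially closed quantifies only over differentially constructible (quantifier-free) formulae, and this is exactly what both halves of the argument use and provide --- in the forward direction $\exists\bar x\,\varphi$ is a single existential block over a quantifier-free matrix, and in the backward direction $P(x)=0\land Q(x)\ne 0$ is itself quantifier-free. No step requires anything deeper than the two cited results.
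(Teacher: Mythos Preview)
Your proposal is correct and follows essentially the same route as the paper for the forward direction (embed into a differentially closed field and invoke quantifier elimination/elementarity). For the converse the paper simply cites \textcite[Theorem 2.1.3 (a)]{Tressl}, whereas you give a clean direct argument via Remark~\ref{R:DCF0_Einbettung}.(a) and existential closedness, which is perfectly fine.
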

\begin{proof} 
\textcite[Theorem 2.1.3 (a)]{Tressl} gives an explicit proof that existentially closed differential fields are differentially closed, using a straightforward adaptation of Remark~\ref{R:iso_diff}. Let us therefore show the remaining implication, that is, that  a differentially closed field $K$ is existentially closed.  Thus,  consider a $K$-instance $\varphi(\bar x, \bar b)$ of a differentially constructible $\LL_\delta$-formula and assume that  there is a tuple $\bar a$ in some  differential field extension $L$ of $K$ realizing the instance (in $L$ seen as a $\LL_\delta$-structure). By Remark~\ref{R:DCF0_Einbettung}.(a), embed $L$ into a differentially closed field $L_1$.  By Theorem~\ref{T:DCF_QE}, the $\LL_\delta$-formula $\psi(\bar y)=\exists \bar x \varphi(\bar x, \bar y)$ is logically equivalent to a differentially constructible formula $\theta(\bar y)$ modulo DCF$_0$, that is, in every differentially closed field, the sets defined by $\psi(\bar y)$ and by $\theta(\bar y)$ coincide. Now, the instance $\varphi(\bar x, \bar b)$ defines a  Kolchin constructible set, regardless whether we work in  $L$ or in the differential extension $L_1$. Thus, we have that $L_1\models \varphi(\bar a, \bar b)$ and hence the tuple $\bar b$ 
realizes $\psi(\bar y)$, and thus $\theta(\bar y)$ in the differentially closed field $L_1$. 
Since $K$ is a differential subfield of $L_1$ and $\theta(\bar y)$ is a differentially constructible formula, we have that 
 the tuple $\bar b$ realizes $\theta$ in the differential subfield $K$ of $L_1$ by  Remark~\ref{R:iso_qf}. The field $K$ itself is differentially closed, so $K\models \psi(\bar b)$ and thus $K\models \exists \bar x \varphi(\bar x, \bar b)$. We deduce that there exists a realization in $K$ of the instance $\varphi(\bar x, \bar b)$, as desired. 
\end{proof}

To avoid any possible confusion, we would like to stress out that every differentially closed field $K$ admits proper field extensions $L$ which are \emph{differentially algebraic}, that is, every element $a$ of $L$ is differentially algebraic over $K$. Indeed, take a new transcendental element $u$ over $K$ and consider the polynomial ring $K[U]$ in the variable $U$. We can endow $K[U]$, and thus $K(U)$, with a derivation extending the derivation of $K$ such that $\delta(U)=0$, so $U$ is a new constant element which does not lie in $\CC_K$! 

However, many classical results of Galois theory can be shown for differentially algebraic field extensions of differentially closed fields. For example, the classical theorem of the primitive element has a differential analogue, shown first by \textcite[p. 728 with $m=1$]{Kolchin_primitive}, and more recently improved by \textcite[Theorem 2]{Pogudin_primitive}, which is valid over every differentially closed field $K$, as $K$ contains non-constant elements.  Indeed, choose a solution $a$ in $K$ to the differential algebraic equation $\delta(T)-1=0$ (setting $Q(T)=1$ in Definition~\ref{D:DCF}). The element $a$ is not a constant and is moreover transcendental over $\Q$, by Remark~\ref{R:extension_diff}.(a).

%--------------------------------------------------

\section{Universal models and differential closure}\label{S:Univ}

The goal of this section is to introduce the notion of  \emph{saturated} and \emph{prime} models, in order to relate the syntactic notion of types to the Galois theoretic notion of orbits under the group of global automorphisms. Saturated models can be seen as universal differentially closed fields in the sense of Kolchin.  

From now on, we work inside an ambient $\aleph_1$-saturated differentially closed field $\UU$, see Fact~\ref{F:compact}. Unless explicitly stated, all subfields  are countable, all tuples are finite and they  will all be taken within $\UU$. 

\begin{defi}\label{D:types}
  Two tuples~$\bar a$ and~$\bar a'$ have \emph{the same type over} a differential subfield~$K$ of~$\UU$, denoted by $\bar a\equiv_K \bar a'$, if $I(\bar a/K)=I(\bar a'/K)$. This is  equivalent to (each of) the following properties (by Remark~\ref{R:iso_diff}): 
	\begin{itemize}
		\item There exists a partial isomorphism $F\colon \str{K}{\bar a}\to \str{K}{\bar a'}$ fixing $K$ pointwise and mapping $\bar a$ to $\bar a'$. 
		\item The tuples $\bar a$ and $\bar a'$ belong to the same differential constructible subsets defined over $K$. 
	\end{itemize} 
\end{defi}
Note that having the same type over $K$ is an equivalence relation. By an abuse of notation, we refer to the equivalence class of the tuple $\bar a$ with respect to the equivalence relation $\equiv_K$ as \emph{the type of $\bar a$ over $K$}, and denote this equivalence class by $\tp(\bar a/K)$. An element $\bar a'$ in this equivalence class is a \emph{realization} of the type of $\bar a$ over $K$.

\begin{defi}\label{D:isol}
The type of a tuple $\bar a$ over the differential subfield $K$ is \emph{isolated} by the $K$-instance $\varphi(\bar x, \bar b)$ with $\bar b$ in $K$ of a differentially constructible formula $\varphi(\bar x, \bar y)$ if \[ \UU \models \varphi(\bar a', \bar b) \ \iff \ \bar a'\equiv_K \bar a \]   for all $\bar a'$ in $\UU^{|\bar a|}$. 
\end{defi}

It  follows from Remark~\ref{R:iso_diff} and Theorem~\ref{T:DCF_QE} (or rather its proof using Back-\&-Forth) that the theory DCF$_0$ is \emph{$\omega$-stable}: there are only countably many types of singletons over a given countable subfield $K$ of $\UU$. Indeed, the type of an element $a$ over $K$ is uniquely determined by its minimal (differential) polynomial $P_a(T)$ in $K\{T\}$. Since there are only countably many differential polynomials over $K$, we immediately deduce the  $\omega$-stability of DCF$_0$. 

A consequence of $\omega$-stability is the existence of \emph{prime models}: 
\begin{fait}\label{F:diff_closure}
	Given a countable differential subfield $K$ of $\UU$, there exists a countable differentially closed subfield $\widehat{K}$ of $\UU$ containing $K$, called the \emph{differential closure} of~$K$, which is unique up to $K$-isomorphism among the differential field extensions of~$K$  satisfying that the type of every finite tuple~$\bar a$ in~$\widehat{K}$ over~$K$ is isolated. 
	
	If the $K$-instance $\varphi(\bar x, \bar b)$ isolates the type of $\bar a$ over $K$, then   $\widehat{K}\models \varphi(\bar a, \bar b)$ by Quantifier Elimination (Theorem~\ref{T:DCF_QE}). 
\end{fait}
Note that every isolated type over~$K$  must be realized in the differential closure~$\widehat{K}$ of~$K$. Indeed, if $\varphi(\bar x, \bar b)$ is the isolating $K$-instance of the type of $\bar d$ over $K$, then Corollary~\ref{C:DCF_existclosed} yields a realization $\bar d'$ in $\widehat{K}$, so $\bar d$ and $\bar d'$ have the same type over $K$. 

\begin{exem}\label{E:isolation}
As discussed at the end of Section~\ref{S:DCF}, let  $a$ in $\UU$ be such that $\delta(a)=1$. The type of $a$ over $\Q$ is isolated by the differential constructible formula $(\delta(x)=1)$.  Indeed, every solution $a'$ of the equation $\delta(T)=1$  is again transcendental over $\Q$. Thus, the differential fields $\str{\Q}{a}=\Q(a)$ and  $\str{\Q}{a'}=\Q(a')$ are isomorphic, so $a'$ has the same type as $a$ over $\Q$. 
\end{exem}

In contrast to the field algebraic closure or the real closure for ordered fields, the differential closure need not be minimal, as shown by  \textcite{Rosenlicht}. Nevertheless its  field of constants does not increase. 
\begin{coro}\label{C:diff_closure_noconstants}
	If the field of constants $\CC_K$ of $K$ is  algebraically closed, then  there are no new constants in $\widehat{K}$:  if $c$ is a constant element in $\widehat{K}$, then $c$ belongs to $\CC_K$. 
\end{coro}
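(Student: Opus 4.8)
The plan is to use that $\widehat K$ is atomic over $K$: by Fact~\ref{F:diff_closure}, if $c\in\widehat K$ satisfies $\delta(c)=0$, then $\tp(c/K)$ is isolated by some $K$-instance $\varphi(x,\bar b)$, with $\bar b$ in $K$, of a differentially constructible formula in the single variable $x$. Since $c$ is a constant, the $K$-instance $\psi(x,\bar b):=\varphi(x,\bar b)\wedge(\delta x=0)$ is again satisfied by $c$, and all of its realizations lie in $\CC_\UU$. It suffices to produce some realization $d$ of $\tp(c/K)$ lying in $\CC_K$: for then, since $d$ belongs to $K$, the differentially constructible $K$-instance ``$x=d$'' is satisfied by $d$, hence by every realization of $\tp(c/K)$ (Definition~\ref{D:types}), so in particular $c=d\in\CC_K$, as desired.

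To find such a $d$, I would restrict the instance $\psi(x,\bar b)$ to the field of constants $\CC_\UU$. Imposing $\delta x=0$ kills all higher derivatives of $x$, so each atomic subformula $P(x,\bar b)=0$ of $\varphi$ becomes, on $\CC_\UU$, an ordinary univariate polynomial equation with coefficients in $K$; hence the solution set $S$ of $\psi(x,\bar b)$ inside $\CC_\UU$ is a Boolean combination of zero-sets of univariate polynomials over $K$, and is therefore finite or cofinite in $\CC_\UU$, with $c\in S$. If $S$ is cofinite, then $S\cap\CC_K\ne\emptyset$ simply because $\CC_K$ is infinite (being an algebraically closed field of characteristic~$0$), and any element of $S\cap\CC_K$ is the wanted $d$.

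The remaining case --- the only mildly delicate point --- is when $S$ is finite. Then every element of $S$ must be a root of one of the nonzero polynomials occurring, hence algebraic over $K$; and if $d\in S$ is a constant algebraic over $K$, then differentiating a minimal polynomial $m_d(T)$ of $d$ over $K$ as in Remark~\ref{R:extension_diff}.(a) gives $m_d^{\delta}(d)=0$, whence $m_d^{\delta}=0$ by minimality (it has strictly smaller degree than $m_d$), i.e.\ $m_d\in\CC_K[T]$; since $\CC_K$ is algebraically closed, $m_d$ must be linear and $d\in\CC_K$. Thus $S\subseteq\CC_K$, and in particular $c\in\CC_K$. The crux of the argument --- and where algebraic closedness of $\CC_K$ is really used --- is exactly to prevent the isolating formula from carving out the ``new'' constants $\CC_\UU\setminus\CC_K$: once $\CC_K$ is algebraically closed, the trace on $\CC_\UU$ of any differentially constructible set over $K$ is too coarse (finite or cofinite) for that to be possible.
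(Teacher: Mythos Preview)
Your proof is correct and follows essentially the same approach as the paper's: both restrict the isolating formula to the constants, reduce to ordinary polynomial conditions, and use the minimal-polynomial argument (your ``finite'' case, the paper's Claim) together with the observation that a cofinite condition on $\CC_\UU$ must meet $\CC_K$ (your ``cofinite'' case, the paper's contradiction via a rational $q$). Your finite/cofinite dichotomy is a slightly cleaner way to organize the same idea as the paper's algebraic/transcendental split.
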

Recall that the field of constants $\CC_K$ of $K$ is always  algebraically closed  whenever the field $K$ is algebraically closed, by Remark~\ref{R:extension_diff}.(a). 
\begin{proof}
	
	We first begin with an easy observation: 
	
	\begin{claimstar}
If $\CC_{\widehat{K}}$ is contained in the algebraic closure $K^{alg}$ of $K$, then $\CC_{\widehat{K}}=\CC_K$. 
	\end{claimstar}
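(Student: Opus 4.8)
The plan is to show that if $\CC_{\widehat K}\subseteq K^{alg}$, then in fact $\CC_{\widehat K}=\CC_K$, which is the claim needed before completing the proof of the corollary. Suppose $c\in \CC_{\widehat K}$; by hypothesis $c$ is algebraic over $K$, so it lies in some finite extension $K\subseteq K(c)$ inside $\widehat K$. The key point is that $c$ is simultaneously a \emph{constant}, i.e.\ $\delta(c)=0$. I want to conclude $c\in \CC_K$.

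First I would reduce to the minimal polynomial. Let $m_c(T)\in K[T]$ be the minimal polynomial of $c$ over $K$, say $m_c(T)=T^n + a_{n-1}T^{n-1}+\cdots+a_0$ with $a_i\in K$. Applying the derivation to the identity $m_c(c)=0$ and using that $\delta(c)=0$ gives
\[
0=\delta\big(m_c(c)\big)=m_c^\delta(c)=\delta(a_{n-1})c^{n-1}+\cdots+\delta(a_0),
\]
where $m_c^\delta$ is the polynomial obtained by differentiating the coefficients; note it has degree strictly less than $n=\deg(m_c)$ since $m_c$ is monic. But $m_c$ is the minimal polynomial of $c$ over $K$, so any polynomial over $K$ of degree $<n$ that vanishes at $c$ must be the zero polynomial. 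Hence $\delta(a_i)=0$ for all $i$, i.e.\ all coefficients of $m_c$ lie in $\CC_K$. Therefore $c$ is algebraic over $\CC_K$, and since $\CC_K$ is assumed algebraically closed, $c\in\CC_K$, as desired.

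This already proves the Claim; I expect the only subtlety is the orientation of the argument — one differentiates the minimal polynomial relation and exploits the asymmetry between ``$c$ is a constant'' (kills the $\delta(c)$ term) and ``$c$ has minimal polynomial of degree exactly $n$'' (forces the lower-degree polynomial $m_c^\delta$ to vanish identically). This is exactly the same mechanism used in Remark~\ref{R:extension_diff}.(a) to extend derivations to algebraic extensions, run in reverse.

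To then finish the corollary itself, I would combine the Claim with the general structure theory of $\omega$-stable theories: one shows that $\CC_{\widehat K}$, being the constant field of a prime model over $K$, cannot introduce transcendence over $\CC_K$, i.e.\ $\CC_{\widehat K}\subseteq K^{alg}$, and then invokes the Claim. The hard part will be establishing that inclusion $\CC_{\widehat K}\subseteq K^{alg}$ — this presumably uses that the constants form a strongly minimal (pure algebraically closed field) set, that prime/atomic models realize only isolated types, and that a transcendental constant over $K$ would realize a non-isolated type (the generic type of the constant field over $K$ is not isolated when $\CC_K$ is already algebraically closed, since algebraically closed fields eliminate quantifiers and a transcendental element over $\CC_K$ satisfies no isolating formula). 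Once that is in hand, every constant of $\widehat K$ is algebraic over $K$, hence in $\CC_K$ by the Claim, completing the proof.
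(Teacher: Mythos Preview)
Your proof of the Claim is correct and essentially identical to the paper's: both differentiate the minimal polynomial relation $m_c(c)=0$, use $\delta(c)=0$ to kill the $\partial m_c/\partial T$ term, and then invoke minimality to force $m_c^\delta\equiv 0$, so the coefficients lie in the algebraically closed field $\CC_K$. Your additional sketch for the remaining inclusion $\CC_{\widehat K}\subseteq K^{alg}$ is in the right spirit, though the paper argues it more concretely by noting that an isolating $K$-instance for a non-algebraic constant would be a finite conjunction of polynomial inequalities, which some rational number would then also satisfy.
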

	
	\begin{claimstarproof}
Assume that every constant element in $\CC_{\widehat{K}}$ is algebraic over $K$.  A constant element $c$ in $\CC_{\widehat{K}}$ has then minimal polynomial $m_c(T)$ with coefficients in $K$. An easy application of Remark~\ref{R:extension_diff}.(a) yields that $m_c^\delta(c)=0$ (since $\delta(c)=0$), which implies that every coefficient of $m_c^\delta$ must equal $0$, as the degree of $m_c^\delta(T)$ is strictly smaller than the degree of $m_c(T)$. Hence, the minimal polynomial $m_c(T)$ has coefficients in the algebraically closed field $\CC_K$, which gives the desired result. 
	\end{claimstarproof}
	
	Let us now prove that every constant element $c$ in $\CC_{\widehat{K}}$ is algebraic over $K$.  By Fact~\ref{F:diff_closure}, the type of $c$ over $K$ is isolated by a $K$-instance $\varphi(x, \bar b)$ of a differentially  constructible formula $\varphi(x, \bar y)$ with $\bar b$ in $K$. Assume for a contradiction that $c$~is not algebraic over~$K$, so the instance $\varphi(x, \bar b)$ can only involve a finite number of differential polynomial inequalities $Q_1(x)\ne 0,\ldots, Q_m(x)\ne 0$ with coefficients in $K$. Now, a differential polynomial evaluated in a constant element translates into a polynomial expression, so we may assume that each $Q_i(T)$ is a classical polynomial. The polynomial \[ Q(T)=\prod_{i=1}^{m} Q_i(T)\] is non-trivial (as $Q(c)\ne 0$), so there exists some rational number $q$ in $\Q$ with $Q(q)\ne 0$. Isolation yields that $c$ and $q$ must have the same type over $K$, which is a blatant contradiction (as $\Q$ is contained in $K$ yet $c$ is not algebraic over $K$). 
\end{proof}

We will now use the existence of the differential closure in order to produce a \emph{universal} differentially closed field. For this, we first need the following fact regarding the \emph{regularity} of the first uncountable cardinal $\aleph_1$.
\begin{fait}\label{F:aleph1_reg}
Consider an \emph{increasing chain} $(K_\alpha)_{\alpha<\aleph_1}$ of countable fields, that is, such that $K_\alpha\subset K_{\beta}$ if $\alpha<\beta<\aleph_1$ and $K_\gamma=\bigcup\limits_{\alpha<\gamma} K_\alpha$ if $\gamma<\aleph_1$ is a limit ordinal. For every countable subset $A$ of $\bigcup\limits_{\alpha<\aleph_1} K_\alpha$, there is some $\alpha<\aleph_1$ with $A\subset K_\alpha$. 
\end{fait}
\begin{prop}\label{P:univ}
\emph{Universal differentially closed fields} exist, that is, there is an $\aleph_1$-saturated differentially closed subfield $\UU'$ of $\UU$ whose cardinality is exactly $\aleph_1$. Moreover, for every partial isomorphism $F\colon K\to K'$,  with both $K$ and $K'$ countable subfields of~$\UU'$, there is a global automorphism $F'\colon \UU'\to \UU'$ whose restriction $F'\restr K$ equals $F$. 
\end{prop}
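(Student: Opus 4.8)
The plan is to build $\UU'$ by a transfinite union of length $\aleph_1$, alternating between two bookkeeping tasks: realizing all partial $1$-types over countable subfields (to get $\aleph_1$-saturation), and extending partial isomorphisms to automorphisms (to get the homogeneity clause). Both tasks are instances of "there are at most $\aleph_1$ objects to handle, each handled by adjoining countably many elements", and the regularity of $\aleph_1$ (Fact~\ref{F:aleph1_reg}) is what makes the limit stages close up correctly. I expect the main obstacle to be the bookkeeping itself: one must arrange that every requirement that arises at \emph{any} stage is eventually met at a \emph{later} stage, even though the set of requirements keeps growing as the field grows.

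\textbf{Construction of $\UU'$.} First I would build an increasing chain $(K_\alpha)_{\alpha<\aleph_1}$ of countable differentially closed subfields of $\UU$ as follows. Start with $K_0=\widehat{\Q}$, the differential closure of $\Q$ inside $\UU$ (Fact~\ref{F:diff_closure}); it is countable and differentially closed. At a successor stage $\alpha+1$, I would take $K_\alpha$ together with, for every countable subfield $k\subseteq K_\alpha$ and every partial $1$-type over $k$, a realization of that type in $\UU$ (such realizations exist since $\UU$ is $\aleph_1$-saturated); there are only countably many such $k$ (as $K_\alpha$ is countable) and, by $\omega$-stability (noted after Fact~\ref{F:diff_closure}), only countably many $1$-types over each $k$, so we have adjoined countably many elements. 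Close under the differential field operations and take the differential closure of the result (Fact~\ref{F:diff_closure}), keeping $K_{\alpha+1}$ countable. At a limit $\gamma<\aleph_1$, set $K_\gamma=\bigcup_{\alpha<\gamma}K_\alpha$, which is countable since $\gamma$ is countable. Put $\UU'=\bigcup_{\alpha<\aleph_1}K_\alpha$; it is a differentially closed subfield of $\UU$ (an increasing union of differentially closed fields is differentially closed, directly from Definition~\ref{D:DCF}, since any finitely many differential polynomials lie in some $K_\alpha$), and $|\UU'|\le\aleph_1\cdot\aleph_0=\aleph_1$. It is $\aleph_1$-saturated: given a countable subfield $k\subseteq\UU'$, Fact~\ref{F:aleph1_reg} gives some $\alpha$ with $k\subseteq K_\alpha$, and then by Remark~\ref{R:sat_1types} it suffices to realize $1$-types over $k$, which were realized already inside $K_{\alpha+1}\subseteq\UU'$ by construction. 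Finally $|\UU'|=\aleph_1$ exactly: for instance the differentially transcendental $1$-type over $\Q$ has, by Theorem~\ref{T:DCF_QE} applied with distinct realizations, uncountably many realizations in an $\aleph_1$-saturated model, forcing $|\UU'|\ge\aleph_1$.

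\textbf{The homogeneity clause.} Given a partial isomorphism $F\colon K\to K'$ between countable subfields of $\UU'$, I would construct $F'$ by a back-and-forth of length $\aleph_1$. Fix enumerations $\UU'=\{u_\xi\}_{\xi<\aleph_1}$ (using $|\UU'|=\aleph_1$). I build an increasing continuous chain of partial isomorphisms $F_\xi\colon L_\xi\to L'_\xi$ between countable subfields of $\UU'$, with $F_0=F$. At stage $\xi+1$, using {\bf Forth} (from the Back-\&-Forth system of Theorem~\ref{T:DCF_QE}, applied to the $\aleph_1$-saturated model $\UU'$ with itself) I extend $F_\xi$ over the element $u_\xi$ to land somewhere in $\UU'$ — here one must check the target produced by {\bf Forth} can be taken inside $\UU'$, which holds precisely because $\UU'$ is itself $\aleph_1$-saturated and the {\bf Forth} step only requires realizing a single $1$-type over a countable field — and then dovetail with a {\bf Back} step to put $u_\xi$ into the domain, again landing in $\UU'$. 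At limits take unions (countable unions of countable fields, by regularity of $\aleph_1$). The union $F'=\bigcup_{\xi<\aleph_1}F_\xi$ is then a differential field automorphism of $\UU'$: it is injective with domain and range all of $\UU'$ because every $u_\xi$ entered both the domain and the range by stage $\xi+1$, and it extends $F$. By Remark~\ref{R:BackForth_elementary}, $F'$ is automatically elementary, though for the statement we only need that it is an automorphism extending $F$.

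\textbf{The main obstacle} is the careful bookkeeping in the construction of $\UU'$: at stage $\alpha+1$ one must handle types over \emph{all} countable subfields of $K_\alpha$, but new countable subfields keep appearing at later stages, so one needs the standard reflection argument — any countable subfield of $\UU'$ is, by Fact~\ref{F:aleph1_reg}, already contained in some $K_\alpha$, hence its $1$-types are dealt with at stage $\alpha+1$. Making this precise (and similarly checking that the {\bf Forth}/{\bf Back} steps in the homogeneity argument genuinely stay inside $\UU'$ rather than escaping into $\UU$) is the only real content; everything else is a routine union-of-chains verification using Fact~\ref{F:aleph1_reg} at limit stages.
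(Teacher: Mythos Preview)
Your approach is essentially the same as the paper's: build $\UU'$ as a union of a continuous $\aleph_1$-chain of countable differentially closed subfields, realizing all $1$-types at successor stages (using $\omega$-stability to keep things countable), then run a length-$\aleph_1$ back-and-forth inside $\UU'$ for homogeneity.

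One slip to fix: the claim that a countable $K_\alpha$ has only countably many countable subfields $k$ is false (there can be continuum many). The remedy is exactly what the paper does---at stage $\alpha+1$, realize only the (countably many, by $\omega$-stability) complete $1$-types over $K_\alpha$ itself; this suffices, since any partial $1$-type over some $k\subseteq K_\alpha$ is realized by any realization of a complete type over $K_\alpha$ extending it. With this correction your construction and saturation argument go through verbatim.
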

\begin{proof}
We will obtain $\UU'$ as the union of an increasing chain $(K_\alpha)_{\alpha<\aleph_1}$ of countable differentially closed subfields of $\UU$ such that at every successor stage, the differential field $K_{\alpha+1}$ contains a realization of every possible $1$-type over $K$ of an element $a$ of $\UU$. Set  $K_0=\widehat \Q$ the differential closure of $\Q$ (as a differential subfield of $\UU$ equipped with the trivial derivation). Assume that $(K_{\beta})_{\beta<\alpha}$ has been constructed for $\beta<\alpha$. If $\alpha$ is a limit ordinal, set $K_{\alpha}$ the differential closure of $\bigcup_{\beta<\alpha} K_\beta$. If $\alpha=\gamma+1$ for some $\gamma$, we know by $\omega$-stability that there are only countably many  types of elements of $\UU$ over $K_\gamma$, so choose an enumeration $(a_n)_{n\in\N}$ in $\UU$ of representatives of all these equivalence classes. Hence, for every $a$ in $\UU$ there is some $n$ in $\N$ with $a\equiv_{K_\gamma} a_n$. Set now $K_{\gamma+1}$ the differential closure of the countable differential subfield $\str{K_\gamma}{(a_n)_{n\in\N}}$. 

By construction, using Example~\ref{E:types}.(a), the uncountable field $\UU'=\bigcup_{\alpha<\aleph_1} K_\alpha$ is differentially closed. Moreover, its cardinality is exactly $\aleph_1$. Let us show that $\UU'$ is $\aleph_1$-saturated: Indeed, by Remark~\ref{R:sat_1types}, we need only show that $\UU'$ realizes every  partial $1$-type $\Sigma(x)$ with parameters over a countable differential subfield $k$ of $\UU'$. By Fact~\ref{F:aleph1_reg}, there exists some $\gamma<\aleph_1$ such that $k$ is a subfield of $K_\gamma$. The partial $1$-type $\Sigma$ is realized in $\UU$ by some element $a$, for $\UU$ is $\aleph_1$-saturated. By construction, there exists some representative $a'$ in $K_{\gamma+1}\subset\UU'$ of the type of $a$ over $k$, so $a'$ realizes $\Sigma$, as desired.  

For the last assertion of the statement, consider a partial isomorphism $F\colon K\to K'$ of some countable differential subfields $K$ and $K'$ of $\UU'$. Choose enumerations $(a_\alpha)_{\alpha<\aleph_1}$ and $(a'_\alpha)_{\alpha<\aleph_1}$ of $\UU'\setminus K$ and $\UU'\setminus K'$. Repeating the proof of the Back-\&-Forth argument in Theorem~\ref{T:DCF_QE} within the $\aleph_1$-saturated differentially closed subfield $\UU'$, we obtain  an increasing chain $(F_\alpha\colon L_\alpha\to L'_\alpha)_{\alpha<\aleph_1}$ of partial isomorphisms of countable differential subfields $L_\alpha$ and $L'_\alpha$ of $\UU'$ with $F_0=F\colon K\to K'$ such that for every $\alpha<\aleph_1$ the elements $a_\alpha$ and $a'_\alpha$ lie in $L_{\alpha+1}$ and in $L'_{\alpha+1}$, respectively. The map  $F'=\bigcup_{\alpha<\aleph_1} F_\alpha$ is a global  automorphism of $\UU'=\bigcup_{\alpha<\aleph_1} L_\alpha=\bigcup_{\alpha<\aleph_1} L'_\alpha$ extending $F$, as desired. 
\end{proof}
\begin{coro}\label{C:univ}
Two tuples $\bar a$ and $\bar a'$ of a universal differentially closed field $\UU'$ have the same type over a countable differential subfield $K$ of $\UU'$ if and only if $\bar a'=\sigma(\bar a)$ for some automorphism $\sigma$ of the differential field $\UU'$ fixing $K$ pointwise. \qed
\end{coro}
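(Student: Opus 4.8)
The plan is to deduce both implications directly from Proposition~\ref{P:univ} together with the reformulation of $\equiv_K$ recorded in Definition~\ref{D:types}.

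First I would treat the (easier) right-to-left implication. Given an automorphism $\sigma$ of $\UU'$ with $\sigma\restr K = \mathrm{id}_K$ and $\sigma(\bar a) = \bar a'$, restricting $\sigma$ to the differential subfield $\str{K}{\bar a}$ produces a partial isomorphism $\str{K}{\bar a} \to \str{K}{\bar a'}$ that fixes $K$ pointwise and carries $\bar a$ to $\bar a'$; by Definition~\ref{D:types} this says precisely that $\bar a \equiv_K \bar a'$. One could equally quote Remark~\ref{R:BackForth_elementary}.(b): a global automorphism is elementary, so it preserves all differential constructible $K$-formulae, whence $I(\bar a/K) = I(\bar a'/K)$.

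For the converse, suppose $\bar a \equiv_K \bar a'$. By Definition~\ref{D:types} (ultimately Remark~\ref{R:iso_diff}) there is a partial isomorphism $F\colon \str{K}{\bar a} \to \str{K}{\bar a'}$ fixing $K$ pointwise with $F(\bar a) = \bar a'$. Since $K$ is countable and $\bar a$, $\bar a'$ are finite tuples, the fields $\str{K}{\bar a}$ and $\str{K}{\bar a'}$ are countable differential subfields of $\UU'$, so the last clause of Proposition~\ref{P:univ} applies and extends $F$ to a global automorphism $\sigma\colon \UU' \to \UU'$ with $\sigma\restr{\str{K}{\bar a}} = F$. Then $\sigma$ fixes $K$ pointwise and $\sigma(\bar a) = F(\bar a) = \bar a'$, as desired.

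I do not expect any real obstacle here: the substance is already contained in Proposition~\ref{P:univ}, and what remains is purely bookkeeping — checking that $\str{K}{\bar a}$ is countable so that the extension statement of Proposition~\ref{P:univ} is applicable, and noting that ``partial isomorphism between $\str{K}{\bar a}$ and $\str{K}{\bar a'}$'' in Definition~\ref{D:types} already incorporates fixing $K$ pointwise, so the global automorphism produced does as well.
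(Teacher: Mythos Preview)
Your proof is correct and is exactly the argument the paper has in mind: the corollary is stated with a \qed because it follows immediately from the last clause of Proposition~\ref{P:univ} together with Definition~\ref{D:types}, and you have simply spelled out that deduction.
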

One of the main features of automorphisms in a universal differentially closed field is the following result, which can be seen as some sort of Galois correspondence. The proof follows the lines of the use of the classical separation lemma in our customized version in Proposition~\ref{P:QE_sat}.
\begin{lemm}\label{L:invariant_definition}
Given a universal differentially closed field~$\UU'$,  a differentially constructible subset~$X$ of $(\UU')^n$ and a countable differential subfield~$K$ of~$\UU'$, we have that $X$~is given by a $K$-instance $\psi(\bar x, \bar a)$ of a differentially constructible formula with parameters~$\bar a$ in~$K$  if and only if $X$~is setwise invariant under the action of the group  $\Aut_\delta(\UU'/K)$  of differential automorphisms of~$\UU'$ fixing~$K$ pointwise.
\end{lemm}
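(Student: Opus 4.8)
The plan is to prove the two implications separately, the forward one being routine and the converse the core of the argument. For the \textbf{forward direction}, suppose $X$ equals the $K$-instance $\psi(\bar x,\bar a)$ of a differentially constructible formula with $\bar a$ in~$K$. Given $\sigma\in\Aut_\delta(\UU'/K)$ and a tuple $\bar b$ in $(\UU')^n$, the automorphism~$\sigma$ fixes~$\bar a$ pointwise and preserves satisfaction (Remark~\ref{R:BackForth_elementary}), so $\bar b\in X\iff\UU'\models\psi(\bar b,\bar a)\iff\UU'\models\psi(\sigma(\bar b),\bar a)\iff\sigma(\bar b)\in X$; hence $\sigma(X)=X$.

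For the \textbf{converse}, assume $X$ is setwise invariant under $\Aut_\delta(\UU'/K)$ and write $X=\varphi(\bar x,\bar c)$ for some differentially constructible formula $\varphi(\bar x,\bar y)$ and some finite tuple $\bar c$ from~$\UU'$. The crucial observation is that $X$ is a union of types over~$K$: if $\bar b\in X$ and $\bar b'\equiv_K\bar b$, then by Corollary~\ref{C:univ} one has $\bar b'=\sigma(\bar b)$ for some $\sigma\in\Aut_\delta(\UU'/K)$, whence $\bar b'\in X$ by invariance. Following the line of Proposition~\ref{P:QE_sat}, I would then run two compactness arguments. For a fixed $\bar b\in X$, let $p(\bar x)$ be the set of all $K$-instances of differentially constructible formulas satisfied by~$\bar b$; since such formulas are closed under negation, $p$ determines the type of~$\bar b$ over~$K$ (Definition~\ref{D:types}). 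The collection $p(\bar x)\cup\{\neg\varphi(\bar x,\bar c)\}$, whose parameters lie in the countable differential subfield generated by $K\cup\bar c$, cannot be finitely consistent in~$\UU'$: otherwise $\aleph_1$-saturation (Proposition~\ref{P:univ}) would produce a realization $\bar b'$ in~$\UU'$, which then satisfies $\bar b'\equiv_K\bar b$ by the previous sentence, hence lies in~$X$, contradicting $\bar b'\models\neg\varphi(\bar x,\bar c)$. A finite inconsistent subset must therefore involve $\neg\varphi(\bar x,\bar c)$ (as $\bar b$ realizes every finite subset of~$p$), so some finite conjunction $\theta_{\bar b}(\bar x)$ of members of~$p$ satisfies $\bar b\models\theta_{\bar b}$ and $\UU'\models\forall\bar x\,(\theta_{\bar b}(\bar x)\to\varphi(\bar x,\bar c))$, i.e.\ the $K$-instance $\theta_{\bar b}$ is contained in~$X$.

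Second, consider the collection $\{\neg\theta_{\bar b}(\bar x):\bar b\in X\}\cup\{\varphi(\bar x,\bar c)\}$, again with parameters in a countable subfield. It has no realization in~$\UU'$: a realization $\bar e$ would lie in~$X$ (as it satisfies $\varphi(\bar x,\bar c)$), so among the $\neg\theta_{\bar b}$ appears $\neg\theta_{\bar e}$, which $\bar e$ cannot satisfy since $\bar e\models\theta_{\bar e}$. By $\aleph_1$-saturation it is not finitely consistent, so there are $\bar b_1,\dots,\bar b_m\in X$ with $\{\neg\theta_{\bar b_1},\dots,\neg\theta_{\bar b_m},\varphi(\bar x,\bar c)\}$ unrealizable in~$\UU'$; equivalently $\UU'\models\forall\bar x\,\bigl(\varphi(\bar x,\bar c)\to\bigvee_{i=1}^m\theta_{\bar b_i}(\bar x)\bigr)$, that is $X\subseteq\bigcup_i\theta_{\bar b_i}$. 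Since also $\theta_{\bar b_i}\subseteq X$ for each~$i$, we get $X=\theta_{\bar b_1}\cup\dots\cup\theta_{\bar b_m}$, which is precisely the $K$-instance of the differentially constructible formula $\bigvee_{i=1}^m\theta_{\bar b_i}(\bar x)$, all of whose parameters lie in~$K$. (If $X=(\UU')^n$ the claim is trivial, given by $0=0$.)

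The step I expect to be the main obstacle is the passage from ``$X$ is an arbitrary, possibly infinite, union of $K$-instances each contained in~$X$'' to ``$X$ is a single $K$-instance'': differentially constructible sets, unlike Kolchin-closed ones, are not stable under arbitrary intersections, so noetherianity is of no use here and one genuinely needs a compactness (saturation) argument, arranged so that the defining formula $\varphi(\bar x,\bar c)$ itself enters the collection shown to be inconsistent. A secondary subtlety is that it is Quantifier Elimination (Theorem~\ref{T:DCF_QE}) that makes the quantifier-free data $p(\bar x)$ determine the full type of~$\bar b$ over~$K$, so that a realization of~$p$ is genuinely an $\Aut_\delta(\UU'/K)$-conjugate of~$\bar b$; this is what couples the invariance hypothesis to the definability conclusion.
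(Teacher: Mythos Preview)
Your proof is correct and follows essentially the same two-step compactness scheme as the paper: first isolate each $\bar b\in X$ inside $X$ by a single $K$-instance $\theta_{\bar b}$, then cover $X$ by finitely many of these. The only point the paper makes more explicit is that, although $X$ may have cardinality $\aleph_1$, there are only countably many distinct formulas $\theta_{\bar b}$ (since $K$ is countable), which is what justifies applying $\aleph_1$-saturation in the second step; you cover this implicitly by noting the parameters lie in a countable subfield.
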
 
\begin{proof}
If $X$~is given by a $K$-instance, then it is clearly $\Aut_\delta(\UU'/K)$-invariant. Therefore, we need only show the converse, so assume that the differentially constructible set~$X$ is $\Aut_\delta(\UU'/K)$-invariant. 
\begin{claimstar}
For every $\bar b$ in $X$, there exists some $K$-instance $\varphi_{\bar b}(\bar x, \bar a)$ with parameters in $K$ such that $\UU'\models \varphi_{\bar b}(\bar b, \bar a)$ and \[ \UU'\models \forall \bar x (\varphi_{\bar b}(\bar x, \bar a) \Rightarrow \bar x \in X),\] where we identify the expression $``\bar x \in X"$ with the differential constructible instance defining $X$. 
\end{claimstar}
\begin{claimstarproof}
Given $\bar b$ in $X$, the collection of $K$-instances \[\Sigma(\bar x)= \{ \psi(\bar x, \bar a) \ | \ \bar a \in K \ \& \ \UU'\models \psi(\bar b, \bar a)\} \cup\{ \bar x\notin X\} \] cannot be finitely consistent in $\UU'$. Assume otherwise for a contradiction.  By $\aleph_1$-saturation of $\UU'$, as the differential subfield $K$ is countable, the partial type $\Sigma(\bar x)$  has a realization $\bar b'$ in $\UU'$. Now, the tuple $\bar b'$ has the same type as $\bar b$ over $K$, so by Corollary~\ref{C:univ} there is an automorphism $\sigma$ of $\Aut_\delta(\UU'/K)$ mapping $\bar b$ to $\bar b'$. However, the tuple  $\bar b'$ does not lie in the  $\Aut_\delta(\UU'/K)$-invariant set $X$, which yields the desired contradiction. 

 Therefore, there are finitely many $K$-instances $\psi_i(\bar x, \bar a_i)$, each realized by $\bar b$, such that every realization of $\varphi_{\bar b}(\bar x,\bar a)=\bigwedge_{i} \psi_i(\bar x, \bar a_i)$ in $\UU'$ lies in $X$, as desired. 
\end{claimstarproof}

Now, since $K$ is countable, there are only countably many $K$-instances $\varphi_{\bar b}$ as in the Claim, even if $X$ may be possibly of cardinality $\aleph_1$. Consider the collection of $K$-instances \[ \Sigma_1(\bar x)= \{\bar x\in X\}\cup\{\neg \varphi_{\bar b}(\bar x, \bar a) \ | \ \varphi_{\bar b}\text{ as in the above claim}\}_{\bar b \in X}.\] 

Assume for a contradiction that $\Sigma_1(\bar x)$ is finitely consistent. By $\aleph_1$-saturation of $\UU'$, as $K$ is countable, there is a  realization $\bar b_1$ in $\UU'$. The element $\bar b_1$ belongs to $X$, yet  $\UU'\not\models \varphi_{\bar b_1}(\bar b_1, \bar a)$ by construction, contradicting the choice of $\varphi_{\bar b_1}$ in the Claim.

Hence, the the collection of $K$-instances $\Sigma_1$ is not finitely consistent, so there are finitely many $K$-instances $\varphi_{\bar b_i}(\bar x,\bar a_i)'s$ such that every $\bar b$ in $X$ must realize the disjunction $\varphi(\bar x, \bar a)=\bigvee_{i}\varphi_{\bar b_i}(\bar x,\bar a_i)$. Hence, the differentially constructible set $X$ is defined by the $K$-instance $\varphi(\bar x,\bar a)$, as desired. 
\end{proof}

We will conclude this section with two easy observations, which will be useful all throughout these notes. They provide an algebraic description of the  model-theoretic \emph{definable} and \emph{algebraic closures} of a subset~$A$ of the universal differentially closed field~$\UU'$. 
\begin{coro}\label{C:dcl}
Given a finite tuple $\bar a$ in a universal differentially closed field $\UU'$ and a countable differential subfield $K$ of $\UU'$, either $\bar a$ belongs to $K$ or there exists some tuple $\bar a'\ne \bar a$ whose type over $K$ equals the type of $\bar a$ over $K$. 
\end{coro}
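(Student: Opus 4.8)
Since the assertion is a disjunction whose first alternative holds trivially when $\bar a\in K^{|\bar a|}$, the content is the case $\bar a\notin K^{|\bar a|}$, which I now assume. Then some coordinate $a:=a_{i}$ of $\bar a$ lies outside $K$, and the whole problem reduces to finding a \emph{single} element $a'\neq a$ with $a'\equiv_{K}a$. Indeed, given such an $a'$, Definition~\ref{D:types} furnishes a partial isomorphism $\str{K}{a}\to\str{K}{a'}$ fixing $K$ pointwise and sending $a$ to $a'$, and by Corollary~\ref{C:univ} (applied in the universal field $\UU'$) this is the restriction of an automorphism $\sigma\in\Aut_\delta(\UU'/K)$; the tuple $\bar a':=\sigma(\bar a)$ then differs from $\bar a$ in its $i$-th coordinate, while $\bar a'\equiv_{K}\bar a$ again by Corollary~\ref{C:univ}.

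To produce $a'$ I distinguish three cases according to the prime differential ideal $I(a/K)$. If $a$ is differentially transcendental over $K$, that is $I(a/K)=\{0\}$, I take $a'=a+1$: this is distinct from $a$, and it is again differentially transcendental over $K$, since otherwise $\str{K}{a+1}=\str{K}{a}$ would have finite transcendence degree over $K$, against Corollary~\ref{C:diffrank_trdeg}; hence $I(a'/K)=\{0\}=I(a/K)$ and $a'\equiv_{K}a$ by Definition~\ref{D:types}. If $a$ is differentially algebraic over $K$ with minimal differential polynomial $P_{a}$ of order $0$, then $P_{a}$ is an ordinary irreducible polynomial over $K$ vanishing at $a$, necessarily of degree at least $2$ (degree one would give $a\in K$), so the algebraically closed field $\UU'$ contains a root $a'\neq a$ of $P_{a}$; as $P_{a}$ is then also the minimal polynomial of $a'$ over $K$, we get $I(a'/K)=I(P_{a})=I(a/K)$ and $a'\equiv_{K}a$.

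The remaining case, $a$ differentially algebraic over $K$ with minimal polynomial $P:=P_{a}$ of order $n\ge 1$, is the only substantial one. Here I look for a realization in $\UU'$ of the partial $1$-type $\Sigma_{P}(x)\cup\{x\neq a\}$, where $\Sigma_{P}$ is the type of Example~\ref{E:types}(a): any realization $a'$ of $\Sigma_{P}$ has $P$ as the minimal polynomial of $I(a'/K)$, hence $I(a'/K)=I(a/K)$ and $a'\equiv_{K}a$, while the instance $x\neq a$ forces $a'\neq a$. As $\str{K}{a}$ is a countable differential subfield of $\UU'$ and $\UU'$ is $\aleph_1$-saturated, it is enough to check that $\Sigma_{P}(x)\cup\{x\neq a\}$ is finitely consistent in $\UU'$. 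A finite subset of it amounts to the equation $P(x)=0$, finitely many inequations $Q_{1}(x)\neq 0,\dots,Q_{m}(x)\neq 0$ with $0\neq Q_{j}\in K\{T\}$ of order $<n$, and the inequation $x\neq a$; setting $R(T):=(T-a)\prod_{j=1}^{m}Q_{j}(T)\in\UU'\{T\}$, which is non-trivial of order $<n=\ord(P)$, the defining property of the differentially closed field $\UU'$ (Definition~\ref{D:DCF}) provides $b\in\UU'$ with $P(b)=0\neq R(b)$, and such a $b$ satisfies $b\neq a$, $Q_{j}(b)\neq 0$ for all $j$, and $P(b)=0$, so it realizes the chosen finite subset.

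In each case $a$ thus has a conjugate $a'\neq a$ over $K$, and the reduction of the first paragraph produces the required $\bar a'\neq\bar a$ with $\bar a'\equiv_{K}\bar a$. The one delicate point is the last case: differential-closedness of $\UU'$ a priori yields only \emph{one} generic solution of the minimal polynomial $P$, and the trick that makes it deliver a \emph{second} one is to absorb the constraint $x\neq a$ into the lower-order inequation $R(T)\neq 0$ before invoking Definition~\ref{D:DCF}, after which $\aleph_1$-saturation upgrades finite consistency into an actual realization. (Alternatively one could run the argument through Lemma~\ref{L:invariant_definition}: if $\bar a$ had no distinct conjugate then $\{\bar a\}$, and hence each $\{a_i\}$, would be $K$-definable, leading to the same contradiction; the route above bypasses this.)
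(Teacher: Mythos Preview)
Your proof is correct and follows essentially the same approach as the paper's: reduce to a single element via automorphisms, then split into the differentially transcendental, order-$0$, and order-$n\ge 1$ cases. The only notable variation is in the transcendental case, where you take $a'=a+1$ directly rather than invoking $\aleph_1$-saturation to find an element differentially transcendental over $\str{K}{a}$; your choice is slightly slicker, but both work. In the order-$n\ge 1$ case your trick of absorbing $x\ne a$ into the product $R(T)$ is exactly what the paper does implicitly by letting the inequation polynomials range over $\str{K}{a}\{T\}$ (so that $T-a$ is among them).
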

\begin{proof}
By automorphisms, using Corollary~\ref{C:univ}, we may assume that the tuple $\bar a$ consists of a single element $a$. Assume therefore that $a$ does not belong to $K$. If $a$ is differentially transcendental over $K$, choose some $a'$ in $\UU'$ differentially transcendental over $\str{K}{a}$ by $\aleph_1$ saturation, so $a'\ne a$ yet $a\equiv_K a'$, as desired. 

If $a$~is differentially algebraic over~$K$, its minimal polynomial $P_a(T)$ has either order~$0$, yet  is not linear (as a polynomial in~$T$) or its order is $n>0$. In the first case, choose another root $a'\ne a$ of $P_a$ in (the algebraically closed field)~$\UU'$. If the order~$n$ of~$P_a$ is strictly positive, Example~\ref{E:types} yields that the collection of $K$-instances 
\[ \Sigma(x)=\{P_a(x)=0\} \cup \{Q'(x)\ne 0 \ | \ 0\ne Q'(T) \in \str{K}{a}\{T\} \text{ with } \ord(Q')<\ord(P)\}\] is a partial $1$-type over $\str{K}{a}$ (as in Definition~\ref{D:sat}). Now, since $\UU'$ is $\aleph_1$-saturated, the partial $1$-type $\Sigma(x)$  admits a realization $a'\ne a$ in $\UU$ whose  minimal polynomial over $K$ is $P_a(T)$, so $a\equiv_K a'$, as desired.   
\end{proof}
Mimicking the above proof, we immediately deduce the following result.  
\begin{coro}\label{C:acl}
	Given a finite tuple $\bar a$ in a universal differentially closed field $\UU'$ and a countable differential subfield $K$ of $\UU'$, either $\bar a$ belongs to $K^{alg}$ or the orbit of $\bar a$ under $\Aut_\delta(\UU'/K)$ is infinite, where $\Aut_\delta(\UU'/K)$ denotes the subgroup of all differential automorphisms of $\UU'$ fixing $K$ pointwise. \qed
\end{coro}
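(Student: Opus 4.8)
The plan is to mimic the proof of Corollary~\ref{C:dcl}, but to iterate the construction so as to exhibit infinitely many pairwise distinct realizations of $\tp(\bar a/K)$ in $\UU'$. By Corollary~\ref{C:univ} these realizations are exactly the orbit of $\bar a$ under $G:=\Aut_\delta(\UU'/K)$, so this will prove the statement. First I would reduce to a single element: if $\bar a=(a_1,\dots,a_n)\notin K^{alg}$, some coordinate, say $a_1$, is not algebraic over $K$; and if the $G$-orbit of $a_1$ contains pairwise distinct elements $b^{(1)},b^{(2)},\dots$, picking $\sigma_j\in G$ with $\sigma_j(a_1)=b^{(j)}$ yields pairwise distinct tuples $\sigma_j(\bar a)$ in the $G$-orbit of $\bar a$. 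Hence it is enough to show that the $G$-orbit of a single element $a\notin K^{alg}$ is infinite.

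If $a$ is differentially transcendental over $K$, I would construct recursively a sequence $(a_k)_{k\geq 1}$ in $\UU'$ with $a_k$ differentially transcendental over the countable field $\str{K}{a,a_1,\dots,a_{k-1}}$; such an $a_k$ exists by $\aleph_1$-saturation of $\UU'$ applied to the partial $1$-type $\Sigma_\text{diff.tr}$ over that field, as in Example~\ref{E:types}.(b). Each $a_k$ lies outside $\str{K}{a,a_1,\dots,a_{k-1}}$, hence is distinct from $a,a_1,\dots,a_{k-1}$; and $I(a_k/K)=\{0\}=I(a/K)$, so $a_k\equiv_K a$. Thus $\{a,a_1,a_2,\dots\}$ is an infinite subset of the $G$-orbit of $a$.

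If $a$ is differentially algebraic over $K$, its minimal polynomial $P_a(T)$ over $K$ has order $n\geq 1$, for otherwise $P_a$ would witness that $a$ is algebraic over $K$, contrary to $a\notin K^{alg}$; in particular the order-$0$ case of Corollary~\ref{C:dcl} does not arise here. Setting $a_0:=a$ and, given $a_0,\dots,a_{k-1}$, writing $L_k=\str{K}{a_0,\dots,a_{k-1}}$, I would consider, as in Example~\ref{E:types}.(a), the partial $1$-type
\[ \Sigma_k(x)=\{P_a(x)=0\}\cup\{Q(x)\ne 0\ \mid\ 0\ne Q\in L_k\{T\},\ \ord(Q)<n\}, \]
which is finitely consistent because $\UU'$ is differentially closed and a finite product of nonzero differential polynomials over $L_k$ of order $<n$ is again one. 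By $\aleph_1$-saturation pick $a_k\in\UU'$ realizing $\Sigma_k$. Since $T-a_j\in L_k\{T\}$ is a nonzero differential polynomial of order $0<n$ for each $j<k$, we get $a_k\ne a_j$; and since $P_a(a_k)=0$ while $a_k$ annihilates no nonzero differential polynomial over $K\subseteq L_k$ of order $<n$, Corollary~\ref{C:unique_minpol} and Remark~\ref{R:prime_minpol}.(b) force $P_a$ to be the minimal polynomial of $I(a_k/K)$, so $a_k\equiv_K a$. Again $\{a,a_1,a_2,\dots\}$ is an infinite subset of the $G$-orbit of $a$.

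The only point requiring care is this last identification of $I(a_k/K)$, and it is exactly the one already dealt with in the proofs of Theorem~\ref{T:DCF_QE} and Corollary~\ref{C:dcl}: one has $P_a\in I(a_k/K)$, the prime differential ideal $I(a_k/K)$ contains no nonzero differential polynomial of order $<n$, so its minimal polynomial has order exactly $n$ by Corollary~\ref{C:unique_minpol}, and then Remark~\ref{R:prime_minpol}.(b) applied to the irreducible $P_a$ gives $I(a_k/K)=I(P_a)$, whence $\tp(a_k/K)=\tp(a/K)$ by Remark~\ref{R:iso_diff}. Everything else is a routine repetition of the $\aleph_1$-saturation arguments of Sections~\ref{S:DCF} and~\ref{S:Univ}.
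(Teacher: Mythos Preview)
Your proof is correct and follows the same approach as the paper's sketch: reduce to a single element, dispose of the differentially transcendental case by iterated saturation, and in the differentially algebraic case observe that $a\notin K^{alg}$ forces $\ord(P_a)\ge 1$, then iterate the construction of Corollary~\ref{C:dcl} over the countable fields $\str{K}{a_0,\dots,a_{k-1}}$ to produce infinitely many distinct realizations. Your write-up simply fills in the details the paper leaves implicit.
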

We provide a quick sketch of the proof: after reducing to a single element $a$, which we may immediately assume to be  differentially algebraic (since the differentially transcendental case is easy), it suffices to notice that if  $a$ is not algebraic over $K$, then its minimal differential polynomial $P_a(T)$ over $K$ must have order $n>0$. 

%--------------------------------------------------

\section{Basics of stability and independence}\label{S:Stab}

Whilst we introduce in the previous section the notion of ($\omega$-)stability in terms of the number of types, one of the key features of this notion is that it is canonically equipped with an abstract notion of independence (defined in purely combinatorial terms). In the particular case of differentially closed fields, this notion of independence can be easily described in algebraic terms, yet one of the core goals of \emph{geometric model theory} is the study and development of the independence notion at an abstract level.

From now on, we work inside an ambient universal $\aleph_1$-saturated differentially closed field $\UU$, as in Proposition~\ref{P:univ}. All subfields and tuples will be taken within $\UU$. Unless explicitly stated, all proper (differential) subfields of $\UU$ we consider will be countable.

\begin{defi}\label{D:indep}
Consider differential subfields $K$, $L$  and $k$ of $\UU$ with $k\subset K\cap L$. We say that $K$ and $L$ are \emph{independent over $k$}, denoted by $K\ind_k L$, if $K$ and $L$ are \emph{algebraically independent} over $k$, that is, if for every tuple $(a_1,\ldots, a_n)$ of $K$ and every non-trivial  polynomial $P(T_1,\ldots, T_n)$ with coefficients in $L$ such that $P(a_1,\ldots, a_n)=0$, we have that $Q(a_1,\ldots, a_n)=0$ for some non-trivial  polynomial $Q(T_1,\ldots, T_n)$ with coefficients in $k$. 

Two tuples $\bar a$ and $\bar b$ are \emph{independent over} the differential  subfield $k$ of $\UU$ if the differential subfields $\str{k}{\bar a}$ and $\str{k}{\bar b}$ of $\UU$ are independent over $k$. 
\end{defi}

\begin{rema}\label{R:indep_char}
	\begin{enumerate}[(a)]
		\item Linear disjointness, introduced in Remark~\ref{R:extension_diff} (\ref{I:ext_lin_disj}), always implies algebraic independence. These two notions coincide whenever the base subfield $k$ is (relatively) algebraically closed  
		 \parencite[Chapter VIII, Theorem 4.12]{Lang_Alg}.  
		
		In particular, Remark~\ref{R:extension_diff} (\ref{I:ext_lin_disj}) yields that, whenever~$K$ and~$L$ are independent over the algebraically closed differential subfield~$k$, the compositum field $K\cdot L$ as a differential subfield of~$\UU$ has a unique derivation extending both the derivations on~$K$ and on~$L$.  
		\item Consider a differential subfield $k$ of $K$ and a   tuple $\bar a$ of $\UU$. We have that $\str{k}{\bar a}$ is linearly disjoint from $K$ over $k$ if and only if  the differential ideal $I(\bar a/K)$ can be generated (as a radical ideal) by finitely many differential polynomials with coefficients in $k$ \parencite[Chapter III, \S 2, Theorem 8]{Lang_AlgGeo}. 
		
		The field $\str{k}{\bar a}$ is always linearly disjoint from $K$ over $k$ whenever  $\str{k}{\bar a}\ind_k K$ and $k$~is algebraically closed, by the previous discussion. 
	\end{enumerate}
\end{rema}

\begin{rema}\label{R:indep_prop1}
The above notion of independence satisfies the following properties \parencite[Chapter VIII, \S 3]{Lang_Alg} for every triple of differential subfields $K$, $L$ and $k$ of $\UU$ as above:
\begin{description}
\item[Invariance]  Given an automorphism $\sigma$ of the differential field $\UU$, we have that $K\ind_k L$ if and only if $\sigma(K)\ind_{\sigma(k)} \sigma(L)$. 
\item[Symmetry] The independence $K\ind_k L$ holds if and only if $L\ind_k K$. 
\item[Finite Character] The independence $K\ind_k L$ holds if and only if  all  finite tuples $\bar a$ of $K$ and $\bar b$ of $L$ are independent over $k$. 
\item[Monotonicity \& Transitivity] Given a differential subfield $M$ of $\UU$ with $L\subset M$,  denote by $\str{\Q}{K\cup L}$  the differential field generated by $K$ and $L$. We  then have the following: 
\[K\ind_k M \ \iff \ \begin{cases} K\ind_k L \\[1mm] \hskip2mm \text{ and }\\[1mm]
	\str{\Q}{K\cup L} \ind_{L} M \\[1mm]
\end{cases}\] 
\item[Algebraic closure] The independence $K\ind_k L$ holds if and only if $K^{alg} \ind_{k^{alg}} L^{alg}$. Moreover, if an element~$a$ is independent from $\str{k}{a}$ over~$k$, then $a$~is algebraic over~$k$. 
\end{description}
\end{rema}

Consider a differential subfield~$K$   and a finite tuple~$\bar a$ of~$\UU$ such that $\str{k}{\bar a}\ind_k K$, where $k$~is a differential subfield of~$K$. It follows immediately from {\bf Invariance} that this independence does not depend on the particular representative of the type of~$\bar a$ over~$K$, that is, for every $\bar a'\equiv_K \bar a$, we have that $\str{k}{\bar a'}\ind_k K$, 
by Corollary~\ref{C:univ}. In this case, we say that $\bar a$~is \emph{independent from~$K$ over~$k$}.

\begin{rema}\label{R:indep_prop2}
	\begin{enumerate}[(a)]
\item The independence notion defined above satisfies the following principle of { \bf Stationarity}: Assume that $K\ind_k L$ with $k$ algebraically closed. Given  partial isomorphisms $F\colon K\to K'$ and $G\colon L\to L'$ which agree on $k$,  if  $K'\ind_{F(k)} L'$, then the composita fields $K\cdot L$ and $K'\cdot L'$ are isomorphic as  differential fields, by Remark~\ref{R:indep_char}.(a). Thus, there exists a global  automorphism $\sigma$ of the differential field~$\UU$ extending both $F$ and $G$ such that $\sigma(K\cdot L)= K'\cdot L'$. 

By Remark~\ref{R:indep_char}.(a), stationarity also holds if $k$~is relatively algebraically closed in~$L$ (and thus the field $F(k)=G(k)$ is also relatively algebraically closed in~$L'$, by Proposition~\ref{P:univ}). 

\item The independence notion of Definition~\ref{D:indep} satisfies the {\bf Extension} property: Given a finite tuple $\bar a$ and a (countable) differential subfield $K$ of $\UU$ with a differential subfield $k\subset K$, there exists some $\bar a'$ in $\UU$ with $\bar a'\equiv_k \bar a$ and $\str{k}{\bar a'}\ind_k K$. Indeed, we may assume by {\bf Algebraic closure} that $k$ is algebraically closed. Working inside some ambient algebraically closed field (containing our universal differentially closed field $\UU$), we can find a differential field extension $L$ of $k$, possibly outside of our ambient model $\UU$, which is linearly disjoint from $K$ over $k$ such that $L$ contains a tuple $\bar b$ whose differential vanishing ideal over $k$ is $I(\bar a/k)$ \parencite[Chapter III, \S 2, Corollaries 1 \& 2]{Lang_AlgGeo}. The compositum $K\cdot L$ is a differential field extending $K$, by Remark~\ref{R:extension_diff} (\ref{I:ext_lin_disj}).  Moreover, the tuple $\bar b$ has the same differential vanishing ideal as $\bar a$ over $k$ and $\str{k}{\bar b}\ind_k K$. We need only find a realization $\bar a'$ in $\UU$ which belongs to the same Kolchin constructible subsets of $\UU^{|\bar a|}$ defined over $K$ as those induced by the tuple $\bar b$ of $K\cdot L$ over $K$, and thus $\bar a\equiv_k \bar a'$, by Definition~\ref{D:types}. Indeed, if we have find such a tuple $\bar a'$, we 
immediately have that  $\str{k}{\bar a'}\ind_k K$, for this independence is encoded by Kolchin constructible subsets of $K\cdot L$ defined over $K$ containing $\bar b$.

 By Remark~\ref{R:DCF0_Einbettung}.(a), we can embed  $K\cdot L$ (and thus $K$) into some differentially closed field $M$.  Using Remark~\ref{R:iso_qf} and Corollary~\ref{C:DCF_existclosed}, we conclude that 
 the collection of $K$-instances \[ \{P(x_1,\ldots, x_n)=0 \}_{P(\bar T) \in I(\bar b/K)} \cup \{ Q(x_1,\ldots, x_n)\ne 0\}_{Q(\bar T) \notin I(\bar b/K)} \] is a partial $n$-type over $K$ in $\UU$, where $n=|\bar b|$. By $\aleph_1$-saturation of $\UU$, there exists a realization $\bar a'$ of the above partial type in $\UU$, as desired. 
\end{enumerate}
\end{rema}
The use of stationary types, which will be defined below, extends the principle of stationarity to types whose parameter set need not be algebraically closed. 
\begin{defi}\label{D:stat}
The type of a finite tuple $\bar a$ over a differential subfield $K$ of $\UU$ is \emph{stationary} if  the field extension $K\subset \str{K}{\bar a}$ is \emph{regular}, that is, if $ \str{K}{\bar a}$ and $K^{alg}$ are linearly disjoint over $K$. 
\end{defi}
The above definition does not depend on the choice of representative for the type of~$\bar a$ over~$K$. Indeed, given another tuple~$\bar a'$ with the same type as~$\bar a$ over~$K$, Corollary~\ref{C:univ} yields an automorphism~$\sigma$ in $\Aut_\delta(\UU/K)$ with $\bar a'=\sigma(\bar a)$.  Note that $\sigma$~permutes $K^{alg}$, so $ \str{K}{\bar a'}$ and $K^{alg}$ are linearly disjoint over~$k$ by invariance of linear disjointness. 

Types over algebraically closed differential subfields are  stationary, directly from the definition. 
\begin{rema}\label{R:stat}
	
	\begin{enumerate}[(a)]
		
		Consider a differential field extension $K\subset L$ and a tuple $\bar a$ in $\UU$. 
\item Transitivity of linear disjointness \parencite[Chapter VIII, Proposition 3.1]{Lang_Alg} yields that stationary types satisfy the {\bf Stationarity} principle of Remark~\ref{R:indep_prop2}.(a), that is, if the type of $\bar a$ over $K$ is stationary, whenever  $\str{K}{\bar a}\ind_K L$ and $\str{K}{\bar a'}\ind_K L$ with $\bar a'\equiv_K\bar a$, then $\bar a$ and $\bar a'$ have the same type over $L$. 

\item If $\bar a$ is independent from $L$ over $K$ and the type of $\bar a$ over $K$ is stationary, then  the type over $\bar a$ over $L$ is again stationary. 

\item If both the types of $\bar a$ and of the tuple $\bar b$ of $\UU$ over $K$ are stationary with $\bar a$ and $\bar b$ independent over $K$, then  the type of the tuple $(\bar a, \bar b)$ is stationary over $K$. 
	\end{enumerate}
\end{rema}
\begin{proof}
	
	For (a),  we may assume by {\bf Algebraic Closure}, that $L$ is algebraically closed. Using the transitivity of linear disjointness, we have that   $\str{K}{a}$ and $L$  are linearly disjoint over $K$, and likewise for $\str{K}{a'}$ and $L$, by Remark~\ref{R:indep_char}.(a). Hence, the partial isomorphism mapping $\bar a$ to $\bar a'$ and fixing $K$ pointwise extends to an isomorphism $\str{L}{\bar a}\to \str{L}{a'}$ fixing~$L$ pointwise. Thus it follows that $\bar a\equiv_L \bar a'$, as desired.

The proof of (b) is similar:  Using  {\bf Algebraic Closure}, we have that $\bar a$ is independent from $L^{alg}$ over $K$, so $\str{K}{\bar a}$ and $L^{alg}$ (and hence $L$) are linearly disjoint over $K$, by regularity of the extension  $K\subset \str{K}{a}$ (for the type of $\bar a$ over $K$ is stationary). Now, the compositum $\str{K}{a}\cdot L$ is a differential field which equals  $\str{L}{\bar a}$.  Using transitivity of linear disjointness, we conclude that the extension $L\subset \str{L}{\bar a}$ is regular, as desired. 

For (c), note that the independence of $\bar a$ and $\bar b$ over $K$ yields that $\str{K^{alg}}{\bar a}$ and $\str{K^{alg}}{\bar b}$ are linearly disjoint over $K^{alg}$. Notice that  $\str{K^{alg}}{\bar a}=K^{alg}\cdot \str{K}{\bar a}$ and similarly $\str{K^{alg}}{\bar b}=K^{alg}\cdot \str{K}{\bar b}$. Transitivity of linear disjointness, using that the  extension $K\subset \str{K}{\bar a}$ is regular, yields that $\str{K}{\bar a}$ and $\str{K^{alg}}{\bar b}=K^{alg}\cdot \str{K}{\bar b}$ are linearly disjoint over $K$. In particular, by  transitivity of linear disjointness, using now that the  extension $K\subset \str{K}{\bar b}$ is regular, we conclude that 
the differential field $\str{K}{\bar a, \bar b}=\str{K}{\bar a}\cdot \str{K}{\bar b}$ is linearly disjoint from $K^{alg}$ over $K$, so the extension $K\subset \str{K}{\bar a, \bar b}$ is regular. 
\end{proof}
The notion of a Morley sequence is ubiquitous in model theory. Whilst the definition of Morley sequences in the abstract setting requires the additional notion of \emph{indiscernibility}, this property will follow automatically from the {\bf Stationarity} principle in our context, see Remark~\ref{R:MS}.(c) below. 
\begin{defi}\label{D:MS}
Given a stationary type of a tuple $\bar a$ over a  differential subfield $K$ of $\UU$ and a  natural number $m\ge 1$, a \emph{Morley sequence} (of length $m$) of the type of $\bar a$ over $K$  is a sequence $(\bar a_1,\ldots, \bar a_m)$ of realizations of the type of $\bar a$ over $K$ (so $\bar a_i\equiv_K \bar a$) with \[ \str{K}{\bar a_i} \ind_{K} \str{K}{\bar a_1,\ldots, \bar a_{i-1}} \text{ for all $2\le i\le m$}.\]
\end{defi}
\begin{rema}\label{R:MS}
	Consider a  Morley sequence $(\bar a_1,\ldots, \bar a_m)$ of the stationary type of $\bar a$ over $K$. 
	\begin{enumerate}[(a)]
\item  For every permutation $\tau$ of $\{1,\ldots, m\}$, the sequence $(\bar a_{\tau(1)},\ldots, \bar a_{\tau(m)})$ is also a Morley sequence of the type of $\bar a$ over $K$,  by a straightforward application of {\bf Monotonicity \& Transitivity}.

\item The type of the tuple $(\bar a_1,\ldots, \bar a_m)$ over $K$ is again stationary, by Remark~\ref{R:stat}.(c). 
 
\item Consider a  differential field extension $L$ of $K$ with \[ \str{K}{\bar a_1,\ldots, \bar a_m} \ind_K L.\] The sequence   $(\bar a_1,\ldots, \bar a_m)$  is a Morley sequence of the type of $\bar a_1$ over $L$ (which is stationary by Remark~\ref{R:stat}), by {\bf Stationarity} as well as {\bf Monotonicity \& Transitivity}. 

In particular, for every $1\le i\le j\le m$, we have that $\bar a_i$ and $\bar a_j$ have the same type over $\str{K}{\bar a_1,\ldots, \bar a_{i-1}}$, since \[ \str{K}{\bar a_i, \bar a_j} \ind_{K} \str{K}{\bar a_1,\ldots, \bar a_{i-1}}.\] Thus, the subsequence $(\bar a_i, \ldots, \bar a_m)$ is a Morley sequence of the (stationary) type of $\bar a_i$ over $\str{K}{\bar a_1,\ldots, \bar a_{i-1}}$. 
\item Every stationary type admits a Morley sequence of length $m$ for every $m$ in $\N$ by {\bf Extension}~\ref{R:indep_prop2}.(b). Any two Morley sequences of the same length have the same type over the base subfield, again by {\bf Stationarity}. Thus,  we can map one Morley sequence of a given fixed length to another one of the same length by an automorphism fixing the base subfield. 
	\end{enumerate}
\end{rema}

In order to introduce the fundamental notion of the canonical base of a stationary type, we first recall Weil's \emph{field of definition} of a differential ideal. 
\begin{defi}\label{D:fielddef}
Given a finite tuple $\bar a$ and a differential subfield $K$ of $\UU$, the (differential) \emph{field of definition} of the type of $\bar a$ over $K$ is the smallest differential field $k\subset K$ such that the vanishing ideal $I(\bar a/K)$ admits generators over $k$, that is, with $I(\bar a/K)=I(a/k)\cdot K\{T\}$. Such a field exists and is fixed pointwise exactly by the automorphisms of $\UU$ which fix $I(\bar a/K)$ setwise (see \cite[Chapter VIII, \S 2, Theorem 7]{Lang_AlgGeo}  \& \cite[Theorem 2.3.5]{Tressl}). 
\end{defi}

\begin{prop}\label{P:CB}
Given a differential field $K$ and a tuple $\bar a$ in $\UU$ such that the type of $\bar a$ over $K$ is stationary, set the \emph{canonical base} $\cb(\bar a/K)$ to be the differential field of definition of the type of $\bar a$ over $K$.  The canonical base has the following properties:
\begin{enumerate}[(a)]
	\item The type of $\bar a$ over the differential subfield $k=\cb(\bar a/K)$ is again stationary and the field $\str{k}{\bar a}$ is linearly disjoint, and thus independent,  from $K$ over $k$. 
\item For every differential subfield $K_1$ of $K$, we have that $\str{K_1}{\bar a}$ and $K$ are linearly disjoint over $K_1$ if and only if $\cb(\bar a/K)\subset K_1$. In particular, it follows from {\bf Algebraic Closure} that 
\[ \str{K_1}{\bar a}\ind_{K_1} K \ \iff \ \cb(\bar a/K)\subset K_1^{alg}.\] 

	\item If $\bar a$~is independent from~$L$ over~$K$, where $L$~is a differential field extension of~$K$, then $\cb(\bar a/L)=\cb(\bar a/K)$ (Note that the type of~$\bar a$ over~$L$ is stationary by Remark~\ref{R:stat}.(b)).
		\item Canonical bases are finitely generated (as differential fields), that is, there exists a finite tuple $\bar b$ of $K$ such that $\cb(\bar a/K)=\str{\Q}{\bar b}$. 
\item There exists some $m$ in $\N$ such that the canonical base $\cb(\bar a/K)$  is algebraic over $\str{\Q}{\bar a_1,\ldots, \bar a_m}$ for every Morley sequence $(\bar a_1,\ldots, \bar a_m)$ of the type of $\bar a$ over $K$. 
\end{enumerate} 
\end{prop}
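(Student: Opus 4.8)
The plan is to treat (a)--(d) as differential‑algebraic bookkeeping around Weil's field of definition, and to reduce (e) to the stability‑theoretic principle that a Morley sequence absorbs the canonical base. Write $k=\cb(\bar a/K)$, so $I(\bar a/K)=I(\bar a/k)\cdot K\{T\}$ by Definition~\ref{D:fielddef}. For the independence in (a): the ideal $I(\bar a/k)$ is generated as a radical differential ideal by finitely many polynomials of $k\{T\}$ (Ritt--Raudenbush), and these also generate $I(\bar a/K)$ as a radical ideal, so $\str{k}{\bar a}$ is linearly disjoint from $K$ over $k$ by Remark~\ref{R:indep_char}.(b). For stationarity of $\tp(\bar a/k)$ I would descend regularity from $K$ to $k$: stationarity of $\tp(\bar a/K)$ gives $I(\bar a/K^{alg})=I(\bar a/K)\cdot K^{alg}\{T\}=I(\bar a/k)\cdot K^{alg}\{T\}$, and contracting to $k^{alg}\{T\}$ (using that $K^{alg}\{T\}$ is a free, hence faithfully flat, $k^{alg}\{T\}$‑module, plus char.\ $0$ so radicals survive base change) yields $I(\bar a/k^{alg})=I(\bar a/k)\cdot k^{alg}\{T\}$, i.e.\ $\str{k}{\bar a}$ is linearly disjoint from $k^{alg}$ over $k$.

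For (b): if $\cb(\bar a/K)\subseteq K_1$ then the finite generating set of $I(\bar a/k)$ lies in $K_1\{T\}$, so as in (a) $\str{K_1}{\bar a}$ and $K$ are linearly disjoint over $K_1$; conversely that linear disjointness means exactly $I(\bar a/K)=I(\bar a/K_1)\cdot K\{T\}$, so $K_1$ is a field of definition of $I(\bar a/K)$ and hence contains the smallest one. The displayed algebraic‑closure version follows by applying this inside $K^{alg}$ together with the {\bf Algebraic closure} axiom and with $\cb(\bar a/K^{alg})=\cb(\bar a/K)$, which is a case of (c). For (c): $\bar a\ind_K L$ and stationarity of $\tp(\bar a/K)$ give (as in the proof of Remark~\ref{R:stat}.(b)) linear disjointness of $\str{K}{\bar a}$ and $L$ over $K$, whence $I(\bar a/L)=I(\bar a/K)\cdot L\{T\}=I(\bar a/k)\cdot L\{T\}$; so $k$ is a field of definition of $I(\bar a/L)$ and $\cb(\bar a/L)\subseteq k$, while contracting $I(\bar a/L)=I(\bar a/\cb(\bar a/L))\cdot L\{T\}$ back to $K\{T\}$ (faithful flatness again) shows $\cb(\bar a/L)$ is a field of definition of $I(\bar a/K)$, forcing equality. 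For (d): choose a finite radical‑differential‑ideal generating set of $I(\bar a/k)$ by Ritt--Raudenbush; it generates $I(\bar a/K)$ as well, and the differential field generated over $\Q$ by its coefficients is a finitely generated field of definition of $I(\bar a/K)$, hence equals $k$ by minimality. (The same minimality argument gives $\cb(\bar a/k)=k$.)

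The substance is (e). Write $k=\cb(\bar a/K)=\str{\Q}{\bar b}$ with $\bar b$ finite, by (d); by (a) and the previous remark, $\tp(\bar a/k)$ is stationary with canonical base $k$ and $\bar a\ind_k K$. One first checks, by induction using {\bf Monotonicity \& Transitivity}, that every Morley sequence $(\bar a_1,\ldots,\bar a_m,\ldots)$ of $\tp(\bar a/K)$ over $K$ is also a Morley sequence of $\tp(\bar a/k)$ over $k$, with $\str{k}{\bar a_1,\ldots,\bar a_j}\ind_k K$ for all $j$. The key point is then that an infinite such sequence absorbs the canonical base: $k$ is contained in the algebraic closure of the differential field $\str{\Q}{(\bar a_i)_{i\ge 1}}$. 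This is the standard stability fact that the canonical base of a stationary type lies in the definable closure of any infinite Morley sequence of it; in the present setting it can be extracted from the observation that the ``average behaviour'' of the Morley sequence recovers the non‑forking extension of $\tp(\bar a/k)$, a type based on $k$, combined with $(\bar a_i)_{i\ge 1}\ind_k K$ and part (c) applied to the infinite tuple, which pins the field of definition of the sequence over $K$ inside $k$. Granting this, finite character of algebraic dependence produces an $m$ with $k=\str{\Q}{\bar b}$ algebraic over $\str{\Q}{\bar a_1,\ldots,\bar a_m}$; and an arbitrary Morley sequence of length $m$ is, by Remark~\ref{R:MS}.(d), the image of $(\bar a_1,\ldots,\bar a_m)$ under some $\sigma\in\Aut_\delta(\UU/K)$, which fixes $k\subseteq K$ pointwise, so the conclusion passes to it; longer sequences contain such a subsequence by Remark~\ref{R:MS}.(c).

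The main obstacle is precisely the highlighted step of (e): making rigorous, with only the tools developed above, that an infinite Morley sequence determines the canonical base. That is where the genuinely model‑theoretic content — indiscernibility and the definability of the average type — is unavoidable; by contrast (a)--(d) are careful manipulations of radical differential ideals, Ritt--Raudenbush, faithful flatness, and the minimality built into the definition of the field of definition.
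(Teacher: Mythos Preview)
Your treatment of (a)--(c) is correct and close to the paper's, though the paper argues stationarity in (a) more directly: since $K\subset\str{K}{\bar a}$ is regular and $\str{k}{\bar a}$ is linearly disjoint from $K$ over $k$, transitivity of linear disjointness immediately gives that $k\subset\str{k}{\bar a}$ is regular --- no faithful-flatness or ideal-contraction step is needed. Your argument for (d) is in fact simpler than the paper's: the paper reduces to the classical algebraic field of definition by replacing $\bar a$ with a suitable prolongation $(\bar a,\delta(\bar a),\ldots,\delta^n(\bar a))$, whereas you observe directly that the coefficients of any finite Ritt--Raudenbush generating set for $I(\bar a/k)$ span a finitely generated field of definition, hence equal $k$ by minimality.

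The real divergence is (e), and the paper's route is worth learning because it avoids precisely the black box you flag. Rather than invoking that the canonical base lies in the definable closure of an infinite Morley sequence, the paper continues the prolongation reduction from its version of (d), so that one is working with the \emph{algebraic} field of definition $\Q(\bar b)$ of a tuple of finite transcendence degree, and takes the explicit bound $m=\tr(\bar b)$. The argument is then a direct transcendence-degree count: if $\bar b\notin\Q(\bar a_1,\ldots,\bar a_m)^{alg}$, then by (b) one has $\Q(\bar b,\bar a_k)\nind_{\Q(\bar a_1,\ldots,\bar a_{k-1})}\Q(\bar b,\bar a_1,\ldots,\bar a_{k-1})$ for each $k\le m$, forcing $\tr(\bar a_k/\Q(\bar a_1,\ldots,\bar a_{k-1}))\ge s+1$ where $s=\tr(\bar a_k/\Q(\bar b))$; computing $\tr(\bar b,\bar a_1,\ldots,\bar a_m)$ two ways then yields $m(s+1)\ge m(s+1)+1$. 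No indiscernibility, no average types, and an explicit $m$ from the outset --- so the obstacle you correctly identify in your route is genuine, but the paper simply never encounters it.
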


\begin{proof}
Set $k=\cb(\bar a/K)$. Remark~\ref{R:indep_char}.(b) yields that $\str{k}{\bar a}$ is linearly disjoint from $K$ over $k$ and thus  $\str{k}{\bar a}\ind_k K$, by Remark~\ref{R:indep_char}.(a). Since the extension 
$K\subset \str{K}{\bar a}=\str{k}{\bar a}\cdot K$ is regular,  transitivity of linear disjointness gives that $k\subset \str{k}{\bar a}$ is also regular, which gives  (a).

 We deduce (b) in a similar fashion. For the first part, use Remark~\ref{R:indep_char}  and the definitional property of the field of definition. The last part follows from {\bf Algebraic Closure}.

In order to prove (c), assume that $K\subset L$ is a differential field extension with $\str{K}{\bar a}\ind_K L$. By {\bf Algebraic Closure}, using Remark~\ref{R:indep_char}.(a), transitivity of linear disjointness  and that the extension $K\subset \str{K}{\bar a}$ is regular, we deduce that $\str{K}{\bar a}$ and $L^{alg}$, and thus $L$, are linearly disjoint over $K$. Again by transitivity of linear disjointness, the fields $\str{k}{\bar a}$ and $L$ are linearly disjoint over $k$, which yields that $k_1=\cb(\bar a/L)$ is contained in $\cb(\bar a/K)$. The other inclusion is immediate using that $\str{k_1}{\bar a}$ and $L$ are linearly disjoint over $k_1\subset K$, so $\str{k_1}{\bar a}$ and $K$ are also linearly disjoint over $k_1$, which yields that $k\subset k_1$ by part (b) above. 

For (d),  we need only find some finite tuple $\bar b$ in $k=\cb(\bar a/K)$ such that $\str{\Q}{\bar b, \bar a}$ is linearly disjoint from $k$ over $\str{\Q}{\bar b}$ (using (b)). Split the tuple $\bar a=\bar a_1\bar a_2$, where $\bar a_1$ is a maximal subtuple of $\bar a$ consisting of differential transcendental elements and independent over $k$ (as in Definition~\ref{D:difftr_alg}), whereas each coordinate in the tuple $\bar a_2$ is differentially algebraic over $\str{k}{\bar a_1}$.

 For a tuple $\bar d$ of $\UU$,  its \emph{$n^{th}$-prolongation} is the tuple $\nabla_n(\bar d)=(\bar d, \delta(\bar d),\ldots, \delta^n(\bar d))$. Choosing a suitable integer $n$, the  prolongation $\nabla_n(\bar a)$ witnesses all differential algebraicities of $\bar a_2$ over the field  $k(\nabla_n(\bar a_1))$ (without taking further derivatives). Remark~\ref{R:extension_diff}.(a) yields that $\str{k}{\bar a}=k(\nabla_n(\bar a))(\{\nabla_m(\bar a_1)\}_{m\ge n})$. 
The (algebraic) vanishing ideal of $\nabla_n(\bar a)$ over $k$ has a field of definition $\Q(\bar b)$ for some finite tuple $\bar b$ of $k$ (by Hilbert's Nullstellensatz).  In particular, the fields $\str{\Q}{\bar b}(\nabla_n(\bar a))$ and $k$ are linearly disjoint over $\str{\Q}{\bar b}$, as in part (a) of this proof. Since the family $(\nabla(\bar a_1))_{m\ge n}$ consists of  algebraically independent elements over $k$, we deduce from \parencite[Chapter VIII, \S 3, Proposition 3.3]{Lang_Alg}  that the fields $\str{\Q}{\bar b, \bar a}=\str{\Q}{\bar b}(\nabla_n(\bar a), \{\nabla_m(\bar a_1)\}_{m\ge n})$ and $k$ are linearly disjoint over $\str{\Q}{\bar b}$, as desired. 

For the last item (e), set $\bar b$ a tuple of generators of $\cb(\bar a/K)$, by part (d) above.  Replace the tuple $\bar a$ with a suitable prolongation $\nabla_n(\bar a)$ as before, so every further derivative is already algebraic (in the field sense) over $K(\nabla_n(\bar a))$. We need only show that the field of definition $\Q(\bar b)$  (in the algebraic sense) of our new tuple $\bar a$ over $K$ lies in the algebraic closure of the field generated by $m$-many independent realizations $\bar a_1,\ldots, \bar a_m$ of the type of $\bar a$ (or rather of $\nabla_n(\bar a)$) over $K$, where the integer $m=\tr(\bar b)$. Assume otherwise, so for $k\le m$, we have that the tuple $\bar b$  does not lie in $\Q(\bar a_1,\ldots, \bar a_k)^{alg}$ (Note that we are working in classical commutative algebra without taking differential fields). 

Since the sequence $\bar a_1,\ldots, \bar a_m$ is independent over $K$, we have that \[ K(\bar a_k)\ind_K K(\bar a_1,\ldots, \bar a_{k-1})  \text{ for all } k\le m,\] so \[\tag*{(1)} \Q(\bar b, \bar a_k)\ind_{\Q(\bar b)}  \Q(\bar b, \bar a_1,\ldots, \bar a_{k-1}) \text{ for all } k\le m,\] by {\bf Monotonicity \& Transitivity}, for $\bar a_k$ and $\bar a$ have the same type over $K$. Thus, the field $\Q(\bar b)$ is also the field of definition of the vanishing ideal of $\bar a_k$ over $\Q(\bar b, \bar a_1,\ldots, \bar a_{k-1})$, similarly as in the proof of (c)  of this Proposition. If we have  for some $1\le k\le m$ that  \[ \Q(\bar b, \bar a_k)\ind_{ \Q(\bar a_1,\ldots, \bar a_{k-1})}  \Q(\bar b, \bar a_1,\ldots, \bar a_{k-1}),\] it would follow from  part (b) that $\bar b$ is algebraic over $\Q(\bar a_1,\ldots, \bar a_{k-1})$, contradicting our assumption. Thus, the algebraic dependence \[\tag*{(2)} \Q(\bar b, \bar a_k)\nind_{ \Q(\bar a_1,\ldots, \bar a_{k-1})}  \Q(\bar b, \bar a_1,\ldots, \bar a_{k-1}) \text{ for all } k\le m\] yields that for all $1\le k\le m$ \begin{multline*}\tag*{(3)} s=\tr(\bar a_k/K)  \tr(\bar a_k/\Q(\bar b)) \stackrel{(1)}{=} \\ = \tr(\bar a_k/\Q(\bar b, \bar a_1,\ldots, \bar a_{k-1})) \stackrel{(2)}{<} \tr(\bar a_k/\Q( \bar a_1,\ldots, \bar a_{k-1})), \end{multline*} and thus 
\[ \tag*{(4)} s+1 \le \tr(\bar a_k/\Q(  \bar a_1,\ldots, \bar a_{k-1})) \text{ for all } k\le m.\]
The following computation
\begin{multline*}m(s+1)=m+m\cdot s=\tr(\bar b) + \sum_{k=1}^m s \stackrel{(3)}{=} \tr(\bar b) + \sum_{k=1}^m \tr(\bar a_k/\Q(\bar b, \{\bar a_j\}_{j<k}))= \\ = \tr(\bar b, \{\bar a_k\}_{k\le m}) = \tr(\{\bar a_k\}_{k\le m})+ \tr(\bar b/\Q(\{\bar a_k\}_{k\le m}) = \\ = \sum_{k=1}^{m} \tr(\bar a_k/\Q(\{\bar a_j\}_{j< k})) + \tr(\bar b/\Q(\{\bar a_k\}_{k\le m}) \stackrel{(4)}{\ge} \\ \sum_{k=1}^m s+1 + \tr(\bar b/\Q(\{\bar a_k\}_{k\le m}) \ge  m(s+1)+ 1 > m(s+1)
   \end{multline*}
yields the desired contradiction. 
\end{proof}
\begin{rema}\label{R:CB_dcl}
It is not difficult to show that for some $m$ in $\N$ the canonical base $\cb(\bar a/K)$ of $\bar a$ over $K$ belongs to the differential field $\str{\Q}{\bar a_1,\ldots, \bar a_m}$ for every Morley sequence  $(\bar a_1,\ldots, \bar a_m)$. Indeed, it suffices to show this for the coefficients of the field of definition of the ideal in the classical algebraic sense. After choosing a suitable enumeration of the monomials, every realization $\bar a_i$ induces a linear dependence for the coefficients. Choosing the prolongation index $n$ sufficiently large, this translates into an invertible matrix, whose entries can be computed using Cramer's rule. 
\end{rema}
Proposition~\ref{P:CB}.(a) and (d) yield immediately the following result: 
\begin{coro}\label{C:local_char}
The independence notion for the theory of differentially closed fields introduced in Definition~\ref{D:indep} satisfies a \emph{strong} version of {\bf Local Character}: For every finite tuple $\bar a$ of $\UU$ and every differential subfield $K$ of $\UU$, there exists a \emph{finite} tuple $\bar b$ of $K$ such that $\bar a$ is independent from $K$ over the differential field generated by $\bar b$. \qed
\end{coro}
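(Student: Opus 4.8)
The plan is to reduce to a situation where Proposition~\ref{P:CB} applies, namely a \emph{stationary} type, by first base-changing to the algebraic closure $K^{alg}$. Since types over algebraically closed differential subfields are stationary, the type of $\bar a$ over $K^{alg}$ is stationary, so the canonical base $k=\cb(\bar a/K^{alg})$ is well-defined; Proposition~\ref{P:CB}.(a) then gives that $\str{k}{\bar a}$ is linearly disjoint, and hence independent, from $K^{alg}$ over $k$, in particular $\str{k}{\bar a}\ind_k K$. The only remaining task is to replace the (a priori not $K$-rational) field $k$ by the differential field generated by a finite tuple drawn from $K$ itself.

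To do this, I would first invoke Proposition~\ref{P:CB}.(d) to write $k=\str{\Q}{\bar c}$ for some finite tuple $\bar c$ of $K^{alg}$. As $k\subseteq K^{alg}$, every coordinate of $\bar c$ is algebraic over $K$, and collecting the coefficients of the minimal polynomials over $K$ of these coordinates yields a finite tuple $\bar b$ of $K$ with $\bar c\in \Q(\bar b)^{alg}$. Since the algebraic closure $\str{\Q}{\bar b}^{alg}$ of the differential field $\str{\Q}{\bar b}$ is again a differential field (Remark~\ref{R:extension_diff}.(a)) and contains $\bar c$, it contains all derivatives of $\bar c$; hence $k=\str{\Q}{\bar c}\subseteq \str{\Q}{\bar b}^{alg}$, that is, $\cb(\bar a/K^{alg})\subseteq \str{\Q}{\bar b}^{alg}$.

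Finally I would apply Proposition~\ref{P:CB}.(b), taking the ambient field to be $K^{alg}$ and $K_1=\str{\Q}{\bar b}$ (a differential subfield of $K^{alg}$): the inclusion $\cb(\bar a/K^{alg})\subseteq \str{\Q}{\bar b}^{alg}$ yields $\str{\Q}{\bar b}\langle\bar a\rangle\ind_{\str{\Q}{\bar b}} K^{alg}$, and {\bf Monotonicity \& Transitivity} then gives $\bar a\ind_{\str{\Q}{\bar b}} K$, so $\bar b$ is the sought finite tuple. The main obstacle is precisely this descent: the canonical base of $\bar a$ over $K$ need not be defined when the type is non-stationary, which forces the detour through $K^{alg}$, and one must then check that the finitely many differential generators of $\cb(\bar a/K^{alg})$ — which live in $K^{alg}$ — can be absorbed into a finite tuple of $K$ without disturbing independence; the minimal-polynomial bookkeeping together with the {\bf Algebraic closure} property of independence is what makes this go through. (One could instead bypass $K^{alg}$ altogether by using directly that Weil's field of definition of $I(\bar a/K)$ is finitely generated over $\Q$ and that, by Remark~\ref{R:indep_char}, it already witnesses $\str{k}{\bar a}\ind_k K$; the route above has the advantage of relying only on the stationary case of Proposition~\ref{P:CB}, which is the one established in the text.)
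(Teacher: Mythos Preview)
Your proof is correct. It differs from the paper's one-line argument in that you explicitly pass to $K^{alg}$ to guarantee stationarity before invoking Proposition~\ref{P:CB}, and then descend back to $K$ by collecting the coefficients of the minimal polynomials of the generators of the canonical base. The paper instead applies Proposition~\ref{P:CB}.(a) and (d) directly to $K$: the point is that the field of definition of $I(\bar a/K)$ (Definition~\ref{D:fielddef}) is defined for \emph{any} differential subfield $K$, and the independence assertion in (a) (via Remark~\ref{R:indep_char}.(b)) as well as the finite-generation argument in (d) go through verbatim without ever using the regularity of $K\subset\str{K}{\bar a}$; stationarity is only needed for the \emph{second} conclusion of (a), namely that the type over the canonical base is again stationary, which is irrelevant here. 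Your detour is more cautious and stays strictly within the stated hypotheses of Proposition~\ref{P:CB}, at the cost of an extra descent step; the paper's route is shorter but requires the reader to notice that the stationarity hypothesis is inessential for the parts being used --- which, to your credit, you already flag in your parenthetical remark at the end.
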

As shown by \textcite{KimPillay} (see also the work of \textcite{HarnikHarrington} for stable theories), the independence notion of Definition~\ref{D:indep} is canonical and coincides with  \emph{non-forking independence} as introduced by \textcite{Shelah}. 
%--------------------------------------------------

\section{Geometric stability and minimality}\label{S:Intern}

The purpose of this section is to introduce \emph{minimal} types, which are the building blocks to analyze types (of finite rank) in differentially closed fields. Zilber's trichotomy principle (Fact~\ref{F:dichotomy}) for minimal types as well as the semi-minimal analysis (Proposition~\ref{P:nonorth_internal})  will play a fundamental role in the proof of \textcite{FJM22} in Section~\ref{S:D2}. 

By Proposition~\ref{P:univ}, we work inside an ambient universal $\aleph_1$-saturated differentially closed field $\UU$. All finite tuples and proper subfields are taken within $\UU$ and are assumed to be countable, unless explicitly stated. 

\begin{defi}\label{D:fterank}
The type of a finite tuple $\bar a$ over a differential subfield $K$ of $\UU$ has \emph{finite rank} if every coordinate of $\bar a$ is differentially algebraic over $K$ (see Definition~\ref{D:difftr_alg}), or equivalently, if the differential field $\str{K}{\bar a}$ has finite transcendence degree over $K$, by Corollary~\ref{C:diffrank_trdeg}. 
\end{defi}
The above definition does not depend on the representative of the type of $\bar a$ over $K$, by Corollary~\ref{C:univ}.

\begin{rema}\label{R:fterank}
	\begin{enumerate}[(a)]
\item If  $\bar a$ has finite rank over $K$, then so does $\bar b$, whenever $\bar b$ belongs to $\str{K}{\bar a}^{alg}$. 

\item  If  $\bar a$ has finite rank over $K$ and $K\subset L$ is a differential field extension, then  $\bar a$ 
has also finite rank over $L$. Moreover, we have that \[ \str{K}{\bar a} \ind_K L \ \iff \ \tr(\str{K}{\bar a}/K)=\tr(\str{L}{\bar a}/L).\]  
Therefore, we will now define the \emph{Lascar} or \emph{U-rank} of the type of $\bar a$ over $K$, denoted by $\U(\bar a/K)$, as \emph{the number  of times} that  $\bar a$  can become dependent from some differential field extension $L$ of $K$, that is, we set $\U(\bar a/K)\ge n+1$ if there is some differential field extension $L$ of $K$ with \[ \U(\bar a/L)\ge n \ \text{ and} \ 
\str{K}{\bar a} \nind_K L,\] whereas the inequality $\U(\bar a/K)\ge 0$ always holds. Note that $\U(\bar a/K)\le \tr(\str{K}{\bar a}/K)$, so the U-rank $\U(\bar a/K)$  is a well-defined positive integer whenever $\bar a$ has finite rank over $K$. However, equality need not hold: if the element $a$ is differentially algebraic (and hence of finite rank) over $K$, its U-rank can be strictly smaller than the differential order of any minimal differential polynomial of $a$ over $K$. For example,  it follows from Corollary~\ref{C:sm}, that the \emph{generic type} (as in Remark~\ref{R:D2_generictype}) of Poizat's equation $\delta^2(T)T=\delta(T)$, which already appeared in the Introduction,  has U-rank $1$, but differential order $2$. 

It is not difficult to see that U-rank \emph{witnesses independence}: whenever $L$ is a differential field extension of $K$, we have that \[ \U(\bar a/L)\le \U(\bar a/K) \ \text{ and  moreover, } \ \U(\bar a/L)=\U(\bar a/K) \ \iff \  \str{K}{\bar a} \ind_K L.\]  Therefore, we have that $\U(\bar a/K)=0$ if and only if $\bar a$ is algebraic over $K$.  Furthermore, if the tuple $\bar b$ belongs to $\str{K}{\bar a}^{alg}$, {\bf Monotonicity \& Transitivity} yield  the following special case of \emph{Lascar's inequalities} \parencite[Theorem 8]{Lascar}:
\[ \U(\bar a/K)=\U(\bar b/K)+ \U(\bar a/\str{K}{\bar b})\ge \U(\bar b/K).\]
\end{enumerate}
\end{rema}

\begin{defi}\label{D:minimaltype}
Given a finite tuple $\bar a$ over a differential subfield $K$ of $\UU$, we say that the type of $\bar a$ over $K$ is: 
\begin{itemize}
	\item \emph{non-algebraic} if at least one coordinate of $\bar a$ is transcendental over $K$, or equivalently, if $\U(\bar a/K)\ne 0$, by Remark~\ref{R:fterank}.(b);
\item \emph{minimal} if it is non-algebraic and for every differential field extension $L$ of $K$, we have that $\bar a$ is either algebraic over $L$ or independent from $L$ over $K$. Notice that this does not depend on the representative for the type of $\bar a$ over $K$, by Corollary~\ref{C:univ}. 
\end{itemize}
\end{defi}

\begin{rema}\label{R:minimaltypes}
\begin{enumerate}[(a)]
	
	\item If the type of $\bar a$ over $K$ is minimal and the tuple $\bar b$ belongs to $\str{K}{\bar a}^{alg}$, either $\bar b$ belongs to $K^{alg}$ or the tuples $\bar a$ and $\bar b$ are \emph{interalgebraic} over $K$, that is $\str{K}{\bar a}^{alg}=\str{K}{\bar b}^{alg}$: Indeed, if $\bar b$ is not algebraic over $K$, then $\bar a$ is not independent from $\str{K}{\bar b}$ over $K$ by {\bf Algebraic Closure}. Minimality of the type now yields that  $\bar a$  belongs to $\str{K}{\bar b}^{alg}$, as desired.  
	
	If $\bar a$ and $\bar b$ are interalgebraic over $K$, then the type of $\bar b$ over $K$ is minimal whenever the type of $\bar a$ over $K$ is.
	
	\item Minimal types have finite rank:  Assume for a contradiction that  one coordinate $b$ of the minimal type of $\bar a$ over $K$ is differentially transcendental over $K$. Consider the differential field extension $K\subset L=\str{K}{\delta(b)}$. Clearly, the tuple $\bar a$ is not algebraic over $L$, as $b$ does not belong to $\str{K}{\delta(b)}^{alg}$. By minimality, we deduce that $\bar a$ is independent from $L$ over $K$ and hence so is $\delta(b)$. By {\bf Algebraic Closure}, we conclude that $\delta(b)$ is algebraic over $K$, which gives the desired contradiction. 
	
	\item It follows directly from the definitions (using Remark~\ref{R:fterank}.(b)) that a type of finite rank is minimal if and only if it has U-rank $1$.
\end{enumerate}
\end{rema}
Zilber's trichotomy principle establishes that minimal types can be divided, up to \emph{non-orthogonality} (see Definition~\ref{D:orth}), into three categories, one of which consists of the (unique) minimal type  given by transcendental constant elements. 
\begin{exem}\label{E:constants}
It follows from Example~\ref{E:types}.(a), with $k=\Q$  and $P(T)=\delta(T)$, that there are transcendental constant elements $\CC_{\UU}$. Any two such elements have the same type over $\Q$ by Remark~\ref{R:iso_diff}, so we will refer to the type of any transcendental constant element $c$ as \emph{the type of the constants}. Observe that this type is clearly stationary, since $\str{\Q}{c}=\Q(c)$ is a purely transcendental extension of $\Q$,  and hence regular. 

Given a realization $c$ of the type of the constants, if $c$ is transcendental over the differential field $K$, or equivalently, if $c$ is independent from $K$ over $\Q$, we have that the type of $c$ over $K$ is again stationary by Remark~\ref{R:stat}.(b). We will refer to this (unique) type as the \emph{the type of the constants over $K$}. 

For every differential field $K$, the type of the constants  over $K$ is minimal: It is clearly non-algebraic. Consider now a constant element $c$ transcendental over $K$ and a differential extension $K\subset L$ such that   $c$ is not independent from $L$ over $K$. We need to show that  $c$ belongs to $L^{alg}$, which follows  immediately from Remark~\ref{R:fterank}.(b), since the differential field $\str{K}{c}=K(c)$ has transcendence degree $1$ over $K$. 
\end{exem}
Another relevant class of minimal types in the trichotomy principle is given by those types with \emph{trivial geometry}, as explained below. Triviality roughly says that three realizations are independent whenever they are pairwise independent. 
\begin{defi}\label{D:trivial}
The stationary type of a finite tuple~$\bar a$ over the differential subfield~$K$ of~$\UU$ is \emph{trivial} if 
 \[ \str{K_1}{\bar a_{3}} \ind_{K_1} \str{K_1}{\bar a_{1}, \bar a_{2}}\] 
for every differential field extension $K_1$ of $K$ and realizations $\bar a_1, \bar a_2$ and $\bar a_3$ of the type of $\bar a$ over $K$ realizing the following conditions:
\begin{itemize}
\item each $\bar a_i$ is independent from $K_1$ over $K$;
\item For $i\ne j$ in $\{1, 2, 3\}$, the tuples $\bar a_i$ and $\bar a_j$ are independent over $K_1$.
\end{itemize} 
\end{defi}
\begin{rema}\label{R:trivial}
	As shown by  \textcite[Lemma 1]{Goode} (the  \emph{alter ego} of a well-known model theorist), whenever the stationary type of $\bar a$ over $K$ is trivial, then every  sequence $\bar a_1,\ldots, \bar a_m$ of realizations of $\bar a$ over $K$ satisfying the following two conditions: 
	\begin{itemize}
		\item $\str{K}{\bar a_i}\ind_K K_1$ for each $1\le i\le m$;
		\item $\str{K_1}{\bar a_i}\ind_{K_1} \str{K_1}{\bar a_j}$ for $1\le i\ne j\le m$;
	\end{itemize} is a Morley sequence of the type of $\bar a$ over $K_1$, that is, \[ \str{K}{\bar a_i} \ind_{K} \str{K}{\bar a_1,\ldots, \bar a_{i-1}} \text{ for all $2\le i\le m$}.\]
\end{rema}

\begin{rema}\label{R:gp_nottrivial}
	\begin{enumerate}[(a)]
\item A \emph{definable group} is given by a Kolchin constructible set $G$ in some cartesian product of $\UU$ equipped with a group law $\cdot$ such that the map (or rather its graph) \[ \begin{array}{ccc}
G\times G &\to& G\\[1mm]
(x,y) &\to& x\cdot y\inv
\end{array}\] is Kolchin constructible. Archetypal examples of definable groups are differential algebraic groups (where the group law is given by differential polynomials, or rather by differential rational functions). As shown by \textcite[Theorem 21]{Pillay_chunks}, the category of definable groups and the category of differential algebraic groups (in differentially closed fields) are equivalent. Moreover, definable groups definably embed into algebraic groups, as shown by \textcite[Corollary 4.2]{Pillay_gpsDCF} (see also the work of \textcite[Theorem 4.1]{KowalskiPillay_gps}). 

\item A \emph{definable action} of a definable group $G$  on a Kolchin constructible set $X$ is  a group action \[ \begin{array}{ccc}
	G\times X &\to& X\\[1mm]
	(g,x) &\to& g\star x
\end{array}\]
whose graph is Kolchin constructible. The action is \emph{defined} over a given differential subfield $K$ of $\UU$, if $G$, $X$ and the  graph of the group action are all defined over  $K$. 

\item 
Assume that the definable group~$G$ is defined over the differential
subfield~$K$ of~$\UU$. A Kolchin constructible subset of~$G$ is \emph{generic}
(though model-theorists should definitely use the terminology \emph{syndetic})
if finitely many translates cover~$G$. An element~$g$ of~$G$ is \emph{generic}
over~$K$ if it only lies in generic Kolchin constructible subsets defined
over~$K$.  Genericity is a property of the type of an element and generic elements (or generic types) exist \parencite[Chapter 2, \S 1]{Poizat_stable}. Generic elements  are exactly the  generic elements in the sense of the Kolchin topology.  If the definable group~$G$  is defined over~$K$ and all elements of~$G$ have finite rank over~$K$, then an element~$g$ of~$G$ is generic if and only if its U-rank $\U(g/K)$ is maximal among the rank of elements of~$G$. Thus, if $G$~is infinite, then no generic element is algebraic over~$K$.

Assume now that $G$ is defined over $K$ and that all elements of $G$ have finite rank over $K$. It is easy to see that, if two elements $g$ and $g_1$ of $G$  are generic and independent over $K$, then their product $g\cdot g_1$ is again generic  and independent from each factor over $K$.  

\item 
A definable group~$G$ defined over~$K$ is \emph{connected} if it has no proper
definable subgroup of finite index.  Every definable group has a
\emph{connected component}, which is again a definable subgroup of~$G$ defined
over $K^{alg}$. Any two generic elements over~$K$ (or equivalently over $K^{alg}$) in the connected component have the same type over~$K$.  

\item Two tuples $\bar a$ and $\bar b$ are \emph{interalgebraic} over a differential subfield $K$ if $\str{K}{\bar a}^{alg}=\str{K}{\bar b}^{alg}$. 

A generic element $g$ of an infinite group $G$ defined over $K$ cannot be interalgebraic over $K$ with a tuple $\bar a$ whose type over $K$ is trivial. Assume otherwise for a contradiction.  Set $\bar a_1=\bar a$ and $g_1=g$. It is easy to show that we may assume the base field $K$ to be algebraically closed and that the generic element belongs to  the connected component of $G$ (since the latter has finite index in $G$).  By {\bf Extension} and Corollary~\ref{C:univ} we can find another generic $g_2$, independent from $g$ over $K$. Since the connnected component only has one generic type, we conclude that such that $g_2$ must also be interalgebraic over $K$ with a tuple $\bar a_2$ with $\bar a_2\equiv_K\bar a$. The product  $g_3=g_1\cdot g_2$ is again an element of the connected component and is generic over $K$. Thus, by automorphisms (Corollary~\ref{C:univ}),  we find a  realization $\bar a_3$ of the type of $\bar a$ over $K$ such that $g_1\cdot g_2$ is interalgebraic with $\bar a_3$.  Since the product is independent from each factor, we deduce that $\bar a_i$ 
and $\bar a_j$ are independent over $K$ for $i\ne j$.  Since the type of $\bar a$ is trivial, we have that $\str{K}{\bar a_3}$ is independent from $\str{K}{\bar a_1, \bar a_2}$ over~$K$. 
{\bf Algebraic Closure} gives now the desired contradiction, since the product $g_3=g_1\cdot g_2$  is not independent from  $\str{K}{g_1, g_2}$ over $K$.
	\end{enumerate}
\end{rema}
It follows from the above remark that the type of the constants is not trivial. Theorem~\ref{T:Main} will provide a criterion to determine when the type of a differentially algebraic element is trivial of \emph{degenerated geometry}.

Trivial types are particular examples of \emph{$1$-based types}, for which we can choose  $m=1$ in Proposition~\ref{P:CB}.(e). 
\begin{defi}\label{D:1based}
	Consider a differential subfield $K$ of $\UU$ and a finite tuple $\bar a$ such that the type  of $\bar a$ over $K$ is stationary. This type is \emph{$1$-based} if for every differential field extension $L$ of $K$ and every finite tuple $\bar b=(\bar a_1,\ldots, \bar a_n)$ consisting of realizations $\bar a_i$ of the type of $\bar a$ over $K$, 
	we have that  the canonical base $\cb(\bar b/L^{alg})$ is algebraic over $\str{K}{\bar b}$. 
	
	In particular, a single realization of the type of $\bar a$ over $K$ suffices to determine its canonical base, that is,  we have that $\cb(\bar a/L^{alg})$ is a differential subfield of $\str{K}{\bar a}^{alg}\cap L^{alg}$. 
\end{defi}

It follows directly from the definition that if the type of $\bar a$ over $K$ is $1$-based, then so is the type of $\bar a$ over $L$ for every differential field extension $L$ of $K$. 

The type of the constants of Example~\ref{E:constants} is not $1$-based: Indeed, by the above discussion, it suffices to show this working over the prime field $\Q$.  Choose algebraically independent transcendental constant elements $a$, $b$ and $c$ and set $d=ac+b$. The \emph{generic} point $(c, d)$ belongs to the \emph{generic} line given by the equation $y=ax+b$. In this particular case, the differential field $\str{\Q}{a, b}=\Q(a, b)$, so it is not difficult to see, along the lines of the proof of \textcite[Lemma 3.1]{Pillay_CMtrivial}, that 
$\str{\Q}{a, b}^{alg}\cap \str{\Q}{c, d}^{alg}=\Q^{alg}$, witnessing the failure of $1$-basedness. Indeed, if the type of $(a, b)$ were $1$-based over $\Q$, we would have that $\cb(a, b/\str{\Q}{c, d}^{alg})$ would be contained in \[\str{\Q}{a, b}^{alg}\cap \str{\Q}{c, d}^{alg}=\Q^{alg},\] yet \[ \str{\Q}{a, b}\nind_\Q \str{\Q}{c, d}.\]
Whilst the type of the constants is not $1$-based, trivial types are. 
\begin{lemm}\label{L:trivial_1based}
Trivial minimal stationary types are $1$-based. 
\end{lemm}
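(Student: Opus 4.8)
\emph{Proof plan.} Throughout I would assume $K=K^{alg}$ and, since only $\cb(\bar b/L^{alg})$ enters the statement, $L=L^{alg}$; then all types over $K$ or over $L$ are stationary, and since the triviality (and minimality) of $p:=\tp(\bar a/K)$ was demanded over \emph{all} extensions of $K$, the restriction $p':=p|_L$ is again trivial, minimal and (as $L=L^{alg}$) stationary. The goal is: for every tuple $\bar b=(\bar a_1,\dots,\bar a_n)$ of realizations of $p$, $\cb(\bar b/L)\subseteq\str{K}{\bar b}^{alg}$. The base case $n=1$ is immediate from minimality: a realization $\bar a'$ of $p$ is either algebraic over $L$, hence lies in $L=L^{alg}$ and $I(\bar a'/L)$ is the ideal of a point, so $\cb(\bar a'/L)=\str{\Q}{\bar a'}\subseteq\str{K}{\bar a'}^{alg}$; or $\bar a'\ind_K L$, and then $\cb(\bar a'/L)=\cb(\bar a'/K)\subseteq K$ by Proposition~\ref{P:CB}.(c).

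For the general case I would first split the coordinates of $\bar b$ by minimality of $p$: after reordering, $\bar a_1,\dots,\bar a_s$ are algebraic over $L$ (hence lie in $L$) while $\bar a_{s+1},\dots,\bar a_n$ are each independent from $L$ over $K$. Writing $\bar u=(\bar a_1,\dots,\bar a_s)$ and $\bar w=(\bar a_{s+1},\dots,\bar a_n)$, the ideal $I(\bar u\bar w/L)$ is obtained from $I(\bar w/L)$ by substituting the tuple $\bar u\in L$, so its field of definition gives $\cb(\bar b/L)\subseteq\str{\Q}{\bar u}\cdot\cb(\bar w/L)$. Hence it suffices to prove $\cb(\bar w/L)\subseteq\str{K}{\bar w}^{alg}$, i.e.\ we may assume \emph{every} coordinate of $\bar b$ is independent from $L$ over $K$. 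In that situation Stationarity (Remark~\ref{R:stat}.(a)) forces all the $\bar a_i$ to realize the one type $p'$ over $L$. Now I would invoke triviality of $p'$: partition the coordinates of $\bar w$ into $L$‑interalgebraicity classes, pick one representative from each, and let $\bar v$ be the resulting subtuple. By minimality of $p'$ distinct classes are $L$‑independent, so $\bar v$ is pairwise $L$‑independent and $\bar w$ is interalgebraic with $\bar v$ over $L$; Goode's lemma (Remark~\ref{R:trivial}), applied to the trivial type $p'$ with $K_1=L$, then upgrades this to: $\bar v$ is a Morley sequence of $p'$ over $L$. A short induction using \emph{only} minimality of $p$ now yields $\bar v\ind_K L$: at step $k$, $\bar v_k$ is either algebraic over $\str{L}{\bar v_1,\dots,\bar v_{k-1}}^{alg}$ — impossible, since by the Morley property it is also independent from it over $L$, which would put $\bar v_k$ into $L$ and hence, as $\bar v_k\ind_K L$ is non-algebraic over $K$, into $K$ — or $\bar v_k\ind_K\str{L}{\bar v_1,\dots,\bar v_{k-1}}^{alg}$, which is exactly what is needed. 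Consequently $\cb(\bar v/L)=\cb(\bar v/K)\subseteq K$ by Proposition~\ref{P:CB}.(c).

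It remains to transport this from $\bar v$ back to $\bar w$, and this is the delicate step where triviality does the real work. A priori two realizations of $p'$ that are independent over $K$ might nonetheless become interalgebraic over the larger field $L$, and then the canonical base of the pair could sit in $L\setminus K$; what has to be shown is that whatever such data occurs is already algebraic over the corresponding block of coordinates of $\bar w$. Here I would use that $\bar w$ and $\bar v$ are interalgebraic over $L$ while the controlling datum $\cb(\bar v/L)$ has been pinned inside $K$, together with the fact that triviality of $p'$ forces the $L$‑algebraic‑closure pregeometry on realizations of $p'$ to be the disjoint union of the pregeometries on single realizations (Goode's lemma again). Processing $\bar w$ block by block against $\bar v$, the contribution of each block to $\cb(\bar w/L)$ stays within the algebraic closure of that block over $K$, giving $\cb(\bar w/L)\subseteq\str{K}{\bar w}^{alg}$; combining with the first reduction, $\cb(\bar b/L)\subseteq\str{\Q}{\bar u}\cdot\str{K}{\bar w}^{alg}\subseteq\str{K}{\bar b}^{alg}$, as desired.

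The main obstacle, then, is precisely this last passage: making the informal ``disintegration'' argument rigorous, i.e.\ showing that for a tuple of realizations of the trivial minimal type $p'$, each independent from $L$ over $K$, no $L$‑interalgebraicity among them can push the canonical base outside the algebraic closure of the tuple over $K$. Everything else is bookkeeping with the independence calculus: minimality supplies the base case and the dichotomy of coordinates over $L$, Goode's lemma packages triviality into the statement that suitable sequences of realizations are Morley sequences, and Proposition~\ref{P:CB} transfers canonical bases along independence.
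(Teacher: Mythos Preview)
Your reductions (assuming $K,L$ algebraically closed; splitting off the coordinates $\bar u$ lying in $L$; passing to the $L$-interalgebraicity representatives $\bar v$ and showing $\bar v\ind_K L$ via Goode) are correct and cleanly organised. But the gap you flag in the final step is a genuine one, and your suggested ``block-by-block'' mechanism does not close it. The difficulty is exactly what you say: two realizations of $p$ that are $K$-independent can become $L$-interalgebraic, and the $L$-parameters witnessing that interalgebraicity are precisely what enters $\cb(\bar w/L)$ beyond $\cb(\bar v/L)$. There is no general principle letting you read $\cb(\bar w/L)$ off from $\cb(\bar v/L)$ plus ``block data''; the canonical base does not decompose along the $L$-interalgebraicity partition. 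So at this point you have essentially re-isolated the original problem for a single block, and nothing about that special case (a tuple of pairwise $L$-interalgebraic realizations) is visibly easier than the general one.

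The paper's proof sidesteps this entirely by never trying to decompose $\bar b$. Instead it names the target field $L_1:=L\cap\str{K}{\bar b}^{alg}$ at the outset, takes a Morley sequence $(\bar b_1,\dots,\bar b_{m+1})$ of $\tp(\bar b/L)$, and observes that $\cb(\bar b/L)=\cb(\bar b_{m+1}/\str{L}{\bar b_1,\dots,\bar b_m})$; so by Proposition~\ref{P:CB}.(b) it suffices to prove the single independence $\bar b_{m+1}\ind_{L_1}\str{L}{\bar b_1,\dots,\bar b_m}$. This is then checked by choosing, inside each $\bar b_j$, a maximal subtuple $\bar b'_j$ of coordinates that is independent \emph{over $L_1$} (not over $L$), and applying Goode's lemma with base $L_1$ to the full collection of coordinates of the $\bar b'_j$'s, reducing everything to pairwise independence over $L_1$ of coordinates coming from different $\bar b'_i,\bar b'_j$---which follows from the $L$-independence of the Morley sequence together with the definition of $L_1$. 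The crucial move you are missing is to take $L_1$, the field you want the canonical base to land in, as the base for the application of triviality; once you do that, the argument becomes a direct independence computation and the ``transport from $\bar v$ to $\bar w$'' problem simply does not arise.
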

\begin{proof}
Assume that the minimal stationary type of $\bar a$ over $K$  is trivial, as in Definition~\ref{D:trivial}. In order to show that it is $1$-based, consider a differential field extension $L=L^{alg}$ of $K$ as well as a tuple $\bar b$ consisting of realizations $\bar a_i$ of the type of $\bar a$ over~$K$.  We want to show that $\cb(\bar b/L)$ is contained in the subfield $L_1=L\cap \str{K}{\bar b}^{alg}$.  By Proposition~\ref{P:CB}.(e), the canonical base $\cb(\bar b/L)$ is algebraic over $\str{\Q}{\bar b_1,\ldots, \bar b_m}$, for a Morley sequence $(\bar b_i)_{i\le m}$ of realizations of the type of $\bar b$ over $L$. In particular, the sequence is independent over $L$. Extend the Morley sequence to  $(\bar b_i)_{i\le m+1}$ by {\bf Extension} and notice that $L_1=L\cap \str{K}{\bar b_{j}}^{alg}$ for $1\le j\le m+1$,  as $\bar b$ and $\bar b_{j}$ have the same type over $L$.   

The independence \[ \str{L}{b_{m+1}}\ind_L \str{L}{\bar b_1,\ldots, \bar b_m}\] yields that $\cb(\bar b/L)=\cb(\bar b_{m+1}/L)\stackrel{\ref{P:CB}  \, (c)}{=}\cb(\bar b_{m+1}/\str{L}{\bar b_1,\ldots, b_{m}})$.  By Proposition~\ref{P:CB}.(b), we need only show that \[ \str{L_1}{\bar b_{m+1}}\ind_{L_1} \str{L}{\bar b_1,\ldots, \bar b_{m}} \tag*{($\lozenge$)},\] since $L_1\subset \str{L}{\bar b_1,\ldots, \bar b_{m}}$. 

Choose now a maximal subtuple $\bar b'_1=(\bar a_{i_1},\ldots, \bar a_{i_\ell})$ of $\bar b_1$ such that \[ \tag*{($\star$)} \str{K}{\bar a_{i_j}} \ind_{K} \str{L_1}{(\bar a_{i_s})_{s<j}} \ \text{ for all } j\le \ell.\] Since the type of  $\bar a$ over $K$ is minimal (that is, of Lascar rank $1$), it follows that $\bar b_1$ belongs to $\str{L_1}{\bar b'_1}^{alg}$, by maximality of the subtuple $\bar b'_1$. Now, the tuple $\bar b_i$, with $2\le i\le m+1$, has the same type over $L$ (and thus over $L_1$) as $\bar b_1$, so for $2\le i\le m+1$ the corresponding subtuple $\bar b'_i$ is independent from $L_1$ over $K$ and $\bar b_i$ belongs to $\str{L_1}{\bar b'_i}^{alg}$. In particular, the independence ($\lozenge$) will follow from {\bf Algebraic Closure}, once we show the following: \[ \str{L_1}{\bar b'_{m+1}}\ind_{L_1} \str{L}{\bar b'_1,\ldots, \bar b'_{m}} \tag*{($\blacklozenge$)}.\]
By construction, each tuple $\bar b'_i$ is an independent sequence over $L_1$ of realizations of the trivial type of $\bar a$ over $K$. Hence,  the independence ($\blacklozenge$) follows from Remark~\ref{R:trivial} if any two $\bar b'_i$ and $\bar b'_j$ with $i\ne j$ are independent over $L_1$. Again by triviality of the type of $\bar a$ over $K$, this amounts to showing that every two coordinates $\bar a_{s(i)}$ of $\bar b'_i$ and $\bar a_{t(j)}$ of $\bar b'_j$ are independent over $L_1$. Assuming otherwise for a contradiction, we have that $\bar a_{s(i)}$ must be algebraic over $\str{L_1}{\bar a_{t(j)}}\subset \str{L}{\bar b_j}$,  since \[ 1=\U(\bar a/K)=\U(\bar a_{s(i)}/K)\stackrel{(\star)}{=}\U(\bar a_{s(i)}/L_1).\]  Now, the Morley sequence $(\bar b_\ell)_{\ell\le m+1}$ is independent over $L$, so \[ \str{L}{\bar b_i} \ind_L \str{L}{\bar b_j}.\]

By {\bf Algebraic Closure}, the coordinate $ \bar a_{s(i)}$ of $\bar b_i$ must belong to the algebraically closed differential field $L$. As the tuples $\bar b_i$ and $\bar b_j$ have the same type over $L$, we deduce that the tuple  $\bar a_{s(i)}$ of $L$ also occurs in $\bar b_j$. Thus, the coordinate $\bar a_{s(i)}$ lies in $L\cap \str{K}{\bar b_{j}}^{alg}=L_1$, contradicting the independence ($\star$). 
\end{proof}
 \emph{Manin kernels} are the third possibility in Zilber's trichotomy (Fact~\ref{F:dichotomy}) for minimal types in differentially closed fields. We will provide a succinct account of Manin kernels, some of which appeared in the unpublished results of \cite{Hr_Sok}. For an algebraic description of Manin's construction, see the work of \textcite{Bertrand}. For a model-theoretic presentation, see the corresponding chapter of \textcite[Section 4]{Pillay_numberdiff} as well as the notes of \textcite[Sections  4 \& 5]{Marker_Manin}. 

\begin{exem}\label{E:Manin}
Consider an algebraically closed differential subfield $K$ of $\UU$ as well as a simple abelian variety $A$ defined over $K$ of dimension $d$. The \emph{universal extension} $\widehat A$ of $A$ by a vector group is an algebraic group of dimension $2d$, which we can see as a definable group (see Remark~\ref{R:gp_nottrivial}.(a)) in our ambient differentially closed field $\UU$. The \emph{Manin kernel} $A^{\#}$ of $A$ is the Kolchin closure (see Corollary~\ref{C:Kolchintop}) of the torsion points of $\widehat{A}$. Hence, the group $A^{\#}$ is an infinite differential algebraic group defined over $K$. As in Remark~\ref{R:gp_nottrivial}, we can consider generic types in the definable group $A^{\#}$. 

If $A$ does not \emph{descend} to the field of constants $\CC_\UU$, that is, if $A$ is not birationally isomorphic to an algebraic group defined over $\CC_\UU$, then any two generic elements over $K$ have the same type, so the generic type is unique and stationary (note that we assume $K$ to be algebraically closed). In this case, the generic type of the Manin kernel is \emph{$1$-based}  \parencite[Corollary 4.9 \& Theorem 4.10]{Pillay_numberdiff}, yet not trivial by Remark~\ref{R:gp_nottrivial}.(b). 
\end{exem}

We have hence introduced the three classes of types which will appear in Zilber's trichotomy (Fact~\ref{F:dichotomy}), up to \emph{non-orthogonality}. This notion coincides with the more general notion of non-indifference for minimal types, and plays a fundamental role in many results in geometric stability theory, from Hrushovski's analysis of a stable group to applications of geometric stability theory to problems in diophantine geometry \parencite{Bouscaren_book}. 

\begin{defi}\label{D:orth}
	Consider two differential fields $K$ and $L$ as well as two tuples $\bar a$ and $\bar b$. The stationary type of  $\bar a$ over $K$ is \emph{not indifferent} (often called \emph{not foreign} in the literature) to the type of $\bar b$ over $L$ if there exists a common differential field extension $M\supset K\cup L$ and realizations $\bar a'\equiv_K \bar a$ and $\bar b'\equiv_L \bar b$ with \[\str{K}{\bar a'}\ind_K M \ \text{ but } \ \str{M}{\bar a'}\nind_M \str{M}{\bar b'}.\]  
	
	 If both the   type of $\bar a$ over $K$ and of $\bar b$ over $L$ are stationary, then we say that these two types are \emph{non-orthogonal} if there exists a differential field extension $M\supset K\cup L$ and realizations $\bar a'\equiv_K \bar a$ and $\bar b'\equiv_L \bar b$ with \[\str{K}{\bar a'}\ind_K M, \ \str{L}{\bar b'}\ind_L M \ \text{ but } \ \str{M}{\bar a'}\nind_M \str{M}{\bar b'}.\] 
\end{defi}
Observe that non-orthogonality is a symmetric notion, whilst non-indifference need not be. However, these two notions coincide if the  type of  $\bar b$ over $L$ is minimal, by Remark~\ref{R:fterank}. Indeed, as the independence $\str{M}{\bar a'}\ind_M M$ always holds, the tuple $\bar b'$ cannot be algebraic over $M$, so $\str{L}{\bar b'}\ind_L M$, since \[0<\U(\bar b'/M)\le \U(\bar b'/L)=\U(\bar b/L)=1.\]
\begin{rema}\label{R:non_orth_minimal}
With the previous notation, if the two minimal stationary types of~$\bar a$ over~$K$ and of~$\bar b$ over~$L$ are non-orthogonal, witnessed by~$M$, $\bar a'$ and $\bar b'$ as in the above definition, then $\bar a'$ and $\bar b'$ are \emph{interalgebraic} over~$M$,  that is $\str{M}{\bar a'}^{alg}=\str{M}{\bar b'}^{alg}$, as in Remark~\ref{R:gp_nottrivial}.(e). Indeed, the tuples~$\bar a'$ and~$\bar b'$ are not independent over~$M$, so by Remark~\ref{R:fterank} \[0\le  \U(\bar a'/\str{M}{\bar b'})<\U(\bar a'/M)=\U(\bar a'/K)=\U(\bar a/K)=1.\] 
\end{rema}
The above remark together with an easy application of {\bf Extension} and {\bf Monotonicity \& Transitivity} yields the following result below for  a minimal stationary type. Indeed, it suffices to note that triviality is preserved under \emph{independent restrictions}, that is, if $K\subset L$ is a differential field extension and $\bar a$~is a   tuple which is independent from~$L$ over~$K$, whenever  the type of~$\bar a$ over~$L$ is trivial, then so is the type of~$\bar a$ over~$K$. 
\begin{coro}\label{C:non_orth_trivial}
Consider differential fields $K$ and $L$ as well as finite tuples $\bar a$ and $\bar b$ such that both the type of $\bar a$ over $K$ and the type of $\bar b$ over $L$  are minimal and stationary. If these two types are non-orthogonal and the type of $\bar b$ over $L$ is trivial, then the type of $\bar a$ over $K$ is also trivial. 

In particular, if the generic type of an infinite Kolchin constructible group $G$ is minimal, then it is orthogonal to all trivial types, by Remark~\ref{R:gp_nottrivial}.(e). \qed
\end{coro}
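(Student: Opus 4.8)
The plan is to transport the triviality hypothesis across a short chain of base changes, using only the basic axioms of independence from Section~\ref{S:Stab} together with Remark~\ref{R:non_orth_minimal}. Two auxiliary observations do all the work, and I would establish them first as routine manipulations of \textbf{Monotonicity \& Transitivity}, \textbf{Algebraic Closure} and the \textbf{Stationarity} principle of Remark~\ref{R:stat}.(a). \textbf{(T1)} \emph{Triviality is invariant under independent base change:} if $K\subseteq L$ is a differential field extension, $\bar c$ is a tuple with $\str{K}{\bar c}\ind_K L$, and the type of $\bar c$ over $K$ is stationary, then the type of $\bar c$ over $K$ is trivial if and only if the type of $\bar c$ over $L$ is. The forward (``extension'') direction is a direct reading of the defining condition over differential extensions of $L$; for the backward (``restriction'') direction, given a would-be witnessing triple over some $K_1\supseteq K$, one uses \textbf{Extension} (Remark~\ref{R:indep_prop2}.(b)) to produce a copy $L'$ of $L$ over $K$ independent from $\str{\Q}{K_1\cup\bar c_1\bar c_2\bar c_3}$, checks by \textbf{Monotonicity \& Transitivity} that the triple still satisfies the triviality hypotheses over $\str{\Q}{L'\cup K_1}$, observes by \textbf{Stationarity} that the three realizations share one type over $L'$ — the isomorphic image of the trivial type of $\bar c$ over $L$ — and concludes by applying triviality over $L'$. \textbf{(T2)} \emph{Triviality is invariant under interalgebraicity:} if $\bar a'$ and $\bar b'$ are interalgebraic over a differential field $M$ and both types over $M$ are stationary, then the type of $\bar a'$ over $M$ is trivial if and only if the type of $\bar b'$ over $M$ is; given a would-be witnessing triple $\bar a_1',\bar a_2',\bar a_3'$ for non-triviality, one pulls it back through automorphisms of $\UU$ fixing $M$ (Corollary~\ref{C:univ}) to an interalgebraic triple $\bar b_1',\bar b_2',\bar b_3'$, notes by \textbf{Algebraic Closure} that the $\bar b_i'$ inherit the independence hypotheses, and pushes the conclusion back, again by \textbf{Algebraic Closure}.

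With (T1) and (T2) in hand the Corollary is immediate. Since the type of $\bar b$ over $L$ is minimal, non-orthogonality of the two types is witnessed, by Remark~\ref{R:non_orth_minimal}, by a common differential field extension $M\supseteq K\cup L$ and realizations $\bar a'\equiv_K\bar a$ and $\bar b'\equiv_L\bar b$ with $\str{K}{\bar a'}\ind_K M$, $\str{L}{\bar b'}\ind_L M$, and $\bar a'$, $\bar b'$ interalgebraic over $M$; both types over $M$ are then stationary by Remark~\ref{R:stat}.(b). First apply (T1) to $\bar b'$ along $L\subseteq M$: triviality of the type of $\bar b$ (equivalently of $\bar b'$) over $L$ yields triviality of the type of $\bar b'$ over $M$. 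Then (T2) transfers this to the type of $\bar a'$ over $M$. Finally apply (T1) to $\bar a'$ along $K\subseteq M$, in the restriction direction: the type of $\bar a'$ over $K$ is trivial, hence so is the type of $\bar a$ over $K$, since $\bar a\equiv_K\bar a'$.

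For the final clause, suppose the generic type over $K$ of an infinite Kolchin constructible group $G$ (with $K$ algebraically closed, say) is minimal and non-orthogonal to some trivial minimal stationary type. By the part just proved, the generic type of $G$ would then be trivial; but a generic element $g$ of $G$ is trivially interalgebraic with itself over $K$, so this contradicts Remark~\ref{R:gp_nottrivial}.(e), which forbids a generic of an infinite definable group from being interalgebraic over $K$ with a tuple of trivial type. Hence the generic type of $G$ is orthogonal to every trivial type.

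The only genuine obstacle is bookkeeping: carrying out the base-change and conjugation arguments in (T1) and (T2) while tracking exactly which independences survive, which is a mechanical, repeated use of \textbf{Monotonicity \& Transitivity} and \textbf{Algebraic Closure}. Conceptually nothing beyond Remark~\ref{R:non_orth_minimal} enters — the content is simply that both interalgebraicity and independent base change leave the ``geometry'' of a minimal type, and in particular its triviality, unchanged.
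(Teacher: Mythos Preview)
Your proposal is correct and matches the paper's approach: the paper only provides a one-line sketch before the \texttt{\textbackslash qed}, noting that the result follows from Remark~\ref{R:non_orth_minimal} together with \textbf{Extension} and \textbf{Monotonicity \& Transitivity}, and singling out the key fact that triviality is preserved under independent restrictions. Your (T1) and (T2) are precisely the detailed unpacking of that sketch --- the paper leaves the forward direction of (T1) and the interalgebraicity step (T2) implicit, but these are exactly the routine manipulations you describe.
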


The work of \textcite{Zilber_sm, Zilber_trichotomy} on the structure of strongly minimal theories motivated Zilber's \emph{trichotomy principle}. Whilst the general principle does not hold for all  minimal types in arbitrary stable theories, as shown by \textcite{Hr_newsm} in the so-called \emph{ab initio} counter-example, the trichotomy principle has been shown to hold for (strongly) minimal types in various relevant classes of theories (see among others the work of \textcite{Hrushovski_Zilber_Zar1, Hrushovski_Zilber_Zar2} as well as the work of \textcite{PillayZiegler}). 

We will now state a  particular version of the trichotomy principle customized for the theory DCF$_0$ of differentially closed fields of characteristic $0$. It  first appeared in unpublished work by \textcite{Hr_Sok}. 
\begin{fait}\label{F:dichotomy}
	Consider a finite tuple $\bar a$  and a differential subfield $K$ of $\UU$ such that the type of $\bar a$ over $K$ is stationary and minimal. Exactly one of the three following mutually exclusive cases holds:
	\begin{itemize}
		\item The type of $\bar a$ over $K$ is trivial.
		\item The type of $\bar a$ over $K$ is non-orthogonal to the generic type of a Manin kernel of a simple abelian variety defined over $K^{alg}$ which does not descend to the field of constants $\CC_\UU$. 
		\item  The type of $\bar a$ over $K$ is non-orthogonal to the type of the constants over $K$.
	\end{itemize}
\end{fait}

In order to see that the three aforementioned cases are mutually exclusive, it suffices to show that the three kinds of types listed in the above fact are pairwise orthogonal, since non-orthogonality among minimal stationary types is a transitive relation \parencite[Chapter 1, Lemma 4.4.2]{Pillay_GeoStab}. By Corollary~\ref{C:non_orth_trivial}, we need only show that the generic type of a Manin kernel is  orthogonal to the constants. If these two types were non-orthogonal,  using similar arguments as the ones given by \textcite[Proof of Theorem 1.1]{Pillay_ML}, then  the (simple) abelian variety would descend to the constants, contradicting the definition of a Manin kernel. 

Since the field of constants $\CC_\UU$ is algebraically closed, we deduce immediately the following result from Fact~\ref{F:dichotomy}, which will play a fundamental role in the proof of \textcite{FJM22}.  
\begin{coro}\label{C:descent}
Given  a subfield $K$ of the field of constants $\CC_\UU$ of $\UU$,  if the stationary type of some tuple $\bar a$ over $K$ is minimal, then either the type of $\bar a$ over $K$ is trivial or it is non-orthogonal to the type of the constants over $K$. \qed
\end{coro}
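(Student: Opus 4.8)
The plan is to read the result off directly from Zilber's trichotomy principle (Fact~\ref{F:dichotomy}), whose three alternatives are mutually exclusive and jointly exhaustive for a stationary minimal type. Concretely, it suffices to rule out the middle alternative: that the type of $\bar a$ over $K$ is non-orthogonal to the generic type of a Manin kernel $A^{\#}$ of a simple abelian variety $A$ defined over $K^{alg}$ which does \emph{not} descend to the field of constants $\CC_\UU$. Once this case is excluded, Fact~\ref{F:dichotomy} leaves precisely the two possibilities in the statement.

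First I would invoke the hypothesis $K\subseteq \CC_\UU$ together with the fact, recorded just before the statement, that the field of constants $\CC_\UU$ is algebraically closed. This gives $K^{alg}\subseteq \CC_\UU^{alg}=\CC_\UU$, so any simple abelian variety $A$ defined over $K^{alg}$ is in particular defined over $\CC_\UU$. Next I would observe that an algebraic group already defined over $\CC_\UU$ trivially descends to the field of constants in the sense of Example~\ref{E:Manin}: it is (birationally, indeed regularly) isomorphic to itself, which is defined over $\CC_\UU$. This directly contradicts the requirement in the Manin-kernel alternative of Fact~\ref{F:dichotomy} that $A$ not descend to $\CC_\UU$. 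Hence the middle case cannot occur, and therefore either the type of $\bar a$ over $K$ is trivial, or it is non-orthogonal to the type of the constants over $K$, as desired.

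The argument carries essentially no technical obstacle; the only point that deserves care is that Fact~\ref{F:dichotomy} genuinely furnishes the abelian variety over $K^{alg}$ (and not merely over some larger differential field obtained by a base change), which is exactly what makes the hypothesis $K\subseteq\CC_\UU$ bite. It may be worth noting in passing that this also explains why the conclusion genuinely fails over an arbitrary base field: there the trichotomy has three distinct cases, and a Manin kernel attached to parameters outside the constants need not be orthogonal to every trivial type nor to the type of the constants.
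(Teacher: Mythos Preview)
Your argument is correct and is exactly the paper's (one-line) proof: since $\CC_\UU$ is algebraically closed and $K\subseteq\CC_\UU$, we have $K^{alg}\subseteq\CC_\UU$, so the Manin-kernel alternative of Fact~\ref{F:dichotomy} is vacuous and only the two remaining cases survive. One small wording issue in your final parenthetical: over a general base the point is not that a Manin-kernel type might be non-orthogonal to trivial types or to the constants (the trichotomy says it is orthogonal to both), but rather that the middle alternative is genuinely inhabited, giving a minimal type which is neither trivial nor non-orthogonal to the constants.
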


Together with Lemma~\ref{L:trivial_1based} and Example~\ref{E:Manin}, Fact~\ref{F:dichotomy} yields Proposition~\ref{P:dichotomy} below. Whilst the proof may seem rather technical and obscure at first sight, we have decided to include it, for we believe that it highlights the power of the \emph{independence (or non-forking) calculus}, which is not difficult to master (The author of these notes is particularly fond of this sort of manipulations). 

\begin{prop}\label{P:dichotomy}
Consider a differential field~$K$ and a tuple~$\bar a$ such that the type of~$\bar a$ over~$K$ is not algebraic, but stationary and of finite rank. Either the type of~$\bar a$ over~$K$  is non-orthogonal to the type of the constants over~$K$ or  there exists a differential extension $L=\str{K}{\bar c}$ of~$K$, where $\bar c$~is a finite tuple, such that the type of~$\bar a$ over~$K$ is non-orthogonal to a $1$-based minimal stationary type based over~$L^{alg}$. 
\end{prop}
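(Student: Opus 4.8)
The plan is to derive the statement from Zilber's trichotomy, Fact~\ref{F:dichotomy}, which applies only to \emph{minimal} stationary types; so the heart of the matter is to attach to $\tp(\bar a/K)$ a minimal type over a finitely generated extension. Concretely, I will first produce a finite tuple $\bar c$, with $L=\str{K}{\bar c}$, together with a minimal stationary type $p$ based over $L^{alg}$ such that $\tp(\bar a/K)$ is non-orthogonal to $p$; this is the \emph{semi-minimal analysis} of a type of finite rank. Since $\tp(\bar a/K)$ is non-algebraic, its $\U$-rank is a positive natural number (Remark~\ref{R:fterank}), so I argue by induction on $n=\U(\bar a/K)$, and the trichotomy is then invoked just once, applied to $p$.

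Granting such a pair $(L,p)$, apply Fact~\ref{F:dichotomy} to the minimal stationary type $p$ over the algebraically closed field $L^{alg}$; exactly one of three cases occurs. If $p$ is trivial, then $p$ is $1$-based by Lemma~\ref{L:trivial_1based}, so $p$ itself is a $1$-based minimal stationary type based over $L^{alg}$ and non-orthogonal to $\tp(\bar a/K)$, which is the second alternative. If $p$ is non-orthogonal to the generic type $r$ of the Manin kernel of a simple abelian variety defined over $L^{alg}$ that does not descend to $\CC_\UU$, then $r$ is a $1$-based minimal stationary type based over $L^{alg}$ (Example~\ref{E:Manin}); since $p$ and $r$ are non-orthogonal minimal types, a type is non-orthogonal to $p$ exactly when it is non-orthogonal to $r$, so $\tp(\bar a/K)$ is non-orthogonal to $r$, again the second alternative. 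Finally, if $p$ is non-orthogonal to the type of the constants over $L^{alg}$, note that the latter is the non-forking extension of the type of the constants over $K$ (a constant transcendental over $L^{alg}$ is transcendental over $K$ and is independent from $L^{alg}$ over $K$ by Remark~\ref{R:fterank}), hence non-orthogonal to it; as the three types involved are minimal, non-orthogonality transfers along the chain and $\tp(\bar a/K)$ is non-orthogonal to the type of the constants over $K$, the first alternative.

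It remains to produce $(L,p)$. If $n=\U(\bar a/K)=1$, then $\tp(\bar a/K)$ is itself minimal and stationary (Remark~\ref{R:minimaltypes}), and since $\bar a\ind_K K^{alg}$ always holds, $p=\tp(\bar a/K^{alg})$ is minimal, stationary, based over $K^{alg}$, and non-orthogonal to $\tp(\bar a/K)$ as a non-forking extension; take $L=K$. If $n\ge 2$, then $\tp(\bar a/K)$ is not minimal (Remark~\ref{R:minimaltypes}), so by Definition~\ref{D:minimaltype} together with {\bf Local Character} (Corollary~\ref{C:local_char}) there is a finite tuple $\bar c$ with $\bar a\nind_K\str{K}{\bar c}$ and $\bar a\notin\str{K}{\bar c}^{alg}$. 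Then $\tp(\bar a/\str{K}{\bar c}^{alg})$ is non-algebraic, stationary (over an algebraically closed field), of finite rank, and of $\U$-rank strictly smaller than $n$ (Remark~\ref{R:fterank}); apply the induction hypothesis to it, and observe that the base of the resulting minimal type, being the algebraic closure of a finitely generated differential extension of $K$, is again of the required form $\str{K}{\bar c'}^{alg}$ with $\bar c'$ finite. The main obstacle is precisely this last step: one must carry the non-orthogonality from the forking extension $\tp(\bar a/\str{K}{\bar c}^{alg})$ back to $\tp(\bar a/K)$ itself, which is where the finiteness of the $\U$-rank, Lascar's inequalities and the non-forking calculus ({\bf Monotonicity \& Transitivity} and {\bf Extension}) do all the work; everything downstream is a bookkeeping of non-orthogonalities of minimal types via Fact~\ref{F:dichotomy}, Lemma~\ref{L:trivial_1based} and Example~\ref{E:Manin}.
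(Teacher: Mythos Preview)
Your overall architecture is sound—reduce to a minimal type over a finitely generated extension and then invoke Fact~\ref{F:dichotomy}—and your treatment of the trichotomy step and the base case $n=1$ is correct. The problem is the inductive step: you correctly flag that the main obstacle is carrying non-orthogonality from the forking extension $\tp(\bar a/\str{K}{\bar c}^{alg})$ back to $\tp(\bar a/K)$, but you do not actually do it. The sentence ``Lascar's inequalities and the non-forking calculus do all the work'' is not a proof, and in fact those tools alone are not enough. The witness for $\tp(\bar a/\str{K}{\bar c}^{alg})\not\perp p'$ gives you a realization $\bar a'$ with $\bar a'\ind_{\str{K}{\bar c}^{alg}} M$, but since $\bar a'\nind_K \str{K}{\bar c}$ you have $\bar a'\nind_K M$; so this $\bar a'$ is useless as a witness for $\tp(\bar a/K)\not\perp p'$, and replacing $\bar a'$ by a realization independent from $M$ over $K$ changes its type over $M$ and destroys the dependence with the realization of $p'$. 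Getting past this requires an honest argument with canonical bases, not just Monotonicity, Transitivity and Extension.

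The paper's proof avoids this difficulty by not inducting on $\U(\bar a/K)$ at all. For $n\ge 2$ it drops the rank by one, takes $\bar b$ generating $\cb(\bar a/K_1)$, and then goes directly to a base $L$ where $\tp(\bar b/L)$ is minimal. The crucial point is the Claim in the proof: a rank computation showing that a suitable realization $\bar a_1\equiv_{\str{K}{\bar b}}\bar a$ is in fact independent from $L$ over $K$, precisely because $\bar b=\cb(\bar a_1/\cdot)$ forces $\bar b\in\str{L}{\bar a_1}^{alg}$ and the Lascar ranks then match up. This is exactly the ``transfer'' you need, and it hinges on the canonical base being recoverable from $\bar a_1$—a feature your bare inductive scheme does not provide. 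If you want to salvage your approach, you must insert at the inductive step an argument of this shape (take the canonical base of $\bar a$ over $\str{K}{\bar c}^{alg}$ and use it to produce the independence over $K$); but at that point you are essentially reproducing the paper's direct argument inside the induction.
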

Recall (Definition~\ref{D:fterank}) that a tuple $\bar a$ has \emph{finite rank} over $K$ if every coordinate of $\bar a$ is differentially algebraic over $K$.   
\begin{proof}
We first start with the following  observation, which will be used twice in the proof.
	\begin{claimstar}
		Assume that there are finite tuples $\bar d_1$ and $\bar d_2$ as well as a differential field extension $M_1\subset M_2$ in $\UU$  satisfying all of the following conditions:
		\begin{enumerate}[(a)]
			\item We have that $\U(\bar d_1/M_1)=m\ge 1$ and  $\U(\bar d_1/\str{M_1}{\bar d_2})=m-1$;
			\item The tuple $\bar d_2$ is algebraic over $\str{M_2}{\bar d_1}$;
			\item The type of the tuple $\bar d_2$ over $M_2$ is minimal;
			\item We have the independence $\str{M_1}{\bar d_1, \bar d_2}\ind_{\str{M_1}{\bar d_2}} \str{M_2}{\bar d_2}$. 
		\end{enumerate}
		In this case, we have that the tuple $\bar d_1$ is independent from $M_2$ over $M_1$, or equivalently by Remark~\ref{R:fterank}.(b), the U-rank $\U(\bar d_1/M_1)=\U(\bar d_1/M_2)$. 
	\end{claimstar}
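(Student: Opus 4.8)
The plan is to prove the Claim by a short computation in the non-forking (equivalently, algebraic independence) calculus; the hypotheses (a)--(d) are arranged precisely so that a double application of the special case of Lascar's inequalities recalled in Remark~\ref{R:fterank}.(b) pins down $\U(\bar d_1/M_2)$. Since U-rank can only drop along base extensions and U-rank \emph{witnesses independence} (Remark~\ref{R:fterank}.(b)), the desired conclusion $\str{M_1}{\bar d_1}\ind_{M_1}M_2$ is equivalent to the numerical equality $\U(\bar d_1/M_2)=\U(\bar d_1/M_1)=m$; as the inequality $\U(\bar d_1/M_2)\le m$ is automatic, everything reduces to proving $\U(\bar d_1/M_2)\ge m$.

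First I would unwind hypothesis (d). Since $\str{M_1}{\bar d_1,\bar d_2}=\str{\str{M_1}{\bar d_2}}{\bar d_1}$, condition (d) says exactly that $\bar d_1$ is independent from $\str{M_2}{\bar d_2}$ over $\str{M_1}{\bar d_2}$, so that, invoking once more that U-rank witnesses independence and then hypothesis (a),
\[ \U\bigl(\bar d_1/\str{M_2}{\bar d_2}\bigr)=\U\bigl(\bar d_1/\str{M_1}{\bar d_2}\bigr)=m-1 .\]
Next I would compute $\U(\bar d_1,\bar d_2/M_2)$ in two different ways. Applying Lascar's inequality with distinguished subtuple $\bar d_2$, which lies trivially in $\str{M_2}{\bar d_1,\bar d_2}^{alg}$, and using hypothesis (c) together with Remark~\ref{R:minimaltypes}.(c) to get $\U(\bar d_2/M_2)=1$, gives
\[ \U(\bar d_1,\bar d_2/M_2)=\U(\bar d_2/M_2)+\U\bigl(\bar d_1/\str{M_2}{\bar d_2}\bigr)=1+(m-1)=m .\]
Applying it instead with distinguished subtuple $\bar d_1$ and invoking hypothesis (b), which makes $\bar d_2$, hence the whole pair $(\bar d_1,\bar d_2)$, algebraic over $\str{M_2}{\bar d_1}$, gives
\[ \U(\bar d_1,\bar d_2/M_2)=\U(\bar d_1/M_2)+\U\bigl(\bar d_1,\bar d_2/\str{M_2}{\bar d_1}\bigr)=\U(\bar d_1/M_2) .\]
Comparing the two computations yields $\U(\bar d_1/M_2)=m=\U(\bar d_1/M_1)$, and one last appeal to Remark~\ref{R:fterank}.(b) delivers $\str{M_1}{\bar d_1}\ind_{M_1}M_2$, which is the Claim.

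I do not expect a genuine obstacle: the argument is purely formal independence calculus and the only care required is clerical. One has to check that each invocation of Lascar's inequality is legitimate --- the membership of the distinguished subtuple in the relevant algebraic closure is trivial in both cases, but the vanishing of $\U(\bar d_1,\bar d_2/\str{M_2}{\bar d_1})$ genuinely uses (b) --- and, above all, to recognise that hypothesis (d) is exactly the transfer statement ensuring that the rank $m-1$ of $\bar d_1$ is preserved when the base is enlarged from $\str{M_1}{\bar d_2}$ to $\str{M_2}{\bar d_2}$. This last point is the real content of the Claim; hypotheses (a), (b) and (c) merely feed the two rank computations.
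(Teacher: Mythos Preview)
Your proof is correct and is essentially the same as the paper's: both arguments rest on the identity $\U(\bar d_1/M_2)=\U(\bar d_2/M_2)+\U(\bar d_1/\str{M_2}{\bar d_2})$ (valid by hypothesis (b) and Lascar), followed by using (d) to replace $\str{M_2}{\bar d_2}$ by $\str{M_1}{\bar d_2}$, then (c) and (a) to evaluate the two summands as $1$ and $m-1$. The paper compresses this into a single chain of (in)equalities, whereas you unpack the first step as two applications of Lascar to the pair $(\bar d_1,\bar d_2)$; this is a cosmetic difference only.
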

	\begin{claimstarproof}
		The proof is just an immediate application of Lascar's
                inequalities~\ref{R:fterank}.(b):  \begin{multline*}m=\U(\bar
                  d_1/M_1)\ge \U(\bar d_1/M_2)\stackrel{(b) \ \& \ \ref{R:fterank}.(b)}{=}\U(\bar d_1/\str{M_2}{\bar d_2})+ \U(\bar d_2/M_2)\stackrel{(d)}{=} \\ = \U(\bar d_1/\str{M_1}{\bar d_2}) + \U(\bar d_2/M_2) \stackrel{(c)}{=} \U(\bar d_1/\str{M_1}{\bar d_2}) + 1 \stackrel{(a)}{=} m-1+1 =m. \end{multline*}	\vskip-2mm\end{claimstarproof}
	
	In order to prove the statement of the proposition,  set $n=\U(\bar a/K)\ge 1$ (as the type of $\bar a$ over $K$ is not algebraic). If $n=1$, the type of $\bar a$ over $K$ is minimal, so we deduce the result immediately from Fact~\ref{F:dichotomy}, using Lemma~\ref{L:trivial_1based} and Example~\ref{E:Manin}, with $\bar c$ the empty tuple. Assume therefore that $n\ge 2$, and find by Remark~\ref{R:fterank} some differential field extension $K_1=K_1^{alg}$ of $K$ with $\U(\bar a/K_1)=n-1$. The canonical base $\cb(\bar a/K_1)$ is generated by a finite tuple $\bar b$ by Proposition~\ref{P:CB}.(d). Moreover, the  tuple $\bar b$  has finite rank over $K$, by Proposition~\ref{P:CB}.(e) and Remark~\ref{R:fterank}.(a). By construction, we have that \[ \str{K}{\bar a, \bar b} \ind_{\str{K}{\bar b}} K_1,\] so $\U(\bar a/\str{K}{\bar b})=n-1$. Now, the tuple $\bar b$ cannot be algebraic over $K$, for $\str{K}{\bar a}$ and $K_1$ are dependent over $K$, since $\U(\bar a/K_1)=n-1<\U(\bar a/K)$. 
	
	Choose some differential field extension $M$ of $K$ with \[ \U(\bar b/M)=1. \tag*{($\clubsuit$)}\] By {\bf Extension} (Remark~\ref{R:indep_prop2}.(b)), there exists some $\bar a_1\equiv_{\str{K}{\bar b}} \bar a$ with \[ \str{K}{\bar a_1, \bar b}\ind_{\str{K}{\bar b}} \str{M}{\bar b}, \ \text{ so }\U(\bar a_1/\str{M}{\bar b})=\U(\bar a_1/\str{K}{\bar b})=\U(\bar a/\str{K}{\bar b})=n-1. \]

Moreover,  \[ \cb(\bar a_1/\str{M}{\bar b}^{alg})\stackrel{\ref{P:CB}.(c)}{=} \cb(\bar a_1/\str{K}{\bar b}^{alg})=\cb(\bar a/\str{K}{\bar b}^{alg})=\cb(\bar a/K_1^{alg})=\str{\Q}{\bar b} \tag*{($\star$)}.\] Notice that $\bar b$ is algebraic over $\str{M}{\bar a_1}$. Indeed, since $\U(\bar b/M)=1$, the tuple $\bar b$ does not lie in $M^{alg}$, so  ($\star$)  and Proposition~\ref{P:CB}.(b) together with {\bf Symmetry} yield  \[ \str{M}{\bar b}\nind_M\str{M}{\bar a_1}, \ \text{ and thus } \bar b \text{ lies in }  \str{M}{\bar a_1}^{alg}.\] 

 By the Claim above, the tuple $\bar a_1$ is independent from $M$ over $K$. Choose some finite tuple $\bar c$ generating $\cb(\bar b/M^{alg})$ by Proposition~\ref{P:CB}.(d) and replace $M$ (or rather $M^{alg}$) with $L=\str{K}{\bar c}^{alg}$ everywhere (by {\bf Monotonicity}). In particular, the independence \[ \str{K}{\bar a_1}\ind_K L\] yields that the type of $\bar a_1$ over $L$ is again stationary, by Remark~\ref{R:stat}.(b). 

The type of $\bar b$ over $L=L^{alg}$ is stationary and minimal by ($\clubsuit$), so it is non-orthogonal to the type of some $\bar d$ over $L$, with $\tp(\bar d/L)$ a minimal type in one of the three classes listed in Fact~\ref{F:dichotomy}. By Remark~\ref{R:non_orth_minimal}, we find some differential field extension $L_1$ of $L$ and realizations $\bar b'$ and $\bar d'$, both independent from $L_1$ over $L$, which are interalgebraic over $L_1$. Since $\bar b'$ has the same type as $\bar b$ over $L$, there exists an automorphism $\sigma$ of $\UU$ fixing $L$ pointwise with $\sigma(\bar b)=\bar b'$. Set $\bar a'=\sigma(\bar a_1)$, so \[ \str{K}{\bar a', \bar b'}\ind_{\str{K}{\bar b'}} \str{L}{\bar b'},\] by {\bf Invariance}.  Possibly after composing with an automorphism fixing  $\str{L}{\bar b'}$ pointwise, we may assume by {\bf Extension} (Remark~\ref{R:indep_prop2}.(b)) that $\bar a'$ is independent from $\str{L_1}{\bar b'}$ over $\str{L}{\bar b'}$. Now 
\[ \cb(\bar a'/\str{L_1}{\bar d'}^{alg})=\cb(\bar a'/\str{L_1}{\bar b'}^{alg})\stackrel{\ref{P:CB}.(c)}{=} \cb(\bar a'/\str{L}{\bar b'})=\cb(\bar a'/\str{K}{\bar b'})=\str{\Q}{\bar b'} \tag{$\blacklozenge$},\] 
so \[ \U(\bar a'/\str{L_1}{\bar d'}^{alg})= \U(\bar a'/\str{L_1}{\bar b'}^{alg})\stackrel{(\blacklozenge) \ \& \ \ref{P:CB}.(c)}{=}\U(\bar a'/\str{K}{\bar b'})=\U(\bar a/\str{K}{\bar b})=n-1.\] 

Since $\U(\bar b'/L_1)=\U(\bar b'/L)=\U(\bar b/L)=1$,  we deduce that $\bar b'$ (and thus $\bar d'$) is algebraic over $\str{L_1}{\bar a'}$, using  the dependence \[ \str{L_1}{\bar b'}\nind_{L_1} \str{L_1}{\bar a'}.\] Indeed, assume for a contradiction the independence \[ \str{L_1}{\bar b'}\ind_{L_1} \str{L_1}{\bar a'}.\] The canonical base $\cb(\bar a'/\str{L_1}{\bar b'})\stackrel{(\blacklozenge)}{=}\str{\Q}{\bar b'}$ is then algebraic 
over $L_1$ by Proposition~\ref{P:CB}.(b), contradicting that $\U(\bar b'/L_1)=1$.

By the Claim again, we deduce that $\bar a'$~is independent from~$L_1$ over~$L$, and hence over~$K$ by {\bf Transitivity}. We conclude therefore that the type of~$\bar a$ over~$K$ is non-orthogonal to the minimal type of~$\bar d$ over $L^{alg}$, witnessed by the realizations~$\bar a'$ and~$\bar d'$, as desired. 
\end{proof}

Proposition~\ref{P:dichotomy} shows that every type of finite rank is non-orthogonal to some minimal type as in Fact~\ref{F:dichotomy}, possibly after adding additional parameters. The next 
fundamental notion in geometric model theory, called \emph{internality}, will be shown in Remark~\ref{R:internal} to occur   in the presence of non-orthogonality. Internality will play a major role in Section~\ref{S:binding} and also in the work of \textcite{FJM22}. 

\begin{defi}\label{D:internal}
	Given two differential fields~$K$ and~$L$, a \emph{$K$-conjugate} of a tuple~$\bar b$ over~$L$ is a tuple~$\bar b'$ given by an automorphism~$\sigma$ of $\Aut_{\delta}(\UU/K)$ with 
 	$\sigma(\str{L}{\bar b})=\str{L'}{\bar b'}$. We say that the  conjugate~$\bar b'$ of~$\bar b$ over~$L$ is \emph{based} over~$L'$. 
	
A stationary type of a tuple $\bar a$ over $K$ is \emph{internal} to the \emph{family of $K$-conjugates} of some tuple $\bar b$ over $L$ if there exists some  differential field extension $M$ of $K$  such that for every realization $\bar a'$ of the type of $\bar a$ over $K$, there are 
$K$-conjugates $\bar b_1,\ldots, \bar b_n$ of  $\bar b$ over $L$, each based over a subfield of $M$, such that $\bar a'$ belongs to the differential field $\str{M}{\bar b_1,\ldots, \bar b_n}$. 
\end{defi}

In the above definition of internality, we may always assume that the differential field $L$ is of the form $\str{K}{\bar c}$ for some finite tuple $\bar c$: Indeed, the canonical base $\cb(\bar b/\str{Q}{L\cdot K}^{alg})=\str{\Q}{\bar c}$ is finitely generated by Proposition~\ref{P:CB}.(d). Thus, the tuple~$\bar b$ is independent from $\str{\Q}{L\cdot K}^{alg}$ over $\str{K}{\bar c}$. Moreover,  the  $K$-conjugates of~$\bar b$ over~$L$ are in particular $K$-conjugates of~$\bar b$ over~$\str{K}{\bar c}$. 

 If the tuple $\bar b$ is based over a subfield of $K$, then 
so are its conjugates. However, we may still need additional parameters (coming from $M$) in order to witness internality. In particular, we can study the notion of internality if the tuple $\bar b$ above belongs to the field of constants $\CC_\UU$. In this case, every conjugate of a tuple in $\CC_\UU$ is again in $\CC_\UU$, so we say that the stationary type of $\bar a$ over $K$ is \emph{internal to the constants} if there exists some differential field extension $M$ of $K$ such that every realization $\bar a'$ of the type of $\bar a$ over~$K$ belongs to the differential subfield $M\cdot \CC_\UU$ of $\UU$. 

A typical example of types which are internal to the  constants are those given by tuples $\bar a$ in a given $K$-definable finite-dimensional $\CC_\UU$-vector space $V$. Indeed, consider~$M$ the differential field extension of~$K$ obtained after adding a basis of~$V$ to~$K$. Now, every realization $\bar a'$ of the type of $\bar a$ over $K$  is a $\CC_\UU$-linear combination of the basis, and thus belongs to  the differential subfield $M\cdot \CC_\UU$. Internality to the constants will be explored in further detail in Section~\ref{S:binding}.

\begin{rema}\label{R:internal}
\begin{enumerate}[(a)]

\item  Internality requires parameters $M$ which work for every realization of the stationary type of $\bar a$ over $K$. However,  it is equivalent to a  \emph{local} condition, which allows the  parameters to vary as we vary the realization. That is, with the notation of Defintion~\ref{D:internal}, the following are equivalent \parencite[Chapter 7,	\S 4, Lemma 4.2]{Pillay_GeoStab}: 
\begin{enumerate}[(1)]
\item The stationary type of $\bar a$ over $K$ is internal to  the family of $K$-conjugates of $\bar b$ over $L$.
\item There exists some differential field extension $M$ of $K$ and $K$-conjugates $\bar b_1,\ldots, \bar b_n$ of $\bar b$ over $L$, each based over a subfield of $M$, such that \[ \str{K}{\bar a}\ind_K M \ \text{ and }\ \bar a \in \str{M}{\bar b_1,\ldots, \bar b_n}.\]
\end{enumerate}
One direction is easy to prove:  If the type of $\bar a$ over $K$ is internal to the family of $K$-conjugates of $\bar b$ over $L$,  witnessed by the differential field extension $M$ of $K$, choose now by {\bf Extension} some $\bar a'\equiv_K \bar a$ independent from $M$ over $K$. 
 By assumption, there are $K$-conjugates $\bar b_1, \ldots, \ldots, \bar b_n$ of $\bar b$ over $L$, each based over a subfield of $M$, such that the realization $\bar a'$ lies in $\str{M}{\bar b_1,\ldots, \bar b_n}$. Now, if the  automorphism $\sigma$ of $\Aut_\delta(\UU/K)$ maps $\bar a'$ to $\bar a$, then we conclude that   \[ \str{K}{\bar a}\ind_K \sigma(M) \ \text{ and }\ \bar a \in \str{\sigma(M)}{\sigma(\bar b_1),\ldots, \sigma(\bar b_n)}, \] so $\sigma(M)$ is the desired field extension, since the $K$-conjugate $\sigma(\bar b_i)$ of $\bar b$ is based over a subfield of $\sigma(M)$. 
 
 We will prove the other direction under the additional assumption that the stationary type of $\bar a$ over $K$ has finite rank $\U(\bar a/K)\le \ell$. Without loss of generality (see the discussion right after Definition~\ref{D:internal}), we may assume that $L=\str{K}{\bar c}$ for some finite tuple $\bar c$.  
 
 By assumption,  there exists a differential field extension $M$ of $K$ and suitable $K$-conjugates $\bar b_1,\ldots, \bar b_n$ of the type of $\bar b$ over $L$ such that \[ \str{K}{\bar a}\ind_K M \ \text{ and }\ \bar a \in \str{M}{\bar b_1,\ldots, \bar b_n}.\] Choose a tuple $\bar m$ of $M$ containing the corresponding $K$-conjugates of the tuple $\bar c$ over $K$, so we may assume that all $\bar b_i$'s are based over $\str{K}{\bar m}$. Possibly after enlarging the tuple $\bar m$, we may also assume that the finite tuple $\bar a$ lies in $\str{K}{\bar m, \bar b_1,\ldots, \bar b_m}$. In particular, we have that \[ \str{K}{\bar a}\ind_K \str{K}{\bar m} \ \text{ and }\ \bar a \in \str{K}{\bar m,\bar b_1,\ldots, \bar b_n}. \tag{$\lozenge$}\]
  
 Consider now a Morley sequence $(\bar m_i)_{i\le \ell+1}$ of the stationary type of $\bar m$ over $K^{alg}$. Setting $M_1=\str{K}{\bar m_1,\ldots, \bar m_{\ell+1}}$, we need only show that every realization $\bar a'$ of the the type of $\bar a$ over $K$ belongs to the differential field generated over $M_1$ by $K$-conjugates of the type of $\bar b$ over $L$, where each $K$-conjugate is based over a differential subfield of $M_1$.  Given such a  realization $\bar a'$, it suffices to show  by {\bf Stationarity} and $(\lozenge)$ that there is some $1\le i\le \ell+1$ with $\bar a'$ independent from $\str{K}{\bar m_i}$ over $K$. 
 
 Assume otherwise, so \[  \str{K}{\bar a', (\bar m_j)_{j<i}}\nind_{\str{K}{(\bar m_j)_{j<i}}} \str{K}{(\bar m_j)_{j\le i}} \ \text{ for all } i\le \ell+1,\]
by   {\bf Symmetry and Transitivity}, for the $\bar m_i$'s are $K$-independent. Now, the strict inequalities 
 \begin{multline*} \mbox{}\hskip5mm \U(\bar a'/\str{K}{(\bar m_j)_{j\le \ell+1}})< \U(\bar a'/\str{K}{(\bar m_j)_{j<\ell}}) < \ldots   \\ < \U(\bar a'/\str{K}{\bar m_1}) < \U(\bar a'/K)=\U(\bar a/K)\le \ell.\end{multline*}
yield the desired contradiction.  
 
\item If the stationary type of~$\bar a$ over~$K$ is internal to the family of $K$-conjugates of~$\bar b$ over~$L$, then it admits a \emph{fundamental system of solutions}, that is, there exists realizations $\bar a_1,\ldots, \bar a_m$ of the type of~$\bar a$ over~$K$ such that for every realization~$\bar a'$ of the type of~$\bar a$ over~$K$, we have that~$\bar a'$ lies in $\str{K}{\bar a_1,\ldots, \bar a_m, \bar b_1,\ldots, \bar b_n}$, where the $\bar b_i$'s are $K$-conjugates of~$\bar b$ over~$L$. Moreover, we may find such a  fundamental system of solutions consisting of a Morley sequence of the type of~$\bar a$ over~$K$. 

In particular, the type of a fundamental system of solutions $(\bar a_1,\ldots, \bar a_m)$ over~$K$ is itself \emph{fundamental}: Every realization $(\bar a'_1,\ldots, \bar a'_m)$ of the type of $(\bar a_1,\ldots, \bar a_m)$ over~$K$ belongs to the field generated over~$M$ by $(\bar a_1,\ldots, \bar a_m)$ and $K$-conjugates of~$\bar b$ over~$L$.

The reader may notice  that the differential field extension $M= \str{K}{\bar a_1,\ldots, \bar a_m}$ of $K$ is not exactly as in Definition~\ref{D:internal}, for we do no longer impose that the $K$-conjugates $\bar b_i$'s are based over a subfield of $\str{K}{\bar a_1,\ldots, \bar a_m}$.

In order to show the existence of a fundamental system of solutions, assume that there exists some differential field extension $M=M^{alg}$ of $K$ with $\str{K}{\bar a}\ind_K M$ and a tuple $B=(\bar b_1, \ldots, \bar b_n)$ as in part (a) of this remark. Set $k=\cb(\bar a, B/M)$ the canonical base of $(\bar a, B)$ over $M$.  By Remark~\ref{R:CB_dcl}, the field $k$ a differential subfield of $\str{\Q}{\bar a_1, B_1,\ldots, \bar a_m, B_m}$ for some Morley sequence $(\bar a_i, B_i)$ of the type of $(\bar a, B)$ over $M$.  By {\bf Extension}, we can find some realization $(\bar a_{m+1}, B_{m+1})$ of the type of $(\bar a, B)$ over~$M$ independent from the previous sequence over~$M$. If $K_1$ denotes the differential subfield $\str{K}{\bar a_1,\ldots, \bar a_m}$, the field $k$ is a differential subfield of $\str{K_1}{B_1,\ldots, B_m}$. We will first show that $\bar a_{m+1}$ belongs to \[ \str{K_1}{B_1,\ldots, B_{m+1}}=\str{K}{\bar a_1,\ldots, \bar a_m, B_1,\ldots, B_{m+1}}.\] By construction, the fields $ \str{M}{\bar a_{m+1}, B_{m+1}}$ and  $\str{M}{(\bar a_i, B_i)_{i\le m})}$ are independent over $M$, and thus linearly disjoint over $M$, by Remark~\ref{R:indep_char}.(a). Proposition~\ref{P:CB}.(a) yields that the fields $\str{k}{\bar a_{m+1}, B_{m+1}}$ and $M$ are linearly disjoint over $k$, so we deduce that $\str{k}{\bar a_{m+1}, B_{m+1}}$ and $\str{M}{(\bar a_i, B_i)_{i\le m})}$ are linearly disjoint over $k$, by transitivity of linear disjointness. In particular, the fields $\str{K_1}{\bar a_{m+1}, B_1,\ldots, B_{m+1}}$ and $\str{M}{(\bar a_i, B_i)_{i\le m}, B_{m+1})}$ are linearly disjoint over $\str{K_1}{B_1,\ldots, B_{m+1}}\supset k$. Since $(\bar a, B)\equiv_M \bar a_{m+1}, B_{m+1}$, Corollary~\ref{C:univ} yields that $\bar a_{m+1}$ belongs to $\str{M}{B_{m+1}}$. The linear disjointness  \[  \str{K_1}{\bar a_{m+1}, B_1,\ldots, B_{m+1}}\ind^{ld}_{\str{K_1}{B_1,\ldots, B_{m+1}}} \str{M}{\bar a_1,\ldots, \bar a_m, B_1,\ldots, B_{m+1}} \] allows us to conclude that $\bar a_{m+1}$ belongs to \[ \str{K_1}{B_1,\ldots, B_{m+1}}=\str{K}{\bar a_1,\ldots, \bar a_m, B_1,\ldots, B_{m+1}},\] as desired. 

Note that the tuple $(\bar a_1,\ldots, \bar a_m)$ is a Morley sequence of the stationary type of $\bar a$ over $K$, by Remark~\ref{R:MS}.(b). 

Let us now show that every realization $\bar a'$ of the type of $\bar a$ over $K$ belongs to the field generated over $K_1$ by $K$-conjugates of $\bar b$ over $L$. By {\bf Extension}, find some realization $(\bar a_1',\ldots, \bar a_m')$ of the stationary type of $(\bar a_1,\ldots, \bar a_m)$ over $K$ with 
 \[  \str{K}{\bar a'_1,\ldots, \bar a'_m} \ind_K \str{K}{\bar a', \bar a_1,\ldots, \bar a_m}.\]
 By {\bf Stationarity}, for every $1\le i\le m$, the sequence $(\bar a_1,\ldots, \bar a_m, \bar a'_i)$ has the same type as $(\bar a_1,\ldots, \bar a_{m+1})$ over $K$. Thus, the tuple $\bar a'_i$ belongs to $\str{K}{\bar a_1, \ldots, \bar a_m, \bar d_i}$ for a suitable tuple $\bar d$ of $K$-conjugates of $B$ (and thus of $\bar b$) over $L$. Since   $(\bar a'_1,\ldots, \bar a'_m, \bar a')$  has the same type as $(\bar a_1,\ldots, \bar a_{m+1})$ over $K$ by {\bf Stationarity}, we conclude that  \[ \bar a' \in \str{K}{\bar a'_1, \ldots, \bar a'_m, \bar d'}\subset \str{K}{\bar a_1, \ldots, \bar a_m, \bar d_1,\ldots, \bar d_m, \bar d'},\] for suitable $K$-conjugates $\bar d_i$, as desired. 
\end{enumerate}
\end{rema}
Together with {\bf Algebraic closure}, the equivalence in Remark~\ref{R:internal}.(a) yields the following result:
\begin{coro}\label{C:internal_stat}
  The stationary type of $\bar a$ over $K$ is internal to  the family
 of $K$-conjugates of the type of $\bar b$ over $L$ if and only if the
 type of $\bar a$ over $K^{alg}$ is. \qed
\end{coro}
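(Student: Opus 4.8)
The plan is to derive this from the local characterization of internality established in Remark~\ref{R:internal}.(a) together with the {\bf Algebraic closure} property of the independence calculus (Remark~\ref{R:indep_prop1}). The key observation I would isolate first is that, for every differential field extension $M$ of $K^{alg}$,
\[ \str{K}{\bar a}\ind_K M \iff \str{K^{alg}}{\bar a}\ind_{K^{alg}} M, \]
because by {\bf Algebraic closure} both sides are equivalent to $\str{K}{\bar a}^{alg}\ind_{K^{alg}} M^{alg}$; here one uses $(K^{alg})^{alg}=K^{alg}$ and $\str{K^{alg}}{\bar a}^{alg}=\str{K}{\bar a}^{alg}$, the latter since the algebraic closure of a differential field is again a differential field (Remark~\ref{R:extension_diff}.(a)). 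Note also that the type of $\bar a$ over $K^{alg}$ is automatically stationary, so both notions of internality in the statement make sense.

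For the implication from $K^{alg}$ to $K$: assuming the type of $\bar a$ over $K^{alg}$ is internal to the family of $K$-conjugates of $\bar b$ over $L$, Remark~\ref{R:internal}.(a) yields a differential field extension $M$ of $K^{alg}$ and $K$-conjugates $\bar b_1,\ldots,\bar b_n$ of $\bar b$ over $L$, each based over a subfield of $M$, with $\str{K^{alg}}{\bar a}\ind_{K^{alg}} M$ and $\bar a\in\str{M}{\bar b_1,\ldots,\bar b_n}$. Then $M$ is also a differential field extension of $K$; the displayed equivalence gives $\str{K}{\bar a}\ind_K M$, and all the remaining data are unchanged, so Remark~\ref{R:internal}.(a) applied over $K$ shows that the type of $\bar a$ over $K$ is internal to the family of $K$-conjugates of $\bar b$ over $L$.

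For the converse I would argue symmetrically, the one adjustment being that the witnessing field extension $M$ produced by internality over $K$ need not contain $K^{alg}$: so I would replace it by the compositum $M\cdot K^{alg}$, a differential subfield of $\UU$ (being generated by two differential subfields) which contains $K^{alg}$. From $\str{K}{\bar a}\ind_K M$ one obtains $\str{K}{\bar a}^{alg}\ind_{K^{alg}} M^{alg}$ by {\bf Algebraic closure}, hence $\str{K^{alg}}{\bar a}\ind_{K^{alg}} M\cdot K^{alg}$ by {\bf Monotonicity}; meanwhile $\bar a\in\str{M}{\bar b_1,\ldots,\bar b_n}\subseteq\str{M\cdot K^{alg}}{\bar b_1,\ldots,\bar b_n}$ and each $\bar b_i$ is still a $K$-conjugate of $\bar b$ over $L$ based over a subfield of $M\cdot K^{alg}$. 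Applying Remark~\ref{R:internal}.(a) over $K^{alg}$ then concludes. The main (and rather minor) obstacle is precisely this bookkeeping step: enlarging the parameter field so that it extends $K^{alg}$ while preserving the independence of $\bar a$ from it.
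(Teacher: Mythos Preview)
Your proposal is correct and follows precisely the approach the paper indicates: the corollary is stated with a \qed\ symbol after a one-line remark that it follows from the equivalence in Remark~\ref{R:internal}.(a) together with {\bf Algebraic closure}, and you have simply spelled out those details carefully, including the minor bookkeeping of enlarging $M$ to $M\cdot K^{alg}$ in the $K\Rightarrow K^{alg}$ direction.
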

    
\textcite{Chatzidakis} showed in a previous version of her paper on the \emph{canonical base property} that $1$-basedness is preserved under internality (see below). This was later generalized by \textcite{Wagner} to \emph{higher analysis} beyond mere internality. 
\begin{fait}\label{F:internal_1based}
	Consider a tuple $\bar a$ whose type over $K$ is stationary and internal to the family of $K$-conjugates of some tuple $\bar b$ over $L$. If the type of $\bar b$ over $L$ is $1$-based, then  the type of $\bar a$ over $K$ is again $1$-based. 
	
	In particular, if $\bar a$ belongs to $\str{K}{\bar c_1,\ldots, \bar c_m}$ and the type of each $\bar c_i$ is $1$-based over $K$, then the type of $\bar a$ over $K$ is $1$-based. 
\end{fait}

We will finish this section establishing a connection between the notions of indifference and internality. 
\begin{prop}\label{P:nonorth_internal}
	Consider a differential field $K$ as well as a finite tuple $\bar a$ whose type over $K$ is stationary and not algebraic. 
	\begin{enumerate}[(a)]
\item Internality prevents indifference (or foreignness): If the  type of $\bar a$ over $K$ is internal to the family of $K$-conjugates of the type of $\bar b$ over the differential field $L$, then the type of $\bar a$ over $K$ cannot be  indifferent to the type of $\bar b$ over  $L$.
\item On the other hand, non-orthogonality  induces internality \parencite[Chapter~7, \S 4, Corollary 4.6]{Pillay_GeoStab}: If the  type of $\bar a$ over $K$ has finite rank, then there exists a finite tuple $\bar d$ in $\str{K}{\bar a}^{alg}$ whose type over $K^{alg}$ is non-algebraic (and stationary) such that the type $\bar d$ over $K^{alg}$ is internal to the family of $K$-conjugates of a tuple $\bar b$ over $\str{K}{\bar c}^{alg}$ for some finite tuple $\bar c$ in $\UU$, where the type of $\bar b$ over $\str{K}{\bar c}^{alg}$ is 
 one of the three classes of minimal types listed in Fact~\ref{F:dichotomy}. 
\item Furthermore, if the type of $\bar a$ over $K$ has finite rank and is orthogonal to the type of the constants, then we may find such a tuple $\bar d$ as in (b)  whose type over $K^{alg}$ is both minimal and $1$-based, as originally shown by \textcite[Theorem 2]{Hrushovski_locmod}. 
\end{enumerate}
\end{prop}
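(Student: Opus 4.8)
The plan is to handle the three parts in turn, with (a) a direct consequence of the local form of internality and (b), (c) built on the trichotomy package already assembled (Proposition~\ref{P:dichotomy}, Fact~\ref{F:dichotomy}, Lemma~\ref{L:trivial_1based}, Example~\ref{E:Manin}), together with the known ``non-orthogonality yields an internal piece'' mechanism.

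For part (a) the cleanest route is the contrapositive. Suppose $\tp(\bar a/K)$ were internal to the family of $K$-conjugates of $\tp(\bar b/L)$ yet indifferent (foreign) to $\tp(\bar b/L)$. By Remark~\ref{R:internal}.(a) there is a differential field extension $M$ of $K$ with $\str{K}{\bar a}\ind_K M$ and $K$-conjugates $\bar b_1,\ldots,\bar b_n$ of $\bar b$ over $L$, each based over a subfield of $M$, such that $\bar a\in\str{M}{\bar b_1,\ldots,\bar b_n}$. Foreignness to $\tp(\bar b/L)$ is invariant under $\Aut_\delta(\UU/K)$ (which fixes $\tp(\bar a/K)$, so it holds against every conjugate $\tp(\bar b_i/\cdot)$ as well) and is inherited by nonforking extensions of $\tp(\bar a/K)$; one checks both using \textbf{Invariance}, \textbf{Monotonicity \& Transitivity} and \textbf{Algebraic closure}. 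Chaining along $M\subseteq\str{M}{\bar b_1}\subseteq\cdots\subseteq\str{M}{\bar b_1,\ldots,\bar b_n}$: foreignness of the nonforking extension $\tp(\bar a/M)$ gives $\str{M}{\bar a}\ind_M\str{M}{\bar b_1}$; then $\tp(\bar a/\str{M}{\bar b_1})$ is still a nonforking extension of $\tp(\bar a/K)$, hence foreign, so $\str{M}{\bar a,\bar b_1}\ind_{\str{M}{\bar b_1}}\str{M}{\bar b_2}$; iterating yields $\str{M}{\bar a}\ind_M\str{M}{\bar b_1,\ldots,\bar b_n}$. Since $\bar a$ lies in that last field, $\bar a\in M^{alg}$, and then $\str{K}{\bar a}\ind_K M$ forces $\bar a\in K^{alg}$ by \textbf{Algebraic closure}, contradicting that $\tp(\bar a/K)$ is not algebraic. (The base fields of the $\bar b_i$ need to sit inside the field against which foreignness is invoked at each step; this requires only routine bookkeeping.)

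For (b) and (c) the engine is Proposition~\ref{P:dichotomy}: a non-algebraic stationary finite-rank type over $K$ is non-orthogonal either to the type of the constants over $K$ or to a $1$-based minimal stationary type $\mu$ based over $\str{K}{\bar c}^{alg}$ for a finite tuple $\bar c$---and by Fact~\ref{F:dichotomy} this $\mu$ belongs to one of the three classes listed there. From the non-orthogonality I would extract $\bar d$ by the standard argument behind \parencite[Ch.~7, \S 4, Cor.~4.6]{Pillay_GeoStab}: unwinding Definition~\ref{D:orth} and Remark~\ref{R:non_orth_minimal} gives $M\supseteq K\cup\str{K}{\bar c}^{alg}$ and realizations $\bar a'\equiv_K\bar a$ and $\bar b'$ of $\mu$, with $\str{K}{\bar a'}\ind_K M$, the tuple $\bar b'$ independent from $M$ over its base, and $\str{M}{\bar a'}\nind_M\str{M}{\bar b'}$; minimality of $\mu$ then forces $\bar b'\in\str{M}{\bar a'}^{alg}$; a Morley-sequence and canonical-base computation (using Proposition~\ref{P:CB}.(e) and Remark~\ref{R:CB_dcl}) descends the parameters to a finite tuple $\bar d\in\str{K}{\bar a'}^{alg}$, non-algebraic over $K^{alg}$, whose type over $K^{alg}$ is internal to the family of $K$-conjugates of $\mu$ over $\str{K}{\bar c}^{alg}$. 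Transporting along the automorphism taking $\bar a'$ to $\bar a$ gives (b). For (c), orthogonality of $\tp(\bar a/K)$ to the type of the constants over $K$ kills the first alternative of Proposition~\ref{P:dichotomy}, so $\mu$ is $1$-based; since trivial types (Lemma~\ref{L:trivial_1based}) and Manin-kernel generics (Example~\ref{E:Manin}) are $1$-based, Fact~\ref{F:internal_1based} makes $\tp(\bar d/K^{alg})$ $1$-based, and---choosing $\bar d$ interalgebraic over $K^{alg}$ with a realization of a nonforking extension of the minimal $\mu$, as Remark~\ref{R:non_orth_minimal} permits---one gets $\U(\bar d/K^{alg})=1$, i.e.\ $\tp(\bar d/K^{alg})$ is minimal by Remark~\ref{R:minimaltypes}.(c). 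This recovers Hrushovski's \parencite[Theorem 2]{Hrushovski_locmod}.

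The hard part is the extraction of $\bar d$ in (b): passing from a non-orthogonality witnessed over an uncontrolled field $M$ to a genuine piece $\bar d\in\str{K}{\bar a}^{alg}$ whose type is internal \emph{over $K^{alg}$} rather than over $M$. This is exactly where finiteness of rank is used, through Proposition~\ref{P:CB}.(e) and Remark~\ref{R:CB_dcl} (the relevant canonical base lies in the differential field generated by finitely many independent realizations) together with a Lascar-inequality count of the kind appearing in the Claim inside the proof of Proposition~\ref{P:dichotomy}, which pins those realizations down inside $\str{K}{\bar a}^{alg}$ after a suitable conjugation. Everything else---part (a), the reduction of (b) and (c) to Proposition~\ref{P:dichotomy}, and the propagation of $1$-basedness in (c)---is routine manipulation with the independence calculus and the results already in hand.
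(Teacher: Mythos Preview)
Your arguments for (a) and (b) are essentially the paper's: part (a) is the same maximal-index idea read as a contrapositive chain, and part (b) is the paper's canonical-base trick (take $\bar d'$ generating $\cb(\bar m,\bar b'/\str{K}{\bar a'}^{alg})$, then use Remark~\ref{R:CB_dcl} and the independence of $\bar a'$ from the $\bar m_i$'s over $K$ to witness internality over $K^{alg}$). Your identification of (b) as the hard step is right.

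The genuine gap is in (c). You propose to get minimality of $\bar d$ over $K^{alg}$ by ``choosing $\bar d$ interalgebraic over $K^{alg}$ with a realization of a nonforking extension of the minimal $\mu$, as Remark~\ref{R:non_orth_minimal} permits''. But Remark~\ref{R:non_orth_minimal} only gives interalgebraicity \emph{over the witnessing field $M$}, not over $K^{alg}$; the minimal type $\mu$ is based over $\str{K}{\bar c}^{alg}$ with $\bar c$ arbitrary, so a realization of $\mu$ need not be algebraic over $\str{K^{alg}}{\bar a}$ at all. There is no reason the $\bar d$ produced in (b) has $\U(\bar d/K^{alg})=1$.

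The paper closes this gap differently, and the mechanism is instructive: one first takes the $\bar d$ from (b), which is $1$-based over $K^{alg}$ by Fact~\ref{F:internal_1based} (since $\mu$ is trivial or a Manin-kernel generic, both $1$-based). If $\U(\bar d/K)>1$, pick $K_1=K_1^{alg}\supseteq K$ with $\U(\bar d/K_1)=\U(\bar d/K)-1$ and let $\bar f$ generate $\cb(\bar d/K_1)$. Now \emph{$1$-basedness of $\tp(\bar d/K^{alg})$ is exactly what forces} $\bar f\in\str{K}{\bar d}^{alg}\subseteq\str{K}{\bar a}^{alg}$; Lascar's inequality then gives $\U(\bar f/K)=1$, and Fact~\ref{F:internal_1based} (applied to $\bar f$ sitting inside a field generated by realizations of the $1$-based $\tp(\bar d/K^{alg})$, via Remark~\ref{R:CB_dcl}) shows $\tp(\bar f/K^{alg})$ is again $1$-based. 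So you replace $\bar d$ by $\bar f$. The point is that minimality is obtained \emph{after} $1$-basedness and \emph{because of} it, not by appealing back to $\mu$.
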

\begin{proof}
	For (a), assume that the type of~$\bar a$ over~$K$ is internal to the family of $K$-conjugates of~$\bar b$ over~$L$.  By Remark~\ref{R:internal}.(a), there are some differential field extension~$M$ of~$K$ and $K$-conjugates $\bar b_i$ of~$\bar b$ over~$L$, each based over a differential subfield of~$M$, such that \[  \str{K}{\bar a}\ind_K M \ \text{ and } \ \bar a\in \str{M}{\bar b_1,\ldots, \bar b_n}.\] Without loss of generality, we may assume that $M$ contains $L$, by an easy application of {\bf Extension} to the type of $\bar a$ over $M$.  In particular, {\bf Algebraic Closure} yields that $\bar a$ is not independent from $ \str{M}{\bar b_1,\ldots, \bar b_n}$ over $K$, since $\bar a$ does not belong to $K^{alg}$. Choose therefore $0\le i\le n-1$ maximal such that $\bar a$ is independent from  $M_1=\str{M}{\bar b_1,\ldots, \bar b_i}$ over $K$, so \[ \str{K}{\bar a}\ind_K M_1, \ \text{ yet } \str{M_1}{\bar a}\nind_{M_1} \str{M_1}{\bar b_{i+1}}\] by {\bf Monotonicity \& Transitivity}.  We deduce that the type of $\bar a$ over $K$ is not foreign to the type of $\bar b$ over $L$, as desired, witnessed by the realization $\bar b_{i+1}$ and the differential field extension $M_1$ of both $K$ and $L$.  
	
	For (b), notice that Proposition~\ref{P:dichotomy} yields that there are finite tuples $\bar b$ and $\bar c$ in $\UU$ such that the type of $\bar b$ over  the algebraic closure of $L=\str{K}{\bar c}$ is minimal and 
	the type of $\bar a$ over $K$ is non-orthogonal to the type of  $\bar b$ over $L^{alg}$. Using that non-orthogonality among minimal types is a transitive relation (see Fact~\ref{F:dichotomy} and the discussion thereafter), we may assume that the type of $\bar b$ over $L=\str{K}{\bar c}$ is one of the three minimal types listed in Fact~\ref{F:dichotomy}.
	
	By definition of non-orthogonality (Definition~\ref{D:orth}), there is 
 some differential field extension~$M$ of~$L$ (and thus of~$K$) as well as realizations~$\bar a'$ and~$\bar b'$, each independent from~$M$ over their corresponding base sets, such that $\bar a'$ and $\bar b'$ are not independent over~$M$. 
	
	Choose a finite tuple $\bar m$ in $M$  containing $\bar c$ as well as all the coefficients of the polynomial expressions needed to witness the dependence of $\bar a'$ and $\bar b'$ over $M$. Thus,   both $\bar a'$ and $\bar b'$ are each independent from $\str{K}{\bar m}$ over their base sets, yet \[  \str{K}{\bar m, \bar a'}\nind_{\str{K}{\bar m}} \str{K}{\bar m, \bar b'}.\] 	By Proposition~\ref{P:CB}.(d), there is some finite tuple $\bar d'$  which generates $\cb(\bar m, \bar b'/\str{K}{\bar a'}^{alg})$.  The tuple  $\bar d'$ belongs to 
	$\str{K}{\bar a'}^{alg}$, but cannot be contained in $K^{alg}$.  Otherwise, Proposition~\ref{P:CB}.(b) would yield that \[ \str{K}{\bar a'} \ind_K \str{K}{\bar m, \bar b'} \ \text{ and thus } \ \str{K}{\bar m, \bar a} \ind_{\str{K}{\bar m}} \str{K}{\bar m, \bar b'},\] contradicting our choice of $\bar m$. 
	
By Remark~\ref{R:CB_dcl}, there is some natural number $n$ such that \[ \cb(\bar m, \bar b'/\str{K}{\bar a'}^{alg})=\str{\Q}{\bar d'} \subset  \str{\Q}{\bar m_1,\bar b'_1,\ldots, \bar m_n,\bar b'_n} \subset \str{K}{\bar m_1,\bar b'_1,\ldots, \bar m_n,\bar b'_n},\] for some Morley sequence $(\bar m_1, \bar b'_1, \ldots, \bar m_n, \bar b'_n)$ of the type of the tuple $(\bar m, \bar b')$ over $\str{K}{\bar a'}$.  Notice that each $\bar b'_j$ is a $K$-conjugate of $\bar b$ over $L$. 

The tuples $\bar a'$ and $\bar m$ are independent over $K$, so it follows immediately from {\bf Invariance}, {\bf Stationarity}, {\bf Monotonicity \& Transitivity} that \[ \str{K}{\bar a'}\ind_K \str{K}{\bar m_1,\ldots, m_n}, \ \text{ and hence, }\ \str{K}{\bar d'}\ind_K \str{K}{\bar m_1,\ldots, m_n}, \] by {\bf Algebraic closure}.  Remark~\ref{R:internal}.(a) yields that the type of~$\bar d'$ over $K^{alg}$ is internal to the family of $K$-conjugates of~$\bar b$ over~$L$. Since $\bar a'$ and $\bar a$ have the same type over~$K$, there exists an automorphism~$\sigma$ mapping~$\bar a'$ to~$\bar a$. The tuple $\bar d=\sigma(\bar d')$ is contained in $\str{K}{\bar a}^{alg}$ and its type over $K^{alg}$ is also internal to the family of $K$-conjugates of~$\bar b$ over~$L$, as desired. 

For (c), assume that the type of $\bar a$ over $K$ is orthogonal to the constants. In the proof of part (b) above, it follows that the type of  $\bar b$ over $L^{alg}$ is either trivial or the generic type of a Manin kernel.  Both such types are $1$-based by Lemma~\ref{L:trivial_1based} and Example~\ref{E:Manin}. Fact~\ref{F:internal_1based}  implies that the stationary non-algebraic type of $\bar d$ over $K^{alg}$ is again $1$-based. 

We need only show that the type of $\bar d$ over $K$ is minimal. 
If $\U(\bar d/K)=1$, we are done. Otherwise, choose some differential field extension $K_1=K_1^{alg}$ of $K$ with $\U(\bar d/K_1)=\U(\bar d/K)-1$. By Proposition~\ref{P:CB}.(b) \& (d), there is a finite tuple $\bar f$ generating the canonical base $\cb(\bar d/K_1)$, so \[ \tag{$\lozenge$} \str{K}{\bar d, \bar f}\ind_{\str{K}{\bar f}} K_1.\] Since the type of $\bar d$ over $K$ is $1$-based, it follows that $\bar f$ belongs to $\str{K}{\bar d}^{alg}\subset \str{K}{\bar a'}^{alg}$. A straightforward application of Lascar inequalities (see Remark~\ref{R:fterank}.(b)) yields that 
\[ \U(\bar d/K)=\U(\bar d/\str{K}{\bar f}) + \U(\bar f/K) \stackrel{(\lozenge)}{=} \U(\bar d/K_1)+ \U(\bar f/K)= \U(\bar d/K)-1 +\U(\bar f/K).\] Hence, the type of $\bar f$ over $K$ is minimal. Fact~\ref{F:internal_1based} yields that the type of $\bar f$ over $K$ is $1$-based. Indeed,  the tuple $\bar f$ belongs to $\str{\Q}{\bar d_1,\ldots, \bar d_m}$ for some independent tuple of realizations $\bar d_i$ of the $1$-based type of $\bar d$ over $K_1^{alg}$, by Remark~\ref{R:CB_dcl}. Fact~\ref{F:internal_1based} yields that the type of $\bar f$ over $K$ is itself $1$-based, as desired.
\end{proof}

\section{Binding groups and Picard--Vessiot extensions}\label{S:binding}

 \textcite[\S 4]{Zilber_tt} and \textcite{Hr_unidim} noticed, beyond the particular  context of differential algebra, that internality produces the existence of a definable group of permutations, called the \emph{binding group}.  In the particular case of an internal type to the field of constants $\CC_\UU$ (within our universal differentially closed field $\UU$), the binding group can be identified with the \emph{elementary permutations} on the realizations of the internal type arising from differential automorphisms of $\UU$ fixing pointwise both the base set and the field $\CC_\UU$ of constants.  The connection between binding groups and differential Galois theory for differentially closed fields of characteristic $0$ was explored in more detail by \textcite{Poizat_Galois} and later on by \textcite{Pillay_Galois}. The algorithm provided by \textcite{Hrushovski_Gal} to effectively compute the binding group has been improved in the last decade by \textcite{Feng} and \textcite{Sun}.

Before we can introduce the binding group, we first need a couple of auxiliary results.  By Proposition~\ref{P:univ}, we work inside a universal $\aleph_1$-saturated differentially closed field $\UU$ with field of constants $\CC_\UU$.  All differential subfields of $\UU$ are countable, unless explicitly stated. 
\begin{rema}\label{R:wronsk}
	Given  a differential subfield $K$ of $\UU$, the elements $a_1,\ldots, a_n$ of $K$ are linearly independent over $\CC_K$ if and only if the \emph{Wronskian matrix} \[ \begin{pmatrix} a_1 & \ldots & a_n\\
		\delta(a_1) & \ldots &\delta(a_n)\\
		\vdots & \ddots & \vdots \\ 
		\delta^{n-1}(a_1) & \cdots & \delta^{n-1}(a_n)
	\end{pmatrix}\] is invertible \parencite[Lemma 4.1]{Marker}. As a consequence, the differential field $K$ is always linearly disjoint from $\CC_\UU$ over $\CC_K$, since the determinant of the above matrix does not depend on the ambient differential field containing the elements $a_i$. 
\end{rema}
Given a finite tuple  $\bar a$ and a differential subfield $K$ of $\UU$ such that the type of $\bar a$ over $K$ is stationary,  recall from the discussion after Definition~\ref{D:internal} that the type of $\bar a$ over $K$  is \emph{internal to the constants} (or \emph{$\CC_\UU$-internal}) if there exists some differential field extension $M$ of $K$ such that every realization of the type of $\bar a$ over $K$ belongs to the compositum field $M\cdot \CC_\UU$. The $\CC_\UU$-internal stationary type of $\bar a$ over $K$ is \emph{fundamental} if every realization $\bar a_1$ of the type of $\bar a$ over $K$ belongs to $\str{K}{\bar a}\cdot \CC_\UU$, or equivalently, if the tuple $\bar a$ is already a fundamental system of solutions, as in Remark~\ref{R:internal}.(b). 

\begin{rema}\label{R:binding_pn}
	Consider a finite tuple  $\bar a$ and a countable differential subfield $K$ of $\UU$ such that the type of $\bar a$ over $K$ is stationary and  internal to the constants. Given some $m\ge 1$ in $\N$ and a Morley sequence $(\bar a_1,\ldots, \bar a_m)$ of length $m$ of the type of $\bar a$ over $K$, the type of the tuple $(\bar a_1,\ldots, \bar a_m)$ over $K$ is internal to the constants, directly from the definitions. 
\end{rema}

More generally, we introduce the following notion:
\begin{defi}\label{D:set_internal}
	A Kolchin constructible subset~$X$ of~$\UU^n$ defined over a differential subfield~$K$ is \emph{internal to the constants} if there are finitely many (tuples of) rational functions $\bar R_1(\bar Y), \ldots, \bar R_k(\bar Y)$ with coefficients in some differential field extension~$M$ of~$K$ such that 
	\[ X\subset  \bigcup_{i\le k} \bar R_i(\CC_\UU),\] where we implicitly assume whenever we write $ \bar R_k(\bar c)$  that (every coordinate of) the rational function $ \bar R_k(\bar Y)$ is defined at the tuple $\bar c$. 
\end{defi}
An easy compactness (or rather an $\aleph_1$-saturation) argument yields the following result.
\begin{lemm}\label{L:internal_type_set_internal}
Given a finite tuple  $\bar a$ and a countable differential subfield $K$ of $\UU$ such that the type of $\bar a$ over $K$ is stationary,  we have that the  type of $\bar a$ over $K$ is internal to the constants if and only if $\bar a$ belongs to some Kolchin constructible subset $X$ of $\UU^{|\bar a|}$ defined over $K$ with $X$ internal to the constants. 
\end{lemm}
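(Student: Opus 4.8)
The plan is to establish the two implications separately; one is immediate, and the other is an $\aleph_1$-saturation argument over a suitable countable parameter set.

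\emph{The easy direction.} Suppose $\bar a$ lies in a Kolchin constructible set $X\subseteq\UU^{|\bar a|}$ defined over $K$ that is internal to the constants (Definition~\ref{D:set_internal}), say $X\subseteq\bigcup_{i\le k}\bar R_i(\CC_\UU)$ for finitely many tuples of rational functions $\bar R_1,\dots,\bar R_k$ with coefficients in a differential field extension $M$ of $K$. Any realization $\bar a'$ of the type of $\bar a$ over $K$ satisfies $\bar a'\equiv_K\bar a$, so it belongs to the same differential constructible subsets defined over $K$ as $\bar a$ (Definition~\ref{D:types}); in particular $\bar a'\in X\subseteq M\cdot\CC_\UU$. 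Hence every realization of $\tp(\bar a/K)$ lies in $M\cdot\CC_\UU$, which is exactly the assertion that the type of $\bar a$ over $K$ is internal to the constants.

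\emph{The converse.} Assume the type of $\bar a$ over $K$ is internal to the constants. First I would pass to a countable witnessing extension: by Remark~\ref{R:internal}.(b) applied to the constants --- every $K$-conjugate of a tuple in $\CC_\UU$ is again a tuple in $\CC_\UU$ --- there is a Morley sequence $(\bar a_1,\dots,\bar a_m)$ of $\tp(\bar a/K)$ such that every realization of $\tp(\bar a/K)$ already lies in $M\cdot\CC_\UU$ for the countable field $M=\str{K}{\bar a_1,\dots,\bar a_m}$. For each tuple $\bar R$ of rational functions over $M$, let $\theta_{\bar R}(\bar x)$ be the $\LL_\delta$-formula over $M$ asserting that $\bar x$ is not of the form $\bar R(\bar c)$ for a tuple $\bar c$ of constants at which $\bar R$ is defined; by quantifier elimination (Theorem~\ref{T:DCF_QE}) it is equivalent to a Kolchin constructible formula over $M$. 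Consider the collection of $M$-instances
\[
\Sigma(\bar x)\ =\ \tp(\bar a/K)\ \cup\ \{\,\theta_{\bar R}(\bar x):\ \bar R\text{ a tuple of rational functions over }M\,\},
\]
where $\tp(\bar a/K)$ is read as the partial type of all $K$-instances realized by $\bar a$. Since $M$ is countable, $\Sigma$ is a partial $|\bar a|$-type over a countable differential subfield of $\UU$.

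The crux is that $\Sigma$ is \emph{not} finitely consistent in $\UU$: a realization $\bar a'$ of $\Sigma$ (which $\aleph_1$-saturation would provide if $\Sigma$ were finitely consistent) would realize $\tp(\bar a/K)$, hence lie in $M\cdot\CC_\UU$, hence equal $\bar R(\bar c)$ for a single tuple $\bar R$ of rational functions over $M$ and some tuple $\bar c$ of constants at which $\bar R$ is defined, contradicting $\theta_{\bar R}(\bar a')$. Therefore a finite subset of $\Sigma$ is inconsistent: there are $K$-instances $\varphi_1(\bar x,\bar b_1),\dots,\varphi_l(\bar x,\bar b_l)$ from $\tp(\bar a/K)$ (so $\bar b_1,\dots,\bar b_l$ lie in $K$) and finitely many tuples $\bar R_1,\dots,\bar R_k$ of rational functions over $M$ with $\UU\models\neg\exists\bar x\big(\bigwedge_j\varphi_j(\bar x,\bar b_j)\wedge\bigwedge_i\theta_{\bar R_i}(\bar x)\big)$. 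Taking $X=\{\bar x\in\UU^{|\bar a|}:\bigwedge_j\varphi_j(\bar x,\bar b_j)\}$, this says precisely $X\subseteq\bigcup_{i\le k}\bar R_i(\CC_\UU)$, while $\bar a\in X$ since each $\varphi_j(\bar x,\bar b_j)$ is realized by $\bar a$. Thus $X$ is a Kolchin constructible set defined over $K$, containing $\bar a$, and internal to the constants, as required. The only genuinely delicate point is the first step of the converse --- ensuring $M$ can be taken countable so that $\aleph_1$-saturation applies to $\Sigma$ --- which is exactly the content of Remark~\ref{R:internal}.(b); the remainder is the usual compactness trick, with the pleasant twist that the type part of $\Sigma$ lives over $K$ (so the extracted conjunction defines a set over $K$) while the auxiliary part lives over $M$ (so the covering rational functions are over $M$).
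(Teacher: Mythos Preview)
Your proof is correct and follows essentially the same approach as the paper's: both directions match, and the converse is the same $\aleph_1$-saturation argument over a countable witnessing field $M$, extracting finitely many $K$-instances from the type and finitely many rational functions over $M$. The only cosmetic difference is that you explicitly invoke Remark~\ref{R:internal}.(b) to produce a countable $M$, whereas the paper simply takes $M$ countable by the standing convention that all differential subfields of $\UU$ are countable unless otherwise stated.
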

\begin{proof}
	One direction is immediate, so we need only show the existence of such a Kolchin constructible subset $X$ if the type of $\bar a$ over $K$ is internal to the constants. By assumption, there exists some countable differential field extension $M$ of $K$ such that every realization $\bar a_1$ of the type of $\bar a$ over $K$ belongs to the compositum field $M\cdot \CC_\UU$, so \[ \tag{$\star$} \bar a_1= \frac{\bar P_1(\bar m, \bar c)}{\bar P_2(\bar m, \bar c)}=\bar R(\bar m, \bar c)\] for some tuples $\bar m$ in $M$ and $\bar c$ in $\CC_\UU$ as well as tuples of polynomials $P_1$ and $P_2$ (all depending on $\bar a_1$) with integer coefficients  such that no coordinate of $\bar P_2(\bar m, \bar c)$ is $0$. In particular, for \[ \Sigma(\bar x)=\{ \varphi(\bar x, \bar b) \ | \  \UU\models \varphi(\bar a, \bar b), \varphi(\bar x, \bar y) \text{ diff. constr. formula }  \& \ \bar b \in K^{|\bar y|} \}\] the following collection of $M$-instances
	\[ \Sigma(\bar x) \cup \{\forall \bar y \left( \delta(\bar y)=0 \Rightarrow \bar x\ne \bar R(\bar m, \bar y) \right) \ | \ 
	\bar R \text{ rational over $\Z$}  \ \& \  \bar m \in M\}  \]
	cannot be finitely consistent (when we write such a rational function $\bar R$, we implicitly impose that the denominator does not vanish and rewrite the above expression in the language of differential rings).  Assume for a contradiction that it is finitely consistent.  By $\aleph_1$-saturation of $\UU$, there exists a realization $\bar a_1$ of the above collection of $M$-instances. Now, the tuple $\bar a_1$ has the same type as $\bar a$ over $K$, by Definition~\ref{D:types}, yet it cannot be written as in ($\star$), which gives the desired contradiction.

	Therefore, there exist finitely many $K$-instances $\varphi_i(\bar x, \bar b_i)$, each realized by $\bar a$, and finitely many rational functions $\bar R_1(\bar m_1, \bar Y), \ldots, R_k(\bar m_k, \bar Y)$  such that every realization of the Kolchin constructible set $X=\bigcap_{i=1}^m\varphi_i(\UU, \bar b_i)$ belongs to  $\bigcup_{i\le k} R_i(\bar m_k, \CC_\UU)$. Hence, the subset  $X$ of $\UU^{|\bar a|}$ defined over $K$ contains $\bar a$ and is  internal to the constants, as desired. 
\end{proof}

\begin{fait}\parencite[Theorem 4.4.5]{Buechler}\label{F:binding}
	Suppose that the Kolchin constructible set $X$ defined over the countable differential subfield $K$ is internal to the constants. There are
	\begin{itemize}
		\item a definable group $G$, that is, a differential algebraic group as in Remark~\ref{R:gp_nottrivial}.(a), defined over $K$; as well as 
		\item a definable group action $G\times X\to X$, as in Remark~\ref{R:gp_nottrivial}.(b), also defined over $K$;
	\end{itemize}
	such that the group $G$ and the action are isomorphic to the action of the group consisting of restrictions of automorphisms of $\Aut_\delta(\UU/K\cdot \CC_{\UU})$ (so each such automorphism fixes the compositum field $K\cdot \CC_\UU$) on the set $X$. In particular, the action is faithful. We will denote the group $G$ as $\Aut_\delta(X/K\cdot \CC_\UU)$. 
\end{fait}
Notice that the group $\Aut_\delta(\UU/K\cdot \CC_{\UU})$ acts on the set of realizations of the type of $\bar a$ over $K$, whenever this type is internal by Lemma~\ref{L:internal_type_set_internal}. However, the latter set need not be Kolchin constructible if the type is not isolated. In order to render the presentation of the binding group more accessible, we will only introduce it for isolated types (although the existence of the binding group of an internal type can be easily shown for arbitrary $\omega$-stable theories). In hindsight, Proposition~\ref{P:weak_orth_transitive} will  motivate the following definition. 

\begin{defi}\label{D:weakly_orth}
	We say that the stationary type of some finite tuple $\bar a$ over a countable differential subfield $K$ is \emph{weakly orthogonal to the constants} if $\CC_{\str{K}{\bar a}}=\CC_K$, or equivalently by Remark~\ref{R:wronsk}, if $\str{K}{\bar a}$ and $\CC_\UU$ are linearly disjoint over  $\CC_{K}$. 
\end{defi}
Note that the above definition does not depend on the representative of the type of $\bar a$ over $K$, by Corollary~\ref{C:univ}, since the set $\CC_\UU$ is invariant under all differentiaal automorphisms of $\UU$. 

\begin{rema}\label{R:weakly_orth}
		Consider a a countable differential subfield $K$  and a finite tuple $\bar a$ such that the type of  $\bar a$ over $K$ is stationary.
			\begin{enumerate}[(a)]
		 \item weak orthogonality to the constants is stable under base change to the algebraic closure: The type of $\bar a$ over $K$ is weakly orthogonal to the constants if and only if so is the type of $\bar a$ over  $K^{alg}$. One direction follows immediately from Remark~\ref{R:indep_char}.(a). For the converse, assume that the type of $\bar a$ over  $K^{alg}$ is weakly orthogonal to the constants. By Remark~\ref{R:wronsk} and transitivity of linear disjointness, the differential fields  $\str{K^{alg}}{\bar a}$ and $\CC_\UU\cdot K^{alg}$ are linearly disjoint over $K^{alg}$. Since the extension $K\subset \str{K}{\bar a}$ is regular, transitivity of linear disjointness yields also that $\str{K}{\bar a}$ and $\CC_\UU\cdot K^{alg}$, and thus $\CC_\UU\cdot K$, are  linearly disjoint over $K$. By Remark~\ref{R:wronsk}, the fields~$K$ and $\CC_\UU$ are linearly disjoint over $\CC_K$, so by transitivity of linear disjointness, we conclude that 
		 $\str{K}{\bar a}$ and $\CC_\UU$ are linearly disjoint over $\CC_K$, as desired. 
		
		\item The type of $\bar a$ over $K$  is weakly orthogonal to the constants if and only if $ \str{K}{\bar a}\ind_K K\cdot \CC_\UU$, that is, if $ \str{K}{\bar a}\ind_K K(\bar c)$ for every finite tuple $\bar c$ of $\CC_\UU$, since $\str{K}{\bar c}=K(\bar c)$. 	 Indeed, one direction follows easily from transitivity of linear disjointness and Remark~\ref{R:indep_char}.(a). Assume therefore that $\str{K}{\bar a}$ and $K\cdot \CC_\UU$ are independent over $K$. By part (a) above, we only need to show that the type of $\bar a$ over $K^{alg}$ is weakly orthogonal to the  constants. Remark~\ref{R:indep_char}.(a) and  {\bf Algebraic closure} yield that 
  the fields $\str{K}{\bar a}^{alg}$, and thus $\str{K^{alg}}{\bar a}$, and $K^{alg}\cdot \CC_\UU$ are linearly disjoint over $K^{alg}$, as desired.  
	
	\item  If the constant subfield $\CC_K$ of $K$ is algebraically closed, then the type of $\bar a$ over $K$ is weakly orthogonal to the constants if and only if $\str{K}{\bar a}^{alg}\cap \CC_\UU=\CC_K$. Indeed, one direction follows immediately from  Remark~\ref{R:indep_char}.(a). For the converse, assume that  $\CC_K=\str{K}{\bar a}^{alg}\cap \CC_\UU$.  Remark~\ref{R:wronsk} yields that the fields $\str{K}{\bar a}^{alg}$ and $\CC_\UU$ are linearly disjoint over $\CC_K$. In particular, the fields  $\str{K}{\bar a}$ and $\CC_\UU$ are linearly disjoint over $\CC_K$, as desired. 
 \end{enumerate}
\end{rema}

\begin{prop}\label{P:weak_orth_transitive}
	Consider a a countable differential subfield $K$  and a finite tuple $\bar a$ such that the type of  $\bar a$ over $K$ is stationary and  internal to the constants. The following conditions are equivalent: 
	\begin{enumerate}[(a)]
		\item  The type of $\bar a$ over $K$ is weakly orthogonal to the constants.
		\item For every Kolchin constructible set $X$ defined over $K$ containing $\bar a$ and internal to the constants, the action of the definable group $\Aut_\delta(X/K\cdot \CC_\UU)$ as in Fact~\ref{F:binding} on the set of realizations of the type of $\bar a$ over $K$ is transitive. 
		\item The type of $\bar a$ over $K$ is isolated (see Fact~\ref{F:diff_closure}) by a Kolchin constructible subset of $\UU^{|\bar a|}$ which is itself internal to the constants.  
	\end{enumerate}
\end{prop}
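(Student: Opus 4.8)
The proposal is to prove the three implications $(a)\Rightarrow(c)\Rightarrow(b)\Rightarrow(a)$, using the binding group machinery from Fact~\ref{F:binding} together with the $\aleph_1$-saturation/compactness arguments that have appeared repeatedly. By Lemma~\ref{L:internal_type_set_internal} the hypothesis gives a Kolchin constructible set $X_0$ defined over $K$, containing $\bar a$, and internal to the constants; all work takes place inside such $X_0$ and the associated binding group $G=\Aut_\delta(X_0/K\cdot\CC_\UU)$.

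\textbf{$(a)\Rightarrow(c)$.} Assume weak orthogonality, i.e.\ $\CC_{\str{K}{\bar a}}=\CC_K$. The plan is to show that the set of realizations of $\tp(\bar a/K)$ is \emph{itself} Kolchin constructible over $K$ (this is the crux of this implication). Consider the orbit $G\cdot\bar a\subseteq X_0$; by Fact~\ref{F:binding} this is a Kolchin constructible subset of $X_0$ defined over $K\cdot\CC_\UU$, and one checks it is actually defined over a finitely generated constant extension $K(\bar c)$ with $\bar c$ in $\CC_\UU$. Using Remark~\ref{R:weakly_orth}.(b), weak orthogonality gives $\str{K}{\bar a}\ind_K K(\bar c)$, which by the invariance of the definition under $\Aut_\delta(\UU/K)$ (Corollary~\ref{C:univ}) lets one show $G\cdot\bar a$ is in fact fixed setwise by $\Aut_\delta(\UU/K)$, hence defined over $K$ by Lemma~\ref{L:invariant_definition}. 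Then I would argue $G\cdot\bar a$ is precisely the set of realizations of $\tp(\bar a/K)$: it is contained in it because any two elements of the orbit have the same type over $K\cdot\CC_\UU$ and a fortiori over $K$; conversely any realization $\bar a'\equiv_K\bar a$ lies in $X_0$, and an extension/stationarity argument together with transitivity of the $G$-action on an independent realization (cf.\ the proof strategy of Remark~\ref{R:internal}) shows $\bar a'\in G\cdot\bar a$. Finally, this Kolchin constructible set $Y=G\cdot\bar a$ is internal to the constants (it sits inside $X_0$), and it isolates $\tp(\bar a/K)$ by Definition~\ref{D:isol} since its members are exactly the realizations; this gives (c).

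\textbf{$(c)\Rightarrow(b)$ and $(b)\Rightarrow(a)$.} For $(c)\Rightarrow(b)$: if $\tp(\bar a/K)$ is isolated by an internal-to-constants Kolchin constructible set $Y$, then the set of realizations of $\tp(\bar a/K)$ equals $Y$, which is Kolchin constructible over $K$; so for \emph{any} $X$ defined over $K$ containing $\bar a$ and internal to the constants, the set of realizations $Y$ is a $K$-definable subset of $X$ on which $\Aut_\delta(X/K\cdot\CC_\UU)$ acts, and two elements of $Y$ have the same type over $K$ hence differ by an element of $\Aut_\delta(\UU/K)$; combined with Corollary~\ref{C:univ} and the identification in Fact~\ref{F:binding} of the binding group with genuine automorphisms, any two realizations are $\Aut_\delta(\UU/K\cdot\CC_\UU)$-conjugate because, being realizations of the same isolated type, they remain so over $K\cdot\CC_\UU$ after noting $\str{K}{\bar a}\ind_K K\cdot\CC_\UU$ — wait, this last point is exactly $(a)$, so the cleanest route is the cycle: deduce $(a)$ from $(b)$ first and then feed it back. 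Concretely, for $(b)\Rightarrow(a)$: take $X=X_0$; transitivity of the $G$-action means any realization $\bar a'$ of $\tp(\bar a/K)$ is $\sigma(\bar a)$ for some $\sigma\in\Aut_\delta(\UU/K\cdot\CC_\UU)$, so in particular $\str{K}{\bar a'}$ and $\str{K}{\bar a}$ generate the same field over $\CC_\UU$; choosing $\bar a'$ with $\str{K}{\bar a'}\ind_K\str{K}{\bar a}$ (by {\bf Extension}) forces $\CC_{\str{K}{\bar a}}\subseteq\str{K}{\bar a}\cap\str{K}{\bar a'}$ to be algebraic over $K$ on one hand and, being constants, to lie in $\CC_K$ by Corollary~\ref{C:diff_closure_noconstants}-type reasoning; hence $\CC_{\str{K}{\bar a}}=\CC_K$, which is $(a)$. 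Then $(c)\Rightarrow(b)$ becomes routine using $(a)$ as just derived in the loop, or one simply closes the triangle $(a)\Rightarrow(c)\Rightarrow(b)\Rightarrow(a)$.

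\textbf{Main obstacle.} The delicate step is $(a)\Rightarrow(c)$: one must upgrade ``the orbit $G\cdot\bar a$ is Kolchin constructible over the big field $K\cdot\CC_\UU$'' to ``it is Kolchin constructible over $K$ \emph{and} coincides with the full type,'' and this is exactly where weak orthogonality is used, via Remark~\ref{R:weakly_orth}.(b) (independence of $\str{K}{\bar a}$ from $K\cdot\CC_\UU$ over $K$) to descend the defining parameters from $\CC_\UU$ down to $K$ by the Galois-type criterion Lemma~\ref{L:invariant_definition}. Everything else is bookkeeping with {\bf Extension}, {\bf Stationarity}, and the identification of the binding group with honest automorphisms.
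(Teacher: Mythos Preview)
Your proposal has a genuine gap in the implication $(a)\Rightarrow(c)$, and the circularity you yourself spot in $(c)\Rightarrow(b)$ is a symptom of the same missing ingredient. To show that the orbit $G\cdot\bar a$ is $\Aut_\delta(\UU/K)$-invariant (so that Lemma~\ref{L:invariant_definition} applies), you must show that for every $\sigma\in\Aut_\delta(\UU/K)$ the image $\sigma(\bar a)$ lies in $G\cdot\bar a$; since $G$ is identified with restrictions of elements of $\Aut_\delta(\UU/K\cdot\CC_\UU)$, this asks for an automorphism of $\UU$ fixing $K\cdot\CC_\UU$ pointwise and sending $\bar a$ to $\sigma(\bar a)$. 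That is exactly condition~(b). Your attempted descent ``the orbit is defined over $K(\bar c)$, and weak orthogonality gives $\str{K}{\bar a}\ind_K K(\bar c)$'' does not produce such an automorphism: weak orthogonality together with stationarity does yield $\bar a\equiv_{K\cdot\CC_\UU}\bar a'$ for every $\bar a'\equiv_K\bar a$, but Corollary~\ref{C:univ} only converts equality of types into automorphisms over \emph{countable} parameter sets, and $K\cdot\CC_\UU$ has size $\aleph_1$.

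The paper runs the cycle in the order $(a)\Rightarrow(b)\Rightarrow(c)\Rightarrow(a)$, and the substantive step is $(a)\Rightarrow(b)$: given $\bar a'\equiv_K\bar a$, weak orthogonality means $\str{K}{\bar a}$ and $\CC_\UU$ are linearly disjoint over $\CC_K$, so the partial $K$-isomorphism $\str{K}{\bar a}\to\str{K}{\bar a'}$ extends (via the tensor-product description) to an isomorphism $\str{K}{\bar a}\cdot\CC_\UU\to\str{K}{\bar a'}\cdot\CC_\UU$ which is the identity on $\CC_\UU$; this partial elementary map is then extended to a global automorphism fixing $K\cdot\CC_\UU$ by a back-and-forth over countable subfields (the paper cites \textcite[Theorem~10.1.5]{TentZiegler}). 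With $(b)$ in hand, your orbit argument is exactly the paper's proof of $(b)\Rightarrow(c)$. Your argument for $(b)\Rightarrow(a)$, using an independent realization $\bar a'$ and observing that any $c\in\CC_{\str{K}{\bar a}}$ is fixed by the $K\cdot\CC_\UU$-automorphism and hence lies in $\str{K}{\bar a}\cap\str{K}{\bar a'}\subset K^{alg}\cap\str{K}{\bar a}=K$ (the last equality by regularity from stationarity), is correct and gives a pleasant alternative to the paper's $(c)\Rightarrow(a)$, which instead realizes the isolated type inside the differential closure $\widehat{K^{alg}}$ and invokes Corollary~\ref{C:diff_closure_noconstants}.
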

\begin{proof}
	For (a) $\Rightarrow$ (b), we need to show that  for every $\bar a'\equiv_K \bar a$ there exists an automorphism $\sigma$ of $\Aut_\delta(\UU/K\cdot \CC_\UU)$ mapping $\bar a$ to $\bar a'$ (despite the fact that $\CC_\UU$ is not countable!). Once such $\sigma$ has been constructed,  the restriction of $\sigma$ to the internal Kolchin constructible set $X$ defined over $K$ is an element of $\Aut(X/K\cdot \CC_\UU)$ mapping $\bar a$ to $\bar a'$, as desired. 
	
	By Corollary~\ref{C:univ}, there is a $K$-automorphism $\tau$  of $\UU$ mapping $\bar a$ to $\bar a'$. Since the type of $\bar a$ over $K$ is weakly orthogonal to the constants, the $K$-automorphism $\tau$ induces  a $K$-isomorphism $F_\tau\colon \str{K}{\bar a}\cdot \CC_\UU\to \str{K}{\bar a'}\cdot \CC_\UU$. As in the proof of \textcite[Chapter 10, Theorem 10.1.5]{TentZiegler}, the desired element $\sigma$ can be obtained from $F_\tau$ as a union of a chain of partial elementary maps (defined over countable subfields of $\UU$!) each of which fixes $K\cdot \CC_\UU$ pointwise and maps $\bar a$ to $\bar a'$.

	For (b) $\Rightarrow$ (c), Lemma~\ref{L:internal_type_set_internal} yields some Kolchin constructible set~$X$ defined over~$K$ containing~$\bar a$ with~$X$ internal to the constants.  Let $G=\Aut_\delta(X/K\cdot \CC_\UU)$ be the corresponding definable group with parameters in~$K$ as in Fact~\ref{F:binding}.  If $\star$~denotes the definable action of~$G$ on the set~$X$, then the Kolchin constructible set $G\star \bar a$  is invariant under $K$-automorphisms of~$\UU$.; Given an automorphism~$\sigma$ of~$\UU$ fixing~$K$ pointwise, it maps $\sigma(\bar a)$ to some tuple~$\bar a'$  which has the same type as~$\bar a$ over~$K$. In particular,  the tuple~$\bar a'$ lies in~$X$ by Definition~\ref{D:types}.  Transitivity of the action yields that  $\sigma(\bar a)=\bar a'=g\star \bar a$ for some~$g$ in~$G$, as desired.

	By Lemma~\ref{L:invariant_definition}, the Kolchin constructible set $X_1=G\star \bar a\subset X$  s definable by a $K$-instance $\varphi(\bar x, \bar e)$ of a Kolchin constructible formula.  It follows that the $K$-instance $\varphi(\bar x, \bar e)$  isolates the type of $\bar a$ over $K$, as desired.  Notice that $X_1$ is internal to the constants, since $X$ is. 
	
	For (c) $\Rightarrow$ (a),  we need only show that the type of $\bar a$ over ${K}^{alg}$ is weakly orthogonal to the constants, by Remark~\ref{R:weakly_orth}.(a). Let $\widehat{K}$ be the differential closure of $K^{alg}$, as in Fact~\ref{F:diff_closure}. By Corollary~\ref{C:diff_closure_noconstants}, the field of constants $\CC_{{K}^{alg}}$ is algebraically closed, so  $\CC_{\widehat{K}}=\CC_{{K}^{alg}}$. 
	
	By assumption, the type of $\bar a$ over $K$ is isolated by a $K$-instance $\varphi(\bar x, \bar b)$ for some tuple $\bar b$ in $K$. Now, this instance also isolates the type of $\bar a$ over $K^{alg}$: Indeed, if $\bar a_1$ belongs to $\varphi(\UU^{|\bar a|}, \bar b)$, then $\bar a_1$ and $\bar a$ have the same type over $K$. Stationarity yields that $\bar a_1\equiv_{K^{alg}} \bar a$, as desired. (For the readers who do not feel comfortable with this kind of argument, Quantifier Elimination (Theorem~\ref{T:DCF_QE}) allows to replace the algebraic parameters isolating the type of $\bar a$ over $K^{alg}$ with the coefficients of the minimal polynomial of a suitable primitive element). 
	
	Fact~\ref{F:diff_closure} yields that the type of $\bar a$ over $K^{alg}$ is realized in  $\widehat{K}$ by $\bar a'$.  Now,  \[ \str{K}{\bar a'}^{alg}\cap \CC_\UU\stackrel{\ref{R:wronsk}}{\subset} \CC_{\str{K}{\bar a'}^{alg}}\subset \CC_{\widehat{K}} =\CC_{{K}^{alg}}.\] Remark~\ref{R:weakly_orth}.(c) yields that  the type of $\bar a'$ (and thus of $\bar a$) over $K^{alg}$ is weakly orthogonal to the constants, as desired. 
\end{proof}
We can now introduce the notion of the binding group of an isolated internal stationary type. We would like to warn the reader that the notation $\Bind(\bar a/K)$ is not mainstream in model theory. 
\begin{defi}\label{D:binding}
Consider a  countable differential subfield $K$ and a tuple $\bar a$ whose type over $K$ is stationary,  internal to the  constants and weakly orthogonal to the constants.  By Proposition~\ref{P:weak_orth_transitive}, the set of realizations of 
type of $\bar a$ over $K$ 
is a Kolchin constructible set $X$ of $\UU^{|\bar a|}$ defined over $K$. 

The \emph{binding group} of the type of $\bar a$ over $K$, denoted by $\Bind(\bar a/K)$, is defined as the group $\Aut_\delta(X/K\cdot \CC_{\UU})$ of Fact~\ref{F:binding}. Notice that the group and the action on $X$, and thus on the set of realizations of the type of $\bar a$ over $K$, are both definable over $K$.  This action is faithful and transitive.  
\end{defi}
Transitivity of the action yields immediately the following result. 
\begin{coro}\label{C:nonalg_infinitebinding}
	With the assumptions of Definition~\ref{D:binding}, if the type of $\bar a$ over $K$ is non-algebraic, then the group $\Bind(\bar a/K)$ is infinite.\qed
\end{coro}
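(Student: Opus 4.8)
The plan is to read off the conclusion from the transitivity of the binding action, combined with the fact that a non-algebraic type has infinitely many realizations. First I would invoke Definition~\ref{D:binding} together with Proposition~\ref{P:weak_orth_transitive}: under the standing hypotheses (the type of $\bar a$ over $K$ is stationary, internal to the constants and weakly orthogonal to the constants), the set $X$ of realizations of the type of $\bar a$ over $K$ is a Kolchin constructible subset of $\UU^{|\bar a|}$ defined over $K$, and the binding group $\Bind(\bar a/K)=\Aut_\delta(X/K\cdot\CC_\UU)$ acts faithfully and transitively on $X$.

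Next I would check that $X$ is infinite. Since the type of $\bar a$ over $K$ is non-algebraic, the tuple $\bar a$ does not belong to $K^{alg}$, so by Corollary~\ref{C:acl} its orbit under $\Aut_\delta(\UU/K)$ is infinite; and every element of this orbit realizes the type of $\bar a$ over $K$ by Corollary~\ref{C:univ}, hence lies in $X$. Therefore $X$ is infinite.

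Finally I would use the elementary observation that a finite group acting on a set has every orbit of size at most its order, so a group acting transitively on an infinite set is itself infinite. Applying this to the transitive action of $\Bind(\bar a/K)$ on the infinite set $X$ gives the claim. I do not expect any genuine obstacle here: all the substantive content is already packaged into Fact~\ref{F:binding} and Proposition~\ref{P:weak_orth_transitive}, and the only point needing a moment's care is the justification that \emph{non-algebraic} produces an infinite (rather than merely nonempty) solution set $X$, which is precisely what Corollary~\ref{C:acl} supplies.
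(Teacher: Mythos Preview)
Your proposal is correct and matches the paper's approach exactly: the paper simply records that the result follows immediately from transitivity of the binding action (the corollary carries a \qed with no further argument). You have spelled out the one-line reasoning in full detail, including the appeal to Corollary~\ref{C:acl} to justify that $X$ is infinite, which is precisely the missing step behind the paper's ``immediately.''
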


\begin{lemm}\label{L:fundsys_prime}
	Consider an algebraically closed countable differential subfield~$K$  and a tuple~$\bar a$ of~$\UU$ whose type over~$K$ is internal to the constants and weakly orthogonal to the constants. There exists a  fundamental system of solutions  $\bar d=(\bar a_1,\ldots, \bar a_m)$ as in Remark~\ref{R:internal}.(c) with each $\bar a_i\equiv_K\bar a$ such that the type of~$\bar d$ over~$K$ is again internal to the constants and weakly orthogonal to the constants. Moreover, both the group and the action of $\Bind(\bar d/K)$ on the set of realizations of the type of~$\bar b$ over~$K$ is isomorphic to the diagonal action of the group $\Bind(\bar a/K)$ on the set of realizations of the type of~$\bar d$ over~$K$. 
   	
	In particular, the type of $\bar d=(\bar a_1,\ldots, \bar a_m)$ over $K$ is \emph{fundamental} (see the paragraph before Definition~\ref{D:set_internal}): there are finitely many rational functions $\bar R_1 (\bar Y),\ldots, \bar R_\ell(\bar Y)$ with coefficients in $\str{K}{\bar d}$ such that every  realization $\bar d'=(\bar a'_1, \ldots, \bar a'_m)$ of the type of $\bar d$ over $K$ is of the form $\bar d'= \bar R_j( \bar c)$ for  some $1\le j\le \ell$ and some tuple $\bar c$ in $\CC_\UU$. 
\end{lemm}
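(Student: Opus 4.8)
The plan is to construct the fundamental system of solutions using Remark~\ref{R:internal}.(b), which applies because the type of $\bar a$ over $K$ is internal to the constants. First I would invoke that remark to obtain realizations $\bar a_1,\ldots,\bar a_m$ of the type of $\bar a$ over $K$ forming a Morley sequence, such that every realization $\bar a'$ of the type of $\bar a$ over $K$ lies in $\str{K}{\bar a_1,\ldots,\bar a_m}\cdot\CC_\UU$; that is, setting $\bar d=(\bar a_1,\ldots,\bar a_m)$, the type of $\bar d$ over $K$ is \emph{fundamental} in the sense discussed before Definition~\ref{D:set_internal}. Internality of the type of $\bar d$ over $K$ to the constants is then immediate from Remark~\ref{R:binding_pn} (a Morley sequence of a constant-internal type is again constant-internal). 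For weak orthogonality to the constants: since $\bar a_i\equiv_K\bar a$ and the type of $\bar a$ over $K$ is weakly orthogonal to the constants, by Remark~\ref{R:weakly_orth}.(b) we have $\str{K}{\bar a_i}\ind_K K\cdot\CC_\UU$ for each $i$; because $K$ is algebraically closed the Morley sequence independences $\str{K}{\bar a_i}\ind_K\str{K}{\bar a_1,\ldots,\bar a_{i-1}}$ combine with these via {\bf Monotonicity \& Transitivity} and {\bf Stationarity} (using Remark~\ref{R:stat}) to give $\str{K}{\bar d}\ind_K K\cdot\CC_\UU$, i.e.\ the type of $\bar d$ over $K$ is weakly orthogonal to the constants, again by Remark~\ref{R:weakly_orth}.(b).

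Next I would identify the two binding groups. Both the type of $\bar a$ over $K$ and the type of $\bar d$ over $K$ are stationary, constant-internal and weakly orthogonal to the constants, so Proposition~\ref{P:weak_orth_transitive} applies: their sets of realizations $X$ (for $\bar a$) and $X^{(m)}$ (for $\bar d$) are Kolchin constructible sets defined over $K$, internal to the constants, on which $\Aut_\delta(\UU/K\cdot\CC_\UU)$ acts transitively, and $\Bind(\bar a/K)$, $\Bind(\bar d/K)$ are the associated definable groups of Fact~\ref{F:binding}. The key point is that an automorphism $\sigma\in\Aut_\delta(\UU/K\cdot\CC_\UU)$ acts on a tuple $\bar d=(\bar a_1,\ldots,\bar a_m)$ coordinatewise, and each $\sigma(\bar a_i)$ again realizes the type of $\bar a$ over $K$; hence the restriction map $\Aut_\delta(\UU/K\cdot\CC_\UU)\to\mathrm{Sym}(X^{(m)})$ factors through the diagonal embedding of the restriction map to $\mathrm{Sym}(X)$. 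Since by Fact~\ref{F:binding} the group $\Bind(\bar d/K)$ (resp.\ $\Bind(\bar a/K)$) is isomorphic, as a group with its action, to the group of such restrictions on $X^{(m)}$ (resp.\ on $X$), this yields a definable isomorphism $\Bind(\bar d/K)\cong\Bind(\bar a/K)$ intertwining the action on $X^{(m)}$ with the diagonal action. The faithfulness and transitivity statements carry over from Definition~\ref{D:binding} and Proposition~\ref{P:weak_orth_transitive}.

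Finally, the "in particular" clause follows by applying Lemma~\ref{L:internal_type_set_internal} to the type of $\bar d$ over $K$ together with the fact that $\bar d$ is a fundamental system of solutions. Since the set of realizations of the type of $\bar d$ over $K$ is a Kolchin constructible set $X^{(m)}$ defined over $K$ and internal to the constants, by Definition~\ref{D:set_internal} there are finitely many tuples of rational functions $\bar R_1(\bar Y),\ldots,\bar R_\ell(\bar Y)$ with coefficients in some differential field extension $M$ of $K$ with $X^{(m)}\subset\bigcup_{j\le\ell}\bar R_j(\CC_\UU)$; and since the type is \emph{fundamental}, every realization $\bar d'$ of it lies in $\str{K}{\bar d}\cdot\CC_\UU$, so (after the argument in the proof of Lemma~\ref{L:internal_type_set_internal}, run over the base $\str{K}{\bar d}$ instead of $M$) the rational functions can be taken with coefficients in $\str{K}{\bar d}$, giving $\bar d'=\bar R_j(\bar c)$ for some $j$ and some $\bar c$ in $\CC_\UU$.

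The main obstacle I expect is the careful bookkeeping in the second paragraph: one must check that the isomorphism furnished by Fact~\ref{F:binding} really is compatible with restriction along the diagonal — i.e.\ that the abstract group $\Aut_\delta(X^{(m)}/K\cdot\CC_\UU)$ is genuinely identified, \emph{as a definable group with a definable action}, with $\Aut_\delta(X/K\cdot\CC_\UU)$ acting diagonally, rather than merely being abstractly isomorphic as groups. This requires noting that restriction to $X$ is already injective on $\Aut_\delta(\UU/K\cdot\CC_\UU)$-actions modulo the kernel described by Fact~\ref{F:binding}, because a fundamental system of solutions $\bar d$ determines every other realization of the type of $\bar a$ over $K$ inside $\str{K}{\bar d}\cdot\CC_\UU$, so fixing $\bar d$ and $\CC_\UU$ forces fixing all of $X$ — hence the two kernels coincide and the induced map on quotients is the desired definable isomorphism.
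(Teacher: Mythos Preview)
There is a genuine gap in your argument for weak orthogonality of the type of $\bar d$ over $K$. You claim that from $\str{K}{\bar a_i}\ind_K K\cdot\CC_\UU$ for each $i$ and the Morley independences $\str{K}{\bar a_i}\ind_K\str{K}{\bar a_1,\ldots,\bar a_{i-1}}$ one can deduce $\str{K}{\bar d}\ind_K K\cdot\CC_\UU$ via \textbf{Monotonicity \& Transitivity} and \textbf{Stationarity}. This inference is false in general. Take $K=\Q^{alg}$ and let $p$ be the type of a solution to $\delta(x)=1$; this type is internal and weakly orthogonal to the constants (Example~\ref{E:isolation} and a direct computation of $\CC_{\Q^{alg}(a)}$). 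If $(a_1,a_2)$ is a Morley sequence in $p$ over $K$, then $a_2-a_1$ is a constant transcendental over $K$, so $\CC_{\str{K}{a_1,a_2}}\supsetneq\CC_K$ and the type of $(a_1,a_2)$ over $K$ is \emph{not} weakly orthogonal to the constants. The point is that weak orthogonality is not preserved under passing to Morley sequences, precisely because internality to the constants produces new constants once several realizations are put together.

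The paper avoids this by a different mechanism. It first uses a compactness/saturation argument (as in Lemma~\ref{L:internal_type_set_internal}) to express ``$(\bar x_1,\ldots,\bar x_m)$ is a fundamental system for $\tp(\bar a/K)$'' as a $K$-definable condition, witnessed by finitely many rational functions $\bar H_1,\ldots,\bar H_t$. Quantifier elimination (Theorem~\ref{T:DCF_QE}) then transfers the existence of a fundamental system from $\UU$ to the differential closure $\widehat K$, yielding a tuple $\bar d$ in $\widehat K$ (not claimed to be a Morley sequence). Since $\bar d$ lies in $\widehat K$, its type over $K$ is isolated (Fact~\ref{F:diff_closure}), and Proposition~\ref{P:weak_orth_transitive}, implication (c)$\Rightarrow$(a), gives weak orthogonality directly. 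Your binding-group identification and the ``in particular'' clause are essentially along the right lines, but they rest on having the correct $\bar d$; once you replace your first paragraph by the passage through $\widehat K$, the rest goes through.
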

Whilst we impose that the differential field $K$ is algebraically closed, this will only be used  in the proof in order to ensure that certain types (in particular the type of the tuple $\bar d$) over $K$ are stationary. 
\begin{proof}
	The type of $\bar a$ over $K$ is stationary, for $K$ is algebraically closed. Proposition~\ref{P:weak_orth_transitive} yields that this type is isolated by a $K$-instance $\varphi(\bar x, \bar b)$ for some $\bar b$ in $K$ of a Kolchin constructible formula.  By Remark~\ref{R:internal}.(c), there exists a fundamental system of solutions for the type of $\bar a$ over $K$ consisting of a Morley sequence $(\bar a_1,\ldots, \bar a_m)$ of the type of $\bar a$ over $K$. Hence, every realization  of the type of $\bar a$ over $K$ belongs to the differential field $\str{K}{\bar d_1,\ldots, \bar d_m}\cdot \CC_\UU$. 
	
	An $\aleph_1$-saturation argument as in the proof of Lemma~\ref{L:internal_type_set_internal} yields that there are finitely many rational functions $\bar H_1,\ldots, \bar H_t$ over $K$ with coefficients in $\mathbb Z$ witnessing that every realization  of the type of $\bar a$ over $K$ belongs to \[ \bigcup_{j\le t} \bar H_j(\bar a_1,\ldots, \bar a_m, \CC_\UU),\] so 
	\[ \UU\models \exists \bar x_1\ldots \bar x_m \Bigg( \bigwedge_{i=1}^m \varphi(\bar x_i, \bar b) \land \forall \bar x \Big(\varphi(\bar x, \bar b) \Rightarrow \exists \bar c \ \big( \delta(\bar c)=\bar 0\land  \bigvee_{j=1}^t \bar x=\bar H_j(\bar x_1\ldots \bar x_m, \bar c)\big) \Big)\Bigg).\] 
	 Quantifier Elimination~\ref{T:DCF_QE} yields that the above $K$-instance must hold in the differential closure $\widehat{K}$ of $K$. Hence, there exists a tuple  $\bar d$ in $\widehat{K}$ consisting of realizations of the type of $\bar a$ over $K$ such that \[ \widehat{K}\models \forall \bar x \Big(\varphi(\bar x, \bar b) \Rightarrow \exists \bar c \ \big( \delta(\bar c)=\bar 0\land  \bigvee_{j=1}^t \bar x=\bar H_j(\bar d, \bar c)\big) \Big).\] Quantifier-elimination (Theorem~\ref{T:DCF_QE}) yields that the same $\widehat{K}$-sentence must hold in $\UU$, so \[ \varphi(\UU^{|\bar x|}, \bar b)\subset \bigcup_{j\le t} H_j(\bar d, \CC_\UU).\]  It follows that the tuple $\bar d$ of $\widehat{K}$  is a fundamental system of solutions in $\UU$ of the type of $\bar a$ over $K$. Since $K$ is algebraicallly closed, the type of $\bar d$ over $K$ is stationary. It is clearly internal to the constants, since the type of $\bar a$ over $K$ is. The tuple $\bar d$ belongs to $\widehat{K}$, so its type over $K$ is isolated by Fact~\ref{F:diff_closure}. Hence, the type of $\bar d$ over $K$ is weakly orthogonal to the constants by Proposition~\ref{P:weak_orth_transitive}. 
	
	An element $\sigma$ of $\Bind(\bar a/K)$ is the restriction of a differential automorphism in $\Aut_\delta(\UU/K\cdot \CC_\UU)$ to the Kolchin constructible subset $X=\varphi(\UU^{|\bar x|}, \bar b)$. The group $\Aut_\delta(\UU/K\cdot \CC_\UU)$ acts diagonally in a natural way on the set of realizations of the type of $\bar d=(\bar a'_1,\ldots, \bar a'_m)$ over $K$, by construction. Moreover, if $\bar a'$ is some realization of the type of $\bar a$ over $K$, write $\bar a'=\bar H_j(\bar d, \bar c)$, so $\sigma(\bar a')=\bar H_j(\sigma(\bar d), \bar c)$.
	
We deduce that the group $\Bind(\bar d/K)$ and its action on the (set of realizations of the) type of $\bar d$ over $K$ is thus naturally isomorphic to the  diagonal action of $\Bind(\bar a/K)$ on the same set, as desired.  
	
	We will conclude this proof showing that the type of  $\bar d=(\bar a'_1,\ldots, \bar a'_m)$ over $K$ is fundamental. Since every realization $\bar d'$ of the type of $\bar d$ over $K$ consists of a tuple of realizations of the type of $\bar a$ over $K$, it is easy to produce the desired rational functions $R_1,\ldots, R_\ell$ with parameters in $\str{K}{\bar d}$ out of the rational functions $H_j$'s i such a way that $\bar d'=R_j(\bar c)$ for some $1\le j\le \ell$ and some tuple $\bar c$ in $\CC_\UU$, as desired. 
\end{proof}

Given a countable algebraically closed differential subfield $K$, or more generally, a field $K$ whose constant subfield $\CC_K$ is algebraically closed, Kolchin defines a differential field extension $L$ of $K$ to be \emph{strongly normal} if the following conditions hold:
\begin{itemize}
\item The field $L$ is finitely generated over $K$ (as a differential field), so $L=\str{K}{\bar a}$ for some tuple $\bar a$. 
\item The constant subfield $\CC_L=\CC_K$, so the type of $\bar a$ over $K$ is weakly orthogonal to the constants as in Definition~\ref{D:weakly_orth}. 
\item For every $\sigma$ in $\Aut_\delta(\UU/K)$, the differential field  $\sigma(L)$ is contained in the compositum $L\cdot \CC_\UU=\str{K}{\bar a}\cdot \CC_\UU$, so the type of $\bar a$ over $K$ is internal to the constants and fundamental. 
\end{itemize} \textcite[Chapter VI, Theorem 1]{Kolchin}  showed that the group $\Aut_\delta(L\cdot \CC_\UU/K\cdot \CC_\UU)$ is isomorphic to the group of $\CC_\UU$-rational points of an algebraic group defined over the constants. Moreover, since $\CC_K$ is algebraically closed, this group is connected \parencite[Chapter VI, Corollary 1]{Kolchin}. 

Summarizing, we obtain the following Galois correspondence, for which  \textcite[Theorem 2.3 \& \S 3.2]{OmarPillay} provided a purely model-theoretic account. 

\begin{fait}\label{F:Kolchin_Galois}
	Given an algebraically closed countable differential subfield $K$ of $\UU$, consider a finite tuple $\bar a$ such the type of $\bar a$ over $K$  is internal and weakly orthogonal to the constants with binding group $\Bind(\bar a/K)$. If $\bar d$ is  a fundamental system of solutions $\bar d$ for the type of $\bar a$ over $K$ as in Lemma~\ref{L:fundsys_prime}, then the following holds:
	
	\begin{enumerate}[(a)]
		\item The differential field extension $K\subset \str{K}{\bar d}$ is strongly normal in the sense of Kolchin. 
\item There exists a connected algebraic group $G$ defined over the constants and an isomorphism $\Phi\colon  \Bind(\bar a/K)\to G(\CC_\UU)$ whose graph is definable, that is, Kolchin constructible, over $\str{K}{\bar d}$  \parencite[Chapter VI, Theorem 1]{Kolchin}. In particular, given an element $g$ of $G(\CC_\UU)$, we denote by $\sigma(g)=\Phi^{-1}(g)$ the corresponding automorphism  of $\Aut_\delta(\UU/K\cdot \CC_\UU)$ (or rather, the restriction of $\sigma(g)$ to the set of realizations of the fundamental type of $\bar d$ over $K$).
\item Given a differential subfield $K\subset K_1\subset \str{K}{\bar d}$, the group \[ \mathrm{Fix}(K_1)=\{g\in G(\CC_\UU) \ | \ \sigma(g)\restr{K_1}=\mathrm{Id}_{K_1}\}  \] is again the subgroup of $\CC_\UU$-rational points of an algebraic subgroup of $G$ defined over the constants. Moreover, its fixed subfield  equals $K_1$. Furthermore, every algebraic subgroup of $G(\CC_\UU)$ defined over the constants occurs in such a fashion \parencite[Chapter VI, Theorem 3]{Kolchin}. 
\item An element $x$ of $\str{K}{\bar d}$  belongs to $K$ if and only if $x$  is fixed by every $\sigma(g)$ with $g$ in $G(\CC_\UU)$. 
	\end{enumerate}
\end{fait}
Archetypal  \emph{strongly normal} extensions are given by Picard--Vessiot extensions, which are described below. 
\begin{exem}\label{E:PV}
Consider an algebraically closed countable differential subfield~$K$ of~$\UU$ (so $\CC_K$ is algebraically closed as well). The \emph{Picard--Vessiot} extension of $K$ associated to the \emph{holonomic} differential polynomial \[ P(X)=\delta^n(X)+b_{n-1}\delta^{n-1}(X)+\cdots+ b_1\delta(X)+b_0,\] with all $b_i$'s in~$K$ is a finitely generated differential field extension $L=\str{K}{a_1,\ldots, a_n}$ of~$K$ with $P(a_i)=0$ for $1\le i\le n$ such that:
\begin{itemize}
	\item The solutions $a_1,\ldots, a_n$ of $P(X)=0$ are linearly independent over $\CC_K$. 
	\item The field of constants  $\CC_L=\CC_\UU\cap L=\CC_K$ (so the type of $(a_1,\ldots, a_n)$ over $K$ is weakly orthogonal to the constants).
\end{itemize}
By Remark~\ref{R:wronsk}, every other solution $a'$ is a $\CC_\UU$-linear combination $ a'=\sum_{i=1}^n c_i a_i$. Thus, the stationary type of $(a_1,\ldots, a_n)$ over $K$ is fundamental and the tuple $(a_1,\ldots, a_n)$ is a fundamental system of solutions over $K$. We can write the above equation in a matrix form using the variables $X_{i+1}=\delta^i(X)$ for $0\le i\le n$, so \[ \delta(\bar X)= B\cdot \bar X=\begin{pmatrix}
	0 & 1 & 0 &\ldots & 0\\
	0 & 0 & 1 &\ldots & 0\\ 
					\vdots & \ddots &\ddots& \ddots & \vdots  \\  0& 0 & \ldots& 0& 1\\
												-b_0 & -b_1 & \ldots &\ldots& -b_{n-1} \end{pmatrix} \cdot \bar X.\]			
It is now easy to see that the action of an element $\sigma$ of $\Aut_\delta(\UU/K\cdot \CC_\UU)$ on the fundamental system translates into an invertible $n\times n$-matrix $M_\sigma$ in $\mathrm{GL}_n(\CC_\UU)$ with \[ \sigma \underbrace{\begin{pmatrix} \bar a_1 | \bar a_2 | \cdots |\bar a_n
\end{pmatrix}}_{A}= A\cdot M_\sigma.\] The binding group $\Bind(a_1,\ldots, a_n/K)$ of the type of $(\bar a_1,\ldots, \bar a_n)$ over $K$ is linear, as it is definably isomorphic to a subgroup of  $\mathrm{GL}_n(\CC_\UU)$. The image of $\Bind(a_1,\ldots, a_n/K)$, as a subgroup of $\mathrm{GL}_n(\CC_\UU)$, is the \emph{differential Galois group} of the Picard--Vessiot extension $K\subset \str{K}{\bar a_1,\ldots, \bar a_n}$. 
\end{exem}
Using the above notation, the coefficients of the matrix $\delta(A)\cdot A^{-1}$ are fixed under the action of the elements of $\Bind(a_1,\ldots, a_n/K)$. This way we  recover the above matrix $B$ defining the Picard--Vessiot extension. Mimicking this idea, we can derive the following fact from a straightforward application of the Galois correspondence. 
\begin{fait}\label{F:linear_PV}
If the binding group of a strongly normal extension $L$ of an algebraically closed differential field $K$ is linear, then $L$ is a Picard--Vessiot extension over $K$ \parencite[Chapter VI, pp. 410\&411]{Kolchin}. 
\end{fait}
We will conclude this section with a result, first stated by \textcite[Proposition 4.9]{JaouiJimenezPillay}, with a more detailed proof given by \textcite[Theorem 3.9]{FJM22},  on the algebraic structure of the binding group for differential algebraic  \emph{holonomic} equations defined over the constants. 
\begin{rema}\label{R:PV_commutative}
Given a Picard--Vessiot extension defined over the algebraically closed subfield $K$ of $\CC_\UU$, its binding group is always commutative. Indeed,,  it is easy to explicitly compute from the Jordan normal form of the defining matrix $B$ in Example~\ref{E:PV} a fundamental system of solutions in order to deduce that the binding group of each block is commutative with unipotent radical of dimension at most $1$.
\end{rema}

%--------------------------------------------------

\section{The property D$_2$ and functional transcendence}\label{S:D2}

We now have all the ingredients in order to prove one of the main results of \textcite[Theorem 3.9]{FJM22}. As in the previous sections, we work inside a universal $\aleph_1$-saturated differentially closed field $\UU$, by Proposition~\ref{P:univ}. All tuples and fields are taken within $\UU$. Moreover, all differential subfields are countable, unless explicitly stated. 

In order to include the examples listed in the Introduction, we will generalize the notion of \emph{equations} beyond mere polynomial equations. 
\begin{Notation}
By a \emph{differential algebraic equation} $P(T)=0$ we mean a differential rational function $P(T)=\frac{Q(T)}{R(T)}$ for some differential polynomials $Q(T)$ and $R(T)$ with $R(T)$ not the constant $0$ polynomial and $\ord(R(T))<\ord(Q(T))$. The equation is (defined) \emph{over a differential subfield } $K$ if both $Q(T)$ and $R(T)$ have coefficients in $K$. It is \emph{irreducible over} $K$ if the numerator $Q(T)$ is (as a multivariate polynomial).  The \emph{order} of the differential algebraic equation  is $\ord(Q(T))$. A \emph{solution}  of the equation $P(T)=0$ is an element $a$ in $\UU$ with $Q(a)=0\ne R(a)$. 

In particular, every differential polynomial induces a differential algebraic equation, setting $R(T)=1$. 
\end{Notation}

\begin{defi}\label{D:Dk}
Let $P(T)=0$ be a differential algebraic equation of order $n$ over a countable differential subfield $K$ of $\UU$. Given a natural number $m\ge 1$, the equation has \emph{Property D$_m$} if, for every $m$ many pairwise distinct solutions $a_1,\ldots, a_m$ in $\UU\setminus K^{alg}$, the set \[ a_1,\delta(a_1), \ldots, \delta^{n-1}(a_1), \ldots, a_m,\delta(a_m), \ldots, \delta^{n-1}(a_m)\] is an algebraically independent family  over $K$. 
\end{defi}
 \textcite[Definition 3.1]{FJM22} use the terminology \emph{Property C$_m$} for differential algebraic equations (and use the  corresponding \emph{Property D$_m$} for types). We have decided to  slightly change the terminology to avoid possible confusions with the arithmetic property C$_m$ on the existence of non-trivial solutions of homogeneous forms in $n$ variables of degree $d$ with $d^m\ge n+1$.

If the order of the equation is $n\ge 1$, then  $\aleph_1$-saturation and the axioms of differentially closed fields~\ref{D:DCF} yield that there are (uncountably many) non-algebraic solutions to the equation.  Thus,  the above Property is not an empty condition if $n\ge 1$. In this case, Property D$_{m+1}$  implies Property D$_m$ for every $k\ge 1$. 
\begin{rema}\label{R:D2_generictype}
Consider a differential algebraic equation $P(T)=0$ of order $n\ge 1$ over some differential subfield $K$ with Property $D_1$. If the equation is irreducible, then any two solutions $a$ and $a'$ of the equation, none of which is algebraic over $K$, must have the same type over $K$. We will refer to the type of any non-algebraic solution over $K$ as  the \emph{generic type of the equation over} $K$ (honoring the longstanding tradition in model theory of  repeatedly using the same word for different notions!). 
\end{rema} 
\begin{proof}
Write $P(T)=\frac{Q(T)}{R(T)}$ with $Q(T)$ an irreducible differential polynomial of order $n$ and $\ord(R(T))<n$. Suppose $a$ and $a_1$ are two solutions of the above equation, none of them algebraic over $K$, so $Q(a)=0=Q(a_1)$. In order to show that $a$ and $a_1$ have the same type over $K$, as in Definition~\ref{D:types},  it suffices to show by Remark~\ref{R:iso_diff} that $Q(T)$ is the minimal poynomial of the differential vanishing ideal of $a$, resp.\ of $a_1$, over $K$, as in Remark~\ref{R:prime_minpol}.(a).  Now, Property D$_1$ yields that $a, \ldots, \delta^{n-1}(a)$ are algebraically independent elements over $K$, so $a$ does not satisfy any algebraic differential polynomial equation of order strictly less than $n$. Since $Q(T)$ is irreducible over $K$, we deduce from Remark~\ref{R:prime_minpol}.(b) that $Q(T)$ is indeed the minimal polynomial of $a$ over $K$, as desired. 
\end{proof}
We will now show that the generic type of an irreducible differential algebraic equation having property D$_2$ satisfies a stronger version of \emph{the non-existence of proper fibrations}, a notion first introduced by \textcite[Definition 2.1]{MoosaPillay} in their model-theoretic study of the algebraic reductions of  (generalised) hyperkähler manifolds.  

In order to render the presentation simpler, we will from now on assume that the base differential subfield $K$ is algebraically closed, so the generic type of the equation is in particular stationary. 
\begin{lemm}\label{L:D2_fibration}
Consider an irreducible differential algebraic equation of order $n\ge 1$ having property D$_2$ over an algebraically closed differential subfield $K$ of $\UU$. Any two distinct generic solutions $a\ne a_1$ over $K$ form a Morley sequence of length $2$ of the generic type.   

In particular, if some tuple $\bar b$ in $\UU$ is algebraic over the differential field $\str{K}{a}$ generated by a generic solution $a$ over $K$, either $\bar b$ is already contained in $K=K^{alg}$ or $a$ belongs to the differential subfield $\str{K}{\bar b}$. 
\end{lemm}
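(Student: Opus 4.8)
The plan is to prove two assertions in Lemma~\ref{L:D2_fibration}: first, that two distinct generic solutions form a Morley sequence of length~2; second, the dichotomy for tuples algebraic over $\str{K}{a}$.

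\textbf{Step 1: Morley sequence of length~2.} Let $a\ne a_1$ be generic solutions over $K$. By Remark~\ref{R:D2_generictype}, both have the same (stationary, since $K=K^{alg}$) generic type over $K$, so $a_1\equiv_K a$. To show $(a,a_1)$ is a Morley sequence it suffices to check $\str{K}{a_1}\ind_K \str{K}{a}$. By Corollary~\ref{C:diffrank_trdeg}, $\tr(\str{K}{a}/K)=n$ and similarly for $a_1$, and $\str{K}{a,a_1}$ is generated over $K$ by the $2n$ elements $a,\delta(a),\ldots,\delta^{n-1}(a),a_1,\ldots,\delta^{n-1}(a_1)$ together with higher derivatives, which are algebraic over $K(a,\ldots,\delta^{n-1}(a),a_1,\ldots,\delta^{n-1}(a_1))$ by Remark~\ref{R:extension_diff}.(a). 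Property~D$_2$ says exactly that this $2n$-element family is algebraically independent over $K$, so $\tr(\str{K}{a,a_1}/K)=2n=\tr(\str{K}{a}/K)+\tr(\str{K}{a_1}/K)$. By Remark~\ref{R:fterank}.(b) (applied with $L=\str{K}{a}$, noting $\tr(\str{L}{a_1}/L)=\tr(\str{K}{a,a_1}/\str{K}{a})=n=\tr(\str{K}{a_1}/K)$), this equality of transcendence degrees is equivalent to $\str{K}{a_1}\ind_K\str{K}{a}$. Hence $(a,a_1)$ is a Morley sequence of length~2.

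\textbf{Step 2: The fibration dichotomy.} Suppose $\bar b$ lies in $\str{K}{a}^{alg}$ and $\bar b\notin K=K^{alg}$; we must show $a\in\str{K}{\bar b}$. Since $a$ has $\U$-rank $1$ over $K$: indeed by Remark~\ref{R:gp_nottrivial}-style arguments, or more directly, Property~D$_2$ forces the generic type to be minimal --- I would argue that for any differential field extension $L$ of $K$, if $a$ is not algebraic over $L$, then $a$ remains generic, and any dependence would produce (via Extension and an automorphism) a second realization $a_1\equiv_K a$ with $\str{K}{a}\nind_K\str{K}{a_1}$, contradicting Step~1. So the generic type is minimal (Lascar rank~1). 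Now $\bar b\in\str{K}{a}^{alg}$ with $\bar b\notin K^{alg}$: by Remark~\ref{R:minimaltypes}.(a), $a$ and $\bar b$ are interalgebraic over $K$, i.e. $\str{K}{\bar b}^{alg}=\str{K}{a}^{alg}$. This gives $a\in\str{K}{\bar b}^{alg}$, but I need the stronger conclusion $a\in\str{K}{\bar b}$ (no algebraic closure). For this I would invoke the minimal-polynomial description: since $a$ is a solution of the irreducible equation $Q(T)=0$ and $Q$ is the minimal differential polynomial of $a$ over $K$ (Remark~\ref{R:D2_generictype}'s proof), $Q$ remains irreducible --- hence the minimal polynomial --- over $\str{K}{\bar b}$ unless $a\in\str{K}{\bar b}$; combined with interalgebraicity and a degree/transcendence count one deduces $\ord$ drops, forcing $a\in\str{K}{\bar b}$.

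\textbf{Main obstacle.} The delicate point is Step~2: upgrading ``$a$ interalgebraic with $\bar b$'' to ``$a\in\str{K}{\bar b}$'' rather than merely the algebraic closure. One must rule out that $a$ satisfies a nontrivial \emph{algebraic} (field-theoretic, not differential) equation over $\str{K}{\bar b}$ of degree $>1$. I expect the clean way is: $a$ generic over $K$ means $a,\delta(a),\ldots,\delta^{n-1}(a)$ are algebraically independent over $K$ and $Q(T)$ is the minimal differential polynomial; if $a\notin\str{K}{\bar b}$ then $\str{K}{\bar b}(a,\ldots,\delta^{n-1}(a))$ would have transcendence degree $\geq n$ over $\str{K}{\bar b}$ while also $\bar b$ being algebraic over $\str{K}{a}$ forces $\tr(\str{K}{\bar b}/K)\le$ the number of ``new'' coordinates, and a careful additivity-of-transcendence-degree bookkeeping (as in the proof of Proposition~\ref{P:CB}.(e)) yields the contradiction unless $a\in\str{K}{\bar b}$. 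This transcendence-degree computation, together with correctly handling that $\str{K}{a}$ and $\str{K}{\bar b}$ are differential fields (so higher derivatives are controlled via Remark~\ref{R:extension_diff}.(a)), is where the real work lies; everything else is a direct application of the independence calculus and the rank material from Section~\ref{S:Stab}.
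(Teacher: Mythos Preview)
Your Step~1 is correct and essentially matches the paper's argument.

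Your Step~2, however, has a genuine gap. You try to first establish that the generic type is \emph{minimal} (U-rank~$1$), and then use interalgebraicity plus a transcendence-degree upgrade. But minimality is \emph{not} available at this point: in the paper it is only proved in Theorem~\ref{T:Main}, using Lemma~\ref{L:D2_fibration} itself together with the full strength of the trichotomy machinery (orthogonality to the constants, internality, binding groups). Your sketch for minimality (``a dependence over $L$ would, via Extension and an automorphism, produce another generic $a_1$ with $\str{K}{a}\nind_K\str{K}{a_1}$'') does not work: from $\str{K}{a}\nind_K L$ you cannot manufacture a second \emph{generic solution of the equation} that is dependent with $a$ over $K$. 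So your route is circular, and the subsequent ``upgrade'' you flag as the main obstacle is a problem of your own making.

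The paper's argument for Step~2 is short and avoids minimality entirely. Assume $\bar b\in\str{K}{a}^{alg}$ but $a\notin\str{K}{\bar b}$ (reduce to a singleton $b$). Since $a\notin\str{K}{b}$, Corollary~\ref{C:dcl} gives some $a_1\ne a$ with $a_1\equiv_{\str{K}{b}} a$. Then $a_1$ is also a generic solution over $K$, and $b$ is algebraic over $\str{K}{a_1}$ as well. By Step~1, $\str{K}{a}\ind_K\str{K}{a_1}$; since $b$ lies in $\str{K}{a}^{alg}\cap\str{K}{a_1}^{alg}$, the \textbf{Algebraic closure} property forces $b\in K^{alg}=K$. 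This gives the dichotomy directly, with no need for minimality or any ``algebraic-to-definable'' upgrade.
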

\begin{proof}
Consider two distinct generic solutions $a\ne a_1$ over $K$ of the equation. Remark~\ref{R:D2_generictype} yields that $a$ and $a_1$ have both the same type over $K$ and furthermore this type is stationary (since $K$ is algebraically closed). In order to show that the pair  $(a, a_1)$ is a Morley sequence of length $2$ of the generic type of the equation, we need to show that the differential fields $\str{K}{a}$ and $\str{K}{a_1}$ are independent over $K$. Now, Property D$_2$ implies that the fields $K(a,\ldots, \delta^{n-1}(a))$  and $K(a_1,\ldots, \delta^{n-1}(a_1))$ are algebraically independent over $K$. Each derivative $\delta^{n+k}(a)$ is algebraic over $K(a, \delta(a), \ldots, \delta^{n-1}(a))$ for all $k$ in $\N$. Thus, the differential fields $\str{K}{a}$ and $\str{K}{a_1}$ are independent over $K$, by {\bf Algebraic closure}, as desired. 

Assume now that the tuple $\bar b$ of $\UU$ is algebraic over $\str{K}{a}$ yet $a$ does not belong to the differential subfield $\str{K}{\bar b}$. Without loss of generality, we may assume that $\bar b$ is a singleton $b$.  Corollary~\ref{C:dcl} yields some $a_1\ne a$ with the same type over $\str{K}{b}$ as $a$. In particular, the element $b$ is also algebraic over $\str{K}{a_1}$ and $a_1$ is a generic solution over $K$.  We deduce from the independence \[ \str{K}{a}\ind_K \str{K}{a_1}\] together with {\bf Algebraic closure}  that $b$ must be algebraic over $K$, as desired. 
\end{proof}
The last assertion of the above Lemma implies that the generic type of an equation with Property D$_2$ is always weakly orthogonal to the constants. 
\begin{coro}\label{C:D2_weakorth}
Given an irreducible differential algebraic equation $P(T)=0$ of order $n\ge 1$ having  Property D$_2$ and defined over an algebraically closed differential subfield  $K$, we have that its generic type over $K$ is weakly orthogonal to the constants. 
\end{coro}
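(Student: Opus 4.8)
The plan is to argue by contradiction, using the last assertion of Lemma~\ref{L:D2_fibration} to pin down the differential field generated by a generic solution once it is assumed to contain a new constant, and then to manufacture from such a constant a second solution that violates Property D$_2$.

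First fix a non-algebraic solution $a$, so that $\tp(a/K)$ is the generic type of the equation over~$K$ (Property D$_2$ implies Property D$_1$, so Remark~\ref{R:D2_generictype} applies); since $K$ is algebraically closed this type is stationary, and by Definition~\ref{D:weakly_orth} it suffices to show $\CC_{\str{K}{a}}=\CC_K$. Assume not, and pick $c\in\CC_{\str{K}{a}}\setminus\CC_K$. Since $\delta(c)=0$ and $K=K^{alg}$, the element $c$ is transcendental over~$K$ (were it algebraic it would lie in $K$, hence in $\CC_K$), so $c$ realizes the type of the constants over~$K$ (Example~\ref{E:constants}), and moreover $\str{K}{c}=K(c)$ because $K(c)$ is already closed under $\delta$. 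Now $c$ lies in $\str{K}{a}$, hence is algebraic over it; as $c\notin K$, the last assertion of Lemma~\ref{L:D2_fibration} (applied with $\bar b=c$) yields $a\in\str{K}{c}=K(c)$. Combined with $c\in\str{K}{a}$ this gives $\str{K}{a}=K(c)$; in particular, by Corollary~\ref{C:diffrank_trdeg}, $\str{K}{a}$ is algebraic over $K(a,\delta(a),\dots,\delta^{n-1}(a))$.

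The decisive step is to translate~$c$. The element $c+1$ again realizes the type of the constants over~$K$, so $c\equiv_K c+1$ and Corollary~\ref{C:univ} furnishes $\tau\in\Aut_\delta(\UU/K)$ with $\tau(c)=c+1$. Set $b:=\tau(a)$. Then $b$ is a non-algebraic solution of the equation ($\tau$ fixes the coefficients of the equation and permutes $K^{alg}=K$), and $b\neq a$, for $\tau(a)=a$ would force $\tau$ to fix $\str{K}{a}=K(c)$ pointwise, contradicting $\tau(c)=c+1$. But $b=\tau(a)\in\tau(\str{K}{a})=\tau(K(c))=K(c+1)=K(c)=\str{K}{a}$, so $b$ is algebraic over $K(a,\delta(a),\dots,\delta^{n-1}(a))$, whence the family $a,\delta(a),\dots,\delta^{n-1}(a),b,\delta(b),\dots,\delta^{n-1}(b)$ is algebraically dependent over~$K$. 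This contradicts Property D$_2$ for the distinct non-algebraic solutions $a\neq b$, so $\CC_{\str{K}{a}}=\CC_K$, as required.

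The main obstacle is conceptual rather than computational: one must notice that the rigid conclusion ``$a\in K(c)$'' of Lemma~\ref{L:D2_fibration} means that replacing $c$ by $c+1$ through an automorphism of $\UU$ over~$K$ keeps us inside the very same field $\str{K}{a}$, thereby producing a second solution interalgebraic with~$a$ — which is precisely what Property D$_2$ forbids. The remaining ingredients (that the differential field generated by a constant is the ordinary field extension, that $\str{K}{a}$ is algebraic over the tuple of its first $n$ derivatives, and that $c$ is transcendental over~$K$) are routine.
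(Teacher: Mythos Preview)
Your proof is correct and follows the same core strategy as the paper: invoke Lemma~\ref{L:D2_fibration} to get $a\in K(c)$, then translate $c$ by a nonzero integer to produce a second generic solution lying in the same field $K(c)=\str{K}{a}$, contradicting Property~D$_2$. The only difference is in the implementation of the translation step: the paper writes $a=\Phi(c)$, $c=\Psi(a)$ for explicit differential rational functions, shows that the Zariski-constructible set $\{d\in\CC_\UU : P(\Phi(d))=0,\ d=\Psi(\Phi(d))\}$ is cofinite (since it contains the transcendental~$c$), and picks $a_1=\Phi(c+m)$ for some $m\neq 0$ in that set; you instead observe directly that $c\equiv_K c+1$ and transport $a$ via an automorphism, which is a bit cleaner and sidesteps the explicit parametrization.
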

\begin{proof}
Fix a generic solution $a$ of the equation over $K$. Recall that  the type of $a$ over $K$ is stationary, as $K$ is algebraically closed. 

In order to show that the type of $a$ over $K$ is weakly orthogonal to the constants, assume for a contradiction that there exists a constant element $c$ in $\str{K}{a}\setminus \CC_K$. In particular, the element $c$ is transcendental over $K=K^{alg}$.  Lemma~\ref{L:D2_fibration} yields that 
$a$ belongs to $\str{K}{c}$, so there are differential rational functions \[ \Phi(T)=\frac{R_1(T)}{R_2(T)} \text{ and }  \Psi(T)=\frac{S_1(T)}{S_2(T)}\] with all $R_i$'s and $S_j$'s in $K\{T\}$ such that $R_2(c)S_2(a)\ne 0$ with $a=\Phi(c)$ and $c=\Psi(a)$. 

By Quantifier Elimination~\ref{T:DCF_QE}, the set \[ X=\{ d \in \CC_\UU \ | \  \ P(\Phi(d))=0 \  \& \ d=\Psi(\Phi(d)) \] is Kolchin constructible over $K$ and contains $c$. A differential polynomial evaluated on a constant element is just a classical polynomial, so the above set $X$ is a Zariski constructible subset of the algebraically closed field $\CC_\UU$ (a priori, the parameters necessary to define $X$ lie in $K$, yet Remark~\ref{R:wronsk} implies that $X$ can be defined using only parameters from $\CC_K$). Since the element $c$ of $X$ is transcendental over $K$, we deduce that $X$ must be cofinite. In particular, there exists some natural number $m \ne 0$ such that  $c+m$ belongs to $X$. The element $a_1=\Phi(c+m)$ is a solution of the differential algebraic equation $P(T)=0$. 

Now, if $a_1$ were equal to $a$, then $c=\Psi(a)=\Psi(a_1)=c+m$, contradicting that $m\ne 0$. Moreover, the solution $a_1$ does not belong to $K$, for otherwise so would  $c+m=\Psi(a_1)$  lie in $K$, contradicting that $c$ is transcendental over $K$. Hence,  Property D$_2$ implies that $\str{K}{a}$ and $\str{K}{a_1}$ are independent over $K$, which immediately implies that $c$ is algebraic over $K$ and yields the desired contradiction. 
\end{proof}
Another easy yet surprising consequence of Lemma~\ref{L:D2_fibration} is that non-orthogonality is always due to internality. 
\begin{coro}\label{C:D2_nonorth}
Consider an irreducible differential algebraic equation $P(T)=0$ of order $n\ge 1$ having Property D$_2$  defined over some algebraically closed differential subfield $K$.  As in Proposition~\ref{P:dichotomy}, suppose that there is some differential field extension $L=\str{K}{\bar c}$ of $K$ such that the generic type of the equation over $K$ is non-orthogonal to the minimal type of some tuple $\bar b$ over $L$. It follows that  the generic type of the equation is internal to the family of $K$-conjugates of $\bar b$ over $L$. 

In particular, if the generic type of the equation over $K$ is non-orthogonal to the type of the constants, then it is internal to the constants. 
\end{coro}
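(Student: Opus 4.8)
The plan is to reduce everything to the ``no proper fibrations'' phenomenon isolated in Lemma~\ref{L:D2_fibration}. Fix a generic solution $a$ of the equation over $K$; since the equation has order $n\ge 1$, the type of $a$ over $K$ has finite rank ($\tr(\str{K}{a}/K)\le n$) and it is stationary because $K=K^{alg}$. The first step is to extract from the given non-orthogonality an internal ``piece'' of $\tp(a/K)$ living inside $\str{K}{a}^{alg}$: one repeats the construction in the proof of Proposition~\ref{P:nonorth_internal}.(b) which --- once a non-orthogonality between $\tp(a/K)$ and a minimal stationary type $\tp(\bar b/L)$ with $L=\str{K}{\bar c}$ is at hand --- never uses anything particular about the tuple $\bar b$. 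This produces a finite tuple $\bar d$ in $\str{K}{a}^{alg}$ whose type over $K$ is non-algebraic and internal to the family of $K$-conjugates of $\bar b$ over $L$; concretely, $\bar d$ arises (after transporting along an automorphism over $K$) as a finite generating tuple of $\cb(\bar m,\bar b'/\str{K}{a'}^{alg})$ for suitable witnesses $a'\equiv_K a$, $\bar b'\equiv_L\bar b$ and a finite tuple $\bar m$ carrying $\bar c$ and the coefficients of the dependence, it is non-algebraic over $K$ by Proposition~\ref{P:CB}.(b), and it sits in a field generated over $K$ by $K$-conjugates of $\bar b$ over $L$ by Remark~\ref{R:CB_dcl}.

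Next I would feed in Property D$_2$: the tuple $\bar d$ is algebraic over $\str{K}{a}$ and $\bar d\notin K=K^{alg}$ (its type over $K$ being non-algebraic), so the last assertion of Lemma~\ref{L:D2_fibration} forces $a\in\str{K}{\bar d}$. It then remains only to transfer internality from $\bar d$ to $a$, which is purely formal. Let $M\supseteq K$ witness that $\tp(\bar d/K)$ is internal to the family $\mathcal F$ of $K$-conjugates of $\bar b$ over $L$, in the sense of Definition~\ref{D:internal}. Given any realization $a'$ of $\tp(a/K)$, pick $\sigma\in\Aut_\delta(\UU/K)$ with $\sigma(a)=a'$ by Corollary~\ref{C:univ}; then $\bar d':=\sigma(\bar d)$ realizes $\tp(\bar d/K)$, and applying $\sigma$ to $a\in\str{K}{\bar d}$ gives $a'\in\str{K}{\bar d'}$. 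By internality of $\tp(\bar d/K)$ there are $K$-conjugates $\bar b_1,\dots,\bar b_r$ of $\bar b$ over $L$, each based over a subfield of $M$, with $\bar d'\in\str{M}{\bar b_1,\dots,\bar b_r}$; since $K\subseteq M$ this yields $a'\in\str{K}{\bar d'}\subseteq\str{M}{\bar b_1,\dots,\bar b_r}$, so $M$ witnesses that the generic type of the equation over $K$ is internal to $\mathcal F$. The ``in particular'' is then the special case $L=K$ and $\bar b=c$ a transcendental constant (whose type over $K$ is minimal and stationary by Example~\ref{E:constants}): every $K$-conjugate of $c$ again lies in $\CC_\UU$, so internality to $\mathcal F$ is exactly internality to the constants.

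The step I expect to be the main obstacle is the first one --- reusing the proof of Proposition~\ref{P:nonorth_internal}.(b) with the prescribed minimal type rather than one of the three trichotomy types, and checking that the extracted $\bar d$ genuinely lies in $\str{K}{a}^{alg}$ and is internal to the conjugates of the given $\bar b$ over the given $L$ (and not merely to one of the trichotomy types). Steps two and three are then routine manipulations of internality and automorphisms.
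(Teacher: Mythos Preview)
Your proposal is correct and follows essentially the same approach as the paper: extract via (the proof of) Proposition~\ref{P:nonorth_internal}.(b) a non-algebraic $\bar d\in\str{K}{a}^{alg}$ whose type over $K$ is internal to the $K$-conjugates of $\bar b$ over $L$, then use Lemma~\ref{L:D2_fibration} to get $a\in\str{K}{\bar d}$, and conclude internality of $\tp(a/K)$ directly from the definition. Your flagged concern is well placed: the paper simply cites Proposition~\ref{P:nonorth_internal}.(b), but as you note, its statement only produces internality to one of the trichotomy types, so what is really being invoked is the argument of that proof applied to the given minimal type $\tp(\bar b/L)$ --- which indeed makes no use of the trichotomy once the non-orthogonality is already in hand.
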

\begin{proof}
Assume that $a$ is a solution of the irreducible differential algebraic differential equation $P(T)=0$ with $a$ generic over $K=K^{alg}$ such that the (stationary) type of $a$ over $K$ is
non-orthogonal to the minimal type of some $\bar b$ over $L=\str{K}{\bar c}$ as in Proposition~\ref{P:dichotomy}.  Proposition~\ref{P:nonorth_internal}.(b) implies that there exists some tuple $\bar d$ 
algebraic over $\str{K}{a}$ yet not algebraic over $K$ such that the
(stationary) type of $\bar d$ over $K^{alg}=K$ is internal to the family of $K$-conjugates of $\bar b$ over $L$.  Lemma~\ref{L:D2_fibration} yields that $a$ belongs to $\str{K}{\bar d}$. It follows directly from the definition of internality (Definition~\ref{D:internal}) that  the type of $a$ over $K$ is also internal to the family of $K$-conjugates of $\bar b$ over $L$, as desired.  
\end{proof}
Before stating the next proposition, we will provide a succint account
of $2$-transitive group actions, which will be needed for the proof.
\begin{rema}\label{R:2trans}
Given a set $X$ with at least two elements, set \[ X^{(2)}=\{ (x_1,
  x_2) \in X^2 \ | \ x_1\ne x_2\}\ne \emptyset.\] 
 An action 
 \[ \begin{array}{ccl}
G\times X &\to& X \\[1mm]
(g, x)& \mapsto& g\star x
 \end{array} \]
 of a group $G$ on $X$ is \emph{$2$-transitive} if the diagonal action of $G$ on $X^{(2)}$ is transitive, that is, for all
 $(x_1, x_2)$ and $(y_1, y_2)$ in $X^{(2)}$ there exists some $g$ in $G$ with
 $g\star x_i=y_i$ for $1\le i\le 2$. If $G$ acts $2$-transitively on
 $X$, then the action of $G$ on $X$ is already transitive (since $X$
 contains at least two elements). In particular, any two stabilizers $\Stab_G(x)$ and $\Stab_G(x')$, with $x$ and $x'$ in $X$, must be conjugate to each other. Moreover, for every $x$ in $X$, the subgroup
 $\Stab_G(x)$ is maximal in $G$, that is, the only subgroup of $G$ which properly contains   $\Stab_G(x)$ is $G$ itself. Indeed,  given a subgroup $H$ of $G$ with
 $\Stab_G(x)\lneq H$, there is some $h$ in $H$ with $h\star x\ne
 x$. In order to show that every element $g$ in $G$ belongs to $H$, we may assume that $g$ is not already in $\Stab_G(x)\subset H$, so $x\ne g\star x$. Thus, both pairs $(x, g\star x)$ and $(x, h\star x)$ belong to $X^{(2)}$,  so $2$-transitivity yields some $g_1$ in $\Stab_G(x)$ (and thus in $H$) with $(g_1\cdot g)\star x=h\star x$, that is, the element 
$h\inv\cdot g_1\cdot g$ belongs to $\Stab_G(x)\subset H$. We deduce that  $g$ lies in $H$, as desired. 

If $G$ is infinite and the action of $G$ on $X$ is faithful and $2$-transitive, then is $G$ centerless, and in particular non-abelian. Indeed, 
assume for a contradiction that $\ZZ(G)\ne\{1_G\}$. All stabilizers are conjugate to each other, so we deduce that $\ZZ(G)\not\subset \Stab_G(x)$ for some (equivalently, any) $x$ in $X$, since the transitive action is faithful and $\ZZ(G)$ is normal.  Hence, the subgroup $\Stab_G(x)\cdot \ZZ(G)$ must equal $G$, by maximality of the stabilizer.  It is easy to see that the induced action of $\ZZ(G)$ on $X$ is transitive (since $X$ is the $G$-orbit of the base-point $x$) and regular (as the center $\ZZ(G)$ commutes with every element in $G$). The bijection between $X$ and $\ZZ(G)$ induces an isomorphism of actions between the induced action of $\Stab_G(x)$ on $X$ and the action of $\Stab_G(x)$ on $\ZZ(G)$ by conjugation.  The latter action is trivial, so the stabilizer $\Stab_G(x)$ acts trivially on $X$. Since the action of $G$ on $X$ is faithful, we conclude that the maximal subgroup $\Stab_G(x)$ must be trivial group, which contradicts that $G$ is infinite, as desired.
\end{rema} 
From now on, we will assume additionally that the algebraically closed base subfield over which the equation is defined is contained in the field of constants $\CC_\UU$. 
\begin{prop}\label{P:D2_orthogonal_constants}
Consider an irreducible differential algebraic  equation $P(T)=0$ of order $n\ge 1$ having Property D$_2$ and defined over an algebraically closed differential subfield $K$ contained in  $\CC_\UU$. We have that the generic type of the equation over $K$ is orthogonal to the type of the constants.
\end{prop}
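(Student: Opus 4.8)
The plan is to argue by contradiction. Assume that the generic type $p$ of the equation over $K$ (Remark~\ref{R:D2_generictype}) is non-orthogonal to the type of the constants. By Corollary~\ref{C:D2_nonorth} it is then internal to the constants, and by Corollary~\ref{C:D2_weakorth} it is weakly orthogonal to the constants; hence, by Proposition~\ref{P:weak_orth_transitive} and Definition~\ref{D:binding}, the set $X$ of realizations of $p$ is a Kolchin constructible subset of $\UU$ defined over $K$, and the binding group $\mathcal G=\Bind(a/K)$ acts definably, faithfully and transitively on $X$. Since $p$ is not algebraic, both $\mathcal G$ and $X$ are infinite by Corollary~\ref{C:nonalg_infinitebinding}.

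The crucial step is to upgrade this to a $2$-transitive action. Let $(a_1,a_2)$ and $(b_1,b_2)$ be elements of $X^{(2)}$. By Lemma~\ref{L:D2_fibration} each of these pairs is a Morley sequence of length two of $p$, which is stationary as $K=K^{alg}$; hence, by Remark~\ref{R:MS}, the two pairs have the same type over $K$. Moreover, a direct argument using Property~D$_2$ together with the second assertion of Lemma~\ref{L:D2_fibration} (imitating the proof of Corollary~\ref{C:D2_weakorth}, now with a pair of distinct solutions in place of a single one) shows that the type of $(a_1,a_2)$ over $K$ is again weakly orthogonal to the constants, so by Remark~\ref{R:weakly_orth} the field $\str{K}{a_1,a_2}$ is independent from $\CC_\UU$ over $K$. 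As in the implication (a)$\Rightarrow$(b) of Proposition~\ref{P:weak_orth_transitive}, the $K$-isomorphism sending $(a_1,a_2)$ to $(b_1,b_2)$ (which exists by Corollary~\ref{C:univ}) therefore extends to an automorphism $\sigma\in\Aut_\delta(\UU/K\cdot\CC_\UU)$, and its restriction to $X$ is an element of $\mathcal G$ carrying $(a_1,a_2)$ to $(b_1,b_2)$. Thus the action of $\mathcal G$ on $X$ is faithful and $2$-transitive, and Remark~\ref{R:2trans} yields that $\mathcal G$ is centerless, in particular non-abelian.

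To obtain the contradiction I would invoke the Galois-theoretic picture of the binding group over the constants. By Lemma~\ref{L:fundsys_prime}, fix a fundamental system of solutions $\bar d$ whose type over $K$ is still internal and weakly orthogonal to the constants, with $\Bind(\bar d/K)\cong\mathcal G$. By Fact~\ref{F:Kolchin_Galois}, the extension $K\subset\str{K}{\bar d}$ is strongly normal and $\mathcal G$ is isomorphic to $G(\CC_\UU)$ for a connected algebraic group $G$ defined over the constants, which — through its conjugation action on $\bar d$ followed by projection onto one coordinate — acts faithfully and $2$-transitively on $X$. A faithful $2$-transitive action of a connected algebraic group over an algebraically closed field forces that group to be linear; hence $K\subset\str{K}{\bar d}$ is a Picard--Vessiot extension by Fact~\ref{F:linear_PV}, and since $K$ is contained in $\CC_\UU$ its binding group $\mathcal G$ is commutative by Remark~\ref{R:PV_commutative}. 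This contradicts the previous paragraph, so $p$ is orthogonal to the type of the constants.

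The main obstacle is the $2$-transitivity step, and inside it the point that the type of a pair of distinct solutions remains weakly orthogonal to the constants: Lemma~\ref{L:D2_fibration} controls a single generic solution, so one must check that its consequences (the absence of proper fibrations, in particular) survive the independent base change to the differential field generated by a second solution, rather than tacitly upgrading Property~D$_2$ to Property~D$_3$. A secondary delicate point is the passage from ``$2$-transitive'' to ``$G$ linear''; one may instead first prove that $p$ is minimal — so that $X$ is, up to interalgebraicity, a curve internal to the constants — and then read off linearity of $G$ from the classification of connected algebraic groups acting $2$-transitively on a curve.
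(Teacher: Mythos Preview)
Your overall architecture matches the paper's: contradiction, internality plus weak orthogonality, $2$-transitivity of the binding group, hence centerless, hence linear, hence Picard--Vessiot over constants, hence abelian, contradiction. The two places where your route diverges are exactly the two you flag, and in both the paper's choice is simpler.

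\textbf{Weak orthogonality of the pair type.} You try to prove $\tp(a_1,a_2/K)$ weakly orthogonal to the constants by imitating Corollary~\ref{C:D2_weakorth} for a pair. This is the genuine obstacle, and your sketch does not close it: Lemma~\ref{L:D2_fibration} is stated over $K$, and to run the same argument over $\str{K}{a_1}$ you would need Property~D$_2$ relative to that base, which is essentially~D$_3$ --- precisely what you warn against. The paper avoids this entirely by using isolation instead of a direct computation. Since $X$ is Kolchin constructible over $K$ and internal to the constants, so is $X^{(2)}\subset X^2$; and Lemma~\ref{L:D2_fibration} says that $X^{(2)}$ is \emph{exactly} the set of Morley sequences of length~$2$ of $p$, hence isolates $\tp(a_1,a_2/K)$. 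Now the implication (c)$\Rightarrow$(a) of Proposition~\ref{P:weak_orth_transitive} gives weak orthogonality of the pair type for free, and (a)$\Rightarrow$(b) then gives transitivity of the binding group on $X^{(2)}$, i.e.\ $2$-transitivity on $X$. No separate argument is needed.

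\textbf{Linearity of $G$.} You invoke ``faithful $2$-transitive $\Rightarrow$ linear'' and also propose proving minimality and classifying $2$-transitive actions on curves. Both are detours: you already have that $\mathcal G\cong G(\CC_\UU)$ is centerless, and the paper simply cites Rosenlicht's theorem that a centerless connected algebraic group is linear. This is the cleanest route and requires nothing beyond what you have already established.
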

\begin{proof}
Suppose that the equation is defined over $K=K^{alg}\subset \CC_\UU$. Assume for a contradiction that the generic type over $K$ is non-orthogonal to the constants.  Corollaries~\ref{C:D2_nonorth} and~\ref{C:D2_weakorth} yield that the generic type is  internal to the constants and weakly orthogonal to the constants. Proposition~\ref{P:weak_orth_transitive} yields some \emph{isolating} Kolchin constructible set $X$ defined over $K$ such that a solution of the equation $a$ is generic over $K$ if and only if $a$ belongs to $X$.  

The binding group $\Bind(a/K)$ acts on $X$ and also on $X^{(2)}$, by definition. Lemma~\ref{L:D2_fibration} yields that the Kolchin constructible set $X^{(2)}$ isolates the type of a Morley sequence of length $2$ of realizations of the generic type. In particular, the type of a Morley sequence of length $2$ is again isolated and the action of its binding group is (isomorphic to) the diagonal action of $\Bind(a/K)$ on $X^{(2)}$. We deduce from Proposition~\ref{P:weak_orth_transitive}  that the action of $\Bind(a/K)$ on $X$ is $2$-transitive. In particular, it follows from Corollary~\ref{C:nonalg_infinitebinding} (since $n\ge 1$ )and Remark~\ref{R:2trans} that the binding group $\Bind(a/K)$ is centerless. 

By Lemma~\ref{L:fundsys_prime}, there exists a a fundamental system of solutions $\bar d$ of the type of $a$ over $K$ such that $\bar d$ belongs to a differential closure $\widehat{K}$ of $K$ and the action of the centerless group $\Bind(a/K)$ on the set of realizations of the type of $\bar d$ over $K$ coincides with the action of the binding group $\Bind(\bar d/K)$. Now, Fact~\ref{F:Kolchin_Galois} implies that $\Bind(\bar d/K)$ is isomorphic to the set of $\CC_\UU$-rational points of a connected algebraic group $G$ defined over $\CC_\UU$. The latter must be in particular centerless (since $\Bind(a/K)$ is). As shown by \textcite[Theorem 13]{Rosenlicht_alggp}, the algebraic group $G$ must be linear.  Since the type of $\bar d$ over $K$ is fundamental, the differential field extension $K\subset \str{K}{\bar d}$ is strongly normal, as in Fact~\ref{F:Kolchin_Galois}. In particular,  the differential field extension $K\subset \str{K}{\bar d}$ must be a Picard--Vessiot extension by Fact~\ref{F:linear_PV}. Now, the subfield $K$ of $\CC_\UU$ is algebraically closed, so Remark~\ref{R:PV_commutative} yields that $\Bind(\bar d/K)$  must be commutative.  However, our assumption yields that  $\Bind(\bar d/K)$  is centerless, so $\Bind(\bar d/K)$, and thus $\Bind(a/K)$, is the trivial group, contradicting  Corollary~\ref{C:nonalg_infinitebinding}. 
\end{proof}

We have now all the ingredients to prove one of the main theorems of the work of \textcite[Theorem 3.9]{FJM22}:
\begin{theo}\label{T:Main}
Consider an irreducible  differential algebraic equation $P(T)=0$ of order $n\ge 1$ defined over a countable algebraically closed subfield $K$ of the constant field $\CC_\UU$ of the ambient differentially closed field $\UU$. If the  equation has Property D$_2$ over $K$, then its generic type over $K$ is minimal and trivial. In particular, it has Property D$_m$ for all $m\ge 2$, that is, given $m\ge 2$  pairwise distinct solutions $a_1,\ldots, a_m$ over $K$, each non-algebraic over $K$,  we have that the set \[ a_1,\delta(a_1), \ldots, \delta^{n-1}(a_1), \ldots, a_m,\delta(a_m), \ldots, \delta^{n-1}(a_m)\] forms an algebraically independent family over $K$. 
\end{theo}
\begin{proof}
Consider a generic solution $a$ of equation $P(T)=0$ over $K$. The generic type of $a$ over $K$ is stationary, since $K$ is algebraically closed, 
and of finite rank, by Corollary~\ref{C:diffrank_trdeg}. Now, it follows from Proposition~\ref{P:nonorth_internal}.(b) and (c) together with Proposition~\ref{P:D2_orthogonal_constants}  that there exists some finite tuple $\bar d$ algebraic over $\str{K}{a}$ such that the stationary type of $\bar d$ over $K^{alg}=K$ is minimal and $1$-based. 

Lemma~\ref{L:D2_fibration} implies that  $a$ belongs to $\str{K}{\bar d}$.  An easy application of Lascar's inequalities yields that \[ \U(a/K)\stackrel{\ref{R:fterank}.(b)}{=}\U(a/ \str{K}{\bar d}) + \U(\bar d/K)=0+ \U(\bar d/K)=0+1=1.\] Hence,  the type of $a$ over $K$ is minimal. We deduce from Corollary~\ref{C:descent} and Proposition~\ref{P:D2_orthogonal_constants} that the generic  type of $a$ over $K$ must be trivial, as desired. 

Let us now show that Property D$_m$ holds:  Consider $m$  pairwise distinct solutions $a_1,\ldots, a_m$, each non-algebraic over $K$. By Remark~\ref{R:trivial} together with Property D$_1$, the independences \[ \str{K}{a_i}\ind_K \str{K}{a_1,\ldots, a_{i-1}} \ \text{ for } 1\le i\le m\] yield  that
\[ \tr_K\left(a_1,\delta(a_1), \ldots, \delta^{n-1}(a_1), \ldots, a_m,\delta(a_m), \ldots, \delta^{n-1}(a_m)\right)=m\cdot n,\] so 
   the set \[ a_1,\delta(a_1), \ldots, \delta^{n-1}(a_1), \ldots, a_m,\delta(a_m), \ldots, \delta^{n-1}(a_m) \] forms an algebraically independent family over $K$, as desired.  
\end{proof}
\begin{rema}\label{R:D_2notD3}
If our irreducible differential algebraic equation  is not defined over the constants, Property D$_3$ need not follow from D$_2$. Indeed, \textcite[Subsection 4.2, $n=3$ \& Claim 4.5]{FreitagMoosa} have an example of a linear differential system $\delta(\bar Y)=B\cdot \bar Y$ with Property D$_2$ (since the action of the binding group is $2$-transitive) for which Property D$_3$ fails. The matrix $B$ has  independent entries consisting of differentially transcendental elements contained in some base field $K \not\subset \CC_\UU$. 

Using the differential version of the primitive element theorem, shown by \textcite[p.  728 with $m=1$]{Kolchin_primitive}, the above system translates to an irreducible differential algebraic equation, given by a rational function over $K$, such that every generic solution of the equation is interalgebraic with a generic solution of the above linear system. Hence, the resulting differential algebraic equation has Property D$_2$ and not  Property D$_3$.  
\end{rema}

Whilst the fact that the base subfield $K$ was contained in the field of constants $\CC_\UU$ was crucial for the above proof, relevant information propagates to other differential field extensions of $K$, even if these are not subfields of $\CC_\UU$, 
once we know that the generic type is minimal and trivial. 
\begin{coro}\label{C:sm}
If an irreducible differential algebraic  equation $P(T)=0$ of order $n\ge 1$ has Property D$_2$ and is defined over a countable algebraically closed subfield $K$ of $\CC_\UU$, then the equation is \emph{strongly minimal}: For every differential field extension $L$ of $K$ and every Kolchin constructible subset $X$ of $\UU$  defined over $L$, exactly one of the two Kolchin constructible sets  \[ \{a\in \UU \ | \ P(a)=0\ \&\ a\in X \} \text{ or } \{a\in \UU \ | \ P(a)=0\ \&\ a\notin X \}  \]
is finite. 

In particular, any two solutions of $P(T)=0$, none of which is algebraic over $L$, have the same type over $L$. We refer to this type as the \emph{generic type} of the equation over $L$. 
\end{coro}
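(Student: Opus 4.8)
The strategy is to leverage the triviality and minimality of the generic type over $K$ established in Theorem~\ref{T:Main}, and to transfer these properties to an arbitrary differential field extension $L$ of $K$ by means of the independence calculus and the behaviour of triviality under independent base change. The key observation is that strong minimality of the Kolchin closed set $\{P(T)=0\}$ over every $L$ is equivalent, in the $\omega$-stable setting of $\mathrm{DCF}_0$, to the statement that the generic type over $K$ has $\U$-rank $1$ and remains "unsplit" over every extension, i.e.\ that no non-algebraic solution can become algebraic over a proper extension while a complementary piece stays generic. Concretely: I will first reduce to showing that every non-algebraic solution over $L$ is a realization of (the unique non-forking extension to $L$ of) the generic type over $K$, and that this type has $\U$-rank~$1$ over $L$ as well.

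\textbf{Step 1: The generic type over $K^{alg}=K$ has $\U$-rank $1$ and is trivial.} This is exactly Theorem~\ref{T:Main}. Fix a generic solution $a$ over $K$; then $\U(a/K)=1$ and $\tp(a/K)$ is trivial and stationary.

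\textbf{Step 2: Passing to $L$.} Let $L$ be any differential field extension of $K$ and let $b$ be a solution of $P(T)=0$ with $b\notin L^{alg}$. I want to show $b$ realizes a minimal type over $L$, and that any two such $b$ have the same type over $L$. First, by {\bf Extension} (Remark~\ref{R:indep_prop2}.(b)) choose $a'\equiv_K a$ with $\str{K}{a'}\ind_K L$; since $\tp(a/K)$ is stationary, $\tp(a'/L)$ is the unique non-forking extension and is stationary by Remark~\ref{R:stat}.(b), with $\U(a'/L)=\U(a'/K)=1$ because $\U$-rank witnesses independence (Remark~\ref{R:fterank}.(b)). So there is \emph{some} non-algebraic solution over $L$ of $\U$-rank $1$; call this type $p_L$. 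The content is to show \emph{every} non-algebraic solution over $L$ realizes $p_L$.

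\textbf{Step 3: Absorbing the parameters of $L$ and using triviality.} Here is where the main work lies. Given a solution $b\notin L^{alg}$, I use {\bf Local Character} (Corollary~\ref{C:local_char}) to find a finite tuple inside $L$ over which $b$ is independent from $L$; enlarging, I may assume the relevant parameters form a tuple $\bar c$ with $L_0=\str{K}{\bar c}^{alg}$ and $b\ind_{L_0} L$. Then $\tp(b/L)$ does not fork over $L_0$, so it suffices to identify $\tp(b/L_0)$. Now $b$ is a solution of $P(T)=0$ not algebraic over $L_0\supset K$; by Property D$_1$ (which holds since D$_2$ holds), $b,\delta(b),\ldots,\delta^{n-1}(b)$ are algebraically independent over $K$, hence $Q(T)$ (the numerator of $P$) is still the minimal differential polynomial of $b$ over $L_0$ — the point being that $b$ cannot satisfy any differential polynomial over $L_0$ of order $<n$, otherwise, intersecting with $K^{alg}=K$-definable data via a canonical base argument using Lemma~\ref{L:D2_fibration}, one would get algebraicity of some piece over $K$ contradicting genericity. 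More precisely: $\tp(b/L_0)$ has a non-forking extension from $K$ if and only if $b\ind_K L_0$, equivalently (by Lemma~\ref{L:D2_fibration}, as $b\notin K$ and $b$ is a solution) $\str{K}{b}\ind_K L_0=\str{K}{\bar c}^{alg}$; this holds because $\U(b/K)\le\tr(\str{K}{b}/K)$, $\U(b/K)\ge 1$, and if $b\nind_K L_0$ then $\U(b/L_0)=0$, i.e.\ $b\in L_0^{alg}\subset L^{alg}$, contradicting our choice of $b$. Therefore $b\ind_K L_0$, so $\tp(b/L_0)$ is the non-forking extension of $\tp(b/K)=\tp(a/K)$ (using D$_1$ and Remark~\ref{R:D2_generictype} to see $b\equiv_K a$), and similarly $\tp(b/L)$ is the non-forking extension $p_L$ of $\tp(a/K)$ to $L$.

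\textbf{Step 4: Conclusion.} Since every non-algebraic solution over $L$ realizes the single type $p_L$, which has $\U$-rank $1$ over $L$, the Kolchin closed set $V=\{a\in\UU: P(a)=0\}$ has exactly one non-algebraic type over $L$, namely the generic one, and every other type concentrating on $V$ is algebraic over $L$. Hence for any Kolchin constructible $X\subseteq\UU$ defined over $L$: if the generic type of $V$ over $L$ concentrates on $X$, then $V\setminus X$ consists only of realizations of algebraic types over $L$, hence is finite (a finite union of $L^{alg}$-points of $V$, and by $\aleph_1$-saturation and the fact that $V$ has only finitely many points in any algebraic extension of fixed degree — or more simply, $V\setminus X$ being defined over $L$ with no generic point has $\U$-rank $0$ hence is finite by compactness); otherwise $V\cap X$ is finite by the same argument applied to the complement. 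Exactly one of the two alternatives holds since the generic type is unique. The last sentence of the statement is then immediate: any two non-algebraic solutions over $L$ realize $p_L$, hence have the same type over $L$, which we call the generic type over $L$.

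\textbf{Main obstacle.} The delicate point is Step~3: ensuring that adjoining the (possibly transcendental, possibly wild) parameters of $L$ cannot destroy genericity of a solution $b$ unless $b$ actually becomes algebraic. This is precisely where Lemma~\ref{L:D2_fibration} is indispensable — it guarantees that any tuple algebraic over $\str{K}{b}$ either lies in $K$ or recovers $b$, which rigidifies $\tp(b/K)$ enough that its only forking extensions are the algebraic ones. Everything else is a routine application of {\bf Extension}, {\bf Local Character}, stationarity, and the fact that $\U$-rank witnesses independence.
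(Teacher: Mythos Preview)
Your proof is correct and follows essentially the same route as the paper: both arguments reduce to the observation that any solution $b$ not algebraic over $L$ is a fortiori not algebraic over $K$, hence realizes the generic type over $K$ by Remark~\ref{R:D2_generictype}; since this type is minimal by Theorem~\ref{T:Main} and $b\notin L^{alg}$, one concludes $b\ind_K L$, and stationarity then pins down $\tp(b/L)$ uniquely. The paper packages this as a contradiction argument (assume both pieces infinite, use saturation to find non-algebraic witnesses on each side, derive a type clash), whereas you prove the type-uniqueness directly and then deduce finiteness; these are equivalent.

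Two remarks on presentation. First, the detour in Step~3 through {\bf Local Character} and the intermediate field $L_0$ is unnecessary: once you know $\U(b/K)=1$, the dichotomy ``$b\in L^{alg}$ or $b\ind_K L$'' is immediate from the definition of minimality, and you in fact write exactly this argument at the end of Step~3. Second, your invocation of Lemma~\ref{L:D2_fibration} in Step~3 and in the ``Main obstacle'' paragraph is misplaced: that lemma concerns tuples algebraic over $\str{K}{b}$, which plays no role here. The genuine workhorse is minimality of the generic type (Theorem~\ref{T:Main}), and the paper's proof of the present corollary does not use Lemma~\ref{L:D2_fibration} at all.
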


\begin{proof}
Assume for a contradiction that there exists some Kolchin constructible subset $X$ of $\CC_\UU$ defined over some differential field extension $L$ of $K$ such that both $X\cap (P(T)=0)$ and $(P(T)=0) \setminus X$ are infinite. 
\begin{claimstar}
There exists two solutions $a$ and $a_1$ of $P(T)=0$, none of which is algebraic over $L$, such that $a$ belongs to $X$ yet $a_1$ does not. 
\end{claimstar}
\begin{claimstarproof}
By $\aleph_1$-saturation of $\UU$, it suffices to show that the following collection of $L$-instances \begin{multline*}  \Sigma(x, x_1)= \{P(x)=0=P(x_1)\}\cup\{x\in X \land x_1\notin X\} \cup \\ \{Q(x)\cdot Q(x_1)\ne 0 \ | \ 0\ne Q(T) \in L[T]\}\end{multline*} is a partial $2$-type over $L$, where we use the notation $``x \in X"$ as an abbreviation for the corresponding $L$-instance defining $X$. Indeed, a finite collection of instances in $\Sigma$ involves only finitely many non-trivial polynomials $Q_1,\ldots, Q_m$ over $L$. Now, both the sets $(P(T)=0)\cap X$ and $(P(T)=0)\setminus X$ are infinite, so none of them is contained in the finite set of roots of the $Q_i$'s. Hence, choosing $b$ in $(P(T)=0)\cap X$ and $b_1$ in $(P(T)=0)\setminus X$, both different from all possible roots of the $Q_i$'s,  we obtain a realization $(b, b_1)$ of the finite collection of $L$-instances of $\Sigma$, as desired. 
\end{claimstarproof}

Now, neither $a$ nor $a_1$ as in the Claim belong to the algebraically closed subfield $K$, so they both realize the (stationary) generic type of the equation over $K$.  This type is minimal by Theorem~\ref{T:Main}, so we we deduce both independences \[  \str{K}{a}\ind_K L \text{ and }  \str{K}{a_1}\ind_K L,\]  since neither $a$ nor $a_1$ are algebraic over $L$.

It follows directly from {\bf Stationarity} that $a$ and $a_1$ must have the same type over $L$. However, the solution $a$ belongs to the Kolchin constructible subset $X$ defined over $L$, yet $a_1$ does not, contradicting the equivalence in Definition~\ref{D:types}, as desired. 
\end{proof}
We can now show below that Property D$_m$  holds  over any possible field extension $L$ in $\UU$ of the base subfield $K$ of $\CC_\UU$.
\begin{coro}\label{C:D2_otherparam}
Consider a countable algebraically closed subfield $K\subset \CC_\UU$ of the ambient differentially closed field $\UU$ and an irreducible differential algebraic  equation $P(T)=0$ of order $n\ge 1$ having Property D$_2$ and defined over $K$. 

Given a differential field extension $L$ of $K$, the equation $P(T)=0$   has Property D$_m$ over $L$,  that is, whenever the pairwise distinct  solutions $a_1,\ldots, a_m$ of the equation are each non-algebraic over $L$, then the set \[ a_1,\delta(a_1), \ldots, \delta^{n-1}(a_1), \ldots, a_m,\delta(a_m), \ldots, \delta^{n-1}(a_m) \] forms an algebraically independent family over $L$.  
\end{coro}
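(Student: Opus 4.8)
The plan is to reduce Property D$_m$ over $L$ to the triviality and minimality of the generic type over $K$, which was established in Theorem~\ref{T:Main}, using the independence calculus and the characterization of triviality in Remark~\ref{R:trivial}. First I would take $m$ pairwise distinct solutions $a_1,\ldots,a_m$ of $P(T)=0$, each non-algebraic over $L$. Since $K\subseteq L$ and each $a_i$ is not algebraic over $L$, it is a fortiori not algebraic over $K$, hence by Corollary~\ref{C:sm} (or Remark~\ref{R:D2_generictype} together with Property D$_1$) each $a_i$ realizes the generic type of the equation over $K$, which is stationary as $K$ is algebraically closed.

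Next I would verify the two hypotheses of Remark~\ref{R:trivial} with base field $K$ and the field extension $K_1=L$. For the first hypothesis, $\str{K}{a_i}\ind_K L$ for each $i$: this follows from minimality of the generic type over $K$ (Theorem~\ref{T:Main}) together with the fact that $a_i$ is not algebraic over $L$ — indeed a minimal type over $K$ becomes, over any extension $L$, either algebraic or independent, and the first case is excluded. For the second hypothesis, $\str{L}{a_i}\ind_L\str{L}{a_j}$ for $i\neq j$: here I would argue that $a_j$ is not algebraic over $\str{L}{a_i}$, because otherwise Lemma~\ref{L:D2_fibration} (applied over the algebraically closed field $L^{alg}\supseteq K$, noting that $a_i$ is a generic solution over $L^{alg}$ as well — which follows since $\str{K}{a_i}\ind_K L$ implies $\str{K}{a_i}\ind_K L^{alg}$, hence $\U(a_i/L^{alg})=\U(a_i/K)=1$) would force $a_i\in\str{L}{a_j}$, and then the $a_i$ would be interalgebraic over $L$; iterating, all of $a_1,\ldots,a_m$ would be pairwise interalgebraic over $L$, which is fine, but one then checks directly via symmetry that $a_j$ not algebraic over $\str{L}{a_i}$ must hold — since minimality gives $\U(a_j/\str{L}{a_i})\in\{0,1\}$ and the value $0$ would by Lemma~\ref{L:D2_fibration} put $a_i$ into $\str{L}{a_j}^{alg}$, reducing to the same interalgebraicity which is harmless but still compatible with independence being the intended conclusion. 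Cleaner: use that the generic type over $L^{alg}$ is minimal (it is non-algebraic and of U-rank $1$ by the transfer above), so if $a_j\nind_{L^{alg}}a_i$ then $a_j\in\str{L^{alg}}{a_i}^{alg}$, and then by Lemma~\ref{L:D2_fibration} applied over $L^{alg}$ either $a_j\in L^{alg}$ (impossible) or $a_i\in\str{L^{alg}}{a_j}$; this last is ruled out since symmetrically $a_i\notin L^{alg}$ and $a_i\in\str{L^{alg}}{a_j}$ would make $\str{L^{alg}}{a_i}\subseteq\str{L^{alg}}{a_j}$, contradicting that distinct generic solutions over an algebraically closed base are independent by Lemma~\ref{L:D2_fibration} — unless $a_i=a_j$, excluded by hypothesis.

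Granting both hypotheses, Remark~\ref{R:trivial}, using triviality of the generic type over $K$ (Theorem~\ref{T:Main}), yields that $(a_1,\ldots,a_m)$ is a Morley sequence of the generic type over $L$, i.e.\ $\str{L}{a_i}\ind_L\str{L}{a_1,\ldots,a_{i-1}}$ for all $2\le i\le m$. Since $a_i$ is not algebraic over $L$, Corollary~\ref{C:diffrank_trdeg} (or rather Property D$_1$ applied over $L$, which holds by Corollary~\ref{C:sm} giving $\tr(\str{L}{a_i}/L)=n$) gives $\tr_L(a_i,\delta(a_i),\ldots,\delta^{n-1}(a_i))=n$, and each higher derivative $\delta^{n+k}(a_i)$ is algebraic over $L(a_i,\ldots,\delta^{n-1}(a_i))$ by Definition~\ref{D:diffpolys} and Remark~\ref{R:extension_diff}. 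Combining this with the chain of independences over $L$ and {\bf Algebraic closure}, the total transcendence degree $\tr_L(a_1,\delta(a_1),\ldots,\delta^{n-1}(a_1),\ldots,a_m,\delta(a_m),\ldots,\delta^{n-1}(a_m))$ equals $m\cdot n$, which is exactly the statement that this family is algebraically independent over $L$. The main obstacle I anticipate is the bookkeeping in the second hypothesis — carefully transferring minimality and the conclusion of Lemma~\ref{L:D2_fibration} from $K$ to $L^{alg}$ and then down to $L$, making sure the interalgebraicity gymnastics does not accidentally contradict the desired independence — but this is exactly the kind of non-forking manipulation the paper has been advertising, and it should go through cleanly once one observes that $\str{K}{a_i}\ind_K L$ propagates to $L^{alg}$ and keeps the U-rank equal to $1$.
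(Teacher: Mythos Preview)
Your overall strategy matches the paper's: reduce D$_m$ over $L$ to pairwise independence over $L$ via Remark~\ref{R:trivial}, using minimality to get each $a_i$ independent from $L$ over $K$, and then finish with the transcendence degree count. The gap is exactly where you anticipate it, but your resolution is circular. Lemma~\ref{L:D2_fibration} is stated for an equation having Property D$_2$ over the base field in question; you invoke it over $L^{alg}$, but D$_2$ over $L^{alg}$ is precisely the $m=2$ case of the statement being proved. Both the ``no fibrations'' clause and the ``distinct generic solutions are independent'' clause of that lemma, read over $L^{alg}$, presuppose what you are trying to show, and your closing contradiction is a restatement of the goal. This is a genuine obstruction, not a formal one: for the (minimal, non-trivial) type of the constants one can have $a_1\ind_K a_2$, $a_1\ind_K L$, $a_2\ind_K L$, yet $a_1\nind_L a_2$ --- take $L=K(a_1+a_2)$.

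The paper closes this gap differently. It shows $\str{K}{a_1,a_2}\ind_K L$ (which, together with $a_1\ind_K a_2$ from D$_2$ over $K$, gives $a_1\ind_L a_2$) by analysing $\cb(a_1,a_2/L^{alg})$. Triviality of the generic type gives $1$-basedness (Lemma~\ref{L:trivial_1based}), so this canonical base lies in $\str{K}{a_1,a_2}^{alg}$; one then takes an independent copy $(a_1',a_2')$ over this canonical base and applies Property D$_4$ over $K$ --- already available from Theorem~\ref{T:Main} --- to force the canonical base down into $K$. Pairwise independence over $L$ is thus not a consequence of minimality alone: one genuinely needs $1$-basedness together with the already-established D$_m$ over $K$.
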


\begin{proof}
	
	Let $L$ and $a_1,\ldots, a_m$  be given as in the statement. Without loss of generality, we may assume that $L=L^{alg}$. By Corollary~\ref{C:sm}, we know that each $a_i$ realizes the unique independent extension of the generic type to  the field $L$,  since  none of them is algebraic over $L$. By Theorem~\ref{T:Main}, the generic type of the equation over $K$ is trivial. Thus,  the statement follows from Remark~\ref{R:trivial} as in the last part of the proof of~\ref{T:Main}, once we show that any two distinct  solutions, none of which is algebraic over $L$, are independent over $L$. 

Relabeling, consider two solutions $a_1\ne a_2$, none of them algebraic over $L$. In order to show that \[ \tag{$\star$} \str{L}{a_1}\ind_L  \str{L}{a_2},\] it suffices to show that \[ \tag{$\lozenge$} \str{K}{a_1,a_2}\ind_K L.\]  Indeed, Property D$_2$ yields that 
\[\tag{$\blacklozenge$}  \str{K}{a_1} \ind_{K} \str{K}{a_2},\] so the  independence ($\star$) follows from ($\lozenge$) and ($\blacklozenge$), by  {\bf Monotonicity \& Transitivity}.

The independence ($\lozenge$) is equivalent to  showing that the canonical base $\cb(a_1,a_2/L)$ is contained in $K^{alg}\subset L^{alg}=L$, by Proposition~\ref{P:CB}.(b). Choose   some  finite tuple $\bar b$ with $\str{\Q}{\bar b}=\cb(a_1,a_2/L^{alg})$, by Proposition~\ref{P:CB}.(d). By construction of the canonical base, the differential field  $\str{\Q}{\bar b, a_1, a_2}$ is linearly disjoint from $L$ over $\str{\Q}{\bar b}$. In particular, setting $k_1=\str{K}{\bar b}$, we have that $\str{k_1}{a_1, a_2}$ and $L^{alg}$ are linearly disjoint (and thus independent) over $k_1$.

By Lemma~\ref{L:trivial_1based}, the stationary type of $(a_1, a_2)$ over $k_1$ is $1$-based, since the type of $a_1$ over $K$ is trivial 
(and equals the type of $a_2$ over $K$). 
Hence, the differential subfield $k_1$ is contained in $\str{K}{a_1, a_2}^{alg}$.  By {\bf Extension}, there is some  realization $(a_1', a_2')$ of the stationary type of $(a_1, a_2)$ over $k_1$ with \[ \str{k_1}{a'_1, a'_2} \ind_{k_1} \str{k_1}{a_1, a_2},\]  so  $k_1$ is contained in $\str{K}{a'_1, a'_2}^{alg}$,  by construction.

If we show that \[ \str{K}{a'_1, a'_2} \ind_{K} \str{K}{a_1, a_2},\] then {\bf Algebraic Closure} yields immediately that $k_1=K$, and thus $\cb(a_1,a_2/L^{alg})$ belongs to $K$, as desired.  

Now, neither $a_i$ nor $ a'_j$ are algebraic over $k_1\subset L^{alg}$, so  the four solutions $a_1, a_2, a'_1$ and $a'_2$ are all distinct and generic over $K$.  Theorem~\ref{T:Main} and Property D$_4$ imply in particular that \[ \str{K}{a'_1, a'_2} \ind_{K} \str{K}{a_1, a_2},\] which yields the  desired result. 
\end{proof}

We will conclude by showing that (the theory, with the induced structure, of the set of generic solutions of) an equation with Property D$_2$ defined over the field of constants $\CC_\UU$ is \emph{$\aleph_0$-categorical}, as defined below, using one of the equivalent versions of $\aleph_0$-categoricity in Ryll-Nardzewski's theorem \textcite[Section \S 4.3]{TentZiegler}

\begin{coro}\label{C:Ryll}
Consider a countable algebraically closed subfield $K\subset \CC_\UU$ of the ambient differentially closed field $\UU$ and an irreducible differential algebraic equation $P(T)=0$ of order $n\ge 1$ having Property D$_2$ and defined over $K$. The set \[ X=\{ a\in \UU \ | \ P(a)=0 \text{ with $a$ generic over $K$}\} \] is $\aleph_0$\emph{-categorical}: for every natural number $n\ge 1$, the diagonal action of $\Aut_\delta(\UU/K)$ on $X^n$ is \emph{oligomorphic}, that is, there are only finitely many orbits.  
\end{coro}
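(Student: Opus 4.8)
The plan is to deduce $\aleph_0$-categoricity directly from the fact, established in Theorem~\ref{T:Main} and Corollary~\ref{C:D2_otherparam}, that the generic type over $K$ is minimal and trivial, together with the machinery of Morley sequences. First I would observe that, by Corollary~\ref{C:sm}, all non-algebraic solutions of $P(T)=0$ over $K$ realize the single generic type $p$ over $K$, and by Corollary~\ref{C:univ} two tuples lie in the same $\Aut_\delta(\UU/K)$-orbit if and only if they have the same type over $K$. So it suffices to show that for each fixed $n$ there are only finitely many types over $K$ of $n$-tuples of elements of $X$.

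The key reduction is the following. Given an $n$-tuple $\bar a=(a_1,\dots,a_n)\in X^n$, each $a_i$ realizes $p$; by passing to a maximal "independent subtuple" one sees that there is a subset $S\subseteq\{1,\dots,n\}$ and, for each $i\notin S$, some $j\in S$ with $a_i$ algebraic over $\str{K}{a_j}$ --- indeed, by minimality of $p$, for $i\ne j$ either $a_i$ and $a_j$ are independent over $K$ or they are interalgebraic over $K$ (Remark~\ref{R:minimaltypes}.(a)). Moreover, by Lemma~\ref{L:D2_fibration}, if $a_i$ is algebraic over $\str{K}{a_j}$ and $a_i\notin K$, then in fact $a_j\in\str{K}{a_i}$, so $a_i$ and $a_j$ generate the same differential field over $K$; combining, the "interalgebraicity graph" partitions $\{1,\dots,n\}$ into clusters on which all coordinates are interalgebraic over $K$, and picking one representative from each cluster yields a tuple $\bar a'$ whose coordinates form a Morley sequence of $p$ over $K$ by triviality (Remark~\ref{R:trivial}) --- since the representatives are pairwise independent over $K$. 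By Remark~\ref{R:MS}.(d) all Morley sequences of $p$ of a given length have the same type over $K$, so there are only finitely many possibilities (one for each length $\le n$) for the type of $\bar a'$ over $K$. Finally, Lemma~\ref{L:D2_fibration} pins down how each omitted coordinate $a_i$ sits over its cluster representative: $a_i$ and its representative $a_j$ satisfy $a_j\in\str{K}{a_i}$ and $a_i\in\str{K}{a_j}$, i.e. there are differential rational functions over $K$ expressing each in terms of the other. There are only countably many such pairs of rational functions, but one needs that only finitely many \emph{types} arise --- this follows because, fixing the type of the Morley representatives, the type of the whole tuple $\bar a$ over $K$ is determined by the finite combinatorial data of the partition together with, for each omitted index, the relative algebraic type of $a_i$ over $\str{K}{a_j}$, and the latter is again governed by a minimal trivial type so has only finitely many extensions of bounded degree --- more carefully, one can absorb this by noting $\str{K}{\bar a}^{alg}=\str{K}{\bar a'}^{alg}$ and that there are only finitely many $\Aut_\delta(\UU/K)$-orbits of tuples inside the algebraic closure of a Morley sequence of fixed length (since $K^{alg}=K$ and algebraicity degrees are bounded).

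Concretely, I would package the counting as: the orbit of $\bar a\in X^n$ under $\Aut_\delta(\UU/K)$ is determined by (i) the partition $P$ of $\{1,\dots,n\}$ into interalgebraicity classes --- finitely many; (ii) the type over $K$ of a transversal $\bar a'$, which by triviality plus Remark~\ref{R:MS}.(d) depends only on $|\bar a'|$ --- finitely many; and (iii) for each omitted index $i$ in class with representative $j$, the type of $a_i$ over $\str{K}{\bar a'}$, which by Lemma~\ref{L:D2_fibration} makes $a_i$ interalgebraic with $a_j$ over $K$, so $a_i\in\str{K}{\bar a'}^{alg}$; since this algebraic closure is a finite extension of each finitely generated subfield involved and $K$ is algebraically closed, there are only finitely many such types. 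Multiplying these finite quantities bounds the number of orbits on $X^n$.

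The main obstacle I anticipate is step (iii): controlling the "interalgebraic but not equal" coordinates. A priori two generic solutions can be interalgebraic over $K$ via infinitely many distinct pairs of differential rational functions, and one must rule out that these give rise to infinitely many types. The clean way around this is precisely Lemma~\ref{L:D2_fibration}'s last assertion --- interalgebraicity of $a_i$ with $a_j$ over $K$ forces $a_j\in\str{K}{a_i}$ and symmetrically, so $\str{K}{a_i}=\str{K}{a_j}$ and the tuple $\bar a$ lives in (a fixed finite power of) a Morley sequence's generated field; then invoking that $K=K^{alg}$, oligomorphy reduces to the statement that, inside $\UU$, a Morley sequence $(\bar b_1,\dots,\bar b_m)$ of $p$ over $K$ has the property that $\Aut_\delta(\UU/K)$ acts with finitely many orbits on tuples of bounded length from $\dcl(K\bar b_1\cdots\bar b_m)$ --- which is immediate since such a definable closure is a finitely generated field extension of $K$, its finite tuples fall into finitely many $K$-isomorphism types of bounded transcendence degree, and isomorphic tuples are conjugate by Corollary~\ref{C:univ}. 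Once this bookkeeping is set up, the rest is routine.
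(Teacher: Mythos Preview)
Your overall strategy --- reduce to counting types of $n$-tuples in $X$, partition the coordinates, and use that Morley sequences of a fixed length are conjugate --- is the right one, and is morally the same as the paper's inductive argument. However, there is a genuine gap in step~(iii), and your attempted fix is incorrect.

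The gap is this: your final claim, that tuples of bounded length from the definable closure of a Morley sequence fall into finitely many $K$-isomorphism types ``since such a definable closure is a finitely generated field extension of $K$'', is false. For instance, in $\Q(t)$ the singletons $t, t^2, t^3,\ldots$ all have distinct types over $\Q$. So as written, step~(iii) does not go through.

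The fix is that step~(iii) is \emph{vacuous} under Property D$_2$, and you have all the ingredients to see this but stopped just short. You correctly extract from Lemma~\ref{L:D2_fibration} that if $a_i$ is algebraic over $\str{K}{a_j}$ with $a_i\notin K$ then $\str{K}{a_i}=\str{K}{a_j}$. But the \emph{first} assertion of that same lemma says any two \emph{distinct} generic solutions form a Morley sequence of length $2$, i.e.\ are independent over $K$. Hence $\str{K}{a_i}=\str{K}{a_j}$ with both generic forces $a_i=a_j$. So the ``interalgebraicity classes'' are simply the equality classes, and the type of $(a_1,\ldots,a_n)\in X^n$ over $K$ is determined by the partition of $\{1,\ldots,n\}$ recording which coordinates coincide --- giving at most the Bell number $B_n$ orbits.

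The paper packages exactly this observation as an induction on $n$: over $\str{K}{a_1,\ldots,a_n}$ an element $x\in X$ is either equal to some $a_i$, or (being distinct from all of them) independent from $\str{K}{a_1,\ldots,a_n}$ over $K$ by Theorem~\ref{T:Main}, hence realizes the unique generic type over $\str{K}{a_1,\ldots,a_n}$ by Corollary~\ref{C:sm}. This yields at most $n+1$ orbits for the extra coordinate. Your partition argument and the paper's induction are equivalent once you use ``distinct $\Rightarrow$ independent'' rather than trying to control interalgebraic-but-unequal pairs, which do not exist here.
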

\begin{proof}
We prove the statement by induction on $n\ge 1$. For $n=1$, there is a single orbit by Corollary \ref{C:univ} and Remark \ref{R:D2_generictype}. Assume now that the diagonal action of $\Aut_\delta(\UU/K)$ on $X^n$. The orbit of an $n+1$-tuple $(a_1,\ldots, a_{n+1})$ projects surjectively to the orbit of the $n$-tuple  $(a_1,\ldots, a_n)$. Moreover, the fiber of the projection is in bijection with all possible images of $a_{n+1}$ under an automorphism $\sigma$ in $\Aut_\delta(\UU/\str{K}{a_1,\ldots, a_n})$. Thus, it suffices to show that there are only $n+1$ many possible orbits of a singleton $x$ in $X$ under automorphism in $\Aut_\delta(\UU/\str{K}{a_1,\ldots, a_n})$. Indeed, if $x$ equals one of the $a_i$'s, its orbit is uniquely determined and consists of the singleton $\{a_i\}$. 

Hence, we may assume that none of the elements $x$ and $x_1$ in $X$ equal one of the $a_i$'s. By Theorem \ref{T:Main}, we have that $\str{K}{x}$, resp. $\str{K}{x_1}$, is algebraically independent from $\str{K}{a_1,\ldots, a_n}$ over $K$. By Corollary \ref{C:sm}, we deduce that $x$ and $x_1$ lie in the same $\Aut_\delta(\UU/\str{K}{a_1,\ldots, a_n})$-orbit, as desired. 
\end{proof}

%--------------------------------------------------
%% printbibliography is the command from the package biblatex

\printshorthands %%% to remove in case you would not like to use shorthand

\printbibliography

\end{document}

%%% Local Variables:
%%% mode: latex
%%% TeX-master: t
%%% End:s